\documentclass{amsart}
\usepackage{amssymb}
\usepackage{hyperref}
\usepackage{graphicx} 
\usepackage{float}
\usepackage[matrix, arrow,all,cmtip,color]{xy}

\newtheorem{theorem}{Theorem}[section]
\newtheorem{nontheorem}[theorem]{Non-Theorem}

\newtheorem{fact}[theorem]{Fact}
\newtheorem{lemma}[theorem]{Lemma}
\newtheorem{corollary}[theorem]{Corollary}
\newtheorem{claim}[theorem]{Claim}
\newtheorem{proposition}[theorem]{Proposition}

\newtheorem{introtheorem}{Theorem}

\theoremstyle{definition}
\newtheorem{definition}[theorem]{Definition}
\newtheorem{example}[theorem]{Example}
\newtheorem{non-example}[theorem]{Non-Example}

\newtheorem{assumption}[theorem]{Assumption}
\newtheorem{remark}[theorem]{Remark}

\newcommand{\acl}{\operatorname{acl}}
\newcommand{\dcl}{\operatorname{dcl}}

\newcommand{\tp}{\operatorname{tp}}
\DeclareMathOperator{\bd}{bd}
\DeclareMathOperator{\inter}{int}

\newcommand{\groupconf}[6]{
	\xy <1.5cm,0cm>: (1,0.2)*{#4}; (2,0.2)*{#5}; (0.8,-0.8)*{#6};
	(-0.2,0)*{#1}; (-0.2,-1)*{#2}; (-0.2,-2)*{#3}; (1,0)*{}; (0,-2)*{}
	**\dir{-}; (0,-2)*{}; (0,0)*{} **\dir{-}; (0,0)*{}; (2,0)*{}
	**\dir{-}; (2,0)*{}; (0,-1)*{} **\dir{-};
	\endxy }

\setcounter{tocdepth}{1} 

\makeatletter
\let\@wraptoccontribs\wraptoccontribs
\makeatother

\begin{document}

\title{Topologically 1-Based T-minimal Structures}
\author{Benjamin Castle}
\address{Department of Mathematics, University of Illinois Urbana-Champaign}
\email{btcastl2@illinois.edu}

\author{Assaf Hasson}
\address{Department of Mathematics, Ben Gurion University of the Negev, Be'er-Sheva, Israel}
\email{hassonas@math.bgu.ac.il}

\contrib[With an appendix by]{Will Johnson}
\address{School of Philosophy, Fudan University,
Shanghai 200433, P.R. China}
\email{ willjohnson@fudan.edu.cn}

\thanks{Castle was supported by NSF grant DMS-2452735. Hasson was supported by ISF grant No. 555/21.}

\maketitle

\begin{abstract}
    We prove group existence and structure theorems in a general setting of tame topological theories. More precisely, we identify a linear/non-linear dividing line -- called \textit{topological 1-basedness} -- among the class of t-minimal theories with the independent neighborhood property. This is a wide class including all visceral theories, as well as all dense weakly o-minimal and C-minimal theories (even those where exchange fails). 
    
    Now assume $\mathcal M$ is highly saturated and t-minimal with the independent neighborhood property. We show that if $\mathcal M$ is non-trivial and topologically 1-based, it admits a type-definable abelian group $(G,+)$ with $G$ an open subset of $M$. Moreover, we can ensure that $G$ is a topological group with the subspace topology inherited from $M$; and in this case, we show that the induced structure on $G$ satisfies an appropriate topological analog of the Hrushovski-Pillay classification of 1-based stable groups. 
\end{abstract}

\tableofcontents

\section{Introduction}

In this paper we introduce the notion of a \textit{topologically 1-based t-minimal structure}, and prove that when such a structure is non-trivial, it admits an infinite type-definable abelian topological group, whose (local) structure we then proceed to study. These results can be viewed as a topological analogue of the literature on 1-based stable structures: on the one hand, the existence of a type-definable group resembles Hrushovski's characterization of locally modular regular types (\cite{Hr4}, following earlier works of Zilber \cite[\S 3]{ZilBook} and Cherlin-Harrington-Lachlan \cite{CHL}). On the other hand, our structural analysis of the group closely resembles the Hrushovski-Pillay analysis of 1-based stable groups (\cite{HrPi87}). 

Our main theorem reads as follows (see Theorem \ref{T: main 1-based}; the relevant terms are defined in Definitions \ref{D: tmin}, \ref{D: inp}, \ref{D: nontrivial}, and \ref{D: 1-based type}):

\begin{introtheorem}\label{T: main intro}
    Let $\mathcal M$ be a sufficiently saturated t-minimal structure with the independent neighborhood property. Assume that $\mathcal M$ is non-trivial and topologically 1-based. Then there are a countable parameter set $A$, and an $A$-type-definable abelian group $G$, such that the following hold:
    \begin{enumerate}
        \item $G$ is open in $\mathcal M$.
        \item $G$ is a topological group with the topology inherited from $M$.
        \item $G$ is locally linear.
    \end{enumerate}
\end{introtheorem}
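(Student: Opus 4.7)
The plan is to adapt the Hrushovski--Pillay analysis of 1-based stable groups to the topological setting, with non-triviality playing the role of ``non-orthogonality to a generic type'' and topological 1-basedness forcing the associated family of plane curves to be ``linear''. The proof divides into three stages: (i) extract a \emph{topological group configuration} from non-triviality; (ii) apply a group-configuration-type theorem to obtain a type-definable group $G$; (iii) upgrade $G$ to a locally linear abelian topological group with the subspace topology inherited from $M$.

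For stage (i), I would use non-triviality at some point to produce a definable family of plane curves exhibiting genuine 2-dimensional interaction, i.e.\ a family $\{C_b : b \in B\}$ of curves in $M^2$ such that generically $C_b$ realizes a 2-dimensional relation not reducible to unary data. Topological 1-basedness is then used to control the canonical parameter $\cb(C_b)$: it should lie in $\acl$ of finitely many generic points on $C_b$. From such a family one builds, in the standard way, a configuration $(a,b,c,x,y,z)$ with the usual (in)dependence pattern, where $a,b,c$ are canonical parameters of curves through incidence-related generic points $x,y,z$. The key extra work over the stable case is topological: we must verify, using the independent neighborhood property and t-minimality, that generic independence has the product-of-neighborhoods flavor required to make germs of definable partial maps compose in a continuous way.

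For stage (ii), I would invoke (or prove, adapting Hrushovski's argument) a group configuration theorem in the t-minimal/INP setting, yielding a type-definable connected group $G$ of topological dimension $1$ whose generic action is interdefinable with the original configuration. Openness of $G$ in $M$ then follows because a generic element of $G$ has the same local topological dimension as a generic of $M$, so $G$ contains an open neighborhood of a generic, and by translation $G$ is open. Continuity of $+$ and $-$ on $G$ is extracted from the construction: the group law is built from germs of definable functions whose domains/ranges are controlled by INP, giving joint continuity on a generic open set; translation then spreads continuity everywhere, so $G$ is a topological group in the inherited topology. Commutativity is obtained by the standard 1-based duplication of the configuration, combined with the fact that in topological dimension $1$ the stabilizer of a generic plane curve cannot be non-abelian.

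Local linearity (part (3)) is the topological analogue of the Hrushovski--Pillay dichotomy: one shows that for every type-definable subset $X \subseteq G^n$, the canonical parameter $\cb(X)$ of a generic fiber is contained in $\acl$ of a single generic point of $X$, which forces $X$ to coincide locally with a coset of a type-definable subgroup. This is essentially a restatement of topological 1-basedness transported to $G$ through the definable identification of generic plane curves with cosets of the graph of translation. The main obstacle, in my view, is the group-configuration step: the classical proof relies on canonical bases and germs of generic functions in a stable theory, and neither is available verbatim here. The entire force of the hypothesis of the independent neighborhood property, together with t-minimality (which provides enough ``dimension'' machinery to imitate Morley rank $1$ arguments), must be brought to bear to ensure that germs of definable maps can be composed, inverted, and regluded on definable opens, so that the abstract group so produced admits a coherent topology compatible with $M$.
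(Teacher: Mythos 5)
Your broad three-stage outline (extract a configuration from non-triviality, apply a group-configuration-style theorem, upgrade the resulting group) matches the paper's overall plan. But the middle step, which you yourself flag as ``the main obstacle,'' is precisely the substantive content of the paper, and as written your proposal does not close it; it only names it. In the t-minimal setting there is no off-the-shelf group configuration theorem, and, more fundamentally, canonical bases in the Hrushovski--Pillay sense are not available: the paper is explicit that it replaces $\operatorname{Cb}$ with the \emph{germ} $\operatorname{germ}(a/A)$ (coded as an imaginary), and that its definition of topological 1-basedness ($\dim(b/A\operatorname{germ}(a/Ab))=\dim(b/Aa)$) is deliberately \emph{weaker} than the assertion $\operatorname{germ}(a/Ab)\in\acl^{eq}(Aa)$ -- the authors remark they do not even know whether these are equivalent. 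So the step in your proposal that reads ``the canonical parameter $\cb(C_b)$ should lie in $\acl$ of finitely many generic points on $C_b$'' imports a stability-theoretic fact that is not a consequence of the hypotheses and is not how the paper proceeds.

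What the paper does instead, and what is missing from your sketch, is the following. First, it proves a structure theorem for topologically 1-based types (Theorem \ref{T: partition}): $\tp(a/Ab)$ is topologically 1-based over $A$ iff any two fibers of the projection $\mu(ab/A)\to\mu(b/A)$ are equal or disjoint. This is the topological analogue of weak normality and is the engine driving everything else. Second, it develops a purely combinatorial theory of \emph{regular groupoid spines} (Sections 7--8): families of bijections between three sets, closed under composition where defined, such that a single incidence $(x,y)$ determines a unique map. The regularity hypothesis guarantees that such a spine extends to a groupoid and that type-definability of the spine transfers to the groupoid, hence to the vertex group. Third, a ``one-dimensional topological group configuration'' built out of non-triviality is shown (via Lemma \ref{L: family of bijections} and Theorem \ref{T: partition}) to produce a type-definable regular groupoid spine on the three infinitesimal neighborhoods $\mu(a/\emptyset)$, $\mu(b/\emptyset)$, $\mu(c/\emptyset)$; applying Theorem \ref{T: combinatorial group conf} gives a group on $\mu(a)$. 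None of this appears in your proposal, and without it the claim ``apply a group-configuration-type theorem'' is a placeholder, not an argument.

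Two more concrete issues in stage (iii). For openness, you say $G$ contains an open neighborhood of a generic point and then translation spreads openness; but the group one first obtains lives on the infinitesimal neighborhood $\mu(a)$, which is type-definable over the \emph{whole} model $\mathcal M$ (not over a small set), so one must first \emph{enlarge} it to a $B$-type-definable open supergroup for countable $B$ (Lemma \ref{L: open supergroup}), then apply Marikova's technique to make it a topological group (Fact \ref{F: marikova}, Lemma \ref{L: loc top gp}, Lemma \ref{L: top gp}). For commutativity, the assertion that ``in topological dimension $1$ the stabilizer of a generic plane curve cannot be non-abelian'' is not substantiated; the paper's actual argument (Theorem \ref{T: local abelian}) applies the few-subgroups result (itself derived from Theorem \ref{T: partition} via local linearity, Theorem \ref{T: loc lin}) to the type-definable family of conjugation graphs to show $\mu$ is abelian, and then extracts an open abelian subgroup. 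In short: you correctly identify what needs to be done and where the difficulty lies, but the proposal does not contain the germ-based, infinitesimal-neighborhood, regular-groupoid-spine machinery that actually carries the proof.
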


We explain the terminology more below; for now, we note that the above theorem encompasses all dense weakly o-minimal and C-minimal structures; in particular, as far as we know, it is the first group-existence theorem for (non-o-minimal) weakly o-minimal structures.

\subsection{Background} To recall, 1-basedness in stability is an abstract notion viewed as capturing those stable theories of \textit{linear} complexity. Classical results in stability show that in non-trivial 1-based theories, any instance of non-triviality can be explained in terms of type-definable abelian groups (e.g., \cite[Corollary 5.4.13]{PillayBook}). The key notion allowing the construction of such groups is the \textit{germ} of a map on a complete (stationary) type -- where the word \textit{germ} here indicates that maps are viewed up to agreement on a non-forking extension of the domain.

Soon after the aforementioned works on 1-basedness in stable theories, it became clear that \textit{o-minimality} provided a suitable tame (yet unstable) setting where many stability-like results could be developed. In particular, o-minimal structures were seen to exhibit a similar dimension theory to strongly minimal structures (and thus many notions could be translated from stability to o-minimality). To that end, early works of Peterzil and Loveys (\cite{Pe},\cite{LovPet}) developed a version of linearity for o-minimal structures, and obtained similar group-existence and structure theorems (which ultimately became part of the o-minimal trichotomy theorem, \cite{PeStTricho}). The key notion here was again the germ of a function -- but in the o-minimal setting, germs retain their original topological meaning (one treats functions up to agreement on an interval).

Many aspects of Peterzil's group-existence proof (\cite{Pe}) are quite general, but the definition of linearity is rather specific to o-minimality. In a later work, Maalouf \cite{MaaCmin} adapted the definition of linearity to the related context of \textit{C-minimality}, and proved a similar group existence theorem -- however, Maalouf needed to assume the \textit{exchange lemma}, which can fail in a general C-minimal structure. 

The above works on o-minimality and C-minimality naturally lead to the following question: is there a general notion of linearity for `topologically tame' structures, and can one use it to prove abstract group-existence and structure theorems? For example, what can be said for \textit{weakly o-minimal} structures, and for C-minimal structures without exchange? 

\subsection{T-minimality and Work of Johnson}

Answering the above question would require a general setting for topological tameness, and this general setting should admit a suitable dimension theory for definable sets and types. Recently, Johnson \cite{t-minimal-johnson} has shown that \textit{t-minimality} provides such a setting. Recall that a (sufficiently saturated) structure is t-minimal if (1) it admits a uniformly $\emptyset$-definable Hausdorff topology, and (2) a unary definable set is infinite if and only if it has non-empty interior with respect to the given topology. T-minimality encompasses all dense weakly o-minimal and C-minimal structures (regardless of exchange), in addition to many valued fields and expansions thereof (e.g. all 1-h-minimal fields, and all unstable dp-minimal expansions of fields).

Johnson's paper develops a rudimentary dimension theory assuming only t-minimality, then proceeds to develop stronger geometric properties (e.g. generic continuity of functions) assuming a stronger condition called \textit{viscerality} (saying roughly that the topology is `uniform'). In our setting, we do not want to assume viscerality: the issue is that in many cases, viscerality follows from the existence of a group structure, and we want to work a priori without a group structure. For example, while all o-minimal groups are visceral, there are also non-visceral linear o-minimal structures (and our results should apply to them).

Instead, we augment t-minimality with a weaker condition, which we call the \textit{independent neighborhood property}: roughly speaking, this property says that one can always `zoom in' on a tuple without changing its dimension. The same property was already studied in the dp-minimal visceral context in \cite{HaHaPeVF}, where it proved quite useful. The key features of the independent neighborhood property in our case are:

\begin{itemize}
    \item It encompasses all visceral, weakly o-minimal, and C-minimal structures.
    \item It implies many of the same geometric properties (including generic continuity) that Johnson's paper proves for visceral structures.
\end{itemize}

In light of the above discussion, our main theorem shows that the known group-existence and structure theorems for o-minimal and C-minimal structures are special cases of a general result, valid for all t-minimal structures with the independent neighborhood property. Our proof follows a similar (but not identical) outline to the original proof of \cite{Pe}, with various new challenges encountered throughout. In particular, note that we \textit{do not} assume exchange (and this causes quite a few difficulties); as a tradeoff, our version of non-triviality needs to be a bit stronger than the usual one (see Definition \ref{D: nontrivial}).

\subsection{Topological 1-basedness}

The main task in our proof is to identify and study a linearity notion -- namely \textit{topological 1-basedness} -- in the absence of either exchange or an order (recall that Peterzil and Loveys' o-minimal version used both exchange and the order). Let us now discuss our version in more depth. 

Just as \textit{germs} play a key role in the stable and o-minimal group existence theorems, they also play a key role in our setting. In fact, it follows from Johnson's paper that in a t-minimal structure, every complete type $\tp(a/A)$ has a well-defined germ at any realization $a$ (see Lemma \ref{L: germs exist}), and this germ can be coded as an imaginary element $\operatorname{germ}(a/A)$.

When constructing groups in the stable setting, germs are manifested definably as \textit{canonical bases}. Thus, it is natural to view the imaginary $\operatorname{germ}(a/A)$ as a sort of `topological canonical base' of $\tp(a/A)$, localized to the point $a$. There is good evidence for this line of thinking -- for example, assuming the independent neighborhood property, if $B\supseteq A$ then one has $\dim(a/B)=\dim(a/A)$ if and only if $\operatorname{germ}(a/B)=\operatorname{germ}(a/A)$ (see Lemma \ref{L: weakly ind}).

In light of this analogy, our definition of topological 1-basedness essentially amounts to replacing canonical bases in the stability-theoretic definition with germs. There is one difference, however, which makes our definition a bit more general. Recall that in stability, 1-basedness requires the canonical base of a type to be algebraic over a single realization. Since the canonical base is imaginary, the word `algebraic' here refers to the operator $\acl^{eq}$. In geometric structures, Gagelman extends the dimension theory to imaginary elements, where the $0$-dimensional closure may be a proper coarsening of $\acl^{eq}$. In our view, it is this 0-dimensional closure operator (rather than $\acl^{eq}$) which correctly captures linearity in our setting. Of course, 0-dimensionality only makes sense assuming exchange (i.e. in geometric structures). However, one can also characterize it topologically. 
That is, in geometric t-minimal structures, 0-dimensional imaginaries are those associated with definable equivalence relations whose classes are \textit{open} (or at least those which have interior in some ambient set). Thus, dropping the assumption of exchange and keeping only $t$-minimality (with independent neighbourhoods), our notion of linearity is the following:
say that $\tp(a/Ab)$ is \textit{topologically 1-based over $A$} if the following equivalent conditions hold (see Lemma \ref{L: 1-based iff constant germ}):
\begin{itemize}
    \item $\dim(b/A\operatorname{germ}(a/Ab))=\dim(b/Aa)$.
    \item The map $y\mapsto\operatorname{germ}(a/Ay)$ is constant on a neighborhood of $b$ in $\tp(b/Aa)$.
\end{itemize}

Then say that $\mathcal M$ is topologically 1-based if $\tp(a/b)$ is topologically 1-based over $\emptyset$ for all real tuples $a$ and $b$.

Note that this definition generalizes known versions of linearity in the literature. For example, our main structure theorem for topologically 1-based types, Theorem \ref{T: partition}, shows that topological 1-basedness is equivalent to an appropriately localized version of Pillay's notion of weak normality. Outside of the stable context, in, e.g.  \cite{BerVas}, Berenstein and Vassiliev develop a version of linearity in the related context of \textit{geometric structures}, which they call \textit{weak 1-basedness}. When restricted to o-minimality, weak 1-basedness is shown to capture exactly the linear o-minimal structures in the sense of Peterzil and Loveys. Now suppose $\mathcal M$ is t-minimal with the independent neighborhood property, and $\mathcal M$ moreover satisfies exchange. Then $\mathcal M$ is a geometric structure. In Corollary \ref{C: weak 1b}, we show that such an $\mathcal M$ is topologically 1-based if and only if it is weakly 1-based as a geometric structure -- thus, topological 1-basedness also generalizes linearity in o-minimal structures (see Corollary \ref{C: omin}).

\subsection{Infinitesimals and Regularity} Once we have the notion of topological 1-basedness, our task is to use it to build type-definable groups. There are two key notions involved here. The first, following earlier works such as Peterzil's o-minimal group configuration of \cite{PetGconf}, is the \textit{infinitesimal neighborhood} $\mu(a/A)$ of a tuple $a$ over a parameter set $A$. This refers to the set of realizations of $\tp(a/A)$ from a highly saturated elementary extension which are `infinitesimally close' to $a$ in the sense of the original model (see Definition \ref{D: inf neigh}). Infinitesimal neighborhoods allow us to replace finite-to-finite correspondences, locally, with homeomorphisms. For example, suppose $a$ and $b$ are interalgebraic over $A$; then using generic continuity, we show that $\mu(ab/A)$ is the graph of a homeomorphism $\mu(a/A)\rightarrow\mu(b/A)$ (see Lemma \ref{L: mu generic cont}). By repeated applications of this fact, it follows that if $a$ and $b$ are interalgebraic over $At$, then $\mu(abt/A)$ is the graph of a $\mu(t/A)$-indexed family of homeomorphisms $\mu(a/A)\rightarrow\mu(b/A)$. Thus, we have a convenient tool for constructing families of homeomorphisms -- and such objects will ultimately be the building blocks of a type-definable group action.

The other key notion is \textit{regularity}. Using infinitesimal neighborhoods, one can construct (for example) three points $a,b,c\in M$, a set $A$, and three compatible type-definable families of homeomorphisms $\mu(a/A)\rightarrow\mu(b/A)$, $\mu(b/A)\rightarrow\mu(c/A)$, and $\mu(a/A)\rightarrow\mu(c/A)$ (compatible meaning that any composition $\mu(a/A)\rightarrow\mu(b/A)\rightarrow\mu(c/A)$ agrees with one of the given maps $\mu(a/A)\rightarrow\mu(c/A)$). One then needs to find additional conditions assuring that (1) these families of maps arise from a group (or rather, at this level, a groupoid) and (2) the resulting groupoid is type-definable in an appropriate sense.

The key notion is regularity (inspired by regular group actions): say that a family of maps $f_t:X\rightarrow Y$ is \textit{regular} if for any $(x,y)\in X\times Y$ there is $t$ with $f_t(x)=y$, and if any two such maps either agree everywhere or agree nowhere. Regular families of maps can be seen as a manifestation of linearity (or 1-basedness), in the sense that a single point determines an entire map in the family. In Sections 7 and 8, we study type-definable systems of compatible regular families of maps in more detail, showing that in many cases such configurations give rise to type-definable group actions (this is in some sense a completely abstract group configuration theorem -- see Theorem \ref{T: combinatorial group conf}). Then in Section 9, we show that such collections of regular families arise naturally under the assumption of topological 1-basedness. The key input here is a structure theorem for topologically 1-based types (see Theorem \ref{T: partition}), which roughly says that such types always look regular in an infinitesimal neighborhood (as mentioned above, this is a topological analog of the \textit{weakly normal} decomposition in \cite{HrPi87}):

\begin{introtheorem}\label{IT: partition} Let $\mathcal M$ be a sufficiently saturated t-minimal structure with the independent neighborhood property. Let $a$ and $b$ be real tuples, and $A$ a parameter set. Then the following are equivalent: 
\begin{enumerate}
    \item $\tp(a/Ab)$ is topologically 1-based over $A$.
    \item Any two fibers in the projection $\mu(ab/A)\rightarrow\mu(b/A)$ are either equal or disjoint. 
\end{enumerate}
\end{introtheorem}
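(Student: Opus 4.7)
My plan is to exploit a direct correspondence between fibers of the projection $\mu(ab/A)\to\mu(b/A)$ and germs of the local types at $a$. For $b'\in\mu(b/A)$, the fiber over $b'$ equals $F(b'):=\mu(a/A)\cap\tp(a/Ab')$. A compactness argument (using that $\mu(a/A)$ is the intersection of all $A$-definable open neighborhoods of $a$) shows that $F(b_1)=F(b_2)$ if and only if $\operatorname{germ}(a/Ab_1)=\operatorname{germ}(a/Ab_2)$: equal fibers forces the types $\tp(a/Ab_1), \tp(a/Ab_2)$ to have the same realizations in $\mu(a/A)$, and by compactness this upgrades to their agreement on some common $A$-definable open neighborhood of $a$, which is exactly germ equality at the base $a$.

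For $(2)\Rightarrow(1)$: given any $b'\in\mu(b/Aa)$, the pair $(a,b')$ realizes $\tp(ab/A)$ and is infinitesimally close to $(a,b)$, so $(a,b')\in\mu(ab/A)$ and $a\in F(b)\cap F(b')$. By (2) the fibers coincide, giving $\operatorname{germ}(a/Ab)=\operatorname{germ}(a/Ab')$ via the correspondence. Hence $y\mapsto\operatorname{germ}(a/Ay)$ is constant on $\mu(b/Aa)$, which by compactness is precisely the statement that this map is constant on a neighborhood of $b$ in $\tp(b/Aa)$, i.e., (1).

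For the harder direction $(1)\Rightarrow(2)$, assume (1), take $b_1,b_2\in\mu(b/A)$ with $a'\in F(b_1)\cap F(b_2)$, and aim to conclude $F(b_1)=F(b_2)$ (equivalently, $\operatorname{germ}(a/Ab_1)=\operatorname{germ}(a/Ab_2)$). Since $(a',b_i)\equiv_A(a,b)$ for $i=1,2$, invariance of (1) under $A$-automorphisms gives that $y\mapsto\operatorname{germ}(a'/Ay)$ is locally constant at every $b^*\in q:=\tp(b_1/Aa')=\tp(b_2/Aa')$, so its level sets partition $q$ into relatively clopen pieces. Combining the dimension identity $\dim(b/A\operatorname{germ}(a/Ab))=\dim(b/Aa)$ (the other equivalent form of topological 1-basedness) with generic continuity (available by the independent neighborhood property) should force the pieces of $q$ containing $b_1$ and $b_2$ to agree; then, using $a'\equiv_{Ab_i}a$ for each $i$, one transfers the germ equality at base $a'$ to germ equality at base $a$ by a careful automorphism argument. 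The main obstacle will be two-fold: (i) propagating local constancy between $b_1$ and $b_2$ within $q$, since $b_1,b_2$ need not be $Aa'$-infinitesimally close even though both are $A$-infinitesimally close to $b$; and (ii) transferring germ equality from base $a'$ back to base $a$, which requires comparing $\tp(a/Ab_1b_2)$ and $\tp(a'/Ab_1b_2)$ even though these agree only over each $Ab_i$ separately. I expect both difficulties to be surmountable by combining the full-dimensionality of germ-classes (from the dimension identity) with the openness of level sets and saturation of the monster model.
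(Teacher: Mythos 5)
Your direction $(2)\Rightarrow(1)$ is essentially correct and close to the paper's (restricting attention to $b'\in\mu(b/Aa)$ sidesteps any issue with the fiber description, since $(a,b')\in\mu(ab/A)$ then holds and Corollary~\ref{C: fiber germ} identifies the fiber's germ at $a$ with $\operatorname{germ}(a/Ab')$).

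However, there is a technical error at the outset that carries risk: for $b'\in\mu(b/A)$, the fiber of $\mu(ab/A)\to\mu(b/A)$ over $b'$ is $\{x\in\mu(a):(x,b')\models\tp(ab/A)\}$, which equals your $F(b')=\mu(a/A)\cap\tp(a/Ab')$ \emph{only} when $(a,b')\in\mu(ab/A)$ (equivalently $\tp(ab'/A)=\tp(ab/A)$), not for arbitrary $b'\in\mu(b/A)$. Also, $\mu(a/A)$ is the intersection of $\mathcal M$-definable d-approximations of $\tp(a/A)$ at $a$, not of $A$-definable open neighborhoods, so the parenthetical compactness remark is off. These slips do not yet derail the argument, but the next one does.

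For $(1)\Rightarrow(2)$ you correctly isolate the two real obstacles, but the proposal does not contain a mechanism to overcome either of them, and I do not see how the tools you name would do it. Obstacle (i): from $(1)$ (via Lemma~\ref{L: 1-based iff constant germ}) you know that $y\mapsto\operatorname{germ}(a'/Ay)$ is constant on $\mu(b^*/Aa')$ for each $b^*$, so the level sets of this map on $\tp(b/Aa')(\mathcal N)$ are relatively open, hence clopen. But nothing about the dimension identity or generic continuity forces $b_1$ and $b_2$ into the \emph{same} level set: $b_1,b_2\in\mu(b/A)$ says they are $A$-infinitesimally close to $b$, while the level set containing $b_1$ is only guaranteed to contain the much smaller $\mu(b_1/Aa')$, and $b_2$ need not lie there. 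Obstacle (ii) is also genuine: germ equality at the common point $a'$ is strictly weaker than equality of the fibers as sets, and passing from one to the other requires comparing $(a,b_1,b_2)$ and $(a',b_1,b_2)$ across a parameter set over which they are not known to have the same type. The paper's proof of Theorem~\ref{T: partition} surmounts both obstacles at once by a construction you do not have: it introduces (via Lemma~\ref{L: additive extension}(3)) an auxiliary realization $c\models\tp(b/Aat)$ that is \emph{independent} from $ab$ in the right sense and satisfies $\operatorname{germ}(a/Act)=\operatorname{germ}(a/Abt)$; it then takes a code $e$ of the common local fiber $X_b\cap U=X_c\cap U$, uses $1$-basedness together with the independence of $b$ and $c$ over $Aatu$ to show $\dim(b/Aaetu)=\dim(b/Aatu)$, and invokes the topological characterization of zero-dimensionality (Lemma~\ref{L: 0-dim}) to conclude that $e$ is definable over parameters not involving $b$. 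It is exactly this last fact that lets one conjugate $(a,b)$ to $(a',b')$, $(a'',b')$, $(a',b'')$ and conclude that the codes -- and hence the local fibers -- coincide. Your outline has no analogue of this ``independent clone plus definable code'' step, and without it the clopen-partition observation alone is not enough to close the argument.
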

So, condition (2) of the above theorem is how we prove that the families of homeomorphisms we build are regular, and thus gives rise to a type-definable group. 

\subsection{Summary of the Paper}

The paper is roughly organized into four main parts:

\begin{itemize}

\item Sections 2-4 study t-minimal theories and the independent neighborhood property in general, focusing on germs and generic continuity. These sections do not involve topological 1-basedness or groups. Section 2 constructs and studies germs in arbitrary t-minimal theories; Section 3 adds the independent neighborhood property to the background assumptions and develops generic continuity; and Section 4 rewrites the key points of the previous sections in the language of infinitesimal neighborhoods.

\item Sections 5-6 introduce topological 1-basedness. Section 5 is concerned mainly with the definition and basic properties. For example, we prove that topological 1-basedness is preserved under adding and removing constants (Lemma \ref{L: constants}) and taking reducts (Lemma \ref{L: reducts}), and that a topologically 1-based structure cannot admit an infinite definable field (Lemma \ref{L: no field}). Then Section 6 is devoted to the main structure theorem for topologically 1-based types, Theorem \ref{IT: partition} above.

\item Sections 7-8 leave the topological setting entirely and focus on a completely abstract version of group configurations, which we call \textit{regular groupoid spines}. This is something like a regular groupoid, but where only certain ordered pairs of objects have morphisms between them. Section 7 shows that any regular groupoid spine with at least 3 objects arises from a genuine regular groupoid (this is really a group configuration theorem). Then Section 8 shows that if a regular groupoid spine is type-definable in an appropriate sense, so is the groupoid it arose from (and thus so is the automorphism group at any of its objects).

\item Finally, Sections 9-11 combine the previous two parts to prove our main theorems. Suppose that $\mathcal M$ is non-trivial and topologically 1-based. In Section 9 we use infinitesimal neighborhoods to construct a type-definable regular groupoid spine with three objects in $\mathcal M$ (precisely, every required property other than regularity comes from the general theory of infinitesimal neighborhoods, and regularity comes from Theorem \ref{IT: partition}). Thus, by the results of Sections 7-8, we obtain a type-definable group in an elementary extension of $\mathcal M$. Then in Section 10, we show that the group can be transferred down to $\mathcal M$, and that -- by virtue of generic continuity -- it can be tweaked slightly to make it a topological group with the induced topology from $\mathcal M$. Finally, Section 11 shows that the group is locally abelian and locally linear. The main tool here is again Theorem \ref{IT: partition}. The idea is that any two infinitesimal translates of an infinitesimal neighborhood (in an appropriate sense) are equal or disjoint -- and this implies that the given infinitesimal neighborhood is a coset of a subgroup. Thus, the local structure of definable sets can be analyzed entirely in terms of subgroups and their cosets, in a similar manner to the Hrushovski-Pillay analysis of 1-based stable groups (\cite{HrPi87}).

We can also give some local analysis. Namely, using the notion of topologically 1-based types, we define what it means for a one-dimensional type to be \textit{topologically locally modular} (even if the ambient structure is not topologically 1-based as a whole). In this case (assuming non-triviality), we are able to build a type-definable topological group on an open subset of the realizations of the type (see Theorem \ref{T: top gp loc mod}). However, the structural results for the group (local linearity and local abelianity) need full topological 1-basedness, at least for all tuples from the group itself.
\end{itemize}

Finally, we give a brief appendix devoted to examples of t-minimal structures with the independent neighborhood property,  covering some non-visceral structures, most notably weakly o-minimal and C-minimal structures. We also include a complementary appendix by Will Johnson, proving that visceral theories have the independent neighborhood property.  

\subsection{Notation and model theoretic generalities}
We use standard model-theoretic notation and terminology. Structures are denoted by calligraphic capital letters ($\mathcal M, \mathcal N$ etc.), and their universes are denoted by the corresponding Latin letters $M$, $N$. All tuples are assumed to be finite. As is customary, by \emph{real} tuples, we refer to tuples in $M^n$ (some $n$), as opposed to \textit{imaginary} tuples which may reside in $\mathcal M^{eq}$.  

Our structures will typically be assumed to be \emph{sufficiently saturated}. To clarify, when we say $\mathcal M$ is \textit{sufficiently saturated}, we mean it is $\kappa$-saturated and $\kappa$-strongly homogenous for some uncountable cardinal $\kappa$ strictly larger than the cardinality of the language of $\mathcal M$. Here, $\kappa$ is unspecified but fixed to go along with the structure $\mathcal M$. In this case, the word \textit{small} always means `of size less than $\kappa$' -- and by a \textit{parameter set}, we mean a small subset of $\mathcal M^{eq}$. Thus, smallness and parameter sets are sensitive to the particular choice of model. For example, when studying infinitesimal neighborhoods, we will be working in an $|M|^+$-saturated model $\mathcal N\succ \mathcal M$. So $M$ is small with respect to $\mathcal N$, but obviously not with respect to $\mathcal M$. 

By a \textit{type-definable set}, we mean the intersection of a small collection of definable sets. We use throughout that the projection of a type-definable set is again type-definable (over the same parameters). A complete type over a parameter set $A$ is a maximally consistent type (in the language $\mathcal L$ expanded with constants symbols for all $a\in A$). For every tuple $a$ and parameter set $A$ we denote by $\tp(a/A)$ the unique complete type over $A$ realized by $a$. \\

\section{T-minimality and Germs}

\textbf{From now until the end of Section 6, we fix a sufficiently saturated first-order structure $\mathcal M$ and a Hausdorff topology $\tau$ on $M$ with a $\emptyset$-definable basis.} Note that there is also a $\emptyset$-definable basis for the product topology on $M^n$, given by products of $n$ basic open sets in $M$.

We will frequently use the following easy consequence of compactness:

\begin{fact}\label{F: int of opens} Let $X\subset M^n$. Then the open subsets of $X$ are closed under small intersections.
\end{fact}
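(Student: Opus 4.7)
The plan is to prove this by a standard saturation argument, exploiting the fact that the topology has a $\emptyset$-definable basis and that ``small'' is by definition below the saturation cardinal $\kappa$.

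First I would fix a small family $\{U_i\}_{i\in I}$ of open subsets of $X$, write each as $U_i = V_i \cap X$ for some $V_i$ open in $M^n$, and pick an arbitrary point $x \in \bigcap_{i\in I} U_i$. It suffices to produce a single basic open set $B \ni x$ with $B \cap X \subset \bigcap_{i\in I} U_i$, or equivalently $B \subset V_i$ for every $i$. Using that each $V_i$ is open and contains $x$, for each $i$ I would choose a basic open $B_{a_i}$ with $x \in B_{a_i} \subset V_i$. This reduces the problem to finding a basic open $B \ni x$ contained in $B_{a_i}$ for every $i \in I$.

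Here is where saturation enters. Let $\{B_a : a \in P\}$ be the $\emptyset$-definable basis guaranteed by the setup. I would write down the partial type $\Sigma(a)$ in a single variable $a$ over the parameter set $\{x\}\cup\{a_i : i\in I\}$ consisting of the formulas $a \in P$, $x \in B_a$, and $B_a \subset B_{a_i}$ for each $i \in I$. Each of these is a first-order formula because the basis is $\emptyset$-definable (the containment $B_a \subset B_{a_i}$ unwinds to $\forall y(y \in B_a \to y \in B_{a_i})$). The parameter set has size at most $|I|+1 < \kappa$, so $\Sigma$ is a type over a small set.

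To verify finite consistency, I would fix a finite subset $F \subset I$ and note that $\bigcap_{i\in F} B_{a_i}$ is a finite intersection of open sets, hence open, and contains $x$; since the $B_a$'s form a basis there is some $a \in P$ with $x \in B_a \subset \bigcap_{i\in F}B_{a_i}$, realizing the finite fragment of $\Sigma$ indexed by $F$. Sufficient saturation then yields a realization $a^\ast$ of all of $\Sigma$, and the basic open $B_{a^\ast}$ contains $x$ and lies inside every $V_i$, as desired. There is no real obstacle in this proof; the only point to be careful about is that the index set $I$ and the parameter set $\{a_i\}_{i\in I}$ must indeed be small, which is true by the definition of ``small'' in the paper's conventions, and that the basis being $\emptyset$-definable lets one express the containments $B_a \subset B_{a_i}$ inside the object language.
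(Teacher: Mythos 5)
Your argument is essentially the same as the paper's: reduce to finding a single basic open neighborhood of the given point contained in all the previously chosen basic opens, express this as a small partial type using the $\emptyset$-definable basis, check finite satisfiability via finite intersections, and invoke saturation. Your only extra step is first passing from the relatively open $U_i\subset X$ to ambient opens $V_i\subset M^n$, which the paper handles implicitly; the core compactness argument is identical.
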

\begin{proof}
    Let $\{U_t:t\in T\}$ be a definable basis for the topology on $M^n$, and let $\{V_i:i\in I\}$ be a small collection of open subsets of $X$. Suppose we are given $a\in\bigcap V_i$. For each $i$, choose $t_i$ with $X\cap U_{t_i}\subset V_i$. Consider the partial type saying that $y\in T$, $a\in U_y$, and $U_y\subset U_{t_i}$ for each $i$. This partial type is finitely satisfiable and small, so it has a realization in $\mathcal M$. This shows that $\bigcap (X\cap V_i)$ is open in $X$.
\end{proof}

\subsection{T-minimality}

Recall the following definition (originally appearing in \cite{Matth}):

\begin{definition}\label{D: tmin}
    $\mathcal M$ is \textit{t-minimal} if for all definable sets $X\subset M$, $X$ is infinite if and only if it has non-empty interior. 
\end{definition}

The t-minimal structures form a fairly wide class, including most known instances of topological tameness in model theory. In particular, t-minimality encompasses all weakly o-minimal and C-minimal structures, as well as Henselian valued fields of characteristic zero.

Johnson (\cite[Definition 2.26, 2.29]{t-minimal-johnson}) has developed a general dimension theory in t-minimal structures. In particular, for any tuple $a\in M^n$ and any set $A$, one has a dimension $\dim(a/A)$, defined as the length of any minimal tuple $b\subseteq a$ with $a\in\acl(Ab)$ (such a tuple is called a \textit{basis} of $a$ over $A$; one of the results of Johnson's paper -- \cite[Proposition 2.25]{t-minimal-johnson} -- is that this is well-defined, i.e. any two bases have the same length). As usual, one then defines the dimension of an $A$-definable set $X$ (or an $A$-type-definable set) as the maximum value of $\dim(a/A)$ for $a\in X$ (and as usual, this does not depend on $A$, as long as it is small). We caution that the dimension theory nevertheless exhibits wild behavior in general t-minimal structures, and refer the reader to Johnson's paper for some examples. For now, let us list some crucial properties of the dimension theory that we will use (all of these are in \cite[Proposition 2.31 and Theorem 2.37]{t-minimal-johnson}):

\begin{fact}\label{F: tmin} Assume $\mathcal M$ is t-minimal, and consider real tuples $a$ and $b$, parameter sets $A$ and $B$, and type-definable sets $X$ and $Y$.
\begin{enumerate}
    \item $\dim(M^n)=n$ for all $n$.
    \item If $X\subset Y$ then $\dim(X)\leq\dim(Y)$.
    \item $\dim(X\times Y)=\dim(X)+\dim(Y)$.
    \item If $f:X\rightarrow Y$ is relatively definable and surjective with finite fibers, then $\dim(X)=\dim(Y)$.
    \item If $A\subset B$ then $\dim(a/B)\leq\dim(a/A)$.
    \item\label{subad} $\dim(ab/A)\leq\dim(b/A)+\dim(a/Ab)$.
    \item If $\acl(Aa)=\acl(Ab)$ then $\dim(a/A)=\dim(b/A)$.
    \item $\dim(a/A)$ is the smallest dimension of an $A$-definable set containing $A$.
\end{enumerate}
\end{fact}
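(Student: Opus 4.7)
The plan is to derive all eight properties from the underlying definition
$$\dim(a/A) = \min\{|b| : b \subseteq a,\ a \in \acl(Ab)\},$$
extended to type-definable sets by $\dim(X) = \sup\{\dim(a/A) : a \in X\}$ for $X$ type-definable over (small) $A$, together with the basic well-definedness result (already announced in Johnson's paper) that any two minimal algebraizing subtuples of $a$ over $A$ have the same length. I would work through the items roughly in order of difficulty, separating the ones that are immediate from those that genuinely use t-minimality or saturation.

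The immediate consequences of the definition are (2), (5), and (6). Property (5) is tautological: any basis $b \subseteq a$ of $a$ over $A$ still satisfies $a \in \acl(Bb)$ when $B \supseteq A$. Property (2) is immediate from the sup definition. For subadditivity (6) I would pick a basis $b' \subseteq b$ of $b$ over $A$ and a basis $a' \subseteq a$ of $a$ over $Ab$, so that transitivity of $\acl$ gives $ab \in \acl(A b' a')$; pruning $b'a'$ down to a minimal algebraizing subtuple of $ab$ over $A$ and invoking well-definedness gives the estimate.

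For the ``productive'' items (1) and (3), I would build generic tuples by induction using t-minimality and saturation. For (1), at step $i$ the set $M \setminus \acl(A, a_1, \ldots, a_{i-1})$ is nonempty: its complement is a small algebraic set, while t-minimality plus saturation ensures that any co-small definable condition is realized. Choosing $a_i$ there yields an $a \in M^n$ whose only minimal algebraizing subtuple over $A$ is itself, witnessing $\dim(M^n) \geq n$; combined with (6) applied coordinate-by-coordinate, the equality follows. For (3), the lower bound is built analogously by picking a basis-length generic of $X$ over $A$ and then a basis-length generic of $Y$ over that, and the upper bound follows from (6).

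The main obstacles live in (4), (7), and (8), because exchange is not available and one must route everything through $\acl$ and well-definedness rather than through symmetric swaps. For (7), given $\acl(Aa) = \acl(Ab)$, I would fix a basis $b' \subseteq b$ of $b$ over $A$, observe that $a \in \acl(Ab')$, prune a minimal algebraizing subtuple of $a$ over $A$ inside $b'$, and symmetrically reverse roles, comparing lengths via well-definedness on each side. Property (4) then reduces to (7): a basis of $a$ over $A$ is mapped by $f$ to a tuple interalgebraic with $f(a)$ over $A$, and well-definedness of dimension on the image side finishes. Property (8) is a compactness argument: the intersection of the family of $A$-definable supersets of $\tp(a/A)$ has dimension at most $\dim(a/A)$ since it projects onto a type-definable set of that dimension, and by compactness in the expanded language parametrizing a definable basis (as in Fact~\ref{F: int of opens}) a smallest-dimensional member is realized on some finite intersection. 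The recurring difficulty throughout this last group is the absence of exchange; the only bridge between two different presentations of the same algebraic data is the well-definedness theorem, so each of (4), (7), (8) is really an exercise in rerouting through that single tool.
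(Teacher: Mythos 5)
The paper does not prove this Fact at all; it is quoted wholesale from Johnson's paper (\cite[Proposition~2.31, Theorem~2.37]{t-minimal-johnson}), so there is no ``paper proof'' to compare against, and your attempt to rederive the whole list from the minimal-basis-length definition plus well-definedness is a much more ambitious route than what the paper commits to. Items (2), (5), and (6) do go through essentially as you describe. The rest share a common gap: without exchange, the minimal-basis-length characterization of $\dim$ is a weak tool for lower bounds or for comparing dimensions of different tuples, which is exactly why Johnson develops the parallel machinery of broad types and $\pi$-rank and devotes real effort to proving the characterizations agree.

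Concretely, in (1) your greedy choice $a_i\notin\acl(A,a_1,\dots,a_{i-1})$ does \emph{not} force $\dim(a/A)=n$: with exchange failing, nothing prevents a later coordinate from retroactively algebraizing an earlier one, e.g.\ $a_1\notin\acl(A)$ yet $a_1\in\acl(Aa_2)$, which makes $\{a_2\}$ a length-one basis of $(a_1,a_2)$ over $A$. Avoiding this requires controlling the sets $\{y:a_j\in\acl(Ay)\}$, which can be large in a general t-minimal structure; compare the ``few broad sets property'' (Definition~\ref{D: few broad sets} in Appendix~A), which is designed precisely to forbid this and is not assumed in Fact~\ref{F: tmin}. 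Your argument for (7) is circular: the basis $b'\subseteq b$ you extract is not a subtuple of $a$, so there is no ``minimal algebraizing subtuple of $a$ inside $b'$'' to prune; the well-definedness theorem compares bases of the \emph{same} tuple and says nothing across interalgebraic tuples of different shapes. Items (3), (4), and (8) in your plan are routed through (1), (7), or a finite-fiber variant and inherit the same gap. These are all true, but they require the broad-type/$\pi$-rank apparatus that the statement is citing, not a greedy construction over the basis-length definition alone.
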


Fact \ref{F: tmin}(\ref{subad}) is known as \textit{sub-additivity}, and will be referenced as such throughout the paper.

\begin{assumption} \textbf{From now until the end of Section 6, assume $\mathcal M$ is t-minimal.}
\end{assumption}

\subsection{Basic Witnesses}

Suppose $a\in M^n$ is a real tuple, and $A$ is a parameter set. Let $b\in M^d$ be a basis for $a$ over $A$, and let $\pi:M^n\rightarrow M^d$ be a corresponding projection sending $a$ to $b$. Since $a\in\acl(Ab)$, there is an $A$-definable set $X\subset M^n$ such that $a\in X$ and $\pi$ is finite-to-one on $X$. We give this a separate name:
\begin{definition}
    Let $a\in M^n$, $A$ a parameter set, and $b\in M^d$ a basis for $a$ over $A$.
    \begin{enumerate}
        \item By an $a$-witness for $b$ over $A$, we mean an $A$-definable set $X\subset M^n$ with $a\in X$ and admitting a finite-to-one projection $\pi:X\rightarrow M^d$ with $\pi(a)=b$.
        \item Let $X$ be an $a$-witness for $b$ over $A$, witnessed by the projection $\pi:M^n\rightarrow M^d$. We say that $X$ is a \textit{minimal $a$-witness for $b$ over $A$} if the fiber size $|\pi^{-1}(b)\cap X|$ is minimal among all such witnesses. That is, let $Y$ be another $a$-witness for $b$ over $A$, also witnessed by the projection $\pi$. Then $|\pi^{-1}(b)\cap X|\leq|\pi^{-1}(b)\cap Y|$.
        \item A set $X$ is a \textit{basic witness} (resp. \textit{mimimal basic witness}) of $a$ over $A$ if there is some basis $b$ of $a$ over $A$ such that $X$ is an $a$-witness (resp. minimal $A$-witness) for $b$ over $A$.
    \end{enumerate}
\end{definition}

So for any $a$ and $A$, there is a minimal basic witness of $a$ over $A$; and the basic witnesses (resp. minimal basic witnesses) form a cofinal family of sets in $\tp(a/A)$.

\subsection{Germs} One of our main tools will be the existence of \textit{germs} of types. This was developed in a slightly different setting in (\cite[\S 3.2]{CasHasYe}). In that paper, the existence of germs followed immediately from the frontier inequality (i.e. $\dim(\operatorname{Fr}(X))<\dim(X)$ for definable $X$). In the present setting, the frontier inequality can fail (Johnson's paper gives precise examples, including even some with the independent neighborhood property (Definition \ref{D: inp})). However, germs still exist for a different reason -- and in many ways, they will serve as a replacement for the frontier inequality for the rest of the paper. 

\begin{definition}\label{D: germ}
    Let $a\in M^n$.
    \begin{enumerate}
        \item If $X\subset M^n$, the \textit{germ of $X$ at $a$}, denoted $\operatorname{germ}_a(X)$, is the equivalence class realized by $X$ among all subsets of $M^n$ modulo agreement in a neighborhood of $a$. In this case, denoting $g=\operatorname{germ}_a(X)$, we say that $X$ \textit{realizes} $g$.
        \item A \textit{definable germ} at $a$ is a germ at $a$ realized by some definable subset of $M^n$.
        \item If $g$ is a definable germ at $a$, then the \textit{dimension} of $g$, denoted $\dim(g)$, is the smallest value of $\dim(X)$ where $X$ is a definable set realizing $g$.
        \end{enumerate}
\end{definition}

\begin{remark} A restatement of Definition \ref{D: germ}(3) is as follows: let $g$ be a definable germ at $a$, realized by the definable set $X$. Then $\dim(g)$ is the \textit{local dimension of $X$ at $a$} -- that is, the smallest value of $\dim(X\cap U)$ where $U$ is a definable neighborhood of $a$.
\end{remark}

\begin{lemma}\label{L: germs exist} Let $a\in M^n$, and $A$ a parameter set. Then the formulas in $\operatorname{tp}(a/A)$ have a cofinal germ at $a$. Precisely, let $X$ be any minimal basic witness of $a$ over $A$. Then for every $A$-definable $Y\subset X$ containing $a$ we have $\operatorname{germ}_a(X)=\operatorname{germ}_a(Y)$. 
\end{lemma}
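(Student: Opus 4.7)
The plan is to reduce the stated cofinality to the stronger concrete claim: for any $A$-definable $Y\subseteq X$ with $a\in Y$, the sets $X$ and $Y$ coincide on an open neighborhood of $a$. Given this, applying it with $Y:=X\cap\phi(M^n)$ for each $\phi\in\tp(a/A)$ shows $\operatorname{germ}_a(X)\le\operatorname{germ}_a(\phi)$, yielding the claimed cofinality; and applying it to the intersection of two minimal basic witnesses shows that any two such witnesses have a common germ at $a$.

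For the concrete claim, fix a basis $b$ of $a$ over $A$ and the coordinate projection $\pi\colon M^n\to M^d$ witnessing minimality of $X$, with $N:=|\pi^{-1}(b)\cap X|$. First, $Y$ is itself an $A$-definable $a$-witness for $b$ via the same $\pi$ (the finite-to-one property being inherited from $X$), so by minimality of $X$ we have $|\pi^{-1}(b)\cap Y|\ge N$; combined with $Y\subseteq X$ this forces $\pi^{-1}(b)\cap Y=\pi^{-1}(b)\cap X=:\{a_1=a,a_2,\dots,a_N\}$. Setting $Z:=X\setminus Y$, we have $\pi^{-1}(b)\cap Z=\emptyset$, and the goal becomes $a\notin\overline{Z}$. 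Since the topology has a $\emptyset$-definable basis, interiors and closures of $A$-definable sets are again $A$-definable; in particular, $X\cap\overline{Z}=X\setminus\operatorname{int}_X(Y)$ is $A$-definable. If $a\in\overline{Z}$, this latter set is itself an $A$-definable basic $a$-witness for $b$ via $\pi$, so the same minimality argument forces its fiber over $b$ to contain all of $\{a_1,\dots,a_N\}$, i.e., \emph{every} $a_i$ lies in $\overline{Z}$.

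The main obstacle is to convert this simultaneous accumulation along the entire fiber $\pi^{-1}(b)\cap X$ into a contradiction, without access to the frontier inequality. The approach is as follows: using Hausdorffness together with strong homogeneity and the fact that the $a_i$ lie in $\acl(Ab)$, separate them by pairwise disjoint basic open sets $U_1,\dots,U_N$ with parameters in $\acl(Ab)$, so that $a_i\in\overline{Z\cap U_i}$ for every $i$. Consider the partial type over $\acl(Ab)$ in the tuple $(y,z_1,\dots,z_N)$ asserting $y\models\tp(b/A)$, $z_i\in Z\cap U_i$, and $\pi(z_i)=y$ for every $i$. A realization in the saturated $\mathcal M$ would exhibit $N$ distinct points $z_1,\dots,z_N\in Z$ lying over a common $y$ with $\tp(y/A)=\tp(b/A)$; applying an $A$-automorphism sending $y$ to $b$ would then place $N$ distinct points of $Z$ in $\pi^{-1}(b)$, contradicting $\pi^{-1}(b)\cap Z=\emptyset$.

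The delicate step — where pure $t$-minimality together with the $\emptyset$-definable basis must substitute for finer tools such as frontier or generic continuity — is verifying finite satisfiability of this partial type, namely that for every $A$-definable neighborhood $\phi$ of $b$ the intersection $\phi\cap\bigcap_i\pi(Z\cap U_i)$ is non-empty. This will require using the Hausdorff separation of the finite fiber together with the finite bound on the fibers of $\pi|_X$ to coordinate the accumulation $a_i\in\overline{Z\cap U_i}$ simultaneously across $i=1,\dots,N$ rather than one index at a time; I expect this to be the technical heart of the argument.
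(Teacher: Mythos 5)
Your reduction to showing that $X$ and $Y$ coincide on a neighborhood of $a$, and your observation that minimality of $X$ forces $\pi^{-1}(b)\cap Y=\pi^{-1}(b)\cap X$, are both correct and match the paper. The route through $Z=X\setminus Y$ also begins soundly: $\overline Z$ is indeed $A$-definable (the basis is $\emptyset$-definable, so closures of $A$-definable sets are $A$-definable), and the minimality argument applied to $X\cap\overline Z$ correctly yields $a_i\in\overline Z$ for every $i$. But the step you yourself flag as the technical heart is a genuine gap, and it cannot be bridged from $t$-minimality alone. For finite satisfiability of your partial type you would need, for each $A$-formula $\phi\in\tp(b/A)$, that $\phi(M^d)\cap\bigcap_{i=1}^{N}\pi(Z\cap U_i)\neq\emptyset$. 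What $a_i\in\overline{Z\cap U_i}$ gives is only $b\in\overline{\pi(Z\cap U_i)}$ for each $i$ separately; closures do not commute with intersections, and without frontier inequality or generic continuity (deliberately unavailable at this point in the paper) nothing stops the $N$ sets $\pi(Z\cap U_i)$ from accumulating at $b$ along pairwise disjoint directions, leaving their common intersection bounded away from $b$. Moreover, the $N$ points buy nothing: a single point of $Z$ above any realization of $\tp(b/A)$ already contradicts $Z\cap\pi^{-1}(b)=\emptyset$ via an $A$-automorphism (since $Z$ is $A$-definable and $\pi$ is $\emptyset$-definable), so the Hausdorff separation and simultaneous accumulation introduce an obstruction rather than overcome one.

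The fix is to drop the $U_i$ and the multiplicity. Assuming $a\in\overline Z$, consider the partial type $\{z\in Z\}\cup\{\pi(z)\models\phi:\phi\in\tp(b/A)\}$ over $A$. For each $\phi\in\tp(b/A)$, the set $\phi(M^d)$ is $A$-definable, contains $b$, and $\dim(b/A)=d$, so by the independent ($n=d$) case of the lemma it contains a neighborhood $V$ of $b$; then $\pi^{-1}(V)$ is a neighborhood of $a$ and meets $Z$. The type is therefore finitely satisfiable; a realization, moved by an $A$-automorphism taking $\pi(z)$ to $b$, produces a point of $Z$ in $\pi^{-1}(b)$, a contradiction. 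This is correct and complete, but unwound it is the paper's argument seen from the wrong side: the paper instead observes that $W:=\{y\in M^d:X_y=Y_y\}=M^d\setminus\pi(Z)$ is $A$-definable and contains $b$, applies the independent case directly to $W$ to obtain a neighborhood $U$ of $b$ with $U\subseteq W$, and takes $\pi^{-1}(U)$ as the desired neighborhood of $a$ on which $X=Y$. No closures, separations, compactness arguments, or automorphisms are needed once you look at $W$ rather than at the image of its complement.
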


\begin{proof}
    The case when $a$ is independent (that is, $\dim(a/A)=n$) is, essentially, due to Johnson. More precisely, in this case, the lemma just says that if $X$ is $A$-definable and contains $a$, then $X$ contains a neighborhood of $a$. But if not, then $a$ belongs to $X-\operatorname{Int}(X)$, and thus $\dim(X-\operatorname{Int}(X))=n$. By Theorem 1.7(3) of Johnson's paper \cite{t-minimal-johnson}, this implies that $X-\operatorname{Int}(X)$ has non-empty interior, which is absurd.
        
    Now in the general case, let $X$ be any minimal basic witness. So $X$ is a minimal $a$-witnesses for some basis $b$, witnessed by the projection $\pi:M^n\rightarrow M^d$ with $\pi(a)=b$.
    
    Let $Y\subset X$ be any other $A$-definable set containing $a$. Since $Y\subset X$, $Y$ is also an $a$-witness for $b$ over $A$ (also witnessed by $\pi$). Since the fiber size above $b$ in $X$ is minimal among all such sets, this means $X$ and $Y$ agree at all points in the fiber above $b$. By the previous paragraph (since $b$ is independent over $A$), there is a neighborhood $U$ of $b$ such that $X$ and $Y$ agree on all fibers above points in $U$. In particular, $X$ and $Y$ agree on the preimage of $U$ in $M^n$, which is a neighborhood of $a$.
\end{proof}

\begin{definition}
 Let $a\in M^n$ and let $A$ be a parameter set. We define the \textit{germ of $a$ over $A$}, denoted $\operatorname{germ}(a/A)$, to be $\operatorname{germ}_a(X)$ for any set $X$ as in Lemma \ref{L: germs exist}. 
\end{definition}

Thus $\operatorname{germ}(a/A)$ is always a definable germ at $a$. Moreover, by Fact \ref{F: int of opens}, $\operatorname{germ}(a/A)$ is precisely the germ at $a$ of the set of realizations of $\tp(a/A)$.

\subsection{Operations on Germs}

Germs come with certain natural relations. We review these now. 

\begin{definition}
    Let $g$ and $h$ be germs at a point $a\in M^n$. We say that $g$ is a \textit{subgerm} of $h$, denoted, $g\leq h$ if for some $X$ realizing $g$ and $Y$ realizing $h$, we have $X\subset Y$. Equivalently, for any $X$ realizing $g$ and $Y$ realizing $h$, we have $X\cap U\subset Y\cap U$ for some neighborhood $U$ of $a$.
\end{definition}

\begin{definition}
    Let $g$ and $h$ be germs at points $a\in M^m$ and $b\in M^n$. The \textit{product of $g$ and $h$}, denoted $g\times h$, is the germ at $(a,b)\in M^{m+n}$ realized by $X\times Y$ whenever $X$ realizes $g$ and $Y$ realizes $h$.
\end{definition}

\begin{definition}
    Let $a\in M^m$ and $b\in M^n$, and let $g$ be a germ at $(a,b)\in M^{m+n}$. The \textit{fiber germ of $g$ over $b$}, denoted $g_b$, is the germ at $a$ realized by the fiber $X_b\subset X^n$ whenever $X$ realizes $g$.
\end{definition}

One easily has:

\begin{lemma}\label{L: germ facts}
    Let $a\in M^m$ and $b\in M^n$.
    \begin{enumerate}
        \item The relation $\leq$ gives a partial order on germs at $a$.
        \item If $g$ and $h$ are germs at $a$ and $b$, respectively, then $(g\times h)_b=g$.
        \item If $g$ and $h$ are germs at $(a,b)$, and $g\leq h$, then $g_b\leq h_b$.
        \item If $g$ and $h$ are definable germs at $a$ and $b$, respectively, then $$\dim(g\times h)=\dim(g)+\dim(h).$$
    \end{enumerate}
\end{lemma}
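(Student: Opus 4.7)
The plan is to verify each part by direct manipulation of representatives, using only the definitions and, for (4), the product formula from Fact \ref{F: tmin}(3). Throughout, I would exploit that the product topology on $M^{m+n}$ has a $\emptyset$-definable basis of product-form neighborhoods $U \times V$, so that any open neighborhood of $(a,b)$ can be shrunk to one of this form. This is the technical underpinning that lets us reduce statements about germs at $(a,b)$ to pairs of statements about germs at $a$ and $b$ separately.

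For (1), reflexivity is immediate; antisymmetry and transitivity come from intersecting two witnessing neighborhoods and invoking Fact \ref{F: int of opens}. To confirm well-definedness on germs rather than representatives, observe that replacing $X$ by any set agreeing with $X$ on a neighborhood of $a$ leaves both sides of $g \leq h$ unchanged after passing to a smaller neighborhood. For (2), a direct computation gives $(X \times Y)_{b'} = X$ for every $b' \in Y$, so in particular the fiber at $b$ realizes $g$; again one needs to check independence from representative, which follows by restricting any two representatives of $g \times h$ to a common product neighborhood $U \times V$ and comparing fibers over $b \in V$. Part (3) is similar: shrink the witnessing neighborhood $W$ of $(a,b)$ to a product $U \times V$, and observe that $X \cap W \subseteq Y$ forces $X_b \cap U \subseteq Y_b \cap U$.

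The only part with some content is (4). For the upper bound, I would pick definable representatives $X$ of $g$ and $Y$ of $h$ with $\dim(X) = \dim(g)$ and $\dim(Y) = \dim(h)$, and observe that $X \times Y$ realizes $g \times h$ with $\dim(X \times Y) = \dim(g) + \dim(h)$ by Fact \ref{F: tmin}(3). For the lower bound, let $Z$ realize $g \times h$; then on some product neighborhood $U \times V$ of $(a,b)$, $Z$ agrees with $X \times Y$ where $X$, $Y$ are representatives of $g$, $h$, and the projections $X \cap U$ and $Y \cap V$ are definable representatives of $g$, $h$ respectively. Hence $\dim(Z) \geq \dim(Z \cap (U \times V)) = \dim(X \cap U) + \dim(Y \cap V) \geq \dim(g) + \dim(h)$, again using Fact \ref{F: tmin}(3). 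The main obstacle, such as it is, lies in keeping careful track of \emph{local} versus \emph{global} dimensions — one must ensure that when passing from a representative of $g \times h$ to its product factors, the factors one extracts are genuine representatives of $g$ and $h$, and this is exactly what the product-basis structure of the topology guarantees.
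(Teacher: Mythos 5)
Your proof is correct, and the paper in fact gives no proof at all (the lemma is prefaced by ``One easily has''), so your routine verification simply fills in the details the authors deemed too straightforward to record; the key structural points you rely on---that the product topology has a definable basis of rectangles $U\times V$ so one can shrink to product neighborhoods, and that Fact~\ref{F: tmin}(3) gives the dimension of a product---are exactly what is needed. One tiny omission worth noting: in part (2) you write $(X\times Y)_{b'}=X$ for $b'\in Y$ and then ``in particular the fiber at $b$ realizes $g$,'' which tacitly assumes $b\in Y$; if representatives of $h$ fail to contain $b$, the fiber at $b$ is empty and the identity fails, so one is implicitly restricting to germs whose representatives contain the base point (as is the case for every germ used in the paper, e.g.\ $\operatorname{germ}(a/A)$). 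This imprecision is already present in the paper's statement and does not affect anything downstream.
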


The following facts are also apparent:

\begin{lemma}\label{L: type germ facts}
    Let $a\in M^m$, $b\in M^n$, and $A\subset B$ parameter sets.
    \begin{enumerate}
        \item $\operatorname{germ}(a/B)\leq\operatorname{germ}(a/A)$.
         \item $\operatorname{germ}(ab/A)\leq\operatorname{germ}(a/A)\times\operatorname{germ}(b/A)$.
        \item If $X$ is $A$-definable and contains $a$, then $\operatorname{germ}(a/A)\leq\operatorname{germ}_a(X)$.
    \end{enumerate}
\end{lemma}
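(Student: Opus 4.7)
The plan is to deduce all three items directly from Lemma \ref{L: germs exist} together with the elementary properties of $\leq$, $\times$, and fiber germs in Lemma \ref{L: germ facts}. The key point is that $\operatorname{germ}(a/A)$ has two equivalent descriptions: it is $\operatorname{germ}_a(W)$ for any minimal basic witness $W$ of $a$ over $A$, and equivalently (by the remark after its definition) it is the germ at $a$ of the set of realizations of $\tp(a/A)$. Both descriptions are useful, but the minimal-basic-witness description is the most convenient one for these arguments.

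I would prove item (3) first, as items (1) and (2) both reduce to it. Given $X$ an $A$-definable set containing $a$, I choose a minimal basic witness $W$ of $a$ over $A$, so $\operatorname{germ}(a/A) = \operatorname{germ}_a(W)$. Then $W \cap X$ is $A$-definable, contains $a$, and is contained in $W$, so Lemma \ref{L: germs exist} gives $\operatorname{germ}_a(W \cap X) = \operatorname{germ}_a(W) = \operatorname{germ}(a/A)$. Since $W \cap X \subseteq X$, the definition of $\leq$ yields $\operatorname{germ}(a/A) = \operatorname{germ}_a(W \cap X) \leq \operatorname{germ}_a(X)$, which is item (3).

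For item (1), pick a minimal basic witness $W$ of $a$ over $A$. Because $A \subseteq B$, the set $W$ is also $B$-definable and contains $a$, so item (3) applied over $B$ gives $\operatorname{germ}(a/B) \leq \operatorname{germ}_a(W) = \operatorname{germ}(a/A)$. For item (2), pick minimal basic witnesses $W_a$ and $W_b$ of $a$ and $b$ over $A$ respectively. Then $W_a \times W_b$ is $A$-definable and contains $(a,b)$, so by item (3) applied to the tuple $(a,b)$ we get $\operatorname{germ}(ab/A) \leq \operatorname{germ}_{(a,b)}(W_a \times W_b)$. By the definition of the product germ, the right-hand side equals $\operatorname{germ}_a(W_a) \times \operatorname{germ}_b(W_b) = \operatorname{germ}(a/A) \times \operatorname{germ}(b/A)$, completing the proof.

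There is no real obstacle here: the work was already done in Lemma \ref{L: germs exist}, and what remains is only unpacking the definitions. If anything, the only point to be slightly careful about is in item (2), where one must verify that a product of minimal basic witnesses is itself a reasonable $A$-definable set containing the pair $(a,b)$ so that item (3) can be applied to it; but this is immediate from the definitions.
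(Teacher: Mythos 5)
The paper gives no proof for this lemma, labeling the facts ``apparent''; your write-up supplies exactly the natural argument that the paper is implicitly leaving to the reader. All three deductions are correct: item (3) follows from Lemma~\ref{L: germs exist} applied to $W\cap X$ inside a minimal basic witness $W$, and items (1) and (2) reduce cleanly to (3) via the observations that a minimal basic witness over $A$ is $B$-definable and that a product of minimal basic witnesses realizes the product germ.
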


\subsection{Codes of Germs}

We remark here that definable germs can be viewed as elements of $\mathcal M^{eq}$. In particular, this will apply to $\operatorname{germ}(a/A)$ for any $a$ and $A$. 

Note that since the topology on $\mathcal M$ is $\emptyset$-definable, every automorphism $\sigma$ of $\mathcal M$ is a homeomorphism. It follows that if $g$ is a germ at $a\in M^n$, then $\sigma(g)$ is a well-defined germ at $\sigma(a)$. Now we define:

\begin{definition}\label{D: germ code}
    Let $a\in M^n$, and let $g$ be a germ at $a$. We say that $c\in\mathcal M^{eq}$ is a \textit{code} for $g$ if the following holds: let $\sigma$ be any automorphism of $\mathcal M$. Then $\sigma(c)=c$ if and only if $\sigma(g)=g$ (so in particular $\sigma(a)=a$).
\end{definition}

\begin{remark}
    An immediate consequence of Definition \ref{D: germ code} is that if $g$ is a germ of $a$ with code $c$, then $a\in\dcl(c)$.
\end{remark}

Now we show:

\begin{lemma}\label{L: germ codes}
    Let $a\in M^n$ and let $g$ be a definable germ at $a$. Then there is a code for $g$ in $\mathcal M^{eq}$. In particular, for any parameter set $A$, there is a code for $\operatorname{germ}(a/A)$ in $\mathcal M^{eq}$.
\end{lemma}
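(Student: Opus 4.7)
The plan is to exhibit $g$ as the equivalence class of a pair (basepoint, parameter) under a $\emptyset$-definable equivalence relation, then read off a code from the corresponding imaginary sort in $\mathcal{M}^{eq}$.

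To begin, since $g$ is a definable germ I would fix a formula $\varphi(x,y)$ and a parameter $d \in M^k$ so that $g$ is the germ at $a$ of $X := \{x \in M^n : \varphi(x,d)\}$. I then consider the relation $E$ on $M^n \times M^k$ given by
\[ (a',d') \mathrel{E} (a'',d'') \iff a' = a'' \text{ and } \varphi(x,d') \leftrightarrow \varphi(x,d'') \text{ holds on some basic open neighborhood of } a'. \]
Because the topological basis $\{U_t\}$ is $\emptyset$-definable, the second clause is expressible by an existential quantifier ranging over the basis index, so $E$ is $\emptyset$-definable. Reflexivity and symmetry are obvious; for transitivity, two witnessing basic open sets $U_s, U_t \ni a'$ yield the relevant biconditionals on the open set $U_s \cap U_t$, and by the basis property this intersection contains some basic open $U_r \ni a'$. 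Thus $E$ is a $\emptyset$-definable equivalence relation, and I would set $c := [(a,d)]_E \in \mathcal{M}^{eq}$.

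To verify Definition \ref{D: germ code}, take any automorphism $\sigma$ of $\mathcal{M}$. Then $\sigma(c) = [(\sigma(a), \sigma(d))]_E$, and since $\sigma$ is a homeomorphism (the basis being $\emptyset$-definable), the set defined by $\varphi(x, \sigma(d))$ is exactly $\sigma(X)$. Hence $\sigma(c) = c$ iff $\sigma(a) = a$ and the germs at $a$ of $X$ and $\sigma(X)$ agree, i.e.\ $\sigma(g) = g$. The clause $\sigma(a) = a$ is in any case automatic from $\sigma(g) = g$, because $\sigma(g)$ is a germ at $\sigma(a)$ and can only equal $g$ if the two basepoints coincide. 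This gives the required biconditional.

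The ``in particular'' clause is then immediate: by Lemma \ref{L: germs exist} and the definition that follows it, $\operatorname{germ}(a/A)$ is realized by a minimal basic witness and so is a definable germ in the sense of Definition \ref{D: germ}(2); applying the construction above to it yields a code in $\mathcal{M}^{eq}$. There is no real obstacle here, since the lemma is essentially bookkeeping over the definitions; the only point requiring any thought is transitivity of $E$, and this is exactly where the $\emptyset$-definability of a topological basis (rather than merely an open family) is used.
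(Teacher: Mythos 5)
Your proof is correct and takes essentially the same approach as the paper: both exhibit $g$ as the class of $(a,d)$ under a $\emptyset$-definable equivalence relation on (basepoint, parameter) pairs, where two pairs are equivalent iff the basepoints coincide and the defined sets agree in a neighborhood of that basepoint. You simply spell out the transitivity check and the verification of the coding property, which the paper leaves as ``one checks easily.''
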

\begin{proof}
    Let $X$ be a definable set realizing $g$. Write $X=Y_b$ for some $\emptyset$-definable family $\{Y_t:t\in T\}$ of sets. Let $\sim$ be the equivalence relation on $M\times T$ given by $(x,s)\sim(y,t)$ if $x=y$ and $Y_s$ and $Y_t$ agree in a neighborhood of $x$. Then $\sim$ is $\emptyset$-definable, so $T/\sim$ is a sort in $\mathcal M^{eq}$. Now let $c\in T/\sim$ be the equivalence class of $(a,b)$. Then one checks easily that $c$ is a code of $g$. 
\end{proof}

Definition \ref{D: germ code} should remind the reader of canonical bases in stability theory, particularly when considering germs of the form $\operatorname{germ}(a/A)$. In fact, germs will serve as a replacement for canonical bases when interpreting a group later on. The main difference between the two notions is that germs at $a$ contain the data of the point $a$, while canonical bases do not. This boils down to which equivalence relation is absorbed in the coding process: one should think of canonical bases (when they exist) as coding formulas in $\operatorname{tp}(a/A)$ up to agreeing \textit{almost everywhere}, while germs code formulas in $\tp(a/A)$ up to agreement \textit{in a neighborhood of $a$}.

Note that there can be several codes of $\operatorname{germ}(a/A)$, but any two are interdefinable over $a$. For this reason -- as is often the case with canonical bases -- we will abuse notation by identifying all codes of a germ with the germ itself. Thus, if $g$ is a code of $\operatorname{germ}(a/A)$, we will simply write $g=\operatorname{germ}(a/A)$.

\subsection{D-approximations} Following (\cite[Definition 3.16]{CasHasYe}), we also define:

\begin{definition}
    Let $a\in M^n$ and let $A$ be a parameter set. A \textit{d-approximation} of $\tp(a/A)$ at $a$ is a definable set $X$ satisfying $a\in X$, $\dim(X)=\dim(a/A)$, and $\operatorname{germ}_a(X)=\operatorname{germ}(a/A)$. 
\end{definition}

\begin{remark} Note that a $d$-approximation of $\operatorname{tp}(a/A)$ at $a$ need not be $A$-definable.
\end{remark}

For example, if $a\in M^n$ and $\dim(a/A)=n$, then a d-approximation of $\tp(a/A)$ at $a$ is just a definable set containing a neighborhood of $a$ in $M^n$. In general, one should think of a d-approximation as analogous to a definable set of Morley degree one in an $\omega$-stable theory: in that context, any complete type $\tp(a/A)$ is (up to parallelism and passing to $\acl(A)$) the unique generic type of some definable set $X$ of degree 1. In our context, any complete type $\tp(a/A)$ is the unique `generic at $a$' type of any d-approximation -- where `generic at $a$' means that no formula in the type decreases the germ at $a$. 

We now give a couple of easy facts about d-approximations. First, a trivial but useful fact is the following:

\begin{lemma}\label{L: d-app locality} Let $a\in M^n$, let $A$ be a parameter set, and let $X$ be a d-approximation of $\tp(a/A)$ at $X$. Then any $A$-definable property which holds of $a$, also holds on a neighborhood of $a$ in $X$.
\end{lemma}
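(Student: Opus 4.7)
The plan is to unwind the definitions of d-approximation and subgerm, and apply Lemma \ref{L: type germ facts}(3) which says that any $A$-definable set containing $a$ realizes a germ above $\operatorname{germ}(a/A)$.

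Concretely, let $\varphi(x)$ be an $A$-formula with $\varphi(a)$ true, and put $Y = \varphi(\mathcal M)$. Since $Y$ is $A$-definable and contains $a$, Lemma \ref{L: type germ facts}(3) gives
\[
\operatorname{germ}(a/A) \leq \operatorname{germ}_a(Y).
\]
By hypothesis, $X$ is a d-approximation of $\tp(a/A)$ at $a$, which in particular means $\operatorname{germ}_a(X) = \operatorname{germ}(a/A)$. Combining these, $\operatorname{germ}_a(X) \leq \operatorname{germ}_a(Y)$.

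Now I would invoke the equivalent ``for any realizations'' form of the subgerm relation: since $X$ realizes $\operatorname{germ}_a(X)$ and $Y$ realizes $\operatorname{germ}_a(Y)$, there is a neighborhood $U$ of $a$ in $M^n$ with $X \cap U \subseteq Y \cap U$. In particular every point of $X \cap U$ satisfies $\varphi$, and $X \cap U$ is a neighborhood of $a$ in $X$ (with the subspace topology). This is the desired conclusion.

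I do not expect any real obstacle here — the statement is essentially an immediate consequence of the definition of d-approximation together with Lemma \ref{L: type germ facts}(3), and the only point requiring care is to use the ``for any realizations'' formulation of $\leq$ rather than the ``for some'' formulation, so that we can apply it with the specific sets $X$ and $Y$ at hand.
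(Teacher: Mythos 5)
Your proof is correct and follows the same route as the paper: apply Lemma \ref{L: type germ facts}(3) to see $\operatorname{germ}(a/A)\leq\operatorname{germ}_a(Y)$, equate $\operatorname{germ}(a/A)$ with $\operatorname{germ}_a(X)$ by the definition of d-approximation, and unpack $\leq$ to get local containment $X\cap U\subseteq Y$. (The paper's one-line proof cites Lemma \ref{L: germ facts}(3), which is evidently a typo for Lemma \ref{L: type germ facts}(3) — you used the correct reference.)
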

\begin{proof}
    Let $Y$ be an $A$-definable set containing $a$. By Lemma \ref{L: germ facts}(3), we compute: $$\operatorname{germ}_a(Y)\geq\operatorname{germ}(a/A)=\operatorname{germ}_a(X),$$ which is equivalent to the statement of the lemma.
\end{proof}

A restatement of Lemma \ref{L: germs exist} is the following:

\begin{corollary}\label{C: minimal witness is d-app}
    Let $a\in M^n$ and $A$ a parameter set. Then any minimal basic witness of $a$ over $A$ is a d-approximation of $\tp(a/A)$ at $a$.
\end{corollary}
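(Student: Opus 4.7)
The plan is to verify the three defining conditions of a d-approximation directly from the definitions and Lemma \ref{L: germs exist}. Let $X$ be a minimal basic witness of $a$ over $A$, say witnessing the basis $b \in M^d$ via the finite-to-one projection $\pi : X \to M^d$ with $\pi(a) = b$; here $d = \dim(a/A)$.

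First, membership $a \in X$ is immediate from the definition of basic witness. Second, to show $\dim(X) = \dim(a/A) = d$: on one hand, $X$ is $A$-definable and contains $a$, so by Fact \ref{F: tmin}(8) (the characterization of $\dim(a/A)$ as the smallest dimension of an $A$-definable set containing $a$) we have $\dim(X) \geq d$; on the other hand, since $\pi$ restricts to a surjective finite-to-one map from $X$ onto $\pi(X) \subset M^d$, Fact \ref{F: tmin}(4) together with Fact \ref{F: tmin}(2) gives $\dim(X) = \dim(\pi(X)) \leq d$.

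Third, the equality of germs $\operatorname{germ}_a(X) = \operatorname{germ}(a/A)$ is essentially tautological: by Lemma \ref{L: germs exist}, $X$ is exactly the type of set used to define $\operatorname{germ}(a/A)$, so there is nothing further to check.

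There is no real obstacle here; the statement is a cosmetic repackaging of Lemma \ref{L: germs exist} together with the dimension-counting one gets for free from a finite-to-one projection onto a basis. I would expect the proof to be only two or three lines in the paper.
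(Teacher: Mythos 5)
Your proof is correct and matches the paper's argument in every substantive respect: both verify the germ condition by invoking Lemma~\ref{L: germs exist} (which makes the equality of germs definitional), and both obtain $\dim(X)=d$ by combining the lower bound $\dim(X)\geq d$ (since $X$ is an $A$-definable set containing $a$) with the upper bound $\dim(X)\leq d$ from the finite-to-one projection onto a subset of $M^d$. The only cosmetic difference is that you route the upper bound through Fact~\ref{F: tmin}(2) and (4), while the paper cites Johnson's Theorem 1.7(10) directly for the same conclusion.
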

\begin{proof}
    Let $X$ be a minimal basic witness, corresponding to the projection $\pi:M^n\rightarrow M^d$ sending $a$ to the basis $b=\pi(a)$. Lemma \ref{L: germs exist} gives that $X$ realizes $\operatorname{germ}(a/A)$. Now $\pi$ is finite-to-one on $X$, so by \cite[Theorem 1.7(10)]{t-minimal-johnson}, $\dim(X)\leq d$; while since $a\in X$, we also have $\dim(X)\geq d$. So $\dim(X)=d$, and thus $X$ is a d-approximation.
\end{proof}

\section{Independent Neighborhoods}

We will need a natural strengthening of t-minimality, which we call the \textit{independent neighborhood property}. This property has already been studied in the more restrictive context of SW-uniformities and related structures (\cite{HaHaPeVF}, \cite[\S 3.1]{HaHaPeGps}).  It is also the key geometric idea underlying the definition of \emph{vicinic sets} in $\S 4$ of the same paper. In this section, we introduce the property in the abstract t-minimal setting and study its basic features. The theme will be that independent neighborhoods allow for stronger geometric properties of definable sets and types.

\subsection{Definition and Examples}

\begin{definition}\label{D: inp}
    $\mathcal M$ has the \textit{independent neighborhood property} if for any tuple $a\in M^n$, any parameter set $A$, and any neighborhood $U$ of $a$, there is a neighborhood $V$ of $a$ such that $V\subset U$ and $V$ is definable over a parameter $t$ with $\dim(a/At)=\dim(a/A)
    $.  
\end{definition}

Definition \ref{D: inp} only requires the new parameters to preserve the dimension of the original tuple $a$. However, it follows that one can preserve the dimension of any tuple containing $a$:

\begin{lemma}\label{L: inp for bigger tuple}
    Suppose $\mathcal M$ has the independent neighborhood property. Let $a\in M^m$, $b\in M^n$, let $A$ be a parameter set, and let $U$ be a neighborhood of $a$. Then there is a neighborhood $V\subset U$ of $a$ defined over a tuple $t$ with $\dim(ab/At)=\dim(ab/A)$.
\end{lemma}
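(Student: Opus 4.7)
The plan is to apply the independent neighborhood property directly to the longer tuple $(a,b) \in M^{m+n}$ and then project back to recover a neighborhood of $a$. Concretely, I would observe that $U \times M^n$ is a neighborhood of $(a,b)$ in the product topology on $M^{m+n}$, and invoke Definition \ref{D: inp} applied to the tuple $ab$, the parameter set $A$, and this larger neighborhood. That yields a neighborhood $W$ of $(a,b)$ with $W \subset U \times M^n$, definable over some parameter $t$ satisfying $\dim(ab/At) = \dim(ab/A)$.

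Next, I would let $\pi \colon M^{m+n} \to M^m$ denote projection onto the first $m$ coordinates and set $V := \pi(W)$. By construction, $a \in V$ and $V \subset U$ (the latter because $W \subset U \times M^n$), and $V$ is $t$-definable via the formula $\exists y\,((x,y) \in W)$. The one point to verify is that $V$ is open; this follows because coordinate projections from a product topology are always open maps, which in our setting is immediate from the fact recorded at the start of Section 2 that $M^{m+n}$ carries a $\emptyset$-definable basis consisting of products of basic opens in the factors. Thus $V$ is the desired neighborhood.

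There is no real obstacle here: the lemma is essentially a formal strengthening of Definition \ref{D: inp}, obtained by treating $(a,b)$ as a single tuple and then projecting. A stylistic alternative would be to first shrink $U \times M^n$ to a basic product $U_1 \times U_2$ with $U_1 \subset U$, apply independent neighborhoods to obtain a $t$-definable subneighborhood $W \subset U_1 \times U_2$ of $(a,b)$, and then extract a basic subproduct of $W$ and read off $V$ as its first factor; this avoids any appeal to openness of projections but is otherwise the same argument.
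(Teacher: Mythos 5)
Your proof is correct and follows essentially the same route as the paper's: apply the independent neighborhood property to the tuple $(a,b)$ with neighborhood $U\times M^n$, then project the resulting $t$-definable neighborhood $W$ back to the first $m$ coordinates. The paper's version is more terse (it does not spell out why $\pi(W)$ is open), but it is the same argument.
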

\begin{proof}
    The set $U\times M^n$ is a neighborhood of $(a,b)$. By the independent neighborhood property, there is a neighborhood $W\subset U\times M^n$ of $(a,b)$ defined over a parameter $t$ with $\dim(ab/At)=\dim(ab/A)$. Let $\pi:M^m\times M^n\rightarrow M^m$ be the projection. Then $\pi(W)\subset U$ is a $t$-definable neighborhood of $a$.
\end{proof}

Most natural examples of t-minimal theories have the independent neighborhood property. For instance, it was shown in (\cite[Corollary 3.12]{HaHaPeVF}) that all SW-uniformities have the independent neighborhood property,  covering all dp-minimal expansions of topological groups. More generally, Johnson shows in Appendix \ref{A: visceral} that all visceral theories have the independent neighborhood property, and thus so do \textit{all} t-minimal topological groups. Finally, in Appendix A, we show that all C-minimal and (dense) weakly o-minimal theories have the independent neighborhood property.

The simplest example of a t-minimal theory without the independent neighborhood property is likely one pointed out by Johnson. Namely, consider a real closed field with the Sorgenfrey topology (generated by the half-open intervals of the form $[a,b)$). This gives a t-minimal theory. However, consider a generic singleton $a$, and any $b>a$. If $V\subset[a,b)$ is any neighborhood of $a$, then $a$ is definable over the canonical parameter of $V$ (as the minimum element of $V)$. It follows that no such $V$ can be defined independently of $a$. 

\begin{assumption} \textbf{From now until the end of Section 6, assume $\mathcal M$ has the independent neighborhood property.}
\end{assumption}

\subsection{Geometric Consequences of Independent Neighborhoods}

Independent neighborhoods imply stronger geometric properties of definable sets. We now give various examples. The first lemma below shows that the local dimension of a d-approximation of $\tp(a/A)$ at $a$ is always $\dim(a/A)$. In contrast, without the independent neighborhood property one can have infinite discrete definable subsets of $M^2$ (consider $\{(x,-x)\}\subset\mathbb R^2$ with the Sorgenfrey topology; in that case, if $a\in\mathbb R$ is generic then $\operatorname{germ}(a,-a/\emptyset)$ is a single point).

\begin{lemma}\label{L: local dimension}
    Let $a\in M^n$, let $A$ be a parameter set, and let $X$ be a d-approximation of $\tp(a/A)$ at $a$.
    \begin{enumerate}
        \item Let $Y\subset X$ be any definable set containing a neighborhood of $a$ in $X$. Then $Y$ is also a d-approximation of $\tp(a/A)$ at $a$. Thus, the local dimension of $X$ at $a$ is $\dim(a/A)$.
        \item In particular, $\dim(\operatorname{germ}(a/A))=\dim(a/A)$.
    \end{enumerate}
        \end{lemma}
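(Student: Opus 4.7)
\medskip

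\noindent\textbf{Proof plan.} The plan is to prove part (1) directly from the definition of a d-approximation, using the independent neighborhood property to overcome the fact that $X$ (and hence $Y$) need not be $A$-definable. Then the local dimension assertion of (1) and part (2) will both drop out by applying (1) to suitable subsets and realizers of $\operatorname{germ}(a/A)$.

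For part (1), I first want to arrange a comparison between the arbitrary d-approximation $X$ and a manageable $A$-definable one. By Corollary \ref{C: minimal witness is d-app}, there is a minimal basic witness $X'$ of $a$ over $A$, and it is an $A$-definable d-approximation of $\tp(a/A)$ at $a$. Since $\operatorname{germ}_a(X)=\operatorname{germ}(a/A)=\operatorname{germ}_a(X')$, the sets $X$ and $X'$ agree on some open neighborhood of $a$. Combining this with the open set witnessing that $Y$ contains a neighborhood of $a$ in $X$, I get an open neighborhood $W$ of $a$ with $X'\cap W = X\cap W \subseteq Y$. Now I invoke the independent neighborhood property (Definition \ref{D: inp}) to shrink $W$ to a neighborhood $V\subseteq W$ of $a$, defined over some tuple $t$ with $\dim(a/At)=\dim(a/A)$. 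Then $X'\cap V\subseteq Y$, and since $X'\cap V$ is $At$-definable and contains $a$, Fact \ref{F: tmin}(8) gives $\dim(Y)\geq \dim(X'\cap V)\geq \dim(a/At)=\dim(a/A)$. The reverse inequality $\dim(Y)\leq \dim(X)=\dim(a/A)$ is immediate from $Y\subseteq X$. Finally, since $Y\subseteq X$ and $Y$ contains a neighborhood of $a$ in $X$, we have $Y$ and $X$ agreeing on that neighborhood, so $\operatorname{germ}_a(Y)=\operatorname{germ}_a(X)=\operatorname{germ}(a/A)$. This makes $Y$ a d-approximation. The local dimension claim in (1) then follows by taking $Y=X\cap U$ for an arbitrary open neighborhood $U$ of $a$: such a $Y$ is open in $X$ and hence contains a neighborhood of $a$ in $X$, so $\dim(X\cap U)=\dim(a/A)$, which is exactly what ``local dimension $\dim(a/A)$'' means.

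For part (2), the upper bound $\dim(\operatorname{germ}(a/A))\leq \dim(a/A)$ is immediate: any minimal basic witness $X'$ realizes $\operatorname{germ}(a/A)$ and has $\dim(X')=\dim(a/A)$. For the lower bound, let $Z$ be any definable set realizing $\operatorname{germ}(a/A)$. Then $Z$ and $X'$ agree on some open neighborhood $U$ of $a$, so $Z\cap U = X'\cap U$. By part (1) applied with $X=X'$ and $Y=X'\cap U$, we have $\dim(X'\cap U)=\dim(a/A)$, and therefore $\dim(Z)\geq \dim(Z\cap U)=\dim(a/A)$. Taking the infimum over $Z$ gives $\dim(\operatorname{germ}(a/A))=\dim(a/A)$.

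The main obstacle is the fact flagged in the Remark before the lemma: a d-approximation need not be definable over $A$, so one cannot read off $\dim(Y)\geq \dim(a/A)$ from $a\in Y$ alone. The step that really uses the standing assumption is the appeal to Definition \ref{D: inp} to produce the parameter $t$ with $\dim(a/At)=\dim(a/A)$; without it, the argument genuinely fails, as illustrated by the Sorgenfrey example discussed just before this subsection, where a singleton can arise as $\operatorname{germ}(a/\emptyset)$ of a one-dimensional type.
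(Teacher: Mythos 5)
Your proof is correct and follows essentially the same route as the paper's: use the independent neighborhood property to produce a small neighborhood over parameters that keep $\dim(a/\cdot)$ at $\dim(a/A)$, then read off the lower bound on $\dim(Y)$ from Fact~\ref{F: tmin}(8). Your explicit introduction of the $A$-definable minimal basic witness $X'$ is a good clarification: the paper's proof passes directly to ``$Y\cap U=X\cap U$ is $At$-definable'' without noting that $X$ itself need not be $At$-definable, whereas you justify this by first replacing $X$ by the $A$-definable $X'$ that realizes the same germ, exactly as needed. Part (2) is likewise correct and spells out the ``follows by definition'' that the paper states tersely.
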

\begin{proof}
    We show (1); then (2) follows by definition.
    
    To show (1), first note that $Y$ trivially realizes $\operatorname{germ}(a/A)$, because $X$ does. The point is that we can also show $\dim(Y)=\dim(a/A)$. For this, by assumption, there is a neighborhood $U$ of $a$ such that $X$ and $Y$ agree on $U$. By the independent neighborhood property, we may assume $U$ is definable over $At$ where $\dim(a/At)=\dim(a/A)$. Now $Y\cap U=X\cap U$ is $At$-definable and contains $a$, so $\dim(Y\cap U)\geq\dim(a/A)$, and thus $\dim(Y)\geq\dim(a/A)$ as well. Equality then follows since $Y\subset X$ and $\dim(X)=\dim(a/A)$. 
\end{proof}

Lemma \ref{L: local dimension} implies a convenient reformulation of the independent neighborhood property in terms of d-approximations:

\begin{lemma}\label{L: GenOS}
    Let $a\in M^n$, and let $A$ be a parameter set. \begin{enumerate}
        \item Let $X$ be any d-approximation of $\tp(a/A)$ at $a$. Then there is a d-approximation $Y\subset X$ of $\tp(a/A)$ at $a$, such that $Y$ is definable over $At$ for some tuple $t$ with $\dim(a/At)=\dim(a/A)$.
        \item In particular, if $U$ is any neighborhood of $a$ in $M^n$, then there is a d-approximation $Y\subset U$ of $\tp(a/A)$ at $a$ which is definable over a tuple $t$ with $\dim(a/At)=\dim(a/A)$.
    \end{enumerate}
\end{lemma}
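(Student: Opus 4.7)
The plan is to prove part (1) directly and then deduce part (2) as an easy corollary. The key idea is that although the given d-approximation $X$ need not be $A$-definable, we can replace it near $a$ by an $A$-definable substitute, namely a minimal basic witness, and then cut down using a neighborhood produced by the independent neighborhood property.

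Concretely, for (1), let $X_0$ be a minimal basic witness of $a$ over $A$; by Corollary~\ref{C: minimal witness is d-app} it is $A$-definable and is a d-approximation of $\tp(a/A)$ at $a$. Since $X$ is also a d-approximation, $\operatorname{germ}_a(X) = \operatorname{germ}(a/A) = \operatorname{germ}_a(X_0)$, so there is a neighborhood $U'$ of $a$ on which $X$ and $X_0$ agree. Applying Definition~\ref{D: inp} to $U'$ produces a neighborhood $V \subset U'$ of $a$ definable over a tuple $t$ with $\dim(a/At) = \dim(a/A)$. Set $Y := X_0 \cap V$. Then $Y$ is $At$-definable, and since $X$ and $X_0$ agree on $V$, we have $Y = X_0 \cap V = X \cap V \subset X$. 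The three conditions for $Y$ to be a d-approximation are now straightforward: $a\in Y$; $\operatorname{germ}_a(Y) = \operatorname{germ}_a(X_0) = \operatorname{germ}(a/A)$ because $V$ is a neighborhood of $a$; and the inequalities $\dim(a/A) = \dim(a/At) \le \dim(Y) \le \dim(X_0) = \dim(a/A)$ hold, the lower bound coming from $a\in Y$ with $Y$ an $At$-definable set and the upper from $Y\subset X_0$.

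For (2), observe that by Lemma~\ref{L: local dimension}(1) applied to $X_0$, the set $X_0 \cap U$ is again a d-approximation of $\tp(a/A)$ at $a$ (it contains the neighborhood $X_0\cap U$ of $a$ in $X_0$). Applying (1) to $X_0 \cap U$ yields a d-approximation $Y \subset X_0 \cap U \subset U$ with the desired definability property.

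The only mildly subtle point---and the real obstacle to the naive strategy of simply intersecting $X$ with an $At$-definable neighborhood---is that $X$ itself need not be $A$-definable, so such an intersection need not be $At$-definable. Routing through a minimal basic witness $X_0$, which \emph{is} $A$-definable and has the same germ at $a$ as $X$, sidesteps this issue; one only has to pay attention to verifying $Y \subset X$, which is where the choice of $V$ inside the agreement neighborhood $U'$ is used.
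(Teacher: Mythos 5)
Your proof is correct and takes essentially the same approach as the paper: route through an $A$-definable d-approximation (a minimal basic witness), find a neighborhood of agreement with $X$, shrink it to an $At$-definable neighborhood via the independent neighborhood property, and intersect. The only cosmetic difference is that you verify the dimension bound directly rather than invoking Lemma~\ref{L: local dimension}, but the substance is the same.
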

\begin{proof}
\begin{enumerate}
    \item Let $Z$ be an $A$-definable d-approximation of $\tp(a/A)$ at $a$ (for example, any minimal basic witness of $a$ over $A$). Then $X$ and $Z$ agree on some neighborhood of $a$, say $U$. By the independent neighborhood property, we may assume $U$ is $At$-definable, where $\dim(a/At)=\dim(a/A)$. Now take $Y=Z\cap U$. Then $Y$ is $At$-definable and realizes $\operatorname{germ}(a/A)$; while we have $Y=Z\cap U=X\cap U\subset X$, so $Y$ is a d-approximation by Lemma \ref{L: local dimension}.
    \item Let $X$ be any d-approximation of $\tp(a/A)$. Then $X\cap U$ is also a d-approximation of $\tp(a/A)$, by Lemma \ref{L: local dimension}. Now apply (1) to $X\cap U$.
\end{enumerate}
\end{proof}

Finally, we show that the independent neighborhood property implies a more natural characterization for d-approximations. Recall that under only t-minimality, a \textit{minimal} basic witness is always a d-approximation. Assuming independent neighborhoods, the same holds of \textit{all} basic witnesses. This will be very useful when characterizing independence in the next subsection.

\begin{lemma}\label{L: basic witness}
    Let $a\in M^n$, $A$ a parameter set, and $X$ a basic witness of $a$ over $A$. Then $X$ is a d-approximation of $\tp(a/A)$ at $a$.
\end{lemma}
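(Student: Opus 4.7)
The plan is to verify the two conditions in the definition of a d-approximation: $\dim(X)=\dim(a/A)$ and $\operatorname{germ}_a(X)=\operatorname{germ}(a/A)$. Let $b\in M^d$ be the basis of $a$ over $A$ and $\pi:M^n\to M^d$ the finite-to-one projection witnessing that $X$ is a basic witness.

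First, for the dimension: applying Fact \ref{F: tmin}(4) to $\pi|_X$ gives $\dim(X)\leq d$, and since $a\in X$ we also have $\dim(X)\geq\dim(a/A)=d$, so $\dim(X)=d$. The inequality $\operatorname{germ}(a/A)\leq\operatorname{germ}_a(X)$ is automatic from Lemma \ref{L: type germ facts}(3), so the whole content of the proof is the reverse inequality; this is where the independent neighborhood property must enter (since without it, minimal basic witnesses are the only basic witnesses known to be d-approximations).

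For the reverse inequality, the idea is to localize $X$ near $a$ so that the restriction becomes a \emph{minimal} basic witness over a slightly larger base, and then invoke Corollary \ref{C: minimal witness is d-app}. Write the fiber $\pi^{-1}(b)\cap X=\{a=a_1,\ldots,a_k\}$, which is finite. Using Hausdorffness together with the $\emptyset$-definable basis, choose for each $j\geq 2$ a basic open separating $a$ from $a_j$; intersecting finitely many such basic opens yields an open neighborhood $W$ of $a$ disjoint from $a_2,\ldots,a_k$. Now apply the independent neighborhood property to obtain an open neighborhood $V\subset W$ of $a$, definable over some $t$ with $\dim(a/At)=\dim(a/A)=d$.

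Consider $X\cap V$. It is $At$-definable, contains $a$, and $\pi$ is finite-to-one on it; moreover $b$ remains a basis of $a$ over $At$ (since $|b|=d=\dim(a/At)$ and $a\in\acl(Ab)\subset\acl(Atb)$). Thus $X\cap V$ is a basic $a$-witness for $b$ over $At$. By construction, the fiber of $X\cap V$ above $b$ is $\{a\}$, which has size $1$ -- the minimum possible. Hence $X\cap V$ is a \emph{minimal} basic witness over $At$, and Corollary \ref{C: minimal witness is d-app} yields $\operatorname{germ}_a(X\cap V)=\operatorname{germ}(a/At)$. Since $V$ is a neighborhood of $a$ we also have $\operatorname{germ}_a(X)=\operatorname{germ}_a(X\cap V)$, and combined with $\operatorname{germ}(a/At)\leq\operatorname{germ}(a/A)$ from Lemma \ref{L: type germ facts}(1) this gives $\operatorname{germ}_a(X)\leq\operatorname{germ}(a/A)$, closing the argument.

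The main obstacle is arranging the ``localization'' of $X$ using parameters that do not lower the dimension of $a$; this is precisely what the independent neighborhood property buys us. Hausdorffness is what guarantees the separating basic open $W$ exists in the first place, but without the INP we could not replace $W$ by $V$ without possibly putting $a$ into the algebraic closure of the new parameters (as the Sorgenfrey example earlier in the section illustrates), which would destroy the minimality argument over $At$.
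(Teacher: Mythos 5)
Your proof is correct, and it closes the argument by a slightly different route than the paper. Both proofs hinge on the same two ideas: use Hausdorffness to cut $X$ down to a set where $a$ is the \emph{only} point of the fiber above $b$, and use the independent neighborhood property to realize this cut-down as an $At$-definable set with $\dim(a/At)=\dim(a/A)$. From there the paper argues directly with cofinality: it fixes an arbitrary $A$-definable $Y\subset X$ with $a\in Y$, observes that $X\cap U$ and $Y\cap U$ have the same (singleton) fiber above $b$, and then uses that $b$ has full dimension over $At$ to propagate this agreement to a neighborhood of $b$, concluding $\operatorname{germ}_a(X)=\operatorname{germ}_a(Y)$. You instead observe that $X\cap V$ is a \emph{minimal} basic witness of $a$ over $At$ (its fiber above $b$ has size one), invoke Corollary \ref{C: minimal witness is d-app} to get $\operatorname{germ}_a(X\cap V)=\operatorname{germ}(a/At)$, and close by chaining germ inequalities via Lemma \ref{L: type germ facts}. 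Your version is arguably more modular, since it reuses the already-established minimal-witness corollary rather than re-running the ``full-dimensional points satisfy $At$-definable properties on a neighborhood'' argument; the paper's version makes the propagation argument visible once more. Both are valid, and the core insight is the same.

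One small point of hygiene worth noting: your step verifying that $b$ remains a basis of $a$ over $At$ is correct but relies on the fact that ``basis'' means a subtuple of minimal length $\dim(a/At)$ algebraizing $a$; since $|b|=d=\dim(a/At)$ and $a\in\acl(Atb)$, no shorter subtuple could work, so $b$ qualifies. You state this a bit tersely, but it checks out.
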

\begin{proof}
    As in the proof of Corollary \ref{C: minimal witness is d-app}, we automatically have $\dim(X)=d$. To prove the lemma, we show that $\operatorname{germ}_a(X)$ is cofinal in $\tp(a/A)$. So let $Y\subset X$ be $A$-definable with $a\in Y$. We will show that $X$ and $Y$ agree in a neighborhood of $a$.
    
    Let $\pi:M^n\rightarrow M^d$ demonstrate that $X$ is a basic witness; so $a\in X$, $\pi$ is finite-to-one on $X$, and $b=\pi(a)$ is a basis of $a$ over $A$.
    
    Since $\pi$ is finite-to-one on $X$, clearly there is a neighborhood $U$ of $a$ such that $X\cap U$ and $Y\cap U$ agree on the fiber above $b$ (one chooses $U$ small enough that $|X\cap U|$ has only the single point $a$ lying above $b$). By the independent neighborhood property, we may assume $U$ is $At$-definable where $\dim(a/At)=\dim(a/A)$. It follows that $b$ is still a basis for $a$ over $At$. Thus $\dim(b/At)=d$, and so any $At$-definable property of $b$ holds on a neighborhood of $b$. In particular, this implies that $X\cap U$ and $Y\cap U$ agree on all fibers above points in some neighborhood of $b$; and as in Lemma \ref{L: germs exist}, this implies the desired statement.
\end{proof}

\subsection{Independence}

Due to the lack of exchange, there are two natural notions of independence in our setting. Namely, given tuples $a$ and $b$, one could only ask that $\dim(a/b)=\dim(a)$, or one could ask that the same also holds with $a$ and $b$ reversed. In this subsection, we show that the independent neighborhood property implies a geometric characterization of each of these notions of independence.

\begin{lemma}\label{L: weakly ind}
    Let $a\in M^n$, and $A\subset B$ parameter sets. The following are equivalent:
    \begin{enumerate}
        \item $\dim(a/B)=\dim(a/A)$.
        \item $\tp(a/A)$ and $\tp(a/B)$ have a common d-approximation at $a$.
        \item $\tp(a/A)$ and $\tp(a/B)$ have exactly the same d-approximations at $a$.
        \item $\operatorname{germ}(a/B)=\operatorname{germ}(a/A)$.
    \end{enumerate}
\end{lemma}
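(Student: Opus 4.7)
The plan is to prove the cycle $(3) \Rightarrow (2) \Rightarrow (4) \Rightarrow (1) \Rightarrow (3)$, with essentially all content concentrated in $(1) \Rightarrow (3)$. The three easy implications are close to formal. For $(3) \Rightarrow (2)$: by Corollary \ref{C: minimal witness is d-app}, a d-approximation of $\tp(a/A)$ at $a$ exists, and (3) says it is also one for $\tp(a/B)$. For $(2) \Rightarrow (4)$: a common d-approximation $X$ satisfies $\operatorname{germ}_a(X)=\operatorname{germ}(a/A)$ and $\operatorname{germ}_a(X)=\operatorname{germ}(a/B)$ directly from the definition of d-approximation. For $(4) \Rightarrow (1)$: Lemma \ref{L: local dimension}(2) says the dimension of the germ $\operatorname{germ}(a/A)$ is exactly $\dim(a/A)$ (and similarly over $B$), so equal germs force equal dimensions.

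For $(1) \Rightarrow (3)$, set $d=\dim(a/A)=\dim(a/B)$. The main content is to show $\operatorname{germ}(a/A)=\operatorname{germ}(a/B)$; once this is known, any d-approximation of $\tp(a/A)$ at $a$ satisfies all three defining conditions for $\tp(a/B)$ (and vice versa), yielding (3). The key step is the intermediate claim: a basic witness of $a$ over $A$ is automatically a basic witness of $a$ over $B$. To see this, pick any $A$-definable basic witness $X_0$ of $a$ over $A$, arising from a projection $\pi:M^n\to M^d$ with $\pi(a)=c$, where $c\subseteq a$ is a length-$d$ sub-tuple and $a\in\acl(Ac)$ (witnessed by the finite fibers of $\pi$ on $X_0$). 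Since $A\subseteq B$ we have $a\in\acl(Bc)$, so $\dim(a/Bc)=0$. By sub-additivity (Fact \ref{F: tmin}(\ref{subad})), $d=\dim(a/B)\leq\dim(c/B)+\dim(a/Bc)=\dim(c/B)$; combined with the trivial bound $\dim(c/B)\leq|c|=d$, this gives $\dim(c/B)=d$, so $c$ is a basis of $a$ over $B$ and $X_0$ is a basic witness of $a$ over $B$.

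Now Lemma \ref{L: basic witness} applied over $B$ gives that $X_0$ is a d-approximation of $\tp(a/B)$ at $a$, so $\operatorname{germ}_a(X_0)=\operatorname{germ}(a/B)$. But $X_0$ is also a d-approximation of $\tp(a/A)$ at $a$ (Lemma \ref{L: basic witness} over $A$), so $\operatorname{germ}_a(X_0)=\operatorname{germ}(a/A)$. Hence the two germs coincide, establishing (4) and consequently (3) as discussed above.

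The main obstacle is precisely the intermediate assertion that a basic witness over $A$ transfers to a basic witness over $B$, and this is where the independent neighborhood property is essential. Without independent neighborhoods, Lemma \ref{L: basic witness} is not available: one only knows that \emph{minimal} basic witnesses are d-approximations, and there is no reason for a minimal basic witness over $A$ to remain minimal over $B$ (the fiber sizes can in principle shrink after enlarging the parameter set). The strengthened Lemma \ref{L: basic witness}, which upgrades every basic witness into a d-approximation, is exactly the tool that lets us bypass any minimality issue and identify the germs over $A$ and $B$ via a common witness.
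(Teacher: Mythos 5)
Your proof is correct and rests on exactly the same key mechanism as the paper's: the observation that under hypothesis (1), a basic witness of $a$ over $A$ remains a basic witness over $B$, combined with Lemma \ref{L: basic witness} (all basic witnesses are d-approximations, not just minimal ones). You arrange the cycle as $(3)\Rightarrow(2)\Rightarrow(4)\Rightarrow(1)\Rightarrow(3)$ while the paper uses $(1)\Rightarrow(2)\Rightarrow(3)\Rightarrow(4)\Rightarrow(1)$, and you spell out the transfer of the basic witness (via sub-additivity) where the paper asserts it by definition, but the mathematical content — including the footnoted point about why minimal basic witnesses alone would not suffice — is identical.
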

\begin{proof}
    (1)$\Rightarrow$(2): Let $X$ be a basic witness of $a$ over $A$. If $\dim(a/B)=\dim(a/A)$, then by definition $X$ is also a basic witness of $a$ over $B$\footnote{This is a subtle use of independent neighborhoods: if we only knew that minimal basic witnesses were d-approximations, the same argument would fail.}. By Lemma \ref{L: basic witness}, $X$ is a d-approximation of both $\tp(a/A)$ and $\tp(a/B)$ at $a$.

.?''//
'[
\\\\\\
];'    (2)$\Rightarrow$(3): By definition of d-approximations, (2) gives that $\tp(a/A)$ and $\tp(a/B)$ have the same dimension and the same germ at $a$, and thus they have the same d-approximations at $a$.

    (3)$\Rightarrow$(4): Clear, since a d-approximation contains the data of the relevant germ.

    (4)$\Rightarrow$(1): Assume $\operatorname{germ}(a/B)=\operatorname{germ}(a/A)$. Then By Lemma \ref{L: local dimension}, we have $$\dim(a/B)=\dim(\operatorname{germ}(a/B))=\dim(\operatorname{germ}(a/A))=\dim(a/A).$$
\end{proof}

\begin{remark}
    Lemma \ref{L: weakly ind} fails without the independent neighborhood property, again witnessed by $\mathbb R$ with the Sorgenfrey topology and the inverted diagonal. In that case, for generic $a\in\mathbb R$, we have $\operatorname{germ}(a,-a/a)=\operatorname{germ}(a,-a/\emptyset)$ (each is a single point), while $0=\dim(a,-a/a)<\dim(a,-a/\emptyset)=1$.
\end{remark}

\begin{definition}
    Let $a\in M^n$ and $A\subset B$ parameter sets. We say that $a$ is \textit{independent from $B$ over $A$} if the equivalent conditions of Lemma \ref{L: weakly ind} hold.
\end{definition}

We now turn to the obvious symmetric notion of independence.

\begin{lemma}\label{L: strongly ind}
    Let $a$ and $b$ be tuples, and $A$ a parameter set. The following are equivalent:
    \begin{enumerate}
        \item $\dim(a/Ab)=\dim(a/A)$ and $\dim(b/Aa)=\dim(b/A)$.
        \item $\dim(ab/A)=\dim(a/A)+\dim(b/A)$.
         \item There are sets $X$ and $Y$ such that $X$ is a d-approximation of $\tp(a/A)$ at $a$, $Y$ is a d-approximation of $\tp(b/A)$ at $b$, and $X\times Y$ is a d-approximation of $\tp(ab/A)$ at $(a,b)$.
         \item Whenever $X$ is a d-approximation of $\tp(a/A)$ at $a$, and $Y$ is a a-approximation of $\tp(b/A)$ at $b$, we have that $X\times Y$ is a d-approximation of $\tp(ab/A)$ at $(a,b)$.
        \item $\operatorname{germ}(ab/A)=\operatorname{germ(a/A)}\times\operatorname{germ}(b/A)$.
    \end{enumerate}
\end{lemma}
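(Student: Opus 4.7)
The plan is to close the cycle $(1) \Rightarrow (5) \Rightarrow (2) \Rightarrow (1)$ and separately to establish $(3) \Leftrightarrow (4) \Leftrightarrow (5)$, with essentially all the work going into $(1) \Rightarrow (5)$. The easy parts: for $(2) \Rightarrow (1)$ I would just invoke subadditivity (Fact \ref{F: tmin}(\ref{subad})), since $\dim(a/A) + \dim(b/A) = \dim(ab/A) \le \dim(b/A) + \dim(a/Ab)$ forces $\dim(a/Ab) = \dim(a/A)$, and symmetrically. For $(5) \Rightarrow (2)$, I would compute
\[
\dim(ab/A) = \dim(\operatorname{germ}(ab/A)) = \dim\bigl(\operatorname{germ}(a/A) \times \operatorname{germ}(b/A)\bigr) = \dim(a/A) + \dim(b/A),
\]
using Lemma \ref{L: local dimension}(2) to identify the dimension of a type with that of its germ, and Lemma \ref{L: germ facts}(4) for the dimension of a product germ. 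Among (3)--(5): $(4) \Rightarrow (3)$ is trivial; $(3) \Rightarrow (5)$ follows because a d-approximation codes the germ, giving $\operatorname{germ}(ab/A) = \operatorname{germ}_{(a,b)}(X \times Y) = \operatorname{germ}_a(X) \times \operatorname{germ}_b(Y) = \operatorname{germ}(a/A) \times \operatorname{germ}(b/A)$; and $(5) \Rightarrow (4)$ combines $(5)$ with the already-proved $(5) \Rightarrow (2)$ so that any such $X \times Y$ has both the correct germ and the correct total dimension at $(a,b)$.

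For the crux $(1) \Rightarrow (5)$, my approach will be a basic-witness construction. First I would take minimal basic witnesses $X$ of $a$ over $A$ and $Y$ of $b$ over $A$, with the underlying finite-to-one projections $\pi_1, \pi_2$ landing on bases $a_0 \subseteq a$, $b_0 \subseteq b$ of lengths $\dim(a/A)$ and $\dim(b/A)$. I will then argue that under $(1)$, the concatenation $(a_0, b_0)$ is itself a basis of $(a,b)$ over $A$. Once that is established, $\pi_1 \times \pi_2$ exhibits $X \times Y$ as a basic witness of $(a,b)$ over $A$, so Lemma \ref{L: basic witness} upgrades $X \times Y$ to a d-approximation of $\tp(ab/A)$ at $(a,b)$. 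Since $\operatorname{germ}_{(a,b)}(X \times Y) = \operatorname{germ}_a(X) \times \operatorname{germ}_b(Y) = \operatorname{germ}(a/A) \times \operatorname{germ}(b/A)$, this immediately yields (5).

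The main obstacle is verifying the algebraic independence of $(a_0, b_0)$ over $A$; the absence of exchange forces a slightly roundabout route. My plan is as follows. Fact \ref{F: tmin}(7) applied to the tuples $a, a_0$ -- which are interalgebraic over $Ab$ since $a_0 \subseteq a$ and $a \in \acl(Aa_0)$ -- gives $\dim(a_0/Ab) = \dim(a/Ab) = \dim(a/A) = |a_0|$ by $(1)$; monotonicity along $Ab_0 \subseteq Ab$ then forces $\dim(a_0/Ab_0) = |a_0|$, and symmetrically $\dim(b_0/Aa_0) = |b_0|$. Now if some proper subtuple $c_0 \subsetneq (a_0, b_0)$ were a basis of $(a_0, b_0)$ over $A$, decompose $c_0 = c_0^a c_0^b$ with $c_0^a \subseteq a_0$ and $c_0^b \subseteq b_0$; at least one inclusion is strict, and if say $c_0^a \subsetneq a_0$, then $a_0 \in \acl(Ac_0) \subseteq \acl(Ab_0 c_0^a)$ would give $\dim(a_0/Ab_0) \le |c_0^a| < |a_0|$, a contradiction. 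The other case is symmetric, so $(a_0, b_0)$ is indeed algebraically independent and the construction goes through.
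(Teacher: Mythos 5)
Your proposal is correct, and it takes essentially the same route as the paper's proof. The key steps are the same: showing that under condition (1) the concatenation of bases for $a$ and $b$ is a basis for $(a,b)$ (your subtuple/coordinate argument is the same as the paper's argument that $cd$ is independent), then invoking Lemma~\ref{L: basic witness} to upgrade the product of basic witnesses to a d-approximation, and using subadditivity for the downward direction. The only cosmetic difference is the cycle structure: you fuse the paper's $(1)\Rightarrow(2)\Rightarrow(3)$ into a direct $(1)\Rightarrow(5)$ and then handle $(3)$--$(4)$ separately, whereas the paper chains $(1)\Rightarrow(2)\Rightarrow(3)\Rightarrow(4)\Rightarrow(5)\Rightarrow(1)$; the underlying arguments are identical.
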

\begin{proof}
    (1)$\Rightarrow$(2): Assume (1). Let $c$ be a basis of $a$ over $A$, and let $d$ be a basis of $b$ over $A$. Then $ab\in\acl(Acd)$. If we can show that the concatenated tuple $cd$ is independent over $A$, it will follow that $cd$ is a basis of $ab$ over $A$, which implies (2). To show this, assume $cd$ is not independent over $A$. Then there is some coordinate from $cd$ that is algebraic over $A$ with all other coordinates of $cd$. Call this coordinate $e\in M$. If $e$ occurs in $c$, then $c$ is not independent over $Ad$, which implies that $\dim(a/Ab)<\dim(a/A)$. Similarly, if $e$ occurs in $d$, then $\dim(b/Aa)<\dim(b/A)$. In either case, we contradict (1).

    (2)$\Rightarrow$(3): Let $X$ and $Y$ be basic witnesses for $a$ and $b$ over $A$, respectively. Assuming (2) we have $$\dim(X\times Y)=\dim(X)+\dim(Y)=\dim(a/A)+\dim(b/A)=\dim(ab/A),$$ which implies easily that $X\times Y$ is a basic witness for $ab$ over $A$\footnote{As in Lemma \ref{L: weakly ind}, this argument fails with minimal basic witnesses, so we are really using independent neighborhoods.}. This implies (3) by Lemma \ref{L: basic witness}.
    
    (3)$\Rightarrow$(4): Assuming (3), we conclude that $\dim(ab/A)=\dim(a/A)+\dim(b/A)$ and $\operatorname{germ}(ab/A)=\operatorname{germ}(a/A)\times\operatorname{germ}(b/A)$. This implies (4) by definition.

    (4)$\Rightarrow$(5): By definition of d-approximations.

    (5)$\Rightarrow$(1): Assume $\operatorname{germ}(ab/A)=\operatorname{germ}(a/A)\times\operatorname{germ}(b/A)$. Then by Lemmas \ref{L: local dimension} and \ref{L: germ facts}(4), we have $$\dim(ab/A)=\dim(\operatorname{germ}(ab/A))=\dim(\operatorname{germ}(a/A)\times\operatorname{germ}(b/A))$$ $$=\dim(\operatorname{germ}(a/A))+\dim(\operatorname{germ}(b/A))=\dim(a/A)+\dim(b/A).$$ That is, we have shown that (5) implies (2). But (2) easily implies (1) by sub-additivity. 
    
\end{proof}

\begin{definition}
    Let $a$ and $b$ be tuples, and $A$ a parameter set. We say that $a$ and $b$ are \textit{symmetrically independent over $A$} if the equivalent conditions of Lemma \ref{L: strongly ind} are satisfied.
\end{definition}

\subsection{Generic Continuity} Johnson \cite[Remark 1.16]{t-minimal-johnson} points out that generic continuity can fail in a general t-minimal structure. His example is yet again a real closed field with the Sorgenfrey topology and the inverted diagonal (namely $x\mapsto -x$ is nowhere continuous). In this subsection, we show that this is no coincidence: in fact, independent neighborhoods imply strong generic continuity results. In particular, a corollary will be that generic continuity holds for all definable functions with domain a power of $M$.

Most of our generic continuity results need some local instance of additivity. Thus, we define: 

\begin{definition}
    Let $a$ and $b$ be tuples, and $A$ a parameter set. We say that $(a,b)$ is \textit{additivite over $A$} if $\dim(ab/A)=\dim(b/A)+\dim(a/Ab)$.
\end{definition}

So if $\mathcal M$ satisfies exchange, then all pairs are additive.\\

Our main use of additivity, which we will exploit frequently, is the following:

\begin{lemma}\label{L: additive pairs}
    Let $(a,b)$ be additive over $A$, and let $B\supset A$ with $\dim(ab/B)=\dim(ab/A)$. Then:
    \begin{enumerate}
        \item $\dim(b/B)=\dim(b/A)$.
        \item $\dim(a/Bb)=\dim(a/Ab)$.
        \item $(a,b)$ is additive over $B$.
    \end{enumerate}
\end{lemma}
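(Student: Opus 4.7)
The plan is a standard chain-of-inequalities squeeze, using only monotonicity and sub-additivity of dimension (Fact \ref{F: tmin}(5) and (6)). The hypothesis that $\dim(ab/B)=\dim(ab/A)$ combined with additivity over $A$ pins down a single numerical value; monotonicity bounds the analogous quantities over $B$ from above by the corresponding quantities over $A$; and sub-additivity over $B$ pushes in the other direction. Once assembled, the bounds force termwise equality.

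First I would record the three available inputs. Monotonicity applied to $A\subseteq B$ and $Ab\subseteq Bb$ gives $\dim(b/B)\leq\dim(b/A)$ and $\dim(a/Bb)\leq\dim(a/Ab)$. Sub-additivity over $B$ gives $\dim(ab/B)\leq\dim(b/B)+\dim(a/Bb)$. Additivity of $(a,b)$ over $A$ reads $\dim(ab/A)=\dim(b/A)+\dim(a/Ab)$, and by hypothesis $\dim(ab/B)=\dim(ab/A)$.

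Now I would string these together as
\[
\dim(b/A)+\dim(a/Ab)=\dim(ab/A)=\dim(ab/B)\leq \dim(b/B)+\dim(a/Bb)\leq \dim(b/A)+\dim(a/Ab),
\]
so every inequality is an equality. Since each summand $\dim(b/B)$, $\dim(a/Bb)$ on the middle side is individually bounded above by the corresponding summand $\dim(b/A)$, $\dim(a/Ab)$ on the outer side, the equality of sums forces equality term by term. This yields conclusions (1) and (2), and substituting back into $\dim(ab/B)=\dim(b/B)+\dim(a/Bb)$ gives conclusion (3).

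There is no real obstacle here; the only thing to be careful about is the final squeeze step, where the equality of two sums is upgraded to equality of summands by noting that each summand on one side dominates the corresponding summand on the other. Everything else is bookkeeping with Fact \ref{F: tmin}, and no further model-theoretic ingredient (in particular, no use of the independent neighborhood property) is needed for this particular lemma.
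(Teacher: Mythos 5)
Your proof is correct and matches the paper's argument: both combine additivity over $A$ with sub-additivity over $B$ and monotonicity of dimension to produce a chain of inequalities that must collapse to equalities, from which all three conclusions follow. The only cosmetic difference is that you write out the termwise-squeeze step explicitly, which the paper leaves implicit.
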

\begin{proof}
    Combining additivity over $A$ with sub-additivity over $B$, we have: 
    $$\dim(ab/B)\leq\dim(b/B)+\dim(a/Bb)\leq\dim(b/A)+\dim(a/Ab)=\dim(ab/A).$$ Since $\dim(ab/B)=\dim(ab/A)$, all of the above inequalities must be equalities. This implies (1), (2), and (3).
\end{proof}

Now the main technical input in generic continuity is the following:

\begin{lemma}\label{L: d-app images}
Let $a$ and $b$ be tuples so that $(a,b)$ is additive over $A$. Let $\pi$ be the projection sending $(a,b)$ to $b$. If $X$ is any d-approximation of $\tp(ab/A)$ at $(a,b)$, then $\pi(X)$ contains a d-approximation of $\tp(b/A)$ at $b$.
\end{lemma}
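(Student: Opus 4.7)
The plan is to reduce $X$ to an $At$-definable d-approximation for a cleverly chosen independent parameter $t$, after which the statement follows from applying the $A$-dimension-preservation and local-dimension machinery at the point $b$.

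First I would apply Lemma \ref{L: GenOS}(1) to $X$, viewed as a d-approximation of $\tp(ab/A)$ at the tuple $(a,b)$, to replace it (up to shrinking) by a subset $Y' \subseteq X$ that is still a d-approximation of $\tp(ab/A)$ at $(a,b)$ and is definable over $At$ for some parameter tuple $t$ with $\dim(ab/At) = \dim(ab/A)$. Since $(a,b)$ is additive over $A$, Lemma \ref{L: additive pairs}(1) then upgrades this to $\dim(b/At) = \dim(b/A)$.

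Now $\pi(Y')$ is an $At$-definable set containing $b$. Choose any d-approximation $Z$ of $\tp(b/At)$ at $b$, for example a minimal basic witness via Corollary \ref{C: minimal witness is d-app}. Since $Z$ is a d-approximation of $\tp(b/At)$ at $b$, Lemma \ref{L: d-app locality} applied to the $At$-definable property ``belongs to $\pi(Y')$'' produces a neighborhood $U$ of $b$ with $Z \cap U \subseteq \pi(Y')$. By Lemma \ref{L: local dimension}(1), $Z \cap U$ is itself a d-approximation of $\tp(b/At)$ at $b$. Finally, because $\dim(b/At) = \dim(b/A)$, Lemma \ref{L: weakly ind} says that $\tp(b/A)$ and $\tp(b/At)$ share the same d-approximations at $b$, so $Z \cap U$ is a d-approximation of $\tp(b/A)$ at $b$. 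Chasing containments gives $Z \cap U \subseteq \pi(Y') \subseteq \pi(X)$, which is the desired conclusion.

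The only non-bookkeeping step is the very first one, where one must ensure that the parameter $t$ can be chosen to preserve the dimension of the \emph{joint} tuple $(a,b)$, not merely of one of its components; this is exactly where the independent neighborhood property (via Lemma \ref{L: GenOS}) does the heavy lifting, and where additivity of $(a,b)$ then transfers dimension preservation from $ab$ down to $b$. Once that is in hand, the rest is a routine localization argument of the kind already used repeatedly in this section.
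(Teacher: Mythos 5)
Your proof is correct and follows essentially the same route as the paper's: shrink $X$ to an $At$-definable d-approximation (the paper phrases this as ``we may assume $X$ is $At$-definable''), use Lemma~\ref{L: additive pairs} together with additivity of $(a,b)$ to transfer $\dim(ab/At)=\dim(ab/A)$ down to $\dim(b/At)=\dim(b/A)$, and then compare d-approximations (or equivalently germs) of $\tp(b/At)$ and $\tp(b/A)$ at $b$ via Lemma~\ref{L: weakly ind}. The only cosmetic difference is that the paper phrases the final step as an inequality of germs ($\operatorname{germ}(b/A)=\operatorname{germ}(b/At)\leq\operatorname{germ}_b(\pi(X))$) while you phrase it via Lemma~\ref{L: d-app locality} applied to a chosen d-approximation $Z$ of $\tp(b/At)$; these are interchangeable.
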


\begin{proof}
    By Lemma \ref{L: d-app locality}, we may assume $X$ is definable over $At$ where $\dim(ab/At)=\dim(ab/A)$. By Lemma \ref{L: additive pairs}, we have $\dim(b/At)=\dim(b/A)$. It follows by Lemma \ref{L: weakly ind} that $\operatorname{germ}(b/At)=\operatorname{germ}(b/A)$.

    On the other hand, note that $\pi(X)$ is an $At$-definable set containing $b$. Applying Lemma \ref{L: type germ facts}(3), we obtain $$\operatorname{germ}(b/A)=\operatorname{germ}(b/At)\leq\operatorname{germ}_b(\pi(X)).$$ Now let $Y$ be any d-approximation of $\tp(b/A)$ at $b$. Then $\operatorname{germ}_b(Y)\leq\operatorname{germ}_b(\pi(X))$. It follows that $Y\cap\pi(X)$ contains a neighborhood of $b$ in $Y$, so by Lemma \ref{L: d-app locality} is a d-approximation of $\tp(b/A)$ at $b$. Since $Y\cap\pi(X)$ is contained in $\pi(X)$, the lemma is now proved. 
\end{proof}

We conclude that projections are locally open at additive pairs:

\begin{lemma}\label{L: d-app open}
    Let $(a,b)$ be additive over $A$, and let $\pi$ be the projection sending $(a,b)$ to $b$. Then there are d-approximations $X$ of $\tp(ab/A)$ at $(a,b)$, and $Y$ of $\tp(b/A)$ at $b$, so that $\pi(X)=Y$ and $\pi:X\rightarrow Y$ is open.
\end{lemma}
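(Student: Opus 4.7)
The plan is to construct $X$ and $Y$ as $A$-definable d-approximations satisfying $\pi(X)=Y$, and then verify openness by a reapplication of Lemma \ref{L: d-app images} to shrinking neighborhoods of $(a,b)$.

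For the construction, fix an $A$-definable d-approximation $X_0$ of $\tp(ab/A)$ at $(a,b)$ (for instance, a minimal basic witness, by Corollary \ref{C: minimal witness is d-app}) and an $A$-definable d-approximation $Y_0$ of $\tp(b/A)$ at $b$. By Lemma \ref{L: d-app images}, $\pi(X_0)$ contains some d-approximation of $\tp(b/A)$ at $b$; as this d-approximation shares the germ $\operatorname{germ}(b/A)$ with $Y_0$, the two must agree on a neighborhood of $b$. Set $Y := Y_0 \cap \pi(X_0)$; this is $A$-definable with germ $\operatorname{germ}(b/A)$ at $b$, so Lemma \ref{L: local dimension}(1) makes $Y$ a d-approximation of $\tp(b/A)$ at $b$. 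Define $X := X_0 \cap \pi^{-1}(Y)$; then $X$ is $A$-definable, contains $(a,b)$, and $\pi(X)=Y$ (since $Y \subseteq \pi(X_0)$). From $X \subseteq X_0$ and Lemma \ref{L: type germ facts}(3) one obtains $\operatorname{germ}_{(a,b)}(X) = \operatorname{germ}(ab/A)$, so $X$ is a d-approximation of $\tp(ab/A)$ at $(a,b)$.

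For openness, fix $(x_0,y_0) \in X$ and an open $W \subseteq M^{m+n}$ containing $(x_0,y_0)$; I want to show $\pi(W \cap X)$ contains a neighborhood of $y_0$ in $Y$. In the case $(x_0,y_0)=(a,b)$: the set $W \cap X$ is open in $X$ and agrees with $X$ on a neighborhood of $(a,b)$, so by Lemma \ref{L: local dimension}(1) it is itself a d-approximation of $\tp(ab/A)$ at $(a,b)$. Applying Lemma \ref{L: d-app images} to $W \cap X$ (using additivity of $(a,b)$ over $A$), the image $\pi(W \cap X)$ contains a d-approximation of $\tp(b/A)$ at $b$, hence realizes $\operatorname{germ}(b/A) = \operatorname{germ}_b(Y)$. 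Together with $\pi(W \cap X) \subseteq Y$, this forces $\pi(W \cap X)$ to contain an open neighborhood of $b$ in $Y$. For any other $(x_0,y_0) \in X$ realizing $\tp(ab/A)$, openness transfers by an $A$-automorphism, which is a homeomorphism since the topology is $\emptyset$-definable.

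The main obstacle is handling the non-generic points $(x_0,y_0) \in X$ with $\tp(x_0 y_0/A) \neq \tp(ab/A)$: at such a point, $W \cap X$ need not be a d-approximation of $\tp(x_0 y_0/A)$, and additivity is not given. The expected remedy is to restrict $X$ further so that every point realizes $\tp(ab/A)$, which can be arranged by intersecting $X$ with a suitable $A$-definable subset obtained by combining the independent neighborhood property with Lemma \ref{L: d-app locality} to excise each lower-dimensional bad locus; alternatively, one argues by saturation that the (locally $\Pi$-form) openness condition extends from the dense set of generic points to a full definable neighborhood of $(a,b)$ in $X$.
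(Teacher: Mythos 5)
Your construction of $X$ and $Y$ and your verification of openness at the point $(a,b)$ are correct and follow the same route as the paper (pick $A$-definable d-approximations, arrange $\pi(X)=Y$, apply Lemma~\ref{L: d-app images} to neighborhoods of $(a,b)$). The gap is exactly where you flag it, and your sketched remedies do not close it.

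The paper resolves the final step more directly than either alternative you suggest. The condition ``$\pi:X\rightarrow Y$ is open at the point $(x,y)$'' --- that for every basic open $U$ containing $(x,y)$, $\pi(U\cap X)$ contains a basic neighborhood of $\pi(x,y)$ in $Y$ --- is a single $A$-definable property of $(x,y)\in X$, because the topology has a $\emptyset$-definable basis and $X$, $Y$ are $A$-definable. You have already verified this property at $(a,b)$. So Lemma~\ref{L: d-app locality} immediately gives a neighborhood $X'$ of $(a,b)$ in $X$ on which the property holds at every point, i.e.\ $\pi:X'\rightarrow Y$ is open. Setting $Y'=\pi(X')$ and invoking Lemma~\ref{L: local dimension}(1) finishes. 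No case split between points realizing $\tp(ab/A)$ and points that do not, and no automorphism argument, is needed.

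Your first remedy gestures toward Lemma~\ref{L: d-app locality}, which is indeed the right tool, but the phrase ``excise each lower-dimensional bad locus'' misdiagnoses the situation: there is a single $A$-definable set $\{(x,y)\in X : \pi \text{ not open at }(x,y)\}$, it need not be lower-dimensional, and all that matters is that $(a,b)$ lies in its complement (which you proved). Trying to shrink $X$ so that every point realizes $\tp(ab/A)$ cannot work, since the realization set of a complete type is only type-definable. The saturation alternative is vaguer still. As written the proposal therefore contains a genuine unfinished step, though you correctly identified where the difficulty lies and which lemma should dispatch it.
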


\begin{proof}
    Fix $A$-definable d-approximations $X$ and $Y$ of $\tp(ab/A)$ at $(a,b)$ and $\tp(b/A)$ at $b$, respectively. Shrinking $X$ if necessary, we may assume $\pi(X)\subset Y$ (here we use that $X\cap\pi^{-1}(Y)$ is also $A$-definable, so is also a d-approximation). Recall that:
    \begin{enumerate}
        \item Every neighborhood of $(a,b)$ in $X$ is also a d-approximation of $\tp(ab/A)$ at $(a,b)$, by Lemma \ref{L: local dimension}; and
        \item Every d-approximation of $\tp(b/A)$ at $b$ contains a neighborhood of $b$ in $Y$, by definition.
        \end{enumerate}
        
        In particular, combining (1) and (2) with Lemma \ref{L: d-app images} gives that for every neighborhood $N$ of $(a,b)$ in $X$, $\pi(N)$ contains a neighborhood of $b$ in $Y$. This is an $A$-definable property of $(a,b)$, so by Lemma \ref{L: d-app locality} it holds on a neighborhood of $(a,b)$ in $X$; that is, there is a neighborhood $X'$ of $(a,b)$ in $X$ such that $\pi:X'\rightarrow Y$ is open. Let $Y'=\pi(X')$, so that $Y'$ is open in $Y$ and $\pi:X'\rightarrow Y'$ is open and surjective. Then by Lemma \ref{L: local dimension}, $X'$ and $Y'$ are d-approximations of $\tp(ab/A)$ at $(a,b)$ and $\tp(b/A)$ at $b$, respectively, and this proves the desired statement.
\end{proof}

In the case of algebraic pairs, we can improve `open' to `homeomorphism':

\begin{lemma}\label{L: d-app homeo}
    Let $a$ and $b$ be tuples, and $A$ a parameter set, so that $a\in\acl(Ab)$. Let $\pi$ be the projection sending $(a,b)$ to $b$. Then there are d-approximations $X$ of $\tp(ab/A)$ at $(a,b)$, and $Y$ of $\tp(b/A)$ at $b$, so that $\pi$ restricts to a homeomorphism $X\rightarrow Y$. 
\end{lemma}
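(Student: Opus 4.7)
The plan is to first produce a d-approximation on which $\pi$ is injective, and then apply Lemma \ref{L: d-app open} to also obtain openness. Note that the hypothesis $a\in\acl(Ab)$ makes $(a,b)$ automatically additive over $A$: we have $\dim(a/Ab)=0$, while $b$ is a basis of $ab$ over $A$, so $\dim(ab/A)=\dim(b/A)=\dim(b/A)+\dim(a/Ab)$. Thus Lemma \ref{L: d-app open} is available; what is missing from its conclusion is only injectivity.

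To achieve injectivity, I would start with an $A$-definable basic witness $X_0$ for $ab$ over $A$ through the projection $\pi$ and basis $b$ (so $\pi$ is finite-to-one on $X_0$); by Lemma \ref{L: basic witness}, $X_0$ is a d-approximation of $\tp(ab/A)$ at $(a,b)$. The fiber $\pi^{-1}(b)\cap X_0$ is finite, say $\{(a,b),(a_1,b),\ldots,(a_k,b)\}$. Using that the topology is Hausdorff, pick pairwise disjoint open neighborhoods $V_0,\ldots,V_k$ of these points; by Lemma \ref{L: inp for bigger tuple}, I can arrange that the $V_i$ are definable over a parameter $t$ with $\dim(ab/At)=\dim(ab/A)$. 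Setting $X_1=X_0\cap V_0$, the disjointness of the $V_i$'s forces $\pi^{-1}(b)\cap X_1=\{(a,b)\}$, and Lemma \ref{L: local dimension} guarantees that $X_1$ is still a d-approximation of $\tp(ab/A)$ at $(a,b)$.

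The key step, and the main obstacle, is to promote the fact that the fiber of $X_1$ over $b$ is a single point to the statement that the fiber of $X_1$ over every sufficiently generic $y$ is a single point. For this I would invoke generic continuity via Lemma \ref{L: d-app locality}. Since $\dim(ab/At)=\dim(ab/A)$, Lemma \ref{L: weakly ind} shows that $X_1$ is also a d-approximation of $\tp(ab/At)$ at $(a,b)$. The set
\[
W=\{(x,y)\in X_1:X_1\cap\pi^{-1}(y)=\{(x,y)\}\}
\]
is $At$-definable and contains $(a,b)$, so by Lemma \ref{L: d-app locality} it contains a neighborhood $X_2$ of $(a,b)$ in $X_1$. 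Every point of $X_2$ has one-point fiber in $X_1$, hence in $X_2$, so $\pi|_{X_2}$ is injective; Lemma \ref{L: local dimension} again ensures that $X_2$ is a d-approximation of $\tp(ab/A)$ at $(a,b)$.

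To finish, I would apply Lemma \ref{L: d-app open} to $X_2$ (which applies since $(a,b)$ is additive over $A$) to obtain a d-approximation $X_3\subseteq X_2$ and a d-approximation $Y$ of $\tp(b/A)$ at $b$ such that $\pi:X_3\to Y$ is open and surjective. Restricting to $X_3\subseteq X_2$ preserves injectivity, and $\pi$ is continuous as a coordinate projection, so $\pi:X_3\to Y$ is a continuous open bijection and therefore a homeomorphism, as required.
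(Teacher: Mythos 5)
Your argument is correct and pursues the same broad strategy as the paper's: extract injectivity from the finite-to-one property coming from $a\in\acl(Ab)$, and openness from Lemma~\ref{L: d-app open}. The notable difference is in how injectivity is obtained. The paper intersects the output $X$ of Lemma~\ref{L: d-app open} with an $A$-definable finite-to-one set $Z$ and then asserts injectivity of $\pi$ on $X\cap Z$ directly; your proof instead first carves out the single point $(a,b)$ from its fiber using Hausdorffness and an independently chosen parameter $t$, and then uses Lemma~\ref{L: d-app locality} to propagate the ``singleton-fiber'' condition to a neighborhood. This extra work is genuinely needed when $a\notin\dcl(Ab)$: the fiber $\pi^{-1}(b)\cap Z$ then contains several $Ab$-conjugates of $(a,b)$, which no $A$-definable $X$ can separate, so a subset of $Z$ need not project injectively without passing to independent parameters exactly as you do. In short, your proof supplies a detail the paper's writeup elides.

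Two small points to tighten. First, you describe $X_0$ as a basic witness ``through the projection $\pi$ and basis $b$,'' but $b$ need not be a basis of $ab$ over $A$ (it may be $A$-dependent); instead let $X'$ be any $A$-definable d-approximation of $\tp(ab/A)$ at $(a,b)$ (e.g.\ a minimal basic witness) and $Z$ an $A$-definable set on which $\pi$ is finite-to-one, and set $X_0=X'\cap Z$; by Lemma~\ref{L: type germ facts}(3), $\operatorname{germ}_{(a,b)}(Z)\geq\operatorname{germ}(ab/A)=\operatorname{germ}_{(a,b)}(X')$, so $X_0$ is a d-approximation by Lemma~\ref{L: local dimension}, with $\pi$ finite-to-one. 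Second, Lemma~\ref{L: d-app open} is an existence statement and does not assert that every d-approximation refines to an open one, so the final paragraph should read: apply Lemma~\ref{L: d-app open} to obtain d-approximations $X'$, $Y'$ with $\pi:X'\to Y'$ open and surjective; since $X'$ and $X_2$ both realize $\operatorname{germ}(ab/A)$, they agree on some neighborhood of $(a,b)$, so there is an open $X_3\subseteq X'\cap X_2$ containing $(a,b)$; then $\pi:X_3\to\pi(X_3)$ is open, continuous and injective (as $X_3\subseteq X_2$), with $\pi(X_3)$ open in $Y'$ and hence a d-approximation by Lemma~\ref{L: local dimension}. Both fixes are routine; the core argument is sound.
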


\begin{proof}
    Since $a\in\acl(Ab)$, we have $\dim(ab/A)=\dim(b/A)$ and $\dim(a/Ab)=0$; thus $(a,b)$ is additive over $A$. By Lemma \ref{L: d-app open}, we can choose d-approximations $X$ and $Y$ so that $\pi:X\rightarrow Y$ is surjective and open. As a projection, $\pi$ is automatically continuous. The only missing ingredient is ensuring that $\pi$ is injective on $X$.

    Now separately, since $a\in\acl(Ab)$, there is an $A$-definable set $Z$ containing $(a,b)$ so that $\pi$ is finite-to-one on $Z$. By Lemma \ref{L: type germ facts}(3), $$\operatorname{germ}_{(a,b)}(Z)\geq\operatorname{germ}(ab/A)=\operatorname{germ}_{(a,b)}(X).$$ Thus $X\cap Z$ contains a neighborhood of $(a,b)$ in $X$, so is also a d-approximation of $\tp(ab/A)$ at $(a,b)
    $ by Lemma \ref{L: local dimension}. Now let $X'=X\cap Z$ and $Y'=\pi(X\cap Z)$; then $Y'$ is open in $Y$, so is a d-approximation of $\tp(b/A)$ at $b$ by Lemma \ref{L: local dimension}, and $\pi:X'\rightarrow Y'$ is still open and continuous. Finally since $X'\subset Z$, $\pi$ is moreover injective on $X'$, so that $\pi:X'\rightarrow Y'$ is a homeomorphism as desired.
\end{proof}

We conclude with generic continuity of functions on powers of $M$. In fact, we will be a bit more general. In his paper, Johnson (\cite[Definition 2.47]{t-minimal-johnson}) defines a \textit{very weak} $k$-cell to be a definable set $X$ admitting a finite-to-one projection $X\rightarrow M^k$ whose image has non-empty interior. One checks easily that any open subset of $M^k$ is a very weak $k$-cell, and that any very weak $k$-cell has dimension $k$. Now we show:

\begin{corollary}\label{C: generic continuity}
    Let $f:X\rightarrow Y$ be an $A$-definable function, where $X$ is a very weak $k$-cell. Let $a\in M^n$ with $\dim(a/A)=k$. Then $f$ is continuous in a neighborhood of $a$.
    \end{corollary}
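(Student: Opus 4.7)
The plan is to apply Lemma \ref{L: d-app homeo} to the pair $(a, f(a))$ and transfer the resulting homeomorphism back to the graph of $f$. Write $b = f(a) \in M^m$ and $\Gamma = \{(x, f(x)) : x \in X\} \subseteq M^{n+m}$.

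I would begin with two preparatory observations. First, $X$ is itself a d-approximation of $\tp(a/A)$ at $a$. Indeed, fixing a finite-to-one $A$-definable projection $\rho \colon X \to M^k$ witnessing that $X$ is a very weak $k$-cell, we have $a \in \acl(A\rho(a))$ and $\rho(a) \in \dcl(a)$, so subadditivity forces
\[
k = \dim(a\rho(a)/A) \leq \dim(\rho(a)/A) + \dim(a/A\rho(a)) = \dim(\rho(a)/A) \leq k.
\]
Hence $\rho(a)$ is a basis of $a$ over $A$, so $X$ is a basic witness, and Lemma \ref{L: basic witness} applies. Second, since $b \in \dcl(Aa) \subseteq \acl(Aa)$, Lemma \ref{L: d-app homeo} (with the roles of the two tuples interchanged, so that the projection is onto the $a$-coordinate) supplies d-approximations $W$ of $\tp(ab/A)$ at $(a,b)$ and $V$ of $\tp(a/A)$ at $a$, with the first-coordinate projection $\pi \colon W \to V$ a homeomorphism.

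Now $\Gamma$ is $A$-definable and contains $(a,b)$, so by Lemma \ref{L: d-app locality} applied to the d-approximation $W$, we have $W \cap N \subseteq \Gamma$ for some neighborhood $N$ of $(a,b)$. Shrinking $W$ to $W \cap N$ preserves the d-approximation property (Lemma \ref{L: local dimension}), so I may assume $W \subseteq \Gamma$; then $V = \pi(W) \subseteq \pi(\Gamma) = X$, and $\pi^{-1} \colon V \to W$ is precisely the map $x \mapsto (x, f(x))$. Composing with the second-coordinate projection shows $f|_V$ is continuous. Finally, since both $V$ and $X$ are d-approximations of $\tp(a/A)$ at $a$, they agree on some open neighborhood $U$ of $a$, so $X \cap U \subseteq V$; hence $f$ is continuous on $X \cap U$, which is a neighborhood of $a$ in $X$.

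I do not anticipate a real obstacle. The one small subtlety is in the first preparatory step: without exchange one cannot immediately conclude that a finite-to-one projection from $X$ to $M^k$ sends $a$ to a generic point of $M^k$, but this is remedied by the subadditivity computation above combined with $\rho(a) \in \dcl(a)$. Once $X$ is recognized as a d-approximation, the rest of the argument is essentially a direct translation from the graph-level homeomorphism supplied by Lemma \ref{L: d-app homeo} to a continuity statement for $f$.
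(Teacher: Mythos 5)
Your proof is correct and follows essentially the same route as the paper's: recognize $X$ as a basic witness (hence d-approximation) for $a$ over $A$, invoke Lemma \ref{L: d-app homeo} for the algebraic pair $(a,f(a))$, and transfer the resulting local homeomorphism back to the graph $\Gamma$ to extract continuity. The only organizational difference is that the paper also verifies directly that $\Gamma$ is a basic witness of $ab$ over $A$, whereas you instead shrink the d-approximation $W$ supplied by Lemma \ref{L: d-app homeo} into $\Gamma$ via Lemma \ref{L: d-app locality} -- both are fine, and yours is arguably slightly cleaner.
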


    \begin{proof}
        Let $\pi:X\rightarrow M^k$ be a projection whose image has non-empty interior, as provided by the assumption that $X$ is a very weak $k$-cell. Let $e=\pi(a)$, and let $b=f(a)$. The assumptions imply that $\dim(e/A)=\dim(a/A)=\dim(ab/A)=k$, and thus $e$ is a basis for both $a$ and $ab$ over $A$. Letting $\Gamma\subset X\times Y$ be the graph of $f$, it now follows that $\Gamma$ and $X$ are basic witnesses of $ab$ and $a$ over $A$, respectively, and thus are d-approximations of $\tp(ab/A)$ and $\tp(a/A)$ at $(a,b)$ and $a$, respectively.
        
        Now by Lemma \ref{L: d-app homeo}, there are d-approximations $X'$ of $\tp(a/A)$ and $\Gamma'$ of $\tp(ab/A)$ such that the projection $(a,b)\mapsto a$ induces a homeomorphism $\Gamma'\rightarrow X'$. Then $\operatorname{germ}_a(X')=\operatorname{germ}_a(X)$, and $\operatorname{germ}_{(a,b)}(\Gamma')=\operatorname{germ}_{(a,b)}(\Gamma)$. So, letting $X''=X\cap X'$ and $\Gamma''=\Gamma\cap\Gamma'$, it follows that $X''$ is a neighborhood of $a$ in $X$ and $\Gamma''$ is a neighborhood of $(a,b)$ in $\Gamma$. Note that if $x\in X''$, then $(x,f(x))\in\Gamma''$ (and vice versa) -- thus $\Gamma''$ projects homeomorphically to $X''$. Finally, on $X''$, $f$ factors as $X''\rightarrow \Gamma''\rightarrow Y$, a composition of two continuous maps. 
    \end{proof}

        \subsection{Uniqueness of the Topology}

    We end this section with an interesting fact that we nevertheless will not need. Recall that a structure can be t-minimal with respect to more than one topology (for example, consider a real closed field with either the order topology or the Sorgenfrey topology). On the other hand, it turns out that there can only be \textit{one} such topology with the independent neighborhood property (at least up to perturbing a finite set) -- and if this topology exists, it is the coarsest t-minimal topology on $\mathcal M$ (again up to a finite set). Thus, if one t-minimal topology on a structure has independent neighborhoods, one might think of that topology as the `canonical' one.

    Recall that we have a fixed topology $\tau$ on $\mathcal M$ with the independent neighborhood property. Now we show:

    \begin{lemma}\label{L: unique top}
        Let $\tau'$ be another $\emptyset$-definable topology on $\mathcal M$ with respect to which $\mathcal M$ is t-minimal.
        \begin{enumerate}
            \item There is a cofinite set $M'\subset M$ so that $\tau'$ refines $\tau$ on $M'$.
            \item If $\mathcal M$ also has the independent neighborhood property with respect to $\tau'$, then there is a cofinite set $M'$ such that $\tau$ and $\tau'$ agree on $M'$.
        \end{enumerate}
    \end{lemma}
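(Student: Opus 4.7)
The plan is to prove (1) directly by identifying a $\emptyset$-definable set of ``bad'' points and showing it is finite; (2) will then follow from (1) by symmetry. Call a point $a \in M$ \emph{bad} if some $\tau$-open neighborhood of $a$ contains no $\tau'$-open neighborhood of $a$; equivalently, if some basic $\tau$-open $U_s$ satisfies $a \in U_s \setminus \inter_{\tau'}(U_s)$. Since the relation ``$U'_{s'} \subseteq U_s$'' between basic $\tau'$-opens and basic $\tau$-opens is $\emptyset$-definable, so is the operation $s \mapsto \inter_{\tau'}(U_s)$; hence the set $B$ of bad points is a $\emptyset$-definable projection.

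The core step, where the independent neighborhood property for $\tau$ enters, is to argue that every $a \in M$ with $\dim(a/\emptyset)=1$ is good. Given such $a$ and a $\tau$-open $U\ni a$, INP for $\tau$ provides a $\tau$-open $V\subseteq U$ containing $a$, defined over some tuple $t$ with $\dim(a/t)=1$. Being a non-empty $\tau$-open set, $V$ has non-empty $\tau$-interior and is therefore infinite by t-minimality of $\tau$. Then t-minimality of $\tau'$ forces $W:=\inter_{\tau'}(V)$ to be non-empty; moreover $V\setminus W$ is $t$-definable with empty $\tau'$-interior, hence finite, so $V\setminus W\subseteq \acl(t)$. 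Since $\dim(a/t)=1$, we have $a\notin \acl(t)$, and therefore $a\in W$. Thus $W$ is a $\tau'$-open neighborhood of $a$ contained in $U$, as required.

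To finish (1), suppose for contradiction that $B$ is infinite. By t-minimality of $\tau$, $B$ has non-empty $\tau$-interior and therefore contains some basic $\tau$-open $U_{s_1}$. As an infinite $s_1$-definable set, $U_{s_1}$ contains some $a$ with $\dim(a/s_1)=1$; but then $\dim(a/\emptyset)\geq\dim(a/s_1)=1$, so $a$ is generic and hence good by the previous paragraph, contradicting $a\in U_{s_1}\subseteq B$. So $B$ is finite, and $M':=M\setminus B$ is the cofinite set sought in (1); a routine check (patching together $\tau'$-open neighborhoods inside each $\tau$-open) confirms that $\tau'$ refines $\tau$ on $M'$ in the subspace-topology sense.

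For (2), I would apply (1) twice: once as stated, and once with the roles of $\tau$ and $\tau'$ interchanged, which is now legitimate since INP is assumed for $\tau'$ as well. This produces cofinite sets $M'$ and $M''$ on which, respectively, $\tau'$ refines $\tau$ and $\tau$ refines $\tau'$; the intersection $M'\cap M''$ is still cofinite, and on it the two topologies refine each other, hence agree. The main obstacle throughout is the central INP step above: ruling out that a generic point can lie on the ``$\tau'$-boundary'' of a $\tau$-open set defined independently of it --- this is the single place where genuine topological tameness of both topologies and the independence-preserving neighborhoods promised by INP must interact.
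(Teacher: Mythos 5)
Your proof is correct and follows essentially the same route as the paper's: reduce to points of dimension one, use the independent neighborhood property for $\tau$ to shrink the given $\tau$-neighborhood to one defined over a tuple $t$ with $\dim(a/t)=1$, then invoke t-minimality of $\tau'$ to see the finite $t$-definable boundary set cannot contain $a$, and conclude by the $\emptyset$-definability of the ``bad'' set. The only difference is cosmetic: you spell out the definability and finiteness bookkeeping that the paper compresses into ``since this is a definable property of $a$, it suffices to prove it for $a\notin\acl(\emptyset)$.''
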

    \begin{proof}
        We show (1). Then (2) follows by applying (1) twice (the second time with $\tau$ and $\tau'$ reversed).

        Now to prove (1), we will show that for cofinitely many $a\in M$, every definable $\tau$-neighborhood of $a$ contains a definable $\tau'$-neighborhood of $a$. Since this is a definable property of $a$, it will suffice to prove it for any $a\in M-\acl(\emptyset)$. So fix such an $a$.

        Since $a\in M-\acl(\emptyset)$, we have $\dim(a)=1$. Now let $U$ be a definable $\tau$-neighborhood of $a$. It is harmless to shrink $U$, so by the independent neighborhood property, we may assume $U$ is definable over a tuple $t$ with $\dim(a/t)=1$. By t-minimality, the $\tau'$-boundary of $U$ is finite, and is thus contained in $\acl(t)$. So $a$ does not belong to the $\tau'$-boundary of $U$, and thus $a$ belongs to the $\tau'$-interior of $U$. This shows that $U'$ contains a definable $\tau'$-neighborhood of $a$, as desired.
    \end{proof}

    \begin{remark}
        If $\mathcal M$ has the independent neighborhood property with respect to $\tau$, then one can completely describe $\tau$ on $M-\acl(\emptyset)$ using only the model theory of $\mathcal M$ (with no topology). Let $a\in M-\acl(\emptyset)$. Then by the independent neighborhood property, $a$ has a neighborhood basis consisting of all $t$-definable sets containing $a$, where $t$ ranges over all tuples with $a\notin\acl(t)$.
    \end{remark}

\section{Infinitesimal Neighborhoods}

We now introduce the two-model setup that has become customary in treating infinitesimals in topological settings.

\begin{assumption}
    \textbf{From now until the end of Section 6, we fix $\mathcal N$, an elementary extension of $\mathcal M$ which is sufficiently saturated with respect to $\mathcal M$.}
\end{assumption}

By default (unless stated otherwise), we still perform our model-theoretic analysis in $\mathcal M$. In particular, parameter sets will continue to be small subsets of $\mathcal M^{eq}$.

Following previous works (see e.g. \cite{PetGconf}), we use $\mathcal N$ to discuss infinitesimal elements, and in particular infinitesimal neighborhoods of elements of $\mathcal M$. The idea is to prove statements about $\mathcal M$ by translating them to cleaner statements about infinitesimals in $\mathcal N$.

The present section is divided into four parts. The first subsection makes the framework of infinitesimals precise. There should be no surprises here, as similar frameworks have become well-known in topological settings. The next subsection then rewrites some of our main topological lemmas on d-approximations in terms of infinitesimal neighborhoods. Again, this is a straightforward translation procedure, and we will be brief. 

Finally, the next two subsections give two isolated but interesting results. First, we use infinitesimals to prove a very useful remnant of additivity that holds even if $\mathcal M$ does not satisfy exchange. Then we show that if $\mathcal M$ is saturated in its own cardinality, every infinitesimal neighborhood is the locus of a complete type. 

\subsection{Basics}

We begin with basic definitions:

\begin{definition}\label{D: infinitesimal}
    Let $a\in M^n$ and $b\in N^n$. We say that $b$ is \textit{$\mathcal M$-infinitesimally close to $a$} if $b$ belongs to every $\mathcal M$-definable neighborhood of $a$. The set of all elements $\mathcal M$-infinitesimally close to $a$ is called the \textit{infinitesimal neighborhood of $a$}, and denoted $\mu(a)$.
\end{definition}

Clearly, in the setup of Definition \ref{D: infinitesimal}, $b$ is infinitesimally close to $a$ if and only if each coordinate of $b$ is infinitesimally close to the corresponding coordinate of $a$. That is, we have $\mu(a,b)=\mu(a)\times\mu(b)$ for any tuples $a$ and $b$ from $\mathcal M$.

We also work with infinitesimal neighborhoods over parameter sets. In this case, instead of intersecting open sets, we intersect d-approximations:

\begin{definition}\label{D: inf neigh}
    Let $a\in M^n$ and $A$ a parameter set. The \textit{infinitesimal neighborhood of $a$ over $A$}, denoted $\mu(a/A)$, is the intersection of all $\mathcal M$-definable d-approximations of $\tp(a/A)$ at $a$, viewed as a type-definable subset of $N^n$.
    \end{definition}

    \begin{remark}
        Note that $\mu(a)$ is NOT the same as $\mu(a/\emptyset)$.
    \end{remark}

    Thus, in the above setting, $\mu(a/A)$ is type-definable over $M$. Recall that the dimension theory from $\mathcal M$ extends to type-definable sets, and the dimension of a type-definable set is the smallest dimension of any definable set containing it. It follows easily that $\dim(\mu(a/A))=\dim(a/A)$ for any $a$ and $A$ as above.

    We now give two equivalent definitions of $\mu(a/A)$, matching the corresponding notion in other settings (e.g. \cite{PetGconf}):

    \begin{lemma}\label{L: mu characterization}
        Let $a\in M^n$, $b\in N^n$, and $A$ a parameter set. The following are equivalent:
        \begin{enumerate}
            \item $b\in\mu(a/A)$.
            \item $b\in\mu(a)$ and $b$ belongs to some $\mathcal M$-definable d-approximation of $\tp(a/A)$ at $a$.
            \item $b\in\mu(a)$ and $b\models\tp(a/A)$.
            \end{enumerate}
    \end{lemma}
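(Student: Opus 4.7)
The plan is to establish the equivalences by a cyclic chain $(1) \Rightarrow (2) \Rightarrow (3) \Rightarrow (1)$. The core tools will be Lemma~\ref{L: d-app locality} (any $A$-definable property of $a$ holds on a neighborhood of $a$ in any d-approximation of $\tp(a/A)$), Lemma~\ref{L: local dimension}(1) (a definable subset of a d-approximation that contains a neighborhood of $a$ in it is itself a d-approximation), the existence of $A$-definable d-approximations via Corollary~\ref{C: minimal witness is d-app}, and elementarity of $\mathcal M \prec \mathcal N$.

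For $(1) \Rightarrow (2)$, the ``some'' clause is immediate because $A$-definable d-approximations exist. To check $b \in \mu(a)$, I would fix an arbitrary $\mathcal M$-definable neighborhood $U$ of $a$ and an $A$-definable d-approximation $X_0$; then $X_0 \cap U$ is again an $\mathcal M$-definable d-approximation by Lemma~\ref{L: local dimension}(1), so hypothesis (1) gives $b \in X_0 \cap U \subseteq U$. For $(2) \Rightarrow (3)$, let $X$ be an $\mathcal M$-definable d-approximation with $b \in X$; given $\varphi(y) \in \tp(a/A)$, Lemma~\ref{L: d-app locality} produces an $\mathcal M$-definable neighborhood $U$ of $a$ with $X \cap U \subseteq \varphi(\mathcal M)$. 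Transferring this containment to $\mathcal N$ and combining with $b \in X$ and $b \in \mu(a) \subseteq U(\mathcal N)$ yields $\varphi(b)$.

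For $(3) \Rightarrow (1)$, I would fix an arbitrary $\mathcal M$-definable d-approximation $X$ and an $A$-definable d-approximation $X_0$. Both realize $\operatorname{germ}(a/A)$, so $X$ and $X_0$ agree on some $\mathcal M$-definable neighborhood $U$ of $a$. Since $b \models \tp(a/A)$ and $X_0$ is $A$-definable containing $a$, we get $b \in X_0$; since $b \in \mu(a)$ we get $b \in U$; and transferring the equality $X \cap U = X_0 \cap U$ to $\mathcal N$ then gives $b \in X_0 \cap U = X \cap U \subseteq X$.

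The argument is essentially bookkeeping, and I do not expect a serious obstacle. The only point that requires genuine care is the interplay between the two models: each d-approximation, each neighborhood, and each germ-agreement relation is witnessed by a first-order statement in $\mathcal M$-parameters, and we must explicitly invoke elementarity to transport these statements to $\mathcal N$, where $b$ and $\mu(a)$ actually live. Keeping track of which objects are fixed in $\mathcal M$ and which are evaluated in $\mathcal N$ is the one thing that could trip up an otherwise routine verification.
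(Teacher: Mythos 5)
Your proposal is correct and follows essentially the same route as the paper's proof: (1)$\Rightarrow$(2) via intersecting a d-approximation with a definable neighborhood and invoking Lemma~\ref{L: local dimension}(1), (2)$\Rightarrow$(3) by a germ comparison that is exactly the content of Lemma~\ref{L: d-app locality}, and (3)$\Rightarrow$(1) by comparing an arbitrary $\mathcal M$-definable d-approximation with a fixed $A$-definable one on a definable neighborhood. The only cosmetic difference is that you cite Lemma~\ref{L: d-app locality} by name in step (2)$\Rightarrow$(3), whereas the paper inlines the underlying germ inequality; the mathematical content is identical.
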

    \begin{proof}
        (1)$\Rightarrow$(2): Assume $b\in\mu(a/A)$. Let $X$ be any $\mathcal M$-definable d-approximation of $\tp(a/A)$ at $a$. By definition, $b\in X$. So to prove (2), it suffices to show that $b\in\mu(a)$. For this, let $U$ be any $\mathcal M$-definable neighborhood of $a$. By Lemma \ref{L: local dimension}, $X\cap U$ is a d-approximation of $\tp(a/A)$ at $a$, and is also $\mathcal M$-definable. Thus $b\in X\cap U\subset U$ as desired.

        (2)$\Rightarrow$(3): Assume (2). We need to show that $b\models\tp(a/A)$. For this, let $X$ be $A$-definable with $a\in X$. By (2), there is an $\mathcal M$-definable d-approximation $Y$ of $\tp(a/A)$ at $a$ with $b\in Y$. Then $$\operatorname{germ}_a(X)\geq\operatorname{germ}(a/A)=\operatorname{germ}_a(Y),$$ so there is a neighborhood $U$ of $a$ with $Y\cap U\subset X\cap U$. Since $X$ and $Y$ are $\mathcal M$-definable, we can choose $U$ to also be $\mathcal M$-definable. By (2), $b\in\mu(a)$, and thus $b\in U$. So $b\in Y\cap U\subset X\cap U\subset X$, as desired.

        (3)$\Rightarrow$(1): Assume (3). Fix an $A$-definable d-approximation $Y$ of $\tp(a/A)$ at $a$. To show (1), let $X$ be any $\mathcal M$-definable d-approximation of $\tp(a/A)$ at $a$. Then $\operatorname{germ}_a(X)=\operatorname{germ}_a(Y)$, so there is a neighborhood $U$ of $a$ so that $X\cap U=Y\cap U$. Since $X$ and $Y$ are $\mathcal M$-definable, we can choose $U$ to also be $\mathcal M$-definable. By (3), $b\in Y\cap U$, and thus $b\in X\cap U\subset X$ as desired.
    \end{proof}

    We also point out that the term `neighborhood' is justified: that is, $\mu(a/A)$ is actually a neighborhood of $a$ in an appropriate sense: 

    \begin{lemma}\label{L: mu open}
        Let $a\in M^n$ and $A$ a parameter set. 
        \begin{enumerate}
            \item If $X\subset M^n$ is an $\mathcal M$-definable d-approximation of $\tp(a/A)$ at $a$, then $\mu(a/A)$ is open in $X$.
            \item In particular, $\operatorname{germ}_a(\mu(a/A))=\operatorname{germ}(a/A)$.
            \end{enumerate}
    \end{lemma}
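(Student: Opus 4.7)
The heart of the lemma is part (1); part (2) will be a short consequence. For (1), my plan is to express $\mu(a/A)$ as a small intersection of $\mathcal{M}$-definable open subsets of $X$ and then invoke Fact \ref{F: int of opens} in the saturated extension $\mathcal{N}$.

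Fix $X$ as given. For each $\mathcal{M}$-definable d-approximation $Y$ of $\tp(a/A)$ at $a$, both $X$ and $Y$ realize $\operatorname{germ}(a/A)$, so there is an $\mathcal{M}$-definable neighborhood $U_Y$ of $a$ with $X\cap U_Y = Y\cap U_Y$; in particular $X\cap U_Y\subseteq Y$, and $X\cap U_Y$ is open in $X$. The key identity to verify is
\[
\mu(a/A) \;=\; \bigcap_Y\,(X\cap U_Y),
\]
where $Y$ ranges over all $\mathcal{M}$-definable d-approximations of $\tp(a/A)$ at $a$. For $\supseteq$, any $b$ in the right-hand side lies in $X\cap U_Y\subseteq Y$ for every such $Y$, hence in $\mu(a/A)$ by definition. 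For $\subseteq$, any $b\in\mu(a/A)$ lies in $X$ (since $X$ itself is one of the sets in the defining intersection of $\mu(a/A)$), and by Lemma \ref{L: mu characterization}(3) we have $\mu(a/A)\subseteq\mu(a)$, which is in turn contained in the $\mathcal{M}$-definable neighborhood $U_Y$. The right-hand side is a small intersection of open subsets of $X(\mathcal{N})$ (small because there are only $|M|$-many $\mathcal{M}$-definable sets, and $\mathcal{N}$ is sufficiently saturated relative to $\mathcal{M}$), so Fact \ref{F: int of opens} applied in $\mathcal{N}$ yields that $\mu(a/A)$ is open in $X$.

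For (2), by Lemma \ref{L: mu characterization}(3) we have $a\in\mu(a/A)$, and by part (1) $\mu(a/A)$ is open in $X$. Hence $\mu(a/A)$ contains an $\mathcal{N}$-open neighborhood of $a$ in $X$; on any such neighborhood $\mu(a/A)$ and $X$ coincide, so they share the same germ at $a$, which equals $\operatorname{germ}(a/A)$ since $X$ is a d-approximation.

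I do not anticipate a serious obstacle. The one point requiring care is simply that the intersection in the proof of (1) is small from $\mathcal{N}$'s perspective, so that Fact \ref{F: int of opens} truly applies; this is guaranteed by the standing assumption that $\mathcal{N}$ is sufficiently saturated with respect to $\mathcal{M}$.
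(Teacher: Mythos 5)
Your proof is correct and uses essentially the paper's approach: express $\mu(a/A)$ as a small intersection of open subsets of $X$ and invoke Fact \ref{F: int of opens} in $\mathcal N$, then deduce (2) from openness plus $a\in\mu(a/A)$. The only difference is in the indexing set: the paper intersects $X\cap U$ directly over $\mathcal M$-definable open neighborhoods $U$ of $a$, using Lemma \ref{L: mu characterization}(2) to recognize $\mu(a/A)$ as $X\cap\mu(a)$, whereas you index over d-approximations $Y$ and, for each, extract a witnessing neighborhood $U_Y$ from germ agreement. Both give the same small family of open subsets of $X$ up to cofinality, so this is a minor variation rather than a genuinely different argument.
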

    \begin{proof} It is clear that (1) implies (2), because any d-approximation of $\tp(a/A)$ at $a$ realizes $\operatorname{germ}(a/A)$. We show only (1). So let $X$ be an $\mathcal M$-definable d-approximation of $\tp(a/A)$ at $a$. Then Lemma \ref{L: mu characterization}(2) gives that $\mu(a/A)$ is the intersection of a small number of open subsets of $X$ -- namely those of the form $X\cap U$ where $U$ is an $\mathcal M$-definable open neighborhood of $a$ (here `small' is in the sense of $\mathcal N$, not $\mathcal M$). (1) now follows by applying Fact \ref{F: int of opens} in the structure $\mathcal N$.
    \end{proof}

        \subsection{Independence and Generic Continuity Revisited}

    We now rewrite key lemmas from Section 4 using infinitesimal neighborhoods. We will be terse, as most of the statements we give are straightforward exercises.

    We begin with independence:

    \begin{lemma}\label{L: germ fiber equality}
        Let $a\in M^n$, and $A\subset B\subset\mathcal M^{eq}$.
        \begin{enumerate}
            \item $\mu(a/B)\subset\mu(a/A)$.
            \item $\mu(a/B)=\mu(a/A)$ if and only if $a$ is independent from $B$ over $A$.
        \end{enumerate}
    \end{lemma}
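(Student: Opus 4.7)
The proof should fall out almost immediately from the work already done: the characterization in Lemma \ref{L: mu characterization}(3) expresses $\mu(a/A)$ as $\mu(a) \cap \{b : b \models \tp(a/A)\}$, and Lemma \ref{L: weakly ind} has already done the heavy lifting for comparing germs/d-approximations of $\tp(a/A)$ and $\tp(a/B)$. So the plan is to just translate these facts into the infinitesimal language.

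For (1), I would argue as follows. Suppose $b \in \mu(a/B)$. By Lemma \ref{L: mu characterization}(3) applied in the parameter set $B$, we have $b \in \mu(a)$ and $b \models \tp(a/B)$. Since $A \subset B$, a realization of $\tp(a/B)$ automatically realizes $\tp(a/A)$. So $b \in \mu(a)$ and $b \models \tp(a/A)$, and Lemma \ref{L: mu characterization}(3) applied at $A$ then gives $b \in \mu(a/A)$.

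For (2), the backward direction uses Lemma \ref{L: weakly ind}, specifically the equivalence between independence of $a$ from $B$ over $A$ and condition (3): that $\tp(a/A)$ and $\tp(a/B)$ have \emph{exactly the same} d-approximations at $a$. Since $\mu(a/A)$ and $\mu(a/B)$ are defined by intersecting the collections of $\mathcal{M}$-definable d-approximations of these respective types, these two intersections are taken over identical collections, hence equal. For the forward direction, I would use that $\dim(\mu(a/A)) = \dim(a/A)$ (noted right after Definition \ref{D: inf neigh}): if $\mu(a/A) = \mu(a/B)$, then $\dim(a/A) = \dim(a/B)$, which is precisely condition (1) of Lemma \ref{L: weakly ind}, i.e.\ independence of $a$ from $B$ over $A$.

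There is essentially no obstacle here; the lemma is really a bookkeeping corollary of Lemmas \ref{L: weakly ind} and \ref{L: mu characterization}. The only mild subtlety worth noting is that one must remember the distinction between $\mathcal{M}$-definable and $A$-definable (or $B$-definable) d-approximations: the sets being intersected to form $\mu(a/A)$ and $\mu(a/B)$ can use arbitrary parameters from $\mathcal{M}$, and it is the \emph{defining condition} (``d-approximation of $\tp(a/A)$ at $a$'' versus ``d-approximation of $\tp(a/B)$ at $a$'') that differs. Lemma \ref{L: weakly ind}(3) is exactly the tool that says these two defining conditions carve out the same collection precisely under independence.
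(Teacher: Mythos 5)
Your proposal is correct and follows exactly the same route as the paper: part (1) via Lemma~\ref{L: mu characterization}(3), the backward direction of (2) via Lemma~\ref{L: weakly ind}(3) (identical collections of d-approximations), and the forward direction via $\dim(\mu(a/A))=\dim(a/A)$ together with Lemma~\ref{L: weakly ind}(1). Your closing remark about the distinction between $\mathcal{M}$-definable d-approximations and the type-dependent defining condition is a fair clarification but does not change the argument.
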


    \begin{proof}
        \begin{enumerate}
            \item This follows by Lemma \ref{L: mu characterization}(3), because every realization of $\tp(a/B)$ also realizes $\tp(a/A)$.
            \item First suppose $a$ is independent from $B$ over $A$. Then by Lemma \ref{L: weakly ind}(3), $\tp(a/A)$ and $\tp(a/B)$ have the same d-approximations at $a$, which by definition gives that $\mu(a/A)=\mu(a/B)$. Conversely, if $\mu(a/A)=\mu(a/B)$ then $$\dim(a/A)=\dim(\mu(a/A))=\dim(\mu(a/B))=\dim(a/B),$$ so $a$ is independent from $B$ over $A$.
        \end{enumerate}
    \end{proof}

    \begin{lemma}\label{L: mu strong ind} Let $a\in M^m$, $b\in M^n$, and $a\subset\mathcal M^{eq}$.
    \begin{enumerate}
    \item $\mu(ab/A)\subset\mu(a/A)\times\mu(b/A)$; thus the natural projections map $\mu(ab/A)$ into $\mu(a/A)$ and $\mu(b/A)$, respectively. 
    \item $\mu(ab/A)=\mu(a/A)\times\mu(b/A)$ if and only if $a$ and $b$ are symmetrically independent over $A$.
\end{enumerate}
\end{lemma}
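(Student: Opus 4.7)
The plan is to deduce both parts from Lemma \ref{L: mu characterization} together with the equivalent characterizations of symmetric independence in Lemma \ref{L: strongly ind}, using as a separate ingredient the observation $\mu(a,b)=\mu(a)\times\mu(b)$ noted after Definition \ref{D: infinitesimal}.

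For (1), I would unpack the definition. If $(a',b')\in\mu(ab/A)$, then by Lemma \ref{L: mu characterization}(3), $(a',b')\in\mu(a,b)=\mu(a)\times\mu(b)$ and $(a',b')\models\tp(ab/A)$. Reading off the coordinates, $a'\in\mu(a)$, $b'\in\mu(b)$, and projecting the type gives $a'\models\tp(a/A)$ and $b'\models\tp(b/A)$. Applying Lemma \ref{L: mu characterization}(3) coordinate-wise then yields $(a',b')\in\mu(a/A)\times\mu(b/A)$.

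For the ``$\Leftarrow$'' direction of (2), I would assume symmetric independence and fix $(a',b')\in\mu(a/A)\times\mu(b/A)$. Any $\mathcal{M}$-definable d-approximation $X$ of $\tp(a/A)$ at $a$ contains $\mu(a/A)$ and hence $a'$, and similarly for $Y$ and $b'$. By Lemma \ref{L: strongly ind}(4), for any such $X,Y$ the product $X\times Y$ is a (necessarily $\mathcal{M}$-definable) d-approximation of $\tp(ab/A)$ at $(a,b)$. Since also $(a',b')\in\mu(a)\times\mu(b)=\mu(a,b)$, Lemma \ref{L: mu characterization}(2) then gives $(a',b')\in\mu(ab/A)$, establishing the reverse inclusion to part (1).

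For the ``$\Rightarrow$'' direction of (2), I would take dimensions. Using $\dim(\mu(c/A))=\dim(c/A)$ together with product additivity of dimension on type-definable sets, the assumed equality yields $\dim(ab/A)=\dim(a/A)+\dim(b/A)$, which is condition (2) of Lemma \ref{L: strongly ind} and hence symmetric independence. The only mildly non-routine step is the product additivity for type-definable sets, which I would verify by a compactness argument inside $\mathcal{N}$: any $\mathcal{N}$-definable superset of $\mu(a/A)\times\mu(b/A)$ must, by saturation, contain some product $X'\times Y'$ of definable supersets of the two factors, so its dimension is at least $\dim(X')+\dim(Y')\geq\dim(a/A)+\dim(b/A)$. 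This is the main obstacle; the rest is a direct translation between the two equivalent characterizations of infinitesimal neighborhoods.
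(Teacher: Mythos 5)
Your proposal is correct and follows essentially the same route as the paper. Part (1) is identical. For (2)~``$\Leftarrow$'' you invoke condition (4) of Lemma \ref{L: strongly ind} rather than condition (3) as the paper does, but these are equivalent and the argument is otherwise the same. For (2)~``$\Rightarrow$'' your compactness argument verifying that the product of type-definable sets has additive dimension is unnecessary: Fact \ref{F: tmin}(3) is stated for type-definable sets, so the paper simply cites it. (Also note your compactness argument only establishes the inequality $\dim(\mu(a/A)\times\mu(b/A))\geq\dim(a/A)+\dim(b/A)$, which together with sub-additivity would suffice, but the cleaner route is just to quote the product formula directly.)
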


\begin{proof}
    \begin{enumerate}
        \item By Lemma \ref{L: mu characterization}(3). Namely, suppose $(x,y)\in\mu(ab/A)$. Then $(x,y)\in\mu(a,b)$, so $x\in\mu(a)$ and $y\in\mu(b)$. Moreover, we have $(x,y)\models\tp(ab/A)$, and thus $x\models\tp(a/A)$ and $y\models\tp(b/A)$. Thus, by Lemma \ref{L: mu characterization}(3), $x\in\mu(a/A)$ and $y\in\mu(b/A)$.
        \item First suppose $a$ and $b$ are symetrically independent over $A$. By (1), we only need to show that if $x\in\mu(a/A)$ and $y\in\mu(b/A)$, then $(x,y)\in\mu(ab/A)$. It is automatic that $(x,y)\in\mu(a,b)$, since $x\in\mu(a)$ and $y\in\mu(b)$. So by Lemma \ref{L: mu characterization}(2), it suffices to show that $(x,y)$ belongs to some $\mathcal M$-definable d-approximation of $\tp(ab/A)$ at $(a,b)$. Now by Lemma \ref{L: strongly ind}, there are $\mathcal M$-definable d-approximations $X$ of $\tp(a/A)$ at $a$ and $Y$ of $\tp(b/A)$ at $b$, so that $X\times Y$ is a d-approximation of $\tp(ab/A)$ at $(a,b)$. Since $x\in\mu(a/A)$ and $y\in \mu(b/A)$, we have $x\in X$ and $y\in Y$, thus $(x,y)\in X\times Y$ as desired.

        Now assume $\mu(ab/A)=\mu(a/A)\times\mu(b/A)$. Then $$\dim(\mu(ab/A))=\dim(\mu(a/A))+\dim(\mu(b/A)),$$ so $\dim(ab/A)=\dim(a/A)+\dim(b/A)$, and thus $a$ and $b$ are symmetrically independent over $A$.
    \end{enumerate}
\end{proof}

Now we turn to generic continuity:

\begin{lemma}\label{L: mu generic cont}
    Let $a\in M^m$, $b\in M^n$, and $A$ a parameter set so that $(a,b)$ is additive over $A$. Then: 
    \begin{enumerate}
        \item The projection $\mu(ab/A)\rightarrow\mu(b/A)$ is open and surjective.
        \item If $a\in\acl(Ab)$, then the projection $\mu(ab/A)\rightarrow\mu(b/A)$ is a homeomorphism.
        \item In particular, if $a$ and $b$ are interalgebraic over $A$, then $\mu(ab/A)$ is the graph of a homeomorphism $\mu(a/A)\rightarrow\mu(b/A)$.
    \end{enumerate}
\end{lemma}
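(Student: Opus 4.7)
The plan is to transfer each of (1), (2), (3) from the d-approximation versions already proved (Lemma \ref{L: d-app open} and Lemma \ref{L: d-app homeo}) to the level of infinitesimal neighborhoods, using Lemma \ref{L: mu open} (which makes $\mu(\cdot/A)$ open inside any $\mathcal{M}$-definable d-approximation) together with a saturation argument for the fiber-filling step.

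For (1), I would first check the easy containment $\pi(\mu(ab/A)) \subseteq \mu(b/A)$: if $(x,y)\in\mu(ab/A)$ then $(x,y)\in\mu(a,b)=\mu(a)\times\mu(b)$ and $(x,y)\models\tp(ab/A)$ by Lemma \ref{L: mu characterization}, so $y\in\mu(b)$ and $y\models\tp(b/A)$, hence $y\in\mu(b/A)$. For \emph{openness}, apply Lemma \ref{L: d-app open} to obtain $\mathcal{M}$-definable d-approximations $X$ of $\tp(ab/A)$ at $(a,b)$ and $Y$ of $\tp(b/A)$ at $b$ with $\pi:X\to Y$ open and surjective (taking the auxiliary parameter inside $\mathcal{M}$ via the independent neighborhood property). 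Openness is first-order, so transfers to $\mathcal{N}$. Since $\mu(ab/A)$ is open in $X$ and $\mu(b/A)$ is open in $Y$, the restriction $\pi:\mu(ab/A)\to\mu(b/A)$ is automatically open.

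The main obstacle is \emph{surjectivity} in (1). Fix $y\in\mu(b/A)$ and consider the partial type $\Sigma(x)$ over $M\cup\{y\}$ asserting $(x,y)\in D$ for every $\mathcal{M}$-definable d-approximation $D$ of $\tp(ab/A)$ at $(a,b)$. Since $\mathcal{N}$ is $|M|^+$-saturated over $\mathcal{M}$, it suffices to verify finite consistency. A finite subset reduces to a single d-approximation $D = D_1\cap\cdots\cap D_k$ — here I need the subsidiary fact that a finite intersection of d-approximations at a common point is again a d-approximation, which follows because the germ at $(a,b)$ is unchanged and the dimension neither drops (it contains a d-approximation from a fixed $A$-definable one by the germ identity) nor exceeds $\dim(ab/A)$. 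Intersecting with the fixed $X$ above and using the common germ $\operatorname{germ}(ab/A)$ at $(a,b)$, the independent neighborhood property yields an $\mathcal{M}$-definable open neighborhood $U$ of $(a,b)$ with $X\cap U\subseteq D$. Then $\pi(X\cap U)$ is an $\mathcal{M}$-definable open subset of $Y$ containing $b$, hence a d-approximation of $\tp(b/A)$ at $b$ by Lemma \ref{L: local dimension}, hence contains $\mu(b/A)\ni y$. Choose $x$ with $(x,y)\in X\cap U\subseteq D$, proving finite consistency.

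For (2), the hypothesis $a\in\acl(Ab)$ forces $(a,b)$ additive over $A$ by sub-additivity, so (1) gives open surjectivity of $\pi:\mu(ab/A)\to\mu(b/A)$. Lemma \ref{L: d-app homeo} produces $\mathcal{M}$-definable $X,Y$ with $\pi:X\to Y$ a homeomorphism; restricting to the open subsets $\mu(ab/A)\subseteq X$ and $\mu(b/A)\subseteq Y$ — whose images match by (1) — yields the desired homeomorphism (injectivity and continuity of the inverse are inherited from $\pi:X\to Y$). Finally, (3) follows from (2) applied to both projections: if $a$ and $b$ are interalgebraic over $A$, then both $\mu(ab/A)\to\mu(a/A)$ and $\mu(ab/A)\to\mu(b/A)$ are homeomorphisms, so $\mu(ab/A)$ is the graph of the composite homeomorphism $\mu(a/A)\to\mu(b/A)$.
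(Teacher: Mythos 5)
Your proof is correct and takes essentially the same approach as the paper's: transfer Lemmas \ref{L: d-app open} and \ref{L: d-app homeo} to the infinitesimal level via Lemma \ref{L: mu open}, and establish surjectivity by a compactness/saturation argument over the directed family of $\mathcal{M}$-definable d-approximations. The only minor variation is that you re-derive the key containment $\mu(b/A)\subseteq\pi(D)$ from the openness of the fixed $\pi\colon X\to Y$ (together with Lemma \ref{L: local dimension}) rather than quoting Lemma \ref{L: d-app images} directly, which is an equivalent but slightly longer route.
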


\begin{proof}
    \begin{enumerate}
        \item Let $\pi$ be the projection sending $(a,b)$ to $b$. By Lemma \ref{L: d-app open}, there are $\mathcal M$-definable d-approximations $X$ of $\tp(ab/A)$ at $(a,b)$, and $Y$ of $\tp(b/A)$ at $b$, so that $\pi$ induces an open surjective map $X\rightarrow Y$. By Lemma \ref{L: mu open}, $\mu(ab/A)$ and $\mu(b/A)$ are open in $X$ and $Y$, respectively, which implies that the projection $\mu(ab/A)\rightarrow\mu(b/A)$ is also open. 
        
        To show $\mu(ab/A)\rightarrow\mu(b/A)$ is surjective, let $y\in\mu(b/A)$. By compactness, it suffices to show that for any $\mathcal M$-definable d-approximation $Z$ of $\tp(ab/A)$ at $(a,b)$, we have $y\in\pi(Z)$. But by Lemma \ref{L: d-app images}, $\pi(Z)$ contains an $\mathcal M$-definable d-approximation of $\tp(b/A)$ at $b$; thus $\pi(Z)$ contains $\mu(b/A)$, and thus $y\in\pi(Z)$.

        \item Repeat the proof of (1), but starting with d-approximations $X$ and $Y$ so that $X\rightarrow Y$ is moreover injective (as provided by Lemma \ref{L: d-app homeo}).
        \item Apply (2) to both $\mu(ab/A)\rightarrow\mu(a/A)$ and $\mu(ab/A)\rightarrow\mu(b/A)$.
    \end{enumerate}
\end{proof}

    \subsection{The Fiber Germ Equation}

We now give a rather surprising consequence of independent neighborhoods. Namely, we prove a fairly strong additivity-like statement that holds even if full additivity fails. This will be extremely useful throughout the rest of the paper.

Recall that our main uses of additive pairs will ultimately stem from Lemma \ref{L: additive pairs}. Roughly, that lemma said that independence is transitive for additive pairs: if $(a,b)$ is additive over $A$ and independent from $B$ over $A$, then all dimension computations in the additive decomposition of $\dim(ab/A)$ remain unchanged over $B$.

The result of this subsection, then, is that one of the clauses of Lemma \ref{L: additive pairs} (namely Lemma \ref{L: additive pairs}(2)) does not require additivity at all. This seems to be a unique and pleasant feature of the topological setting (compared to other sub-additivite dimension theories).

    \begin{lemma}\label{L: fiber germ equation}
        Let $a$ and $b$ be real tuples, and $A$ a parameter set.
        \begin{enumerate}
            \item $\mu(ab/A)_b=\mu(a/Ab)$. That is, the fiber above $b$ of the projection $\mu(ab/A)\rightarrow\mu(b/A)$ is precisely $\mu(a/Ab)$.
            \item $\operatorname{germ}(ab/A)_b=\operatorname{germ}(a/Ab)$ -- that is, the fiber germ of $\operatorname{germ}(ab/A)$ above $b$ is $\operatorname{germ}(a/Ab)$.
            \item If $B\supset A$ and $\dim(ab/B)=\dim(ab/A)$, then $\dim(a/Bb)=\dim(a/Ab)$.
            \end{enumerate}
    \end{lemma}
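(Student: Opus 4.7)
My plan is to take (1) as the engine: once it is established, (2) follows by combining it with the openness of $\mu$ inside a d-approximation, and (3) follows by a dimension comparison after taking fibers of a $\mu$-equality.

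For (1), I apply Lemma \ref{L: mu characterization}(3) symmetrically. A point $x \in N^m$ lies in $\mu(ab/A)_b$ iff $(x,b) \in \mu(a,b) = \mu(a)\times\mu(b)$ and $(x,b) \models \tp(ab/A)$. Since $b \in M^n$, the first condition collapses to $x \in \mu(a)$, while the second, upon treating $b$ as part of the base, becomes $x \models \tp(a/Ab)$. Applying Lemma \ref{L: mu characterization}(3) in the other direction identifies this set as $\mu(a/Ab)$.

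For (2), I fix an $A$-definable d-approximation $X$ of $\tp(ab/A)$ at $(a,b)$, so that by definition of fiber germ, $\operatorname{germ}(ab/A)_b = \operatorname{germ}_a(X_b)$. The inequality $\operatorname{germ}(a/Ab) \leq \operatorname{germ}_a(X_b)$ is immediate from Lemma \ref{L: type germ facts}(3), since $X_b$ is $Ab$-definable and contains $a$. For the reverse, I use (1) to rewrite $\mu(a/Ab) = \mu(ab/A)_b$ and then invoke Lemma \ref{L: mu open}: since $\mu(ab/A)$ is open in $X$, its fiber above $b$ is open in $X_b$. Hence $a$ has an $\mathcal N$-open neighborhood inside $X_b$ contained in $\mu(a/Ab)$, and therefore in any $Ab$-definable d-approximation $Z$ of $\tp(a/Ab)$ at $a$. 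Elementary equivalence between $\mathcal M$ and $\mathcal N$ then transfers this local containment back to $\mathcal M$, yielding $\operatorname{germ}_a(X_b) \leq \operatorname{germ}_a(Z) = \operatorname{germ}(a/Ab)$.

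For (3), suppose $\dim(ab/B) = \dim(ab/A)$. By Lemma \ref{L: germ fiber equality}(2) this forces $\mu(ab/B) = \mu(ab/A)$; taking fibers above $b$ and applying (1) for both $A$ and $B$ gives $\mu(a/Bb) = \mu(a/Ab)$, from which equality of dimensions follows. The main subtlety to watch for is the model-change step in (2): one must check that openness of $\mu(ab/A)$ inside the $\mathcal M$-definable set $X$, witnessed in the $\mathcal N$-topology, yields a genuine germ equality at the point $a \in M$. This is handled by noting that agreement of two $\mathcal M$-definable sets on some basic open neighborhood of a given parameter is a first-order property, so elementary equivalence handles the transfer automatically.
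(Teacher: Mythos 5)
Your proof is correct and follows essentially the same route as the paper: (1) by unpacking Lemma \ref{L: mu characterization}(3), (2) by pushing (1) through the openness of infinitesimal neighborhoods in d-approximations (Lemma \ref{L: mu open}), and (3) as a corollary. You are more explicit than the paper about the $\mathcal{M}$--$\mathcal{N}$ transfer in (2) -- correctly noting that agreement of two $\mathcal{M}$-definable sets on a basic open neighborhood of an $\mathcal{M}$-point is first-order, so the $\mathcal{N}$-germ equality descends -- and in (3) you phrase the argument through $\mu$-sets and part (1) rather than through germs and part (2) as the paper does, but these are cosmetic variants of the same argument.
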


    \begin{proof} (1) follows from Lemma \ref{L: mu characterization}(3): we have $x\in\mu(a/Ab)$ if and only if $x\in\mu(a)$ and $x\models\tp(a/Ab)$; and this is equivalent to the assertion that $(x,b)\in\mu(a,b)$ and $(x,b)\models\tp(ab/A)$. 
    
    (2) now follows from (1) by applying Lemma \ref{L: mu open}(2) to both sides of the equation. That is, by Lemma \ref{L: mu open}(2), we can rewrite the desired statement as $\operatorname{germ}_a(\mu(ab/A)_b)=\operatorname{germ}_a(\mu(a/Ab))$. But (1) gives that $\mu(ab/A)_b=\mu(a/Ab)$, so of course the two sets have the same germ at $a$.
    
     Now for (3), assume $B\supset A$ and $\dim(ab/B)=\dim(ab/A)$. By Lemma \ref{L: weakly ind}, $\operatorname{germ}(ab/B)=\operatorname{germ}(ab/A)$. Now applying (2) gives $$\operatorname{germ}(a/Bb)=(\operatorname{germ}(ab/B))_b=(\operatorname{germ}(ab/A))_b=\operatorname{germ}(a/Ab).$$ Then by Lemma \ref{L: weakly ind} again, $\dim(a/Bb)=\dim(a/Ab)$.
     \end{proof}

     In fact, Lemma \ref{L: fiber germ equation} implies:

     \begin{corollary}\label{C: fiber germ}
         Let $a$ and $b$ be real tuples, and $A$ a parameter set. If $(x,y)\in\mu(ab/A)$, then the fiber $\mu(ab/A)_y$ realizes $\operatorname{germ}(x/Ay)$.
     \end{corollary}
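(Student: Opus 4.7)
The strategy is to reduce the statement to a fiber-germ computation in $\mathcal{N}$, which we may treat as sufficiently saturated in its own right. The main intermediate step is to prove
$$\operatorname{germ}_{(x,y)}(\mu(ab/A)) = \operatorname{germ}(xy/A),$$
that is, that $\mu(ab/A)$ realizes the germ of $\tp(xy/A)$ at every $(x,y)$ inside it, and not merely the germ of $\tp(ab/A)$ at $(a,b)$ (which is Lemma \ref{L: mu open}(2)). Granting this, the conclusion is immediate: taking fiber germs above $y$ and applying Lemma \ref{L: fiber germ equation}(2) inside $\mathcal{N}$ at the point $(x,y)$ gives
$$\operatorname{germ}_x(\mu(ab/A)_y) = \operatorname{germ}_{(x,y)}(\mu(ab/A))_y = \operatorname{germ}(xy/A)_y = \operatorname{germ}(x/Ay),$$
which is precisely the statement that $\mu(ab/A)_y$ realizes $\operatorname{germ}(x/Ay)$.

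To prove the intermediate claim I would first fix an $A$-definable d-approximation $X$ of $\tp(ab/A)$ at $(a,b)$ -- for example, a minimal basic witness, which is a d-approximation by Corollary \ref{C: minimal witness is d-app}. By saturation and strong homogeneity of $\mathcal{N}$, pick $\sigma \in \operatorname{Aut}(\mathcal{N}/A)$ with $\sigma(a,b) = (x,y)$. Since $X$ is $A$-definable, $\sigma$ fixes $X$ setwise; and since $\sigma$ is a homeomorphism of $\mathcal{N}$, it sends $\operatorname{germ}_{(a,b)}(X) = \operatorname{germ}(ab/A)$ to $\operatorname{germ}_{(x,y)}(X)$, while also sending $\operatorname{germ}(ab/A)$ to $\operatorname{germ}(xy/A)$. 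Hence $\operatorname{germ}_{(x,y)}(X) = \operatorname{germ}(xy/A)$, so that $X$ is in fact also a d-approximation of $\tp(xy/A)$ at $(x,y)$. Now $X$ is $\mathcal{M}$-definable, so $\mu(ab/A) \subseteq X$, and by Lemma \ref{L: mu open}(1), $\mu(ab/A)$ is open in $X$; thus $\mu(ab/A)$ and $X$ agree on some open neighborhood of $(x,y)$, yielding the intermediate claim.

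\textbf{Expected obstacle.} The subtle point is that $\mu(ab/A)$ is type-definable over $M$ rather than just over $A$, so one cannot directly transport its germ at $(a,b)$ to its germ at $(x,y)$ by the automorphism $\sigma$ -- there is no reason for $\sigma$ to fix $\mu(ab/A)$ setwise. The workaround above is to bypass $\mu(ab/A)$ itself, running the automorphism argument on an $A$-definable ambient set $X$ and then reinjecting $\mu(ab/A)$ via openness. A secondary bookkeeping point is to ensure $\mathcal{N}$ is saturated enough to legitimately apply Lemma \ref{L: fiber germ equation}(2) at the point $(x,y) \in N^{m+n}$, but this is fine under the paper's convention that ``sufficiently saturated with respect to $\mathcal{M}$'' also supplies all the saturation needed at points of $N$.
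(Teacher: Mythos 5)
Your proof is correct and takes essentially the same approach as the paper's: fix an $A$-definable d-approximation $X$ of $\tp(ab/A)$ at $(a,b)$, use an automorphism of $\mathcal{N}$ over $A$ (equivalently, $\tp(xy/A)=\tp(ab/A)$) to transfer germ information from $(a,b)$ to $(x,y)$, and use that $\mu(ab/A)$ is open in $X$ to pass from $X$ to $\mu(ab/A)$. The only cosmetic difference is ordering -- the paper applies Lemma~\ref{L: fiber germ equation}(2) at $(a,b)\in M$ and then transports the resulting fiber-germ identity to $(x,y)$ by automorphism, so no separate ``bookkeeping'' discussion about invoking the lemma at points of $N$ is needed, whereas you transport first and apply the lemma at $(x,y)$; both are fine, and your intermediate observation that $\mu(ab/A)$ realizes $\operatorname{germ}(xy/A)$ at every $(x,y)$ it contains is a pleasant restatement of what the paper's argument establishes implicitly.
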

     \begin{proof}
         Let $X$ be an $A$-definable d-approxiation of $\tp(ab/A)$ at $(a,b)$. By Lemma \ref{L: fiber germ equation}(2), the fiber $X_b$ realizes $\operatorname{germ}(a/Ab)$. Now if $(x,y)\in\mu(ab/A)$, then $\tp(xy/A)=\tp(ab/A)$. So $X_y$ must realize $\operatorname{germ}(x/Ay)$. Since $\mu(ab/A)$ is open in $X$, the same germ is also realizeed by $\mu(ab/A)_y$.
     \end{proof}

\subsection{Infinitesimal Neighborhoods as Complete Types}

    We end this section with another interesting fact about infinitesimal neighborhoods. Namely, in the case that $\mathcal M$ is saturated in its own cardinality, Lemma \ref{L: mu characterization} (in combination with Lemma \ref{L: d-app locality}) implies that every infinitesimal neighborhood is, in fact, the locus of a complete type. We will not explicitly use this fact, as we might not have access to saturated models. Nevertheless, this seems interesting in its own right, so we include a proof sketch. Note that if we had a saturated model, some of our arguments could be expressed a bit more elegantly using this fact.
    
    \begin{lemma}
        Assume $\mathcal M$ is saturated. Let $a\in M^n$ and $A$ a parameter set. Then there is a set $B\subset M^{eq}$ such that $A\subset B$, $\dim(a/B)=\dim(a/A)$, and $\mu(a/A)$ is precisely the set of realizations of $\tp(a/B)$ (note that $B$ will be small in the sense of $\mathcal N$, but not in the sense of $\mathcal M$).
    \end{lemma}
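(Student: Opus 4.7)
The plan is to construct $B$ by exhausting all $\mathcal{M}$-definable open neighborhoods of $a$, at each stage adjoining a parameter that witnesses a d-approximation inside that neighborhood while preserving the dimension of $a$ over $A$.

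Enumerate the $\mathcal{M}$-definable open neighborhoods of $a$ in $M^n$ as $\{U_\alpha : \alpha < \lambda\}$ with $\lambda \leq |M|$. Build inductively parameters $t_\alpha \in M^{eq}$ and sets $Y_\alpha$ so that, writing $A_\alpha := A \cup \{t_\beta : \beta < \alpha\}$, the following hold for every $\alpha < \lambda$: (i) $Y_\alpha \subseteq U_\alpha$; (ii) $Y_\alpha$ is $A_\alpha t_\alpha$-definable and is a d-approximation of $\tp(a/A)$ at $a$; and (iii) $\dim(a/A_{\alpha+1}) = \dim(a/A)$.

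The successor step is where the independent neighborhood property does its work. Assume inductively that $\dim(a/A_\alpha) = \dim(a/A)$. Lemma \ref{L: weakly ind} then gives $\operatorname{germ}(a/A_\alpha) = \operatorname{germ}(a/A)$, so d-approximations of $\tp(a/A_\alpha)$ at $a$ and of $\tp(a/A)$ at $a$ coincide. Applying Lemma \ref{L: GenOS}(2) to the parameter set $A_\alpha$ and the neighborhood $U_\alpha$ produces $Y_\alpha \subseteq U_\alpha$, a d-approximation of $\tp(a/A_\alpha)$ (hence of $\tp(a/A)$) at $a$, defined over $A_\alpha t_\alpha$ for some $t_\alpha \in M^{eq}$ with $\dim(a/A_\alpha t_\alpha) = \dim(a/A_\alpha) = \dim(a/A)$. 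At a limit stage $\nu$, the required $\dim(a/A_\nu) = \dim(a/A)$ is automatic: any $A_\nu$-definable set witnessing a strictly smaller dimension would use only finitely many of the parameters $t_\beta$, hence would already be definable over $A_\alpha$ for some $\alpha < \nu$, contradicting the inductive hypothesis at stage $\alpha$.

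Set $B := A \cup \{t_\alpha : \alpha < \lambda\}$. Then $|B| \leq |M|$, so $B$ is small in $\mathcal{N}$, and the same finite-parameter argument gives $\dim(a/B) = \dim(a/A)$. For the inclusion $\tp(a/B)(\mathcal{N}) \subseteq \mu(a/A)$: any realization $b$ of $\tp(a/B)$ satisfies $Y_\alpha(b)$ for every $\alpha$ (as $Y_\alpha$ is $B$-definable and contains $a$), hence $b \in U_\alpha$ for every $\alpha$, so $b \in \mu(a)$; together with $A \subseteq B$ giving $b \models \tp(a/A)$, Lemma \ref{L: mu characterization}(3) yields $b \in \mu(a/A)$. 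For the reverse inclusion, let $b \in \mu(a/A)$ and let $B' \subseteq B$ be any finite subset with $A \subseteq B'$. Monotonicity of dimension and $\dim(a/B) = \dim(a/A)$ give $\dim(a/B') = \dim(a/A)$, so by Lemma \ref{L: germ fiber equality}(2) we have $\mu(a/B') = \mu(a/A)$. Therefore $b \in \mu(a/B')$, and Lemma \ref{L: mu characterization}(3) yields $b \models \tp(a/B')$. Since this holds for every such finite $B'$, we obtain $b \models \tp(a/B)$.

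The main subtlety lies in the successor step: one must simultaneously arrange the topological condition $Y_\alpha \subseteq U_\alpha$, the dimension-preservation condition (iii), and the fact that $Y_\alpha$ is a d-approximation of the \emph{original} type $\tp(a/A)$ rather than merely of $\tp(a/A_\alpha)$. The independent neighborhood property (through Lemma \ref{L: GenOS}(2)) furnishes the first two simultaneously, while Lemma \ref{L: weakly ind}, applied to the inductively maintained dimension equality, is what identifies d-approximations of $\tp(a/A_\alpha)$ at $a$ with those of $\tp(a/A)$ at $a$.
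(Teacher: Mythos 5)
Your proposal is correct and takes essentially the same route as the paper's proof: build $B$ as an increasing union $A_\alpha$, using independent neighborhoods to preserve $\dim(a/\cdot)$ at each successor step, and verify the two containments via Lemma \ref{L: mu characterization} and Lemma \ref{L: germ fiber equality}. You enumerate $\mathcal{M}$-definable open neighborhoods rather than $\mathcal{M}$-definable d-approximations and invoke Lemma \ref{L: GenOS}(2) instead of Lemma \ref{L: d-app locality}, but these differences are cosmetic.

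One remark: the single point the paper singles out and you pass over silently is that, as $\alpha$ grows, $A_\alpha$ may exceed the base cardinal $\kappa$ fixed when $\mathcal{M}$ was declared sufficiently saturated, so $A_\alpha$ need no longer be a ``parameter set'' to which Lemma \ref{L: GenOS}(2) literally applies. The paper flags this and explains that it is precisely the full saturation of $\mathcal{M}$ (together with $|A_\alpha| < |M|$) that guarantees the witnessing tuple $t_\alpha$ can be found inside $\mathcal{M}^{eq}$; this is the only place the extra hypothesis of the lemma (that $\mathcal{M}$ be saturated in its own cardinality) is used, so it is worth stating. Your argument is otherwise complete, and this is a missing sentence rather than a flawed idea.
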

    \begin{proof}
        Let $\{X_\alpha:\alpha<\kappa\}$  enumerate  all $\mathcal M$-definable d-approximations of $\tp(a/A)$ at $a$. We may assume that $\kappa$ is a cardinal and $\kappa\leq|M|$. We will inductively build $b_{\alpha}$ and $Y_{\alpha}$, for $\alpha<\kappa$, such that:
        \begin{enumerate}
            \item $b_\alpha$ is a finite tuple from $\mathcal M$.
            \item $Y_{\alpha}\subset X_{\alpha}$ is an $Ab_{\alpha}$-definable d-approximation of $\tp(a/A)$ at $a$.
            \item $\dim(a/A_{\alpha}b_{\alpha})=\dim(a/A_{\alpha})$, where $A_{\alpha}=A\cup\{b_{\beta}:\beta<\alpha\}$. \end{enumerate}

            Indeed, to build $b_{\alpha}$ and $Y_{\alpha}$, one just needs to apply Lemma \ref{L: d-app locality} to $X_\alpha$ with parameter set $A_{\alpha}$. The key point making this work is that $|A_{\alpha}|<|M|$ for all $\alpha$, hence (as $\mathcal M$ is saturated) such a $b_{\alpha}$ can be found inside $\mathcal M$.
            
            Now suppose we have built $b_{\alpha}$ and $Y_{\alpha}$ for al $\alpha$. Let $B=A_{\kappa}$ be $A$ together with all $b_{\alpha}$ for $\alpha<\kappa$ -- equivalently, the union of all $A_{\alpha}$ for $\alpha<\kappa$. It follows by induction, that $\dim(a/A_{\alpha})=\dim(a/A)$ for all $\alpha\leq\kappa$ (for the limit step, use that any drop in dimension is witnessed by a formula with finitely many parameters). In particular, $\dim(a/B)=\dim(a/A)$. 
            
            Now we show that $\mu(a/A)$ is the set of realizations of $\tp(a/B)$. First assume $a'\models\tp(a/B)$. Then for each $\alpha<\kappa$ we have $\tp(a'/Ab_{\alpha})=\tp(a/Ab_{\alpha})$, thus $a'\in Y_{\alpha}\subset X_{\alpha}$. So $a'$ belongs to all $\mathcal M$-definable d-approximations of $\tp(a/A)$ at $a$, and thus $a'\in\mu(a/A)$.
            
            Conversely, suppose $a'\in\mu(a/A)$. If $\tp(a'/B)\neq\tp(a/B)$, then there is $\alpha<\kappa$ so that $\tp(a'/A_{\alpha})\neq\tp(a/A_{\alpha})$. But $A_{\alpha}$ is a small set in $\mathcal M^{eq}$, and $\dim(a/A_{\alpha})=\dim(a/A)$; so we have $\mu(a/A_{\alpha})=\mu(a/A)$. Thus $a'\in\mu(a/A_{\alpha})$, and thus $\tp(a'/A_{\alpha})=\tp(a/A_{\alpha})$, a contradiction.
    \end{proof}

\section{Topological 1-basedness}

Throughout this section, we continue to fix a sufficiently saturated t-minimal structure $\mathcal M$ with the independent neighborhood property, as well as a highly saturated elementary extension $\mathcal N$ where we treat infinitesimal neighborhoods of elements of $\mathcal M$.

The purpose of this section is to introduce our notion of topological 1-basedness and study its basic properties. The first two subsections give some motivation and equivalent definitions. The next three subsections show that topological 1-basedness is preserved under topological reducts and expansions by constants, and that (similarly to strongly minimal theories) if exchange holds then topological 1-basedness can be checked at the level of 2-types. Finally, the last three subsections give examples of topologically 1-based and non-1-based structures.

\subsection{The Definition}

We begin by defining topological 1-basedness.

\begin{definition}\label{D: 1-based type}
    Let $a\in M^m$, $b\in M^n$, and $A$ a parameter set. We say that $\tp(a/Ab)$ is \textit{topologically 1-based over $A$} if $\dim(b/Aa)=\dim(b/A\operatorname{germ}(a/Ab))$. If $A=\emptyset$, we simply say that $\tp(a/b)$ is topologically 1-based.
\end{definition}

\begin{definition}\label{D: 1-base}
    We say that $\mathcal M$ is \textit{topologically 1-based} if $\tp(a/b)$ is 1-based (over $\emptyset$) for all real tuples $a$ and $b$.
\end{definition}

\begin{remark}
    Letting $g:=\operatorname{germ}(a/Ab)$, an equivalent statement in Definition \ref{D: 1-based type} would be that $\dim(b/Aag)=\dim(b/Aa)$, i.e. $b$ is independent from $g$ over $Aa$. This is equivalent because $a\in\dcl(g)$, so $\dim(b/Aag)=\dim(b/Ag)$ is automatic.
\end{remark}

\begin{remark} If $\mathcal M$ satisfies exchange, Definition \ref{D: 1-based type} makes sense even if $b$ is an imaginary tuple (that is, in that case the values $\dim(b/Ag)$ and $\dim(b/Aa)$ are well-defined). Without exchange, dimension theory only works for real tuples, and thus we must assume $b$ is a real tuple. Note that if $\mathcal M$ satisfies exchange, one could also allow imaginary $b$ in Definition \ref{D: 1-base}, and this would result in the same notion of topological 1-basedness (this is essentially Lemma \ref{L: 1-based iff dim 0}). 
\end{remark}

Let us motivate the above definitions. As we stated in the introduction, we intend to treat germs as the topological analog of canonical bases. In stability, one would say $\tp(a/b)$ is 1-based if $\operatorname{Cb}(\tp(a/\acl(b)))\in\acl(a)$. In our case, there is no need to pass from $b$ to $\acl(b)$, as Lemma \ref{L: weakly ind} gives that $\tp(a/b)$ and $\tp(a/\acl(b))$ have the same germ at $a$. Thus, the most obvious translation of the definition from stability would simply be that $\operatorname{germ}(a/b)\in\acl(a)$.

Our chosen formulation is a bit more general. Indeed, let $g=\operatorname{germ}(a/b)$. Notice, then, that if $g\in\acl(a)$, then $\dim(b/g)=\dim(b/a)$, which is exactly our definition. On the other hand, the latter property is all we will need, so we have chosen to stick with it. In fact, we do not know whether the two potential definitions are equivalent.

\subsection{Equivalent Formulations}

Recall that if $\mathcal M$ satisfies exchange, one can extend the dimension theory to imaginary sorts (see \cite[\S 3]{Gagelman}) -- with the caveat that there can be infinite zero-dimensional interpretable sets. For example, if $\mathcal M\models \mathrm{ACVF}$, then the set of all balls of a given radius is 0-dimensional (the intuition is that the data of a ball is `locally constant', so the set of balls is topologically `small'). In this light, the following should help clarify our definition:

\begin{lemma}\label{L: 1-based iff dim 0}
    Assume $\mathcal M$ satisfies exchange.
    \begin{enumerate}
        \item Let $a$ and $b$ be real tuples, $A$ a parameter set, and $g=\operatorname{germ}(a/Ab)$. Then $\tp(a/Ab)$ is topologically 1-based over $A$ if and only if $\dim(g/Aa)=0$.
        \item $\mathcal M$ is topologically 1-based if and only if for all real tuples $a$ and parameter sets $A$, we have $\dim(\operatorname{germ}(a/A)/a)=0$.
    \end{enumerate}
\end{lemma}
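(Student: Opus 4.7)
For part (1), the plan is to apply the Gagelman additive formula $\dim(xy/B) = \dim(x/B) + \dim(y/Bx)$ (valid on imaginaries under exchange) to the pair $(b,g)$ over $Aa$, expanded in both orders. Two key inclusions grease the wheels: $g \in \dcl(Aab)$, since the germ at $a$ of $\tp(a/Ab)$ is determined by $a$ together with $Ab$; and $a \in \dcl(g)$, by the remark following Definition \ref{D: germ code}. Equating the two additive expansions of $\dim(bg/Aa)$ gives
\[
\dim(g/Aa) + \dim(b/Aag) = \dim(b/Aa) + \dim(g/Aab).
\]
The term $\dim(g/Aab) = 0$ by the first inclusion, and $\dim(b/Aag) = \dim(b/Ag)$ by the second, so the identity collapses to $\dim(g/Aa) = \dim(b/Aa) - \dim(b/Ag)$. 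Since $a \in \dcl(g)$ also forces $\dim(b/Ag) \leq \dim(b/Aa)$, vanishing of $\dim(g/Aa)$ is equivalent to the equality $\dim(b/Ag) = \dim(b/Aa)$, which is exactly Definition \ref{D: 1-based type}.

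For part (2), the direction ($\Leftarrow$) is immediate: specialize the hypothesis to a real tuple $A = \{b\}$ and apply part (1) with empty base. The nontrivial direction is ($\Rightarrow$). Assuming $\mathcal M$ topologically 1-based, we must handle an arbitrary parameter set $A$, which may contain imaginaries. The plan is to reduce to the real-tuple case that the hypothesis already covers. First, choose a finite imaginary subtuple $c \subseteq A$ with $\dim(a/c) = \dim(a/A)$ --- such a $c$ exists because dimensions are bounded integers --- and then $\operatorname{germ}(a/A) = \operatorname{germ}(a/c)$ by Lemma \ref{L: weakly ind}.

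It remains to replace the imaginary $c$ by a real tuple $d$ having the same germ at $a$. Write $c = f(d_0)$ with $d_0$ real and $f$ a $\emptyset$-definable function. By sufficient saturation, realize $d \models \tp(d_0/c)$ with $d$ independent from $a$ over $c$, i.e.\ $\dim(d/ca) = \dim(d/c)$. Exchange yields the symmetric statement $\dim(a/cd) = \dim(a/c)$, so by Lemma \ref{L: weakly ind}, $\operatorname{germ}(a/cd) = \operatorname{germ}(a/c)$. Since $c = f(d) \in \dcl(d)$, the left-hand side equals $\operatorname{germ}(a/d)$, so we conclude $\operatorname{germ}(a/A) = \operatorname{germ}(a/d)$. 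Now $d$ is a real tuple, so topological 1-basedness and part (1) (with empty base) give $\dim(\operatorname{germ}(a/d)/a) = 0$, as required.

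The main obstacle is precisely this last reduction from imaginary to real parameters. The symmetry step that transfers $\dim(d/ca) = \dim(d/c)$ to $\dim(a/cd) = \dim(a/c)$ genuinely requires exchange, and without exchange the germ over $c$ need not coincide with the germ over any real tuple. This is why the lemma is stated, and the proof plan organized, under the standing exchange hypothesis.
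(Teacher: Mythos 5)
Your proof is correct and follows essentially the same route as the paper's: part (1) is the same additivity computation (the paper expands $\dim(ab/A)=\dim(abg/A)$, you expand $\dim(bg/Aa)$ two ways, but these are the same identity), and part (2) likewise reduces to a real tuple whose germ at $a$ equals $\operatorname{germ}(a/A)$ by choosing a real preimage of a finite subtuple of $A$, independent from $a$. The only substantive difference is that you spell out the exchange step converting $\dim(d/ca)=\dim(d/c)$ into the symmetric $\dim(a/cd)=\dim(a/c)$, which the paper's phrase ``choose $b$ to be independent from $a$ over $c$'' leaves implicit.
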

\begin{proof}
    \begin{enumerate}
        \item For ease of notation, assume $A=\emptyset$. Since $a\in\dcl(g)$, we have $\dim(b/g)=\dim(b/ga)$. Moreover, since $g\in\dcl(ab)$, we have $\dim(ab)=\dim(abg)$. Now combining with additivity, we compute: $$\dim(a)+\dim(b/a)=\dim(ab)=\dim(abg)$$ $$=\dim(a)+\dim(g/a)+\dim(b/ga)=\dim(a)+\dim(g/a)+\dim(b/g).$$ Rearranging now gives $$\dim(b/a)=\dim(g/a)+\dim(b/g).$$ In particular, $\dim(g/a)=0$ holds if and only if $\dim(b/a)=\dim(b/g)$, as desired.
        \item The right to left direction is trivial by (1); we show left to right. So assume $\mathcal M$ is topologically 1-based. Let $a$ be a real tuple, $A$ be a parameter set, and $g=\operatorname{germ}(a/A)$. Let $c$ be a finite tuple from $A$ so that $\dim(a/c)=\dim(a/A)$, and thus $\operatorname{germ}(a/c)=g$. Then let $b$ be a real tuple so that $c\in\dcl(b)$. We may choose $b$ to be independent from $a$ over $c$; thus $\operatorname{germ}(a/b)=g$ as well. Then since $\mathcal M$ is topologically 1-based, and by (1), we get that $$\dim(\operatorname{germ}(a/b)/a)=\dim(g/a)=0,$$ which proves (2).
        \end{enumerate}
\end{proof}


Returning to the general case, one should think of topologically 1-based types as `locally linear' -- namely, $\tp(a/b)$ is topologically 1-based if in some neighborhood of $b$, any conjugate $b'\equiv_ab$ satisfies $\operatorname{germ}(a/b)=\operatorname{germ}(a/b')$. In some sense, this is the most natural way to adapt the stability-theoretic definition using canonical bases to the topological setting. Namely, in a stable theory, the asserion that $\operatorname{Cb}(\operatorname{stp}(a/b))\in\acl(a)$ really says that $\operatorname{Cb}(\operatorname{stp}(a/b'))$ is constant on a `big' set of realizations $b'\models\tp(b/a)$ (here `big' means non-forking over $a$). In the topological setting, we make exactly the same statement but interpret `big' as `on a neighborhood of $b$ in $\tp(b/a)$' (see Lemma \ref{L: 1-based iff constant germ} below). In general, passing from `$\operatorname{acl}^{eq}$' to `constant on a neighborhood' seems to be the correct philosophy in finding topological analogs of stability-theoretic phenomena.

Lemma \ref{L: 1-based iff constant germ} makes the above discussion precise:

\begin{lemma}\label{L: 1-based iff constant germ}
    Let $a$ and $b$ be real tuples, and $A$ a parameter set. The following are equivalent:
    \begin{enumerate}
        \item $\tp(a/Ab)$ is topologically 1-based over $A$.
        \item The map $y\mapsto\operatorname{germ}(a/Ay)$ is constant for $y\in\mu(b/Aa)$.
    \end{enumerate}
\end{lemma}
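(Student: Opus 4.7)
Plan: Set $g = \operatorname{germ}(a/Ab)$. The strategy is to reformulate (1) in terms of infinitesimal neighborhoods and then exploit the automorphism-invariance characterization of germ codes (Definition \ref{D: germ code}).

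First, I reduce (1) to the equality $\mu(b/Aa) = \mu(b/Aag)$. As noted in the remark following Definition \ref{D: 1-based type}, since $a \in \dcl(g)$ we have $\dim(b/Ag) = \dim(b/Aag)$, so (1) reads $\dim(b/Aa) = \dim(b/Aag)$; by Lemma \ref{L: germ fiber equality}(2) applied with parameter sets $Aa \subseteq Aag$, this is equivalent to $\mu(b/Aa) = \mu(b/Aag)$. The inclusion $\mu(b/Aag) \subseteq \mu(b/Aa)$ is automatic, so it suffices to relate the reverse inclusion to (2).

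For (1) $\Rightarrow$ (2), pick $y \in \mu(b/Aa) = \mu(b/Aag)$. By Lemma \ref{L: mu characterization}(3), $y \models \tp(b/Aag)$, so there is an automorphism $\sigma$ of $\mathcal N$ fixing $Aag$ pointwise with $\sigma(b) = y$. By Definition \ref{D: germ code}, $\sigma$ sends $\operatorname{germ}(a/Ab)$ to $\operatorname{germ}(a/Ay)$; combined with $\sigma(g) = g$, this forces $\operatorname{germ}(a/Ay) = g$. For (2) $\Rightarrow$ (1), I show $\mu(b/Aa) \subseteq \mu(b/Aag)$: given $y \in \mu(b/Aa)$, Lemma \ref{L: mu characterization}(3) yields $y \in \mu(b)$ and $y \models \tp(b/Aa)$, so I pick an automorphism $\sigma$ fixing $Aa$ with $\sigma(b) = y$; then $\sigma(g) = \operatorname{germ}(a/Ay) = g$ by hypothesis (2), so $\sigma$ also fixes $g$. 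Thus $y \models \tp(b/Aag)$, and Lemma \ref{L: mu characterization}(3) places $y \in \mu(b/Aag)$.

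The main delicate point is the initial translation of (1) into the equality $\mu(b/Aa) = \mu(b/Aag)$, which crucially uses $a \in \dcl(g)$ to absorb $a$ into $g$ in the dimension computation. After that, the core equivalence reduces to a transparent back-and-forth between $\tp(b/Aa)$ and $\tp(b/Aag)$ using automorphisms of $\mathcal N$, with germs respected by such automorphisms via Definition \ref{D: germ code}.
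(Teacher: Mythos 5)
Your proof is correct, and the forward direction (1)\,$\Rightarrow$\,(2) follows essentially the same path as the paper: reduce (1) to the equality $\mu(b/Aa)=\mu(b/Aag)$ via Lemma \ref{L: germ fiber equality}(2) and the observation $a\in\dcl(g)$, then transport $g$ by an automorphism fixing $Aag$. (The paper phrases this with types rather than automorphisms, but that is the same argument.) Your converse direction, however, takes a genuinely different route. The paper proves (2)\,$\Rightarrow$\,(1) by selecting a particular $b'\in\mu(b/Aa)$ with $\dim(b'/Aabg)$ maximal and then running a dimension computation through $\tp(ab'g/A)=\tp(abg/A)$ to deduce $\dim(b/Ag)=\dim(b/Aa)$. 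You instead show the full inclusion $\mu(b/Aa)\subseteq\mu(b/Aag)$: for any $y\in\mu(b/Aa)$, any automorphism fixing $Aa$ and carrying $b$ to $y$ automatically fixes $g$ because $\sigma(g)=\operatorname{germ}(a/Ay)=g$ by (2), so $y\models\tp(b/Aag)$ and Lemma \ref{L: mu characterization}(3) applies. This makes the two directions completely symmetric, both passing through a single set equality, and avoids the somewhat opaque dimension bookkeeping in the paper's converse. The one thing to be a little careful about --- and which both you and the paper treat implicitly --- is that these automorphisms live in $\mathcal N$, and one is using that germ codes, being classes of a $\emptyset$-definable equivalence relation, are equivariant under automorphisms of $\mathcal N$ and compute compatibly between $\mathcal M^{eq}$ and $\mathcal N^{eq}$ for $M$-tuples; this is standard but worth flagging.
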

\begin{proof} We may assume $A=\emptyset$.
    Let $g=\operatorname{germ}(a/b)$. First assume $\tp(a/b)$ is topologically 1-based, and let $b'\in\mu(b/a)$. We show that $g=\operatorname{germ}(a/b')$. Indeed, by 1-basedness, $\dim(b/a)=\dim(b/ga)$, thus $\mu(b/a)=\mu(b/ga)$. So $b'\in\mu(b/ga)$, and thus $b'\models\tp(b/ga)$. So $\tp(abg)=\tp(ab'g)$, and since $g=\operatorname{germ}(a/b)$, we conclude that also $g=\operatorname{germ}(a/b')$.

    Conversely, suppose $y\mapsto\operatorname{germ}(a/y)$ is constant on $\mu(b/a)$. Then for any $b'\in\mu(b/a)$ we have $\tp(abg)=\tp(ab'g)$. Now since $\dim(\mu(b/a))=\dim(b/a)$, there is an element $b'\in\mu(b/a)$ with $\dim(b'/abg)=\dim(b/a)=\dim(b'/a)$. It follows that $\dim(b'/ag)=\dim(b'/g)=\dim(b'/a)$ as well, and since $\tp(abg)=\tp(ab'g)$, this implies $\dim(b/g)=\dim(b/a)$.  
\end{proof}

\subsection{Expansion by Constants}

Recall that the class of locally modular strongly minimal theories is preserved under naming constants and taking reducts (as is the analogous class of `linear' o-minimal theories). In the next two subsections, we prove the same for topological 1-basedness.

\begin{lemma}\label{L: constants}
Let $C$ be a paraneter set, and let $\mathcal M_C$ be $\mathcal M$ expanded by naming the elements of $C$. Then $\mathcal M_C$ is topologically 1-based if and only if $\mathcal M$ is topologically 1-based.
\end{lemma}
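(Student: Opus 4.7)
The plan is to use Lemma \ref{L: fiber germ equation}(3) -- the fiber germ equation -- as the key tool for transferring dimension equalities between the base $\emptyset$ and the base $C$.

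For the forward direction, we would assume $\mathcal M$ is topologically 1-based. Given real $a, b$, we want $\dim(b/Ca) = \dim(b/Cg)$ for $g = \operatorname{germ}(a/Cb)$. We first choose a finite $c \subseteq C$ satisfying $\operatorname{germ}(a/cb) = g$, $\dim(b/ca) = \dim(b/Ca)$, and $\dim(b/cg) = \dim(b/Cg)$; each condition is finitely supported, and the three can be simultaneously arranged by taking a large enough $c$. Assuming $c$ is real (the imaginary case reduces to this via a real coding, using that the data carried by $c$ is finitely supported), we apply topological 1-basedness of $\mathcal M$ to $\tp(a/bc)$ to obtain $\dim(bc/a) = \dim(bc/g)$, which equals $\dim(bc/ag)$ since $a \in \dcl(g)$. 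Now Lemma \ref{L: fiber germ equation}(3), applied to the real tuples $b, c$ with parameter sets $\{a\} \subseteq \{a, g\}$, yields $\dim(b/acg) = \dim(b/ac)$, so $\dim(b/cg) = \dim(b/ca)$, and hence $\dim(b/Cg) = \dim(b/Ca)$.

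For the backward direction, we would assume $\mathcal M_C$ is topologically 1-based. Given real $a, b$, we use saturation of $\mathcal M$ to find real tuples $a', b'$ with $\tp(a'b'/\emptyset) = \tp(ab/\emptyset)$ and $\dim(a'b'/C) = \dim(a'b'/\emptyset)$. Two applications of Lemma \ref{L: fiber germ equation}(3) give $\dim(b'/Ca') = \dim(b'/a')$ and $\dim(a'/Cb') = \dim(a'/b')$; by Lemma \ref{L: weakly ind}, the latter upgrades to $\operatorname{germ}(a'/Cb') = \operatorname{germ}(a'/b') =: g'$. Now unwinding topological 1-basedness of $\mathcal M_C$ applied to $(a', b')$ becomes $\dim(b'/Ca') = \dim(b'/Cg')$, and combining with the first equality gives $\dim(b'/a') = \dim(b'/Cg')$. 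Since $a' \in \dcl(g')$, we then obtain
\[
\dim(b'/a') = \dim(b'/Cg') \leq \dim(b'/g') \leq \dim(b'/a'),
\]
forcing equality throughout, so $\dim(b'/a') = \dim(b'/g')$. Transferring via an automorphism realizing $a'b' \equiv_\emptyset ab$ yields $\dim(b/a) = \dim(b/\operatorname{germ}(a/b))$.

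The main obstacle is the saturation argument for the backward direction: one must verify consistency of the partial type $\tp(ab/\emptyset) \cup \{\neg\varphi(x,y) : \varphi\text{ is over }C\text{ with }\dim(\varphi) < \dim(ab/\emptyset)\}$. This is the standard dimension count: any finite disjunction of such $\varphi_i$ has dimension strictly less than $\dim(ab/\emptyset)$, and thus cannot cover the formula defining $\tp(ab/\emptyset)$, so finite satisfiability holds. A secondary technicality is the imaginary case of $C$ in the forward direction; this can be sidestepped by representing each needed imaginary in $c$ via a real finite tuple and verifying that the chosen germ and dimension witnesses are preserved under the substitution.
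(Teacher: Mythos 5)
Your two directions match the paper's argument in structure, and the dimension and germ bookkeeping you carry out (including the fiber germ equation applications and the chain of inequalities forced to equalities) is correct. The backward direction (from $\mathcal M_C$ to $\mathcal M$) is essentially identical to the paper's first paragraph, and your saturation/compactness justification for finding a $C$-independent realization of $\tp(ab)$ is a fine way to make that step explicit.

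There is, however, a genuine gap in the forward direction, and it is precisely the step you dismiss as a ``secondary technicality'' to be ``sidestepped.'' Since $C\subseteq\mathcal M^{eq}$, the finite subtuple $c$ need not be real, and the definition of topological 1-basedness of $\mathcal M$ only applies to $\tp(a/b')$ with $b'$ a \emph{real} tuple -- so you cannot apply 1-basedness to $\tp(a/bc)$ until $c$ is replaced by a real tuple $d$. Your claim that this is just a matter of ``representing each needed imaginary in $c$ via a real finite tuple and verifying that the chosen germ and dimension witnesses are preserved under the substitution'' is not correct as stated: for an arbitrary real $d$ with $c\in\dcl(d)$, the witnesses are \emph{not} preserved -- $d$ typically carries strictly more information than $c$, so $\operatorname{germ}(a/bd)$ may be strictly smaller than $\operatorname{germ}(a/bc)$ and $\dim(b/da)$ may drop below $\dim(b/ca)$. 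What is actually needed (and what the paper does) is to choose $d$ to be symmetrically independent from $ab$ over $c$, so that $\dim(ab/d)=\dim(ab/c)$; then Lemma \ref{L: fiber germ equation}(2) gives $\operatorname{germ}(a/bd)=\operatorname{germ}(a/bc)=g$, and Lemma \ref{L: fiber germ equation}(3) propagates $\dim(b/da)=\dim(b/ca)$. Moreover, the third dimension equality ($\dim(b/dg)=\dim(b/cg)$) is \emph{not} recovered; instead one only gets $\dim(b/cg)\ge\dim(b/dg)$ from $c\in\dcl(d)$, and the argument finishes by chaining inequalities rather than by substitution. You should spell this step out -- it is where the content of the lemma actually lives -- and you should also note, as the paper does, that $\mathcal M_C$ is still t-minimal with the independent neighborhood property, which is needed before any of the cited lemmas can be invoked in the expanded structure.
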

\begin{proof}

Before starting the proof, we should note that $\mathcal M_C$ is also t-minimal with the same topology. Moreover, $\mathcal M_C$ also has independent neighborhoods. Indeed, given any $a$ and $A$, the independent neighborhood property for $\mathcal M$ gives us arbitrarily small neighborhoods $U$ of $a$ defined over parameters $t$ with $\dim(a/CAt)=\dim(a/CA)$; and this is equivalent to the independent neighborhood property for $\mathcal M_C$. Thus, for the rest of the proof, we can use all of the results collected to this point on independent neighborhoods.

We now proceed with the proof. First suppose $\mathcal M_C$ is topologically 1-based. To show $\mathcal M$ is topologically 1-based, let $a$ and $b$ be real tuples. We wish to show that $\tp(a/b)$ is topologically 1-based. As this only depends on $\tp(ab)$, we may replace $(a,b)$ with any other realization of the same type. In particular, we may assume that $(a,b)$ is independent from $C$, i.e. $\dim(ab/C)=\dim(ab)$. By Lemma \ref{L: fiber germ equation}, this implies that $\dim(a/Cb)=\dim(a/b)$ and $\dim(b/Ca)=\dim(b/a)$.

Now let $g=\operatorname{germ}(a/b)$. Since $\dim(a/Cb)=\dim(a/b)$, we also have $g=\operatorname{germ}(a/Cb)$. Then the fact that $\mathcal M_C$ is topologically 1-based gives $\dim(b/Cg)=\dim(b/Ca)$. But now we compute:

$$\dim(b/g)\geq\dim(b/Cg)=\dim(b/Ca)=\dim(b/a),$$ and this implies that $\tp(a/b)$ is topologically 1-based.

Now assume $\mathcal M$ is topologically 1-based; we show that $\mathcal M_C$ is topologically 1-based. To do this, fix real tuples $a$ and $b$, and set $g=\operatorname{germ}(a/Cb)$. We want to show that $\tp(a/Cb)$ is topologically 1-based over $C$ -- i.e. that $\dim(b/Cg)=\dim(b/Ca)$.

Let $c$ be a finite tuple from $C$ such that $\dim(a/Cb)=\dim(a/cb)$, and $\dim(b/Cg)=\dim(b/cg)$. Then let $d$ be a real tuple such that $c\in\dcl(d)$. Replacing $d$ with an independent realization of $\tp(d/c)$, we may assume that $ab$ and $d$ are symmetrically independent over $c$ (for example, this is, essentially, \cite[Corollary 2.16]{t-minimal-johnson}). 

Let $g=\operatorname{germ}(a/Cb)$. By the choice of $c$ we also have $g=\operatorname{germ}(a/cb)$. Now by the choice of $d$ we have $\dim(ab/d)=\dim(ab/c)$, and thus Lemma \ref{L: fiber germ equation}(2) gives $\operatorname{germ}(a/bd)=\operatorname{germ}(a/bc)=g$. Since $\mathcal M$ is topologically 1-based, it follows that $\dim(bd/g)=\dim(bd/a)$. By Lemma \ref{L: fiber germ equation}(3), this implies $\dim(b/dg)=\dim(b/da)$.

Next, recall that $\dim(ab/d)=\dim(ab/c)$. By Lemma \ref{L: fiber germ equation}(3) again, this implies $\dim(b/da)=\dim(b/ca)$. Then, combining with the previous paragraph, we have:

$$\dim(b/Cg)=\dim(b/cg)\geq\dim(b/dg)$$ $$=\dim(b/da)=\dim(b/ca)\geq\dim(b/Ca),$$ which implies that $\tp(a/Cb)$ is topologiclly 1-based over $C$.
\end{proof}

\subsection{Reducts}\label{ss: reducts} Now we show that topological 1-basedness is preserved under reducts in an appropriate sense. There is a subtlety here in that one needs to preserve the topology and independent neighborhoods for this to even make sense. Thus, we define:

\begin{definition}
    An \textit{admissible reduct of $\mathcal M$} is a structure $\mathcal M^-$ such that:
    \begin{enumerate}
        \item The universe of $\mathcal M^-$ is $\mathcal M$.
        \item Every (parametrically) definable set in $\mathcal M^-$ is also parametrically definable in $\mathcal M$.
        \item With respect to the same topology $\tau$, $\mathcal M^-$ is also t-minimal with the independent neighborhood property. 
    \end{enumerate}
\end{definition}

In all known examples of t-minimal structures with independent neighborhoods, the independent neighborhoods are witnessed by a fixed definable basis. This means there is a $\emptyset$-definable basis $\{U_t\}$ such that for all $a\in M^n$ and $A$, there are arbitrarily small $U_{t_1}\times\dots\times U_{t_n}$ containing $a$ and satisfying $\dim(a/At_1\dots t_n)=\dim(a/A)$. In this case, any reduct of $\mathcal M$ containing the same family $\{U_t\}$ is admissible. In contrast, we do not know whether any reduct defining the same topology is admissible.

Now we show:

\begin{lemma}\label{L: reducts}
    Let $\mathcal M^-$ be an admissible reduct of $\mathcal M$. If $\mathcal M$ is topologically 1-based, then $\mathcal M^-$ is topologically 1-based. 
\end{lemma}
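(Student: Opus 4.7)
The plan is to mirror the structure of the proof of Lemma \ref{L: constants}, replacing the expansion-by-constants step with a real-tuple proxy for the $\mathcal M^-$-germ. Fix real tuples $a, b$ in $\mathcal M$; we aim to show $\dim^-(b/g^-) = \dim^-(b/a)$ where $g^- := \operatorname{germ}^-(a/b)$. First I would use Lemma \ref{L: germ codes} to realize $g^-$ as the $\sim$-equivalence class of a pair $(a, t)$, where $t$ is a finite \emph{real} tuple in $\dcl^-(b)$ parameterizing an $\mathcal M^-$-definable minimal basic witness $Y_t$ of $a$ over $b$. Since $\mathcal M^-$-definable sets are $\mathcal M$-definable, we may view everything inside $\mathcal M$: in particular $t \in \dcl(b)$ and $g^- \in \dcl^-(at) \subseteq \dcl(at)$.

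Next, as in Lemma \ref{L: constants}, I would decouple $t$ from $b$ by extending: pick $b^*$ realizing $\tp(b/at)$ symmetrically independent from $b$ over $at$, so that $b^* \equiv_{at} b$ (whence $g^- = \operatorname{germ}^-(a/b^*)$ too, since this germ depends only on $(a, t)$), and $(b, b^*)$ is symmetrically independent over $at$. Apply the topological 1-basedness of $\mathcal M$ to the real pair $(a, b^*t)$: setting $g_1 := \operatorname{germ}(a/b^*t)$, we obtain $\dim(b^*t/g_1) = \dim(b^*t/a)$. Two invocations of Lemma \ref{L: fiber germ equation}(3) — projecting first to extract the independence of $b^*$ and then transferring across the independent extension $(b, b^*)$ — should yield $\dim(b/g_1 t) = \dim(b/at)$ and hence, using that $g^- \in \dcl(at)$ and the observation that $g_1 \le g^-$ as germs at $a$ (so that $g^- \in \dcl(g_1, t)$ via the relation witnessed by $t$), the equality $\dim(b/g^-) = \dim(b/a)$ in $\mathcal M$.

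Finally — and this is where admissibility is essential — since $\mathcal M^-$ inherits t-minimality and the independent neighborhood property with respect to the same topology $\tau$, every lemma from Sections 2–4 holds verbatim inside $\mathcal M^-$. Hence the entire argument above can be re-run inside $\mathcal M^-$, with $g_1$ replaced by its $\mathcal M^-$-analog (which is $g^-$ itself, since $t$ was chosen to $\mathcal M^-$-code the minimal basic witness), yielding the desired $\dim^-(b/g^-) = \dim^-(b/a)$. The main obstacle I anticipate is in the bridging step: the inequality $\dim^- \ge \dim$ goes the wrong way to lift $\mathcal M$-equalities directly to $\mathcal M^-$-equalities, so one cannot simply conclude from the $\mathcal M$-computation. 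The point of choosing $t$ real and extending $b$ to $b^*$ independently is precisely to arrange a configuration where the same chain of fiber-germ and sub-additivity inequalities produces the correct $\dim^-$ equalities in the reduct; verifying that this chain closes without a "dimension leak" between the two structures is the delicate part of the argument.
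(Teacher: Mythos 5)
Your proposal has a genuine gap, and to your credit you flag it yourself: the argument (at best) produces an equality of $\mathcal M$-dimensions, $\dim_{\mathcal M}(b/g^-) = \dim_{\mathcal M}(b/a)$, while what is needed is the $\mathcal M^-$-dimension equality. Since $\mathcal M^-$ has fewer definable sets, $\dim_{\mathcal M^-} \geq \dim_{\mathcal M}$, and this inequality cannot be converted into the desired conclusion. The fix you sketch --- ``re-run the entire argument inside $\mathcal M^-$'' --- is circular: the only non-formal input in your chain is the topological 1-basedness of $\mathcal M$, applied to $\operatorname{germ}_{\mathcal M}(a/b^*t)$. If you replace that germ by its $\mathcal M^-$-analog and apply 1-basedness in $\mathcal M^-$, you are assuming exactly what you are trying to prove. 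There is also a smaller unaddressed point: passing from $\dim(b/g_1 t) = \dim(b/at)$ to $\dim(b/g^-) = \dim(b/a)$ requires $\dim(b/at) = \dim(b/a)$, which is not automatic from $t \in \dcl^-(b)$ (one gets $\dim(bt/a)=\dim(b/a)$ via interalgebraicity, but without additivity this does not yield $\dim(b/at)=\dim(b/a)$ unless $\dim(t/a)=0$, which you have not arranged).

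The paper closes the gap by a different mechanism. It first proves a Claim (via weak cell decomposition) that for any $\mathcal M^-$-definable set $X$, $\dim_{\mathcal M^-}(X) = \dim_{\mathcal M}(X)$. Then, using compactness and this Claim, it replaces $(a,b)$ with a realization of $\tp_{\mathcal M^-}(ab)$ for which $\dim_{\mathcal M}(ab) = \dim_{\mathcal M^-}(ab)$. From that normalization, one deduces that an $\mathcal M^-$-basis of $(a,b)$ is still a basis in $\mathcal M$, hence that $\operatorname{germ}_{\mathcal M^-}(a/b) = \operatorname{germ}_{\mathcal M}(a/b) =: g$, and by the fiber germ equation that $\dim_{\mathcal M^-}(b/a) = \dim_{\mathcal M}(b/a)$. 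The punchline is a proof by contradiction: if $\dim_{\mathcal M^-}(b/g) < \dim_{\mathcal M^-}(b/a) = \dim_{\mathcal M}(b/a) = \dim_{\mathcal M}(b/g)$, the strict inequality is witnessed by an $\mathcal M^-$-definable $g$-set containing $b$ of small $\mathcal M^-$-dimension; by the Claim this set has the same small $\mathcal M$-dimension, forcing $\dim_{\mathcal M}(b/g) < \dim_{\mathcal M}(b/g)$. Your real-tuple-proxy idea for the germ is attractive, but without the normalization that equalizes the two dimension theories on the chosen configuration, the chain does indeed leak between structures exactly where you suspected.
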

\begin{proof}
    By Lemma \ref{L: constants}, we may assume all basic relations of $\mathcal M^-$ are definable without parameters in $\mathcal M$. Now we begin with:
    \begin{claim}
        If $X\subset M^n$ is $\mathcal M^-$-definable, then $\dim(X)$ is the same in $\mathcal M^-$ and $\mathcal M$.
    \end{claim}
    \begin{proof} Working first in $\mathcal M^-$, let $\{X_i\}$ be a weak cell-decomposition of $X$ (in the sense of \cite[Proposition 2.50]{t-minimal-johnson}). So each $X_i$ is a weak $k_i$-cell for some $k_i$. Then $X_i$ is still a weak $k_i$-cell in the sense of $\mathcal M$, and it follows that in both structures $\dim(X)$ is equal to the largest $k_i$.
    \end{proof}

    The claim also implies the analogous statement for germs: the dimension of a definable germ in $\mathcal M^-$ remains the same when viewed as a definable germ in $\mathcal M$  (on the other hand, the data of which germ is $\operatorname{germ}(a/A)$ for fixed $a$ and $A$ may change).
    
    Now toward a proof of the lemma, fix real tuples $a$ and $b$. We want to show that $\tp_{\mathcal M^-}(a/b)$ is topologically 1-based. We can freely replace $(a,b)$ by any other realization of $\tp_{\mathcal M^-}(ab)$. In particular, by a compactness argument (using the above claim), one finds a realization such that $\dim_{\mathcal M}(ab)=\dim_{\mathcal M^-}(ab)$. Now let $c$ be a basis of $(a,b)$ in the sense of $\mathcal M^-$; then one still has $(a,b)\in\acl_{\mathcal M}(c)$, and it follows that $c$ is still a basis in the sense of $\mathcal M$. In particular, if $X$ is an $(a,b)$-witness for $c$ in the sense of $\mathcal M^-$, then $X$ is also an $(a,b)$-witness for $c$ in the sense of $\mathcal M$. It follows that $\operatorname{germ}_{\mathcal M^-}(a/b)$ and $\operatorname{germ}_{\mathcal M}(a/b)$ have a common realization, and thus are equal. Call this common germ $g$. By Lemma \ref{L: fiber germ equation}(1), $\operatorname{germ}_{\mathcal M^-}(b/a)$ and $\operatorname{germ}_{\mathcal M}(b/a)$ are both the fiber germ $g_a$, and thus $$\dim_{\mathcal M^-}(b/a)=\dim(\operatorname{germ}_{\mathcal M^-}(b/a))=\dim(\operatorname{germ}_{\mathcal M}(b/a))=\dim_{\mathcal M}(b/a).$$

    Now since $\mathcal M$ is topologically 1-based, we have $\dim_{\mathcal M}(b/g)=\dim_{\mathcal M}(b/a)$. If $\tp_{\mathcal M^-}(a/b)$ were not topologically 1-based, then we would have $$\dim_{\mathcal M^-}(b/g)<\dim_{\mathcal M^-}(b/a)=\dim_{\mathcal M}(b/a)=\dim_{\mathcal M}(b/g).$$ So there is some $g$-definable set in $\mathcal M^-$ which contains $b$ and has dimension less than $\dim_{\mathcal M}(b/g)$. But this assertion is still valid in $\mathcal M$, so one obtains $\dim_{\mathcal M}(b/g)<\dim_{\mathcal M}(b/g)$, a contradiction.
\end{proof}

\subsection{Reduction to Plane Curves} A fundamental result in the theory of strongly minimal sets says that if such a set is locally modular, it is also 1-based; equivalently, non-1-basedness can always be witnessed in the plane. A similar result was proven for weakly 1-based geometric theories in \cite[Proposition 2.15]{BerVas}. We do not know whether the analogous statement holds of topological 1-basedness in complete generality -- however it does hold assuming exchange (essentially because of the result of \cite{BerVas}). Here we give a short topological proof.

\begin{lemma} Assume $\mathcal M$ has exchange and is not topologically 1-based. Then there are $a,b\in M$ and a real tuple $t$ so that $\dim(a/t)=\dim(b/t)=\dim(ab/t)=1$ and $\tp(ab/t)$ is not topologically 1-based over $\emptyset$.
\end{lemma}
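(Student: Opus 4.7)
The plan is to reduce an arbitrary non-1-based configuration $(\alpha,\beta)$ to the special case where $\alpha=(a,b)$ is a pair of singletons interalgebraic over $\beta$ with $\dim(\alpha/\beta)=1$; in that case, taking $t=\beta$ directly yields the conclusion, since $\operatorname{germ}(ab/\beta)=\operatorname{germ}(\alpha/\beta)$ transfers the non-1-basedness verbatim.

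First, I would replace $(\alpha,\beta)$ with a pair of bases, so both are independent tuples; by Lemma \ref{L: mu generic cont}(3), the infinitesimal neighborhoods of interalgebraic representatives are homeomorphic, so non-1-basedness is preserved. Let $g=\operatorname{germ}(\alpha/\beta)$; by Lemma \ref{L: 1-based iff dim 0} and exchange we have $\dim(g/\alpha)\geq 1$, and by passing to a subimaginary via Gagelman's exchange in $\mathcal{M}^{eq}$ we may assume $\dim(g/\alpha)=1$. Next I would discard any coordinate of $\alpha$ that lies in $\acl(\beta)$: such coordinates contribute a constant factor to the germ and can be removed without altering non-1-basedness, ensuring that every remaining coordinate of $\alpha$ has dimension $1$ over $\beta$.

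Second, if $\dim(\alpha/\beta)>1$ I would reduce it by moving basis coordinates into the parameter side. Pick $\alpha_0\in\alpha$ belonging to a basis of $\alpha$ over $\beta$, and replace $(\alpha,\beta)$ by $(\alpha\setminus\{\alpha_0\},\beta\cup\{\alpha_0\})$. By the Fiber Germ Equation (Lemma \ref{L: fiber germ equation}), the new germ equals the fiber germ $g_{\alpha_0}=\operatorname{germ}(\alpha\setminus\{\alpha_0\}/\beta\alpha_0)$, and the additivity formulas together with the basis property of $\alpha_0$ yield $\dim(g_{\alpha_0}/\alpha\setminus\{\alpha_0\})=\dim(g/\alpha)=1$, preserving non-1-basedness. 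Iterating reaches the stage $\dim(\alpha/\beta)=1$, at which the (now singleton) basis is some coordinate $a\in\alpha$, and every other coordinate of $\alpha$ lies in $\acl(a\beta)$.

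Third, I would select $b\in\alpha\setminus\{a\}$ with the property that $\operatorname{germ}(ab/\beta)$ has dimension $\geq 1$ over $(a,b)$, and set $t=\beta$. The required equalities $\dim(a/t)=\dim(b/t)=\dim(ab/t)=1$ then hold automatically: $a,b\notin\acl(\beta)$ by the cleanup, and $b\in\acl(a\beta)$ yields interalgebraicity over $\beta$. The non-1-basedness of $\tp(ab/t)$ over $\emptyset$ comes precisely from the choice of $b$. Existence of such a $b$ is the heart of the matter: if every pair $(a,b)$ had $\operatorname{germ}(ab/\beta)\in\acl^{eq}(ab)$, then $g$ --- a definable $1$-dimensional germ in $M^{|\alpha|}$ --- would be determined by all its coordinate-plane projections and hence by $\alpha$ itself, contradicting $\dim(g/\alpha)=1$.

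The main obstacle is this last combinatorial step: showing that a $1$-dimensional definable family of germs in $M^{|\alpha|}$ must project non-trivially onto at least one coordinate $2$-plane. This is where topology enters substantively: because $g$ is a \emph{definable} germ, its variation over $\alpha$ is carried by a definable family of realizing sets, so the projection to any coordinate pair is again a definable family whose germ structure can be computed via Fiber Germ Equation and generic continuity (Corollary \ref{C: generic continuity}); finite-dimensionality of $\alpha$ then rules out pathological configurations where the excess germ dimension vanishes in every $2$-dimensional projection.
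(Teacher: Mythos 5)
Your step 2 is where the argument breaks down: moving a basis coordinate $\alpha_0$ into the parameter side need not preserve non-1-basedness, and the appeal to ``additivity formulas together with the basis property of $\alpha_0$'' does not deliver the claimed identity $\dim(g_{\alpha_0}/\alpha\setminus\{\alpha_0\})=\dim(g/\alpha)$. Taking a fiber germ can lose information about $\beta$. For a concrete failure, work in a real closed field, let $\beta=(a,b)$ be generic, and let $\alpha=(x_1,x_2,y)$ be an independent triple with $y=a x_1+b$ and $x_2$ free. Then $\dim(\alpha/\beta)=2$ with basis $(x_1,x_2)$, and $g=\operatorname{germ}(\alpha/\beta)$ (the germ of the surface) determines $(a,b)$, so $\dim(g/\alpha)=1$ and $\tp(\alpha/\beta)$ is not topologically 1-based. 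But the fiber over $x_1$ is the horizontal line $\{y=ax_1+b\}$, whose germ at $(x_2,y)$ is interdefinable with the point $(x_2,y)$ itself; hence $\dim(g_{x_1}/x_2y)=0$ and $\tp(x_2y/\beta x_1)$ \emph{is} topologically 1-based over $\emptyset$. So the reduction depends on choosing the right coordinate, and you give no recipe for that. (A secondary issue: ``passing to a subimaginary $\ldots$ we may assume $\dim(g/\alpha)=1$'' is not a legitimate normalization either --- $g$ is the specific imaginary $\operatorname{germ}(\alpha/\beta)$ and cannot be swapped for a lower-dimensional subimaginary while retaining its role; the paper never normalizes to $\dim(g/a)=1$ and does not need to.)

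The sound part of your sketch --- realizing $g$ as the germ of a graph $\mu(e)\to\prod_i\mu(f_i)$ via Lemma~\ref{L: mu generic cont}, decomposing $g\in\dcl(g_1,\dots,g_k)$ with $g_i=\operatorname{germ}(e f_i/\beta)$, and concluding $\dim(g_i/\alpha)>0$ for some $i$ --- is precisely the paper's opening move, but the paper frames it differently. It chooses $n=|a|$ minimal among all non-1-based pairs, so that this coordinate decomposition immediately forces $n=d+1$ with no coordinate-moving at all. It then needs a second, genuinely different argument to get from $n=d+1$ down to $n=2$: introduce $t'\in\mu(t/a)$ independent from $t$ over $a$, split $a=(b,c)$ with $b\in M$ and $\dim(b/tt')=1$, invoke minimality of $n$ a \emph{second} time to make $\tp(c/bt)$ topologically 1-based, and then use Lemma~\ref{L: 1-based iff constant germ} together with the independent neighborhood property to show $\operatorname{germ}(a/t)=\operatorname{germ}(a/t')$, which puts $g\in\dcl(at')$ and yields the contradictory $\dim(t/g)\geq\dim(t/a)$. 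Your proposal has no counterpart to this second step --- the ``move a coordinate'' iteration was meant to replace it, but as the example shows, it does not work as stated.
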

\begin{proof}
    Let $a\in M^n$ and $t\in M^m$ so that $\tp(a/t)$ is not topologically 1-based over $\emptyset$, and so that the value of $n$ is minimized among all such pairs. Note that $n\geq 2$ (as one can easily check that all 1-types are topologically 1-based).
    
    Let $d=\dim(a/t)$, and write $a=(e_1,\dots,e_d,f_1,\dots,f_{n-d})$ where $e:=(e_1,\dots,e_d)$ is a basis for $a$ over $t$. Clearly $n>d$, since otherwise $\tp(a/t)$ would be topologically 1-based.
    
    By Lemma \ref{L: mu generic cont}, $g=\operatorname{germ}(a/t)$ is realized by the graph of a function $\mu(e)\rightarrow\mu(f_1)\times\dots\times\mu(f_{n-d})$. This function is determined by its coordinate components $\mu(e)\rightarrow\mu(f_i)$, and each of these components realizes $g_i:=\operatorname{germ}(ef_i/t)$. In other words, $g\in\dcl(g_1, \dots, g_{n-d})$. Now by assumption (and Lemma \ref{L: 1-based iff dim 0}) we have $\dim(g/a)>0$, and thus some $\dim(g_i/a)>0$. Then $\tp(e_1,\dots,e_d,f_i/t)$ is not topologically 1-based over $\emptyset$. By the minimality of $n$, we must have $a=(e_1,\dots,e_d,f_i)$, so that $d=n-1$.

    Now assume $n\geq 3$. Let $t'\in\mu(t/a)$ be independent from $t$ over $a$. A short computation (using that $d=n-1$ and $n\geq 3$) yields that $a\notin\acl(tt')$. So we can write $a=(b,c)$ where $b\in M$, $c\in M^{n-1}$, and $\dim(b/tt')=1$.

    Let $X$ be a $\emptyset$-definable d-approximation of $\tp(cbt)$ at $cbt$. By Lemma \ref{L: fiber germ equation}, the fiber $X_{bt}$ realizes $\operatorname{germ}(c/bt)$. Since $\tp(cbt)=\tp(cbt')$, $X_{bt'}$ realizes $\operatorname{germ}(c/bt')$. 
    
    Now by the minimality of $n$, $\tp(c/bt)$ is topologically 1-based, so by Lemma \ref{L: 1-based iff constant germ}, $X_{bt}$ and $X_{bt'}$ agree on some neighborhood $U$ of $c$. By the independent neighborhood property, we may assume $U$ is definable over a tuple $u$ with $\dim(cbtt'/u)=\dim(cbtt')$. By additivity of $(c,btt')$ over $\emptyset$ (i.e. Lemma \ref{L: additive pairs}), $\dim(btt'/u)=\dim(btt')$, and thus $\dim(b/tt'u)=\dim(b/tt')=1$. Thus, there is a neighborhood $V$ of $b$ such that the fibers $X_{vt}$ and $X_{vt'}$ agree on $U$ for all $v\in V$. This means the fibers $X_t$ and $X_t'$ agree on $U\times V$, so that $\operatorname{germ}_a(X_t)=\operatorname{germ}_a(X_{t'})$.

    Now similarly to above, since $X$ is a $\emptyset$-definable d-approximation of $\tp(at)$ at $at$, we have that $X_t$ realizes $\operatorname{germ}(a/t)$, and thus $X_{t'}$ realizes $\operatorname{germ}(a/t')$. It follows that $\operatorname{germ}(a/t)=\operatorname{germ}(a/t')=g$, say. But then $g\in\dcl(at')$, which gives $$\dim(t/g)\geq\dim(t/at')=\dim(t/a).$$ So $\tp(a/t)$ is topologically 1-based afterall, a contradiction.

    So we have $n=2$ and $d=n-1=1$. Write $a=(a_1,a_2)$. If either $\dim(a_i/t)=0$ then $\tp(a/t)$ is easily seen to be topologically 1-based. Thus we have $\dim(a_1/t)=\dim(a_1a_2/t)=\dim(a_2/t)=1$, as desired.
\end{proof}
\begin{remark}\label{R: reduction to n=2}
    The proof above actually shows a bit more, in the sense that the statement can be localized. Namely, let $X\subset M$ be type-definable over $A$, and suppose exchange holds for any $X$-tuples over $A$. Moreover, suppose there are $X$-tuples $a$ and $t$ with $\tp(a/t)$ not topologically 1-based over $A$. Then one can find such tuples $a$ and $t$ so that $a\in X^2$ (and so that the other dimension requirements of the lemma are satisfied). The reason this works is that the extra parameters we had to add to $t$ during the proof came from the original tuple $a$ (so we never introduced anything outside $X$; similarly, $t'\in X$ as well, though this is not needed).
\end{remark}
\subsection{Locally Linear Groups}\label{ss: local linearity}

In the remainder of this section, we give examples of topologically 1-based and non-topologically-1-based structures, with the hope of convincing the reader that we have isolated the correct dividing line. 

Recall that a 1-based group in stability theory has only group-like definable sets: namely, every definable set is either a subgroup, a coset of a subgroup, or a finite Boolean combination of those. In the topological setting, the analogous notion is \textit{local linearity}:
\begin{definition}
    Assume $(M,\cdot,\dots)$ is an expansion of a topological group (with the given topology $\tau$). We say $\mathcal M$ is \textit{locally linear} if for all $a\in M^m$ and $b\in M^n$, there is a definable subgroup $H\leq M^m$ such that $\operatorname{germ}(a/b)=\operatorname{germ}_a(a\cdot H)$.
\end{definition}

The following are all examples of locally linear t-minimal topological groups with independent neighborhoods:

\begin{itemize}
    \item Ordered vector spaces over any ordered division ring.
    \item The reduct $(K,+,v)$ whenever $(K,+,\cdot,v)\models ACVF$. One can make this more general by taking valued vector spaces (in a certain language) over any fixed valued field, but we omit the details as they will not be needed.
    \item If $K$ is a model of ACVF or RCVF, and $O$ is the valuation ring, take $K/O$ with its full induced structure (see \cite[Corollary 3.12, \S 6.3]{HaHaPeVF}).
\end{itemize}

Now we show:

\begin{lemma}
    Assume (in addition to our standing assumption of t-minimality and independent neighborhods) that $\mathcal M=(M,\cdot,\dots)$ is an expansion of a topological group, and is locally linear. Then $\mathcal M$ is topologically 1-based.
\end{lemma}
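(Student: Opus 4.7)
The plan is to verify topological 1-basedness directly via the definition. Fix real tuples $a\in M^m$ and $b\in M^n$; we wish to show $\dim(b/g)=\dim(b/a)$ for $g:=\operatorname{germ}(a/b)$. The inequality $\dim(b/g)\leq\dim(b/a)$ is automatic from $a\in\dcl(g)$, so only the reverse direction requires work.

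By local linearity, $g=\operatorname{germ}_a(aH)$ for some definable subgroup $H\leq M^m$; let $c$ be a canonical parameter for $H$. Since left-translation by $a^{-1}$ is an $a$-definable homeomorphism of $M^m$ (the group operation being in the language), the germ $g$ at $a$ is interdefinable over $a$ with the germ $\operatorname{germ}_e(H)$ at the identity. In particular, $g\in\dcl^{eq}(a,c)$, so any formula over $g$ can be rewritten over $ac$, yielding $\mu(b/ac)\subseteq\mu(b/g)$ and hence $\dim(b/g)\geq\dim(b/ac)$.

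The crux is therefore to choose the witnessing subgroup $H$ so that its canonical parameter $c$ is independent from $b$ over $a$ (that is, $\dim(b/ac)=\dim(b/a)$). To accomplish this, I would pick a tuple $d$ in a sufficiently saturated extension, with $d$ independent from $b$ over $a$ and with $\dim(a/bd)=\dim(a/b)$; by Lemma~\ref{L: weakly ind} this gives $\operatorname{germ}(a/bd)=g$. Now apply local linearity to the pair $(a,bd)$ to produce a (possibly new) definable subgroup $H'$ with $\operatorname{germ}_a(aH')=g$. If the canonical parameter $c'$ of $H'$ can be arranged to lie in $\dcl^{eq}(ad)$, then $\dim(b/ac')=\dim(b/ad)=\dim(b/a)$, and combined with $g\in\dcl^{eq}(a,c')$ this finishes the proof.

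The main obstacle is the final refinement: showing that among the witnesses $H'$ to local linearity for $(a,bd)$, there is at least one whose canonical parameter depends on $a$ and $d$ but not on $b$. I expect this to follow by iterating local linearity and combining witnesses via the fiber-germ equation (Lemma~\ref{L: fiber germ equation}), exploiting that $d$ has been chosen to carry all the ``non-$a$'' parameter information independently of $b$; but the precise parameter-chasing and the verification that we have not inadvertently shrunk the germ is where the real work will live.
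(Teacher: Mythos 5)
Your setup is correct, and the observation that $g$ and $\operatorname{germ}_e(H)$ are interdefinable over $a$ via translation is a good one. But the gap you flag at the end is not merely a matter of ``parameter-chasing'': it is, as far as I can tell, fatal to this route. Local linearity asserts only the \emph{existence} of a definable subgroup $H$ realizing the correct germ, with no control over its parameters. The available machinery for controlling parameters (Lemma~\ref{L: d-app locality}, Lemma~\ref{L: GenOS}) yields \emph{open subsets} of the coset $aH$ with controlled parameters, not subgroups, and there is no evident definable way to recover a subgroup with the same germ at $e$ from such an open subset. Worse, chasing your reduction through: by Lemma~\ref{L: 0-dim}, showing $\dim(b/ac')=\dim(b/a)$ for a suitable witness $c'$ amounts to showing that $y\mapsto\operatorname{germ}_e(H_y)$ is constant on a neighborhood of $b$ in $\tp(b/a)$, which (translating by $a$) is precisely the assertion that $y\mapsto\operatorname{germ}(a/y)$ is constant on $\mu(b/a)$ --- and by Lemma~\ref{L: 1-based iff constant germ} that is topological 1-basedness itself. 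So the proposed refinement is circular. Introducing the auxiliary $d$ does not escape this: you would need to know the parameters of the as-yet-unconstructed $H'$ before choosing $d$.

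The paper's proof avoids the issue entirely by applying local linearity to the \emph{product tuple}: it takes $H\leq M^{m+n}$ with $\operatorname{germ}(ab/\emptyset)=\operatorname{germ}_{(a,b)}((a,b)\cdot H)$, and then uses Lemmas~\ref{L: local dimension} and~\ref{L: d-app locality} to find a $d$-approximation $X\subseteq (a,b)\cdot H$ of $\tp(ab)$ at $(a,b)$ over a parameter $t$ with $\dim(ab/t)=\dim(ab)$ --- here the independent neighborhood property controls the parameters of $X$, never those of $H$. For $b'\in\mu(b/a)$ one then observes that the fibers $((a,b)\cdot H)_b$ and $((a,b)\cdot H)_{b'}$ are \emph{literally equal}, both being $a\cdot N$ where $N$ is the kernel of the projection $H\to M^n$ (they are cosets of $N$ and both contain $a$). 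Since $X$ is open in $(a,b)\cdot H$, the fibers $X_b$ and $X_{b'}$ are open subsets of this common coset, so $\operatorname{germ}_a(X_b)=\operatorname{germ}_a(X_{b'})$, whence $\operatorname{germ}(a/b)=\operatorname{germ}(a/b')$ by the fiber germ equation, and one concludes by Lemma~\ref{L: 1-based iff constant germ}. The coset structure in $M^{m+n}$ is what makes the comparison of fibers automatic; your approach, working with a subgroup of $M^m$ only, has no analogous structural handle, which is why the parameters of $H$ become a genuine obstruction.
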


\begin{proof} We use Lemma \ref{L: 1-based iff constant germ}. Namely, let $a\in M^m$ and $b\in M^n$, and let $b'\in\mu(b/a)$ -- so by Lemma \ref{L: fiber germ equation}, $(a,b')\in\mu(ab/\emptyset)$. We will show that $\operatorname{germ}(a/b)=\operatorname{germ}(a/b')$.

By local linearity, there is an $\mathcal M$-definable subgroup $H\leq M^{m+n}$ so that $\operatorname{germ}(ab/\emptyset)=\operatorname{germ}_{(a,b)}((a,b)\cdot H)$. By Lemmas \ref{L: local dimension} and \ref{L: d-app locality}, there is an $\mathcal M$-definable d-approximation $X$ of $\tp(ab)$ at $(a,b)$ such that:
\begin{enumerate}
    \item $X$ is an open subset of $(a,b)\cdot H$, and
   \item $X$ is $t$-definable for some $t$ with $\dim(ab/t)=\dim(ab/\emptyset)$.
\end{enumerate}

Note that (2) implies $\mu(ab/t)=\mu(ab/\emptyset)$ -- thus $(a,b')\in\mu(ab/t)$, and thus $\tp(abt)=\tp(ab't)$. It follows that $X$ is also a d-approximation of $\tp(ab')$ at $(a,b')$.

Now let $X_b$ and $X_{b'}$ be the fibers in $X$ above $b$ and $b'$ -- so $a\in X_b\cap X_{b'}$. Since $X$ is open in $(a,b)\cdot H$, $X_b$ is open in the corresponding fiber $((a,b)\cdot H)_b$ -- and likewise for $b'$. But since $H$ is a subgroup, the two fibers $((a,b)\cdot H)_b$ and $((a,b)\cdot H)_{b'})$ are equal -- namely, since they both contain $a$, they are both equal to $a\cdot N$ where $N\leq M^m$ is the kernel of the projection $H\rightarrow M^n$. It follows that $\operatorname{germ}_a(X_b)=\operatorname{germ}_a(X_{b'})$.

Finally, by Lemma \ref{L: germ fiber equality} we now have $$\operatorname{germ}(a/b)=\operatorname{germ}(ab/\emptyset)_b=(\operatorname{germ}_{(a,b)}X)_b=\operatorname{germ}_a(X_b)$$ $$=\operatorname{germ}_a(X_{b'})=(\operatorname{germ}_{(a,b')}X)_{b'}=\operatorname{germ}(ab'/\emptyset)_{b'}=\operatorname{germ}(a/b'),$$ which completes the proof.
\end{proof}

In fact, later on we will show that local linearity is equivalent to topological 1-basedness in the case of topological groups (see Theorem \ref{T: loc lin}). This is an analog of the Hrushovsk-Pillay classification of 1-based groups in stability theory (\cite{HrPi87}). We postpone the proof of the full result as it requires more machinery than we have available at this point. 

\subsection{1-basedness and Fields}

On the opposite end of the previous lemma, let us now check that no infinite field is topologically 1-based:

\begin{lemma}\label{L: no field}
    Assume $\mathcal M$ defines an infinite field -- that is, assume there is an infinite definable $F\subset M^n$ for some $n$ with a definable field structure. Then $\mathcal M$ is not topologically 1-based.
\end{lemma}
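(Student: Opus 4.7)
The plan is to witness non-1-basedness via the classical 2-parameter family of affine lines inside $F$, in the style of the Zilber/Hrushovski witnesses for non-locally-modular strongly minimal sets. Let $d := \dim(F)$, which is positive since $F$ is infinite and $\mathcal M$ is sufficiently saturated. For $(a,b) \in F^2$, set $L_{a,b} := \{(x',y') \in F^2 : y' = ax' + b\}$, a $t$-definable subset of $F^2 \subseteq M^{2n}$ which is in bijection with $F$ via the first coordinate and thus has dimension $d$. Choose $a,b,x \in F$ with $(a,b,x)$ of maximal dimension $3d$, and set $y := ax + b$, $t := (a,b)$, $p := (x,y)$. Then $t$ and $p$ are real tuples. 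Using $y \in \dcl(tx)$ and $b = y - ax \in \dcl(a,p)$, together with sub-additivity and the identity $\dim(t x y) = \dim(a x y) = 3d$, a short computation gives $\dim(p/t) = d$ and $\dim(t/p) = d$.

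Next I claim $L_t$ is a basic witness for $p$ over $t$. Indeed, $L_t$ is $t$-definable and contains $p$; and if I pick $d$ coordinates of $x$ forming a basis for $x$ over $\emptyset$ (which exist since $\dim(x) = d$), the corresponding coordinate projection $M^{2n} \to M^d$ is finite-to-one on $L_t$ at $p$: the $y'$-coordinates are determined by $x'$ via $y' = ax'+b$, and the projection $F \to M^d$ onto the chosen coordinates is finite-to-one at $x$. By Lemma \ref{L: basic witness}, $L_t$ is a d-approximation of $\tp(p/t)$ at $p$, so $\operatorname{germ}_p(L_t) = \operatorname{germ}(p/t)$; and this observation applies equally to any $t' \models \tp(t/p)$, yielding $\operatorname{germ}_p(L_{t'}) = \operatorname{germ}(p/t')$. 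Now suppose toward contradiction that $\mathcal M$ is topologically 1-based. Since $\dim(\mu(t/p)) = \dim(t/p) = d \geq 1$, there exists $t' = (a',b') \in \mu(t/p)$ with $t' \neq t$. As $t' \models \tp(t/p)$, the point $p$ lies on $L_{t'}$, forcing $b' = y - a'x$; hence $t' \neq t$ forces $a \neq a'$, so $L_t$ and $L_{t'}$ are distinct lines through $p$. By the field axioms, $L_t \cap L_{t'} = \{p\}$. On the other hand, Lemma \ref{L: 1-based iff constant germ} yields $\operatorname{germ}(p/t) = \operatorname{germ}(p/t')$, i.e.\ $\operatorname{germ}_p(L_t) = \operatorname{germ}_p(L_{t'})$, so $L_t$ and $L_{t'}$ agree on some neighborhood $U$ of $p$ in $M^{2n}$. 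Then $L_t \cap U \subseteq L_{t'} \cap L_t = \{p\}$, contradicting Lemma \ref{L: local dimension}, which says any neighborhood of $p$ in the d-approximation $L_t$ has local dimension $d \geq 1$.

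The main point requiring care is the identification of $L_t$ as a basic witness when $F$ is embedded in $M^n$ rather than being $M$ itself: one must exhibit an honest coordinate projection $M^{2n} \to M^d$, not a projection onto $F$. This is handled by pulling back a basis of the generic point $x \in F$ to a subtuple of the ambient coordinates, which is precisely the content of the dimension theory of Fact \ref{F: tmin}. Once this bookkeeping is done, the rest of the proof is purely a geometric manifestation of the classical fact that two distinct affine lines through a common point meet only at that point — which is what prevents the germ at $p$ from being locally constant in the parameter.
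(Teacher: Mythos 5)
Your proof is correct in its essential outline and runs closely parallel to the paper's, with one small gap to close. Both arguments start from the same incidence geometry (affine lines in $F^2$, a generic point-line pair with $\dim(p,t)=3d$, where $t$ parameterizes a line through $p$), and both deduce from topological 1-basedness that $\operatorname{germ}(p/t)$ does not change under a nontrivial perturbation of $t$. From there the routes diverge slightly: the paper uses $\mu(p/l)=\mu(p/l')$ to extract a \emph{second point} $p'\neq p$ lying on both lines, contradicting that two distinct lines share at most one point; you instead compare the germs of the lines themselves at $p$, noting that $L_t$ and $L_{t'}$ realize $\operatorname{germ}(p/t)=\operatorname{germ}(p/t')$ and therefore agree on a full-dimensional set near $p$, while meeting only at $p$. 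These are two ways of cashing in the same incidence fact, and yours is a correct variant. One small advantage of the paper's route is that it never needs to know that the line $L_t$ itself is a d-approximation -- it only uses the abstract set $\mu(p/l)$ -- whereas your route leans on precisely that claim, which is where your gap lies.

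The gap: you invoke Lemma \ref{L: basic witness} on $L_t$ by asserting that the coordinate projection $\pi:M^{2n}\to M^d$ onto a basis $e$ of $x$ is ``finite-to-one on $L_t$ at $p$''. But an $a$-witness (and hence Lemma \ref{L: basic witness}) requires $\pi$ to be finite-to-one on the \emph{entire} set, and there is no reason for the $e$-projection $F\to M^d$ to be finite-to-one on all of $F$ when $F$ is embedded in $M^n$; a priori it is finite-to-one only on some definable piece of $F$ containing $x$. The fix is routine and does not disturb the rest of the argument: first absorb the parameters of $F$ via Lemma \ref{L: constants} (which you are already tacitly doing), then pick a $\emptyset$-definable $Z\subseteq F$ with $x\in Z$ on which the $e$-projection is genuinely finite-to-one (this exists since $x\in\acl(e)$), and set $L_t^\circ:=L_t\cap(Z\times F)$. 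Now $L_t^\circ$ is a $t$-definable basic witness of $p$ over $t$, hence a d-approximation, so $\operatorname{germ}_p(L_t^\circ)=\operatorname{germ}(p/t)$; likewise for $L_{t'}^\circ$. Your final paragraph then goes through verbatim with $L_t^\circ,L_{t'}^\circ$ in place of $L_t,L_{t'}$: one obtains a neighborhood $U$ of $p$ with $L_t^\circ\cap U=L_{t'}^\circ\cap U\subseteq L_t\cap L_{t'}=\{p\}$, contradicting Lemma \ref{L: local dimension}, which gives $L_t^\circ\cap U$ local dimension $d\geq 1$ at $p$.
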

\begin{proof} Suppose $F\subset M^n$ is an infinite definable field. By Lemma \ref{L: constants}, we may assume $F$ (with its operations) is $\emptyset$-definable. Let $d=\dim(F)>0$. Let $P=L=F^2$, interpreted as the sets of points and (non-vertical) lines in the plane, and let $I\subset X\times T$ be the incidence relation. Then one easily checks the following:

\begin{enumerate}
    \item $\dim(P)=\dim(L)=2d$ and $\dim(I)=3d$. 
    \item If $(p,l)\in I$ with $\dim(pl)=3d$ then $\dim(p)=\dim(l)=2d$ and $\dim(p/l)=\dim(l/p)=d$.
\end{enumerate}

Now fix $(p,l)\in I$ as above, so $\dim(pl)=3d$. Let $g=\operatorname{germ}(p/l)$. If $\mathcal M$ were topologically 1-based, we would have $$\dim(l/gp)=\dim(l/p)=d>0.$$ So there is $l'\neq l$ with $\tp(plg)=\tp(pl'g)$, and thus $g=\operatorname{germ}(p/l')$. It follows that $\tp(p/l)$ and $\tp(p/l')$ have the same d-approximations at $x$, which gives that $\mu(p/l)=\mu(p/l')$ and $\dim(\mu(p/l))=d>0$. So there is $p'\neq p$ with $p'\in\mu(p/l)\cap\mu(p/l')$. Thus $\tp(pl)=\tp(p'l)$ and $\tp(pl')=\tp(p'l')$. In particular, $p$ and $p'$ are distinct intersection points of the lines $l$ and $l'$, which is a contradiction. 
\end{proof}

\begin{remark} The above lemma fails if we replace `definable field' with `interpretable field'. Indeed, for $K\models ACVF$, we already saw that $K/O$ is locally linear, thus topologically 1-based; but it interprets a copy of the residue field $k$.
\end{remark}

\subsection{The o-minimal Case}

We conclude this section by pointing out that topological 1-basedness successfully captures the dividing line of the trichotomy for o-minimal structures.

Recall that every point in an o-minimal structure is either trivial, an element of a group interval with linear structure, or an element of a real closed field interval (see the main result of \cite{PeStTricho}). Call an o-minimal structure \textit{linear} if only the first two cases occur. In \cite{BerVas}, Berenstein and Vassiliev generalize linearity in o-minimal structures to a general notion of \textit{weakly 1-based} for geometric theories (including all t-minimal theories with exchange): a geometric theory is weakly 1-based if for all real tuples $a$ and sets $B$, there is $a'\models\tp(a/B)$ so that $\dim(a/a')=\dim(a/Ba')=\dim(a/B)$.

Now return to our fixed t-minimal $\mathcal M$ (and elementary extension $\mathcal N$) with independent neighborhoods. If $\mathcal M$ has exchange, then $\mathcal M$ is also geometric. As it turns out, in this case $\mathcal M$ is topologically 1-based in our sense if and only if it is weakly 1-based as a geometric structure. We will not fully prove this until the next section; for now, we give one implication in general, and the other implication only in the o-minimal case.

\begin{lemma}\label{L: w1b to 1b}
    Suppose $\mathcal M$ has exchange. If $\mathcal M$ is weakly 1-based, then $\mathcal M$ is topologically 1-based.
\end{lemma}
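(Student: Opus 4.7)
The plan is to use the equivalent characterization of topological 1-basedness from Lemma \ref{L: 1-based iff dim 0}, valid under exchange: $\mathcal M$ is topologically 1-based if and only if $\dim(\operatorname{germ}(a/B)/a)=0$ (in Gagelman's extended dimension theory on $\mathcal M^{eq}$) for every real tuple $a$ and parameter set $B$. So I fix such $a$ and $B$, set $g := \operatorname{germ}(a/B)$, and aim to show $\dim(g/a)=0$.

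The central step is to apply the weak 1-basedness hypothesis to the tuple $B$ over the set $\{a\}$: this produces $B' \equiv_a B$ with $\dim(B/B') = \dim(B/aB') = \dim(B/a)$. A short exchange computation --- decomposing $\dim(aBB')$ both as $\dim(aB) + \dim(B'/aB)$ and as $\dim(BB') + \dim(a/BB')$, and using that $\dim(B'/aB)=\dim(B/a)$ by exchange-symmetry from the given independences --- then yields $\dim(a/BB') = \dim(a/B) = \dim(a/B')$. By Lemma \ref{L: weakly ind}, this gives $\operatorname{germ}(a/B) = \operatorname{germ}(a/BB') = \operatorname{germ}(a/B')$. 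Thus every conjugate $B'$ of $B$ over $a$ furnished by weak 1-basedness satisfies $\operatorname{germ}(a/B') = g$.

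To conclude, I view the assignment $B'' \mapsto \operatorname{germ}(a/B'')$ as a definable map $f$ from realizations of $\tp(B/a)$ into the imaginary sort of germs at $a$. Since $B$ is generic in the domain, $g = f(B)$ is generic in the image, so the generic-fiber identity in Gagelman's theory reads $\dim(f^{-1}(g)/ag) = \dim(B/a) - \dim(g/a)$. The preceding step shows every weak-1-based conjugate $B'$ lies in $f^{-1}(g)$, and for any such $B'$ one computes (using $g \in \dcl^{eq}(aB)$ together with $\dim(B'/aB) = \dim(B/a)$) that $\dim(B'/ag) = \dim(B/a)$. This forces $\dim(f^{-1}(g)/ag) \geq \dim(B/a)$ and hence $\dim(g/a) \leq 0$, as required.

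The main obstacle I anticipate is the final dimension-counting step: one has to invoke Gagelman's dimension theory for imaginaries carefully to justify the generic-fiber identity for $f$, and to verify that the weak-1-based conjugates really do achieve the claimed dimension $\dim(B/a)$ over $ag$ inside the fiber. Both of these use exchange crucially --- exactly the hypothesis of the lemma --- and without it the pleasant cancellation in the $\dim(aBB')$ decomposition would also break down.
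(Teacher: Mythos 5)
Your proof is essentially correct, but it takes a route that is dual to the paper's and then overshoots in the final step. Both proofs rest on Lemma~\ref{L: 1-based iff dim 0} and apply weak 1-basedness exactly once; the difference is which tuple you perturb. The paper applies weak 1-basedness to $a$ over $b$, obtaining $a'\models\tp(a/b)$ with $\dim(a/a')=\dim(a/ba')=\dim(a/b)$, then takes a second \emph{independent} realization $a''$ of $\tp(a'/ab)$ and observes that $\operatorname{germ}(a/a')=\operatorname{germ}(a/a'')=g$, giving $g\in\dcl(aa')\cap\dcl(aa'')$ with $a'$ and $a''$ independent over $a$, whence $\dim(g/a)=0$ by additivity. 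You instead apply weak 1-basedness to $b$ (your $B$, after reducing to a real tuple --- note the definition of weak 1-basedness requires $B$ to be a real tuple, so the reduction should be stated) over $\{a\}$, getting a conjugate $b'$ with $b\mathop{\mathpalette\perp\,}_a b'$ and, by your correct dimension computation, $\operatorname{germ}(a/b)=\operatorname{germ}(a/b')=g$. Both are valid applications; yours avoids the paper's extra $a''$ step because the two conjugates $b$ and $b'$ are already independent over $a$ by the way weak 1-basedness was invoked.

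Where your proposal becomes unnecessarily fragile is the last paragraph. You don't need the generic-fiber identity or the definability of the map $b''\mapsto\operatorname{germ}(a/b'')$ (which, while provable via uniformity of minimal basic witnesses across conjugates, requires an argument you only gesture at). The conclusion falls out of the standard $\dcl$-intersection lemma directly from what you've already established: since codes of $\operatorname{germ}(a/b)$ and $\operatorname{germ}(a/b')$ are interdefinable over $a$, we have $g\in\dcl(ab)\cap\dcl(ab')$; combined with $b\mathop{\mathpalette\perp\,}_a b'$, a two-line additivity computation (decompose $\dim(abb'g)$ first dropping $g$ via $g\in\dcl(ab)$, then via $g\in\dcl(ab')$) yields $\dim(g/a)\le 0$. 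This is precisely what the paper does with $a'$ and $a''$, and replacing your final paragraph with it would make the argument both shorter and airtight.
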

\begin{proof}
    We freely use the extension of dimension theory to imaginaries, given by (\cite{Gagelman}). Fix real tuples $a$ and $b$, let $d=\dim(a/b)$, and let $g=\operatorname{germ}(a/b)$. By Lemma \ref{L: 1-based iff dim 0}, it suffices to show that $\dim(g/a)=0$.
    
    Since $\mathcal M$ is weakly 1-based, there is $a'\models\tp(a/b)$ with $\dim(a/a')=\dim(a/ba')=d$. Let $a''$ be an independent realization of $\tp(a'/ab)$ -- i.e. $\tp(aba')=\tp(aba'')$ and $\dim(a''/aba')=d$. It follows easily that $\dim(a''/a)=\dim(a''/aa')=d$, so that $a'$ and $a''$ are independent over $a$. 

    Now since $\dim(a/ba')=\dim(a/b)=d$, we have $\operatorname{germ}(a/ba')=g$. Then since $\dim(a/a')=\dim(a/ba')$, we also have $\operatorname{germ}(a/a')=g$. Similarly, $\operatorname{germ}(a/a'')=g$. It follows that $g\in\dcl(aa')\cap\dcl(aa'')$; and since $a'$ and $a''$ are independent over $a$, it follows from additivity that $\dim(g/a)=0$ as desired. 
\end{proof}

\begin{corollary}\label{C: omin}
    Assume $\mathcal M$ is o-minimal. Then $\mathcal M$ is topologically 1-based if and only if it is linear.
\end{corollary}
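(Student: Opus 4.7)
The plan is to derive both directions by combining external results with the lemmas already established. For the ``if'' direction, I would first note that o-minimal theories satisfy exchange, so they fall within the scope of Lemma \ref{L: w1b to 1b}. By the main characterization of Berenstein--Vassiliev (\cite{BerVas}), an o-minimal structure is linear in the sense of Peterzil--Loveys if and only if it is weakly 1-based as a geometric structure. Hence if $\mathcal M$ is linear, it is weakly 1-based, and Lemma \ref{L: w1b to 1b} then yields that $\mathcal M$ is topologically 1-based.

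For the ``only if'' direction, I would argue the contrapositive using the Peterzil--Starchenko trichotomy (\cite{PeStTricho}). If the o-minimal structure $\mathcal M$ is not linear, then some point of $M$ lies in an interval $F \subset M$ on which a real closed field structure is definable. This $F$, together with its definable field operations, is an infinite definable field in $\mathcal M$ in the sense required by Lemma \ref{L: no field} (it is literally a definable subset of $M^1$ with definable binary operations after naming finitely many parameters, which is allowed by Lemma \ref{L: constants}). Lemma \ref{L: no field} then concludes that $\mathcal M$ is not topologically 1-based, which finishes the contrapositive.

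The only point requiring care is that the field produced by the trichotomy is a \emph{definable} (not merely interpretable) subset of $M$ with definable operations --- but this is exactly the output of the Peterzil--Starchenko theorem, so it matches the hypothesis of Lemma \ref{L: no field} on the nose. No genuine obstacle remains beyond these invocations; the corollary falls out immediately from the two cited structure theorems on o-minimal structures, together with Lemmas \ref{L: w1b to 1b} and \ref{L: no field}.
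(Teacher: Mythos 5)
Your proposal is correct and takes essentially the same approach as the paper: the ``if'' direction goes through Berenstein--Vassiliev plus Lemma \ref{L: w1b to 1b}, and the ``only if'' direction is the contrapositive via the Peterzil--Starchenko trichotomy and Lemma \ref{L: no field}. The only difference is that you spell out the trichotomy invocation and the definability-of-the-field point, both of which the paper treats as immediate.
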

\begin{proof} If $\mathcal M$ is topologically 1-based, Lemma \ref{L: no field} gives that it does not define an infinite field, and thus is linear. If $\mathcal M$ is linear, then by \cite[\S 3.2, Theorem 2.20]{BerVas} it is weakly 1-based, so by Lemma \ref{L: w1b to 1b} it is topologically 1-based.
\end{proof}

\section{The Structure Theorem for Topologically 1-based Types}

In this section, we prove our main topological structure theorem for topologically 1-based types. Roughly speaking, we show that such types look like actual partitions of an ambient space, at least after zooming into an infinitesimal locus. This result and proof are inspired by arguments from Peterzil's group construction in linear o-minimal theories \cite{Pe}.

Before presenting the main result, we give two technical lemmas in the next two subsections.

\subsection{Additive Extensions}

Our first lemma concerns taking independent extensions of types. Roughly, we want to know that we can always find independent realizations of a type without introducing new failures of additivity:

\begin{lemma}\label{L: additive extension}
    Let $a$ and $b$ be real tuples, and $A$ a parameter set. 
    \begin{enumerate}
        \item There is $c\models\tp(b/a)$ such that $\dim(c/Aab)=\dim(b/Aa)$ and $(c,ab)$ is additive over $A$.
        \item For any $c$ as in (1), and any $B\supset A$ with $\dim(abc/B)=\dim(abc/A)$, we have that $b$ and $c$ are symmetrically independent over $Ba$.
        \item If $\tp(a/Ab)$ is topologically 1-based over $A$, then we can choose $c$ as in (1) so that in addition $\operatorname{germ}(a/Ab)=\operatorname{germ}(a/Ac)$. 
    \end{enumerate}
\end{lemma}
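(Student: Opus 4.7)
The plan is to prove (1) and (3) simultaneously by a compactness argument, then derive (2) from the resulting additivity via the fiber germ equation. Let $d = \dim(b/Aa)$. First I fix an $Aa$-definable d-approximation $Y$ of $\tp(b/Aa)$ at $b$ of dimension $d$ (any minimal basic witness works, by Corollary \ref{C: minimal witness is d-app}). For (3), I further shrink $Y$ to lie in an $Aa$-definable neighborhood of $b$ on which $y \mapsto \operatorname{germ}(a/Ay)$ is constant; the existence of such a neighborhood follows from Lemma \ref{L: 1-based iff constant germ} together with compactness, since germs are coded as imaginaries and the condition $\operatorname{germ}(a/Ay) = \operatorname{germ}(a/Ab)$ cuts out a type-definable set in $y$.

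I seek $c$ as a realization of the partial type $\Pi(y)$ over $Aab$ consisting of: (a) $y \in Y$; (b) $y \models \tp(b/Aa)$; and (c) $(a,b,y) \notin W$ for every $A$-definable $W \subseteq M^{m+2n}$ with $\dim(W) < \dim(ab/A) + d$. Any realization $c$ of $\Pi$ satisfies the conclusions of (1) (and (3) in that setting): clause (b) gives $c \models \tp(b/a)$ and $\dim(c/Aa) = d$; clause (a) gives $\dim(c/Aab) \leq \dim(Y) = d$; and clause (c), together with sub-additivity, forces $\dim(abc/A) = \dim(ab/A) + d$, hence $\dim(c/Aab) = d$ and $(c, ab)$ additive over $A$. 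Note that clause (b) supplies the stronger equality $\tp(c/Aa) = \tp(b/Aa)$, which is essential for (2); and under the hypothesis of (3), the shrinking of $Y$ forces $\operatorname{germ}(a/Ac) = \operatorname{germ}(a/Ab)$ as well.

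The heart of the proof is the consistency of $\Pi$. A finite subtype restricts $y$ to an $Aa$-definable set $Y \cap Z$ of dimension $d$ (containing $b$) and forbids finitely many fibers $W_j^{ab} = \{y : (a,b,y) \in W_j\}$; it suffices to show $\dim(W_j^{ab} \cap Y) < d$ for each $j$. By Lemma \ref{L: GenOS}, I may realize $\tp(ab/A)$ at $(a,b)$ by an $At$-definable d-approximation with $\dim(ab/At) = \dim(ab/A)$, so that $(a,b)$ becomes generic in this d-approximation over $At$. Then Lemma \ref{L: fiber germ equation}(3) transfers the dimension bound: because $W_j$ has total dimension $< \dim(ab/A) + d$ and $(a,b)$ is generic over the parameter set witnessing $W_j$, the fiber $W_j^{ab}$ has dimension strictly less than $d$, as required.

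For (2), apply Lemma \ref{L: additive pairs} to the additive pair $(c, ab)$: this yields $(c, ab)$ additive over $B$, $\dim(ab/B) = \dim(ab/A)$, and $\dim(c/Bab) = d$. Lemma \ref{L: fiber germ equation}(3) applied to the pair $(b,a)$ gives $\dim(b/Ba) = d$. Using $\tp(c/Aa) = \tp(b/Aa)$ from clause (b), we have $\tp(ac/A) = \tp(ab/A)$ and hence $\dim(ac/A) = \dim(ab/A)$; combining the sub-additivity bound $\dim(abc/A) \leq \dim(ac/A) + \dim(b/Aac)$ with $\dim(abc/A) = \dim(ab/A) + d$ forces $\dim(b/Aac) = d$, giving symmetric independence of $b$ and $c$ over $Aa$. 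Finally, the hypothesis $\dim(abc/B) = \dim(abc/A)$ yields $\operatorname{germ}(abc/B) = \operatorname{germ}(abc/A)$ by Lemma \ref{L: weakly ind}; taking fiber germs over $a$ via Lemma \ref{L: fiber germ equation} gives $\operatorname{germ}(bc/Ba) = \operatorname{germ}(bc/Aa)$, hence $\dim(bc/Ba) = \dim(bc/Aa) = 2d$. Combined with $\dim(b/Ba) = \dim(c/Ba) = d$, this is symmetric independence of $b$ and $c$ over $Ba$. The main obstacle throughout is the fiber dimension bound used in the consistency of $\Pi$: without exchange, the generic fiber dimension is not obtained symmetrically, and the argument relies crucially on the independent neighborhood property via Lemma \ref{L: fiber germ equation}(3).
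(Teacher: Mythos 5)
Your partial-type approach to (1) and (3) is clean, but the consistency argument for $\Pi$ has a genuine gap exactly where you flag ``the heart of the proof.'' What you need is: for $A$-definable $W$ with $\dim(W) < \dim(ab/A)+d$, the fiber $W^{ab}$ (or even $W^{ab}\cap Y$) has dimension $<d$. Arguing by contradiction, you would take $y\in W^{ab}$ with $\dim(y/Aab)\geq d$ and try to conclude $\dim(W)\geq\dim(aby/A)\geq\dim(ab/A)+\dim(y/Aab)$. But the last inequality is precisely additivity of the pair $(y,ab)$ over $A$, which is unavailable here; sub-additivity (Fact~\ref{F: tmin}(\ref{subad})) gives only the \emph{reverse} inequality $\dim(aby/A)\leq\dim(ab/A)+\dim(y/Aab)$, and without exchange the desired lower bound can genuinely fail --- this is exactly the difficulty the whole lemma is navigating. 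Your invocation of Lemma~\ref{L: fiber germ equation}(3) does not close the gap: that lemma says dimensions of a \emph{fixed} tuple are preserved when one passes to an independent larger parameter set; it is a monotonicity statement, not a fiber-dimension bound for definable sets. Intersecting with the d-approximation $Y$ does not help either: it only caps $\dim(y/Aab)$ at $d$, whereas you need strict inequality.

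The paper's proof of (1) is built to avoid this exact trap. It replaces $b$ by a basis $e$ of $b$ over $Aa$ (so $e$ is independent over $A$ and $\dim(e/A)=d$), and works in $\mathcal N$ with the type-definable set $\mu(ae/A)\times\mu(e/A)$, whose dimension is computed directly as a product via Fact~\ref{F: tmin}(3). The internal claim then finds a dimension-maximal point of $\mu(ae/A)\times Y$ (for an $At$-definable d-approximation $Y$ of $\tp(e/A)$ with $\dim(ae/At)=\dim(ae/A)$) and moves it onto $(a,e)$ by an automorphism over $At$; at no point is a fiber-dimension bound invoked. Once a correct $c$ is in hand --- and the paper's construction in fact delivers the stronger $c\models\tp(b/Aa)$, which your (and the paper's) argument for (2) relies on --- your derivation of (2) is correct and essentially coincides with the paper's. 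Part (3) stands or falls with (1).
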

\begin{proof}
\begin{enumerate}
    \item Let $e$ be a basis for $b$ over $Aa$. Then $e$ is independent over $Aa$, so also over $A$, and thus $\dim(e/A)=\dim(e/Aa)=\dim(b/Aa)$. Note that this implies $\mu(e/Aa)=\mu(e/A)$, which we use below.

Next, note that $\dim(\mu(ae/A))=\dim(ae/A)=\dim(ab/A)$, and $\dim(\mu(e/A))=\dim(e/A)=\dim(b/Aa)$. So $X=\mu(ae/A)\times\mu(e/A)$ is a type-definable set of dimension $\dim(ab/A)+\dim(b/Aa)$. By compactness, we can find elements of $X$ of maximal dimension over $A$. Our main goal is to show that we can take the first coordinate of such an element to be $(a,e)$ itself. 

\begin{claim} There is $f'\in\mu(e/A)$ such that $\dim(aef'/A)\geq\dim(X)=\dim(ab/A)+\dim(b/Aa)$.
\end{claim}
\begin{proof}
    Let $Y$ be any d-approximation of $\tp(e/A)$ at $e$. By compactness, it will suffice to find $y\in Y$ so that $\dim(aey/A)\geqq\dim(X)$ (noting that this last inequality is a type-definable condition in $y$).

    By Lemma \ref{L: GenOS}, we may assume $Y$ is definable over $At$ where $\dim(ae/At)=\dim(ae/A)$, and thus $\mu(ae/At)=\mu(ae/A)$. Now since $Y$ is a d-approximation, we have $\dim(Y)=\dim(e/A)=\dim(\mu(e/A))$. So $\dim(\mu(ae/A)\times Y)=\dim(X)$, and thus there is $(a',e',y')\in\mu(ae/A)\times\mu(e/A)$ with $\dim(a'e'y'/A)\geq\dim(X)$. So $(a',e')\in\mu(ae/A)=\mu(ae/At)$, and thus $\tp(a'e't/A)=\tp(aet/A)$. So there is $y$ with $\tp(a'e'y't/A)=\tp(aeyt/A)$. Then $\tp(y/At)=\tp(y'/At)$, implying $y\in Y$; and moreover $\dim(aey/A)=\dim(a'e'y'/A)\geq\dim(X)$.    
\end{proof}

Fix $f'$ satisfying the conclusion of the claim: i.e., $f'\in\mu(e/A)$ and $\dim(aef'/A)\geq\dim(X)$. Since $\mathcal M$ is sufficiently saturated, there is $f$ from $\mathcal M$ so that $\tp(aef/A)=\tp(aef'/A)$.

    Now since $f'\in\mu(e/A)=\mu(e/Aa)$, we have $\tp(ae/A)=\tp(af'/A)=\tp(af/A)$. So there is $c$ from $\mathcal M$ so that $\tp(abe/A)=\tp(acf/A)$. Thus $c\models\tp(b/Aa)$. Moreover, $f$ is also a basis for $c$ over $Aa$; it follows that $bc$ is interalgebraic with $ef$ over $Aa$, so that $$\dim(abc/A)=\dim(aef/A)=\dim(aef'/A)\geq\dim(ab/A)+\dim(b/Aa)$$ $$=\dim(ab/A)+\dim(c/Aa)\geq\dim(ab/A)+\dim(c/Aab).$$ The reverse inequality is automatic by sub-additivity; thus, all inequalities above are equalities. We conclude that $(c,ab)$ is additive over $A$, and that $\dim(c/Aab)=\dim(c/Aa)=\dim(b/Aa)$. Thus, we have now established (1). 
    \item Let $c$ be as in (1), and let $B\supset A$ so that $\dim(abc/B)=\dim(abc/A)$. By Lemma \ref{L: fiber germ equation}(3), we get $$\dim(c/Bab)=\dim(c/Aab)=\dim(b/Aa)=\dim(c/Aa)\geq\dim(c/Ba).$$ So $c$ is independent from $b$ over $Ba$.

    On the other hand, we compute: $$\dim(abc/B)=\dim(abc/A)=\dim(ab/A)+\dim(c/Aab)$$ $$=\dim(ac/A)+\dim(b/Aa)\geq\dim(ac/B)+\dim(b/Bac)$$ $$\geq\dim(abc/B).$$ So all inequalities above must be equalities, and this implies $$\dim(b/Bac)=\dim(b/Aa)\geq\dim(b/Ba).$$ Thus $b$ is also independent from $c$ over $Ba$, which proves (2).
    
     \item

    Now assume in addition that $\tp(a/Ab)$ is topologically 1-based over $A$. We revisit the construction in (1) slightly to prove (3). First, choose $e$, $B$, and $f'$ as in the original construction. Now let $g=\operatorname{germ}(a/Ab)$, so by assumption $\dim(b/Aa)=\dim(b/Aga)$. It follows that $e$ is also a basis for $b$ over $Aga$, so that $\dim(e/Aa)=\dim(e/Aga)$, and thus $\mu(e/Aa)=\mu(e/Aga)$. Since $f'\in\mu(e/Aa)$, we conclude that $f'\in\mu(e/Aga)$, and thus $\tp(aeg/A)=\tp(af'g/A)$.
    
    Now we make two quick edits to the construction. Namely, we can first choose $f$ to satisfy the stronger property that $\tp(aefg/A)=\tp(aef'g/A)$, and thus $\tp(afg/A)=\tp(af'g/A)=\tp(aeg/A)$. Then we can in turn choose $c$ to satisfy the stronger property that $\tp(abeg/A)=\tp(acfg/A)$ (so in particular $\tp(abe/A)=\tp(acf/A)$ as in (1)). Then the conclusions of (1) still follow; while we in addition have $\tp(abg/A)=\tp(acg/A)$, thus $g=\operatorname{germ}(a/Ab)=\operatorname{germ}(a/Ac)$. This proves (2).
    \end{enumerate}
    \end{proof}

\subsection{Topological Characterization of Zero-Dimensionality}

We also need one more lemma before the main argument. Recall that topologically 1-based types require a `low complexity germ', in the sense that knowing the germ doesn't lower the dimension of the parameter (and assuming exchange, this is equivalent to the germ having dimension zero). The following lemma gives a topological characterization of this phenomenon for general imaginaries. This result is likely well known, but we include a proof for completeness.

\begin{lemma}\label{L: 0-dim}
    Let $a\in M^n$, $b\in\mathcal M^{eq}$, and $A$ a parameter set. Assume that $b\in\dcl(Aa)$. Then the following are equivalent:
    \begin{enumerate}
    \item $\dim(a/Ab)=\dim(a/A)$.
    \item There is an $A$-definable function $f:X\rightarrow Y$, where $X\subset M^n$ and $Y\subset\mathcal M^{eq}$, which takes constant value $b$ on a neighborhood of $a$.
    \end{enumerate}
\end{lemma}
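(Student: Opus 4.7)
I would deduce both directions from Lemma~\ref{L: weakly ind}, which translates the dimension equation in (1) into the germ equation $\operatorname{germ}(a/Ab)=\operatorname{germ}(a/A)$. Since the inequality $\operatorname{germ}(a/Ab)\leq\operatorname{germ}(a/A)$ is automatic (Lemma~\ref{L: type germ facts}(1)), each direction comes down to exhibiting an appropriate common witness for the germ at $a$ that is simultaneously controlled by the other parameter set.

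For (2)$\Rightarrow$(1), I would fix an $Ab$-definable d-approximation $W$ of $\tp(a/Ab)$ at $a$ -- for instance a minimal basic witness, via Corollary~\ref{C: minimal witness is d-app} -- and write $W=\{x:\varphi(x,b)\}$ for some $A$-formula $\varphi$ (suppressing $A$-parameters). The key trick is to substitute $f(x)$ for $b$: the set
\[
W^{\ast}:=\{x\in X:\varphi(x,f(x))\}
\]
is $A$-definable, contains $a$, and on the neighborhood $V\subseteq X$ of $a$ on which $f\equiv b$ it coincides with $W$. Hence $\operatorname{germ}_a(W^{\ast})=\operatorname{germ}_a(W)=\operatorname{germ}(a/Ab)$; but $W^{\ast}$ is $A$-definable and contains $a$, so Lemma~\ref{L: type germ facts}(3) gives $\operatorname{germ}(a/A)\leq\operatorname{germ}_a(W^{\ast})=\operatorname{germ}(a/Ab)$, which is the reverse inequality we needed.

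For (1)$\Rightarrow$(2), I would use $b\in\dcl(Aa)$ to fix an $A$-definable $h:X'\to Y$ with $h(a)=b$, and let $X$ be any $A$-definable d-approximation of $\tp(a/A)$ at $a$ contained in $X'$ (shrink to $X\cap X'$ if necessary; Lemma~\ref{L: local dimension} ensures this is still a d-approximation). By (1) and Lemma~\ref{L: weakly ind}, $\operatorname{germ}(a/Ab)=\operatorname{germ}(a/A)=\operatorname{germ}_a(X)$. Now $h^{-1}(b)$ is $Ab$-definable and contains $a$, so Lemma~\ref{L: type germ facts}(3) yields $\operatorname{germ}_a(h^{-1}(b))\geq\operatorname{germ}(a/Ab)=\operatorname{germ}_a(X)$; unpacking the definition of $\geq$ on germs, $X$ and $h^{-1}(b)$ agree on a neighborhood of $a$ in $X$. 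Setting $f:=h|_X$ finishes the construction.

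The only point requiring a light touch is that when $\dim(a/A)<n$ the point $a$ may have no open $M^n$-neighborhood contained in any $A$-definable set, so ``neighborhood of $a$'' must be read in the subspace topology on the chosen domain $X$; this is precisely why the statement allows $X\subsetneq M^n$. Beyond that, the argument is routine manipulation of germs and d-approximations, and I do not anticipate a significant obstacle.
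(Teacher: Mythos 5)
Your proof is correct. The $(1)\Rightarrow(2)$ direction is essentially the paper's argument. The $(2)\Rightarrow(1)$ direction takes a genuinely different route: the paper invokes the independent neighborhood property directly (shrinking $U$ to a $t$-definable neighborhood over a tuple $t$ with $\dim(a/At)=\dim(a/A)$, then observing $b\in\dcl(At)$ to conclude), whereas you build the $A$-definable set $W^*$ via the substitution trick and deduce the germ equality $\operatorname{germ}(a/A)=\operatorname{germ}(a/Ab)$, passing to the dimension equality through Lemma~\ref{L: weakly ind}. Both proofs ultimately rely on independent neighborhoods, but yours isolates that reliance entirely inside the germ-to-dimension translation; in particular, it shows that the germ equality in $(2)\Rightarrow(1)$ is already available under plain $t$-minimality, which is a mild sharpening.

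One technical wrinkle: the step $\operatorname{germ}_a(W^*)=\operatorname{germ}_a(W)$ uses the paper's germ relation (Definition~\ref{D: germ}), which is agreement on an ambient $M^n$-neighborhood of $a$, not a subspace neighborhood in $X$. Since $W^*\subseteq X$ by construction while $W$ may a priori contain points near $a$ outside $X$, the two sets as you present them only visibly agree on subspace neighborhoods of the form $X\cap U$. This is easily repaired by first replacing $W$ with $W\cap X$: since $X$ is $A$-definable (hence $Ab$-definable) and contains $a$, Lemma~\ref{L: germs exist} gives $\operatorname{germ}_a(W\cap X)=\operatorname{germ}_a(W)$, and once $W\subseteq X$, agreement of $W$ and $W^*$ on $X\cap U$ upgrades to agreement on all of $U$. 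Your closing remark shows you are aware of the subspace-versus-ambient distinction in general, but this particular adjustment should be made explicit in the argument.
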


\begin{proof}
    First, assume (1). Since $b\in\dcl(Aa)$, there are an $A$-definable function $f:X\rightarrow Y$, where $X\subset M^n$ and $Y\subset\mathcal M^{eq}$, such that $f(a)=b$. Shrinking $X$ if necessary, we may assume it is a d-approximation of $\tp(a/A)$ at $a$. Now (1) implies that $\operatorname{germ}(a/Ab)=\operatorname{germ}(a/A)=\operatorname{germ}_a(X)$. In particular, the $Ab$-definable condition `$f(x)=b$' must hold on a neighborhood of $a$ in $X$. This proves (2).

    Now assume (2), and let $f:X\rightarrow Y$ be the given function. Let $U$ be an open set containing $a$ such that $f$ takes constant value $b$ on $X\cap U$. By the independent neighborhood property, we may assume $U$ is $t$-definable where $\dim(a/At)=\dim(a/A)$. But now $b$ is $At$-definable, since $b$ is the unique element of $f(X\cap U)$. Thus also $\dim(a/Ab)=\dim(a/A)$, which proves (1).
\end{proof}

\subsection{The Main Structure Result}

Finally, we give the main result of this section:

\begin{theorem}\label{T: partition} Let $a$ and $b$ be real tuples, and $A$ a parameter set. Then the following are equivalent: 
\begin{enumerate}
    \item $\tp(a/Ab)$ is topologically 1-based over $A$.
    \item Any two fibers in the projection $\mu(ab/A)\rightarrow\mu(b/A)$ are either equal or disjoint. 
\end{enumerate}
\end{theorem}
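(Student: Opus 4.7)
The plan is to prove each direction separately; direction (2) $\Rightarrow$ (1) is a direct application of earlier lemmas, while (1) $\Rightarrow$ (2) is substantially deeper.

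For (2) $\Rightarrow$ (1): given $y \in \mu(b/Aa)$, the symmetric form of Lemma \ref{L: fiber germ equation}(1) (switching the roles of $a$ and $b$) yields $(a,y) \in \mu(ab/A)$, so $a$ lies in both fibers $F_y = \mu(ab/A)_y$ and $F_b = \mu(a/Ab)$. Hypothesis (2) forces $F_y = F_b$. By Corollary \ref{C: fiber germ}, these two fibers realize $\operatorname{germ}(a/Ay)$ and $\operatorname{germ}(a/Ab)$ respectively at $a$, so the germs agree. Since $y \in \mu(b/Aa)$ was arbitrary, the constant germ condition holds, and Lemma \ref{L: 1-based iff constant germ} gives topological 1-basedness.

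For (1) $\Rightarrow$ (2): fix $(x_0, y_1), (x_0, y_2) \in \mu(ab/A)$; we must show $F_{y_1} = F_{y_2}$. By Corollary \ref{C: fiber germ}, each $F_{y_i}$ realizes $g_i := \operatorname{germ}(x_0/Ay_i)$ at $x_0$. The strategy is to first establish $g_1 = g_2$, then propagate the resulting local agreement to global equality. For the germ equality, 1-basedness of $\tp(a/Ab)$ together with Lemma \ref{L: 0-dim} provides an $Aa$-definable function $h$ in $\mathcal M$ with $h \equiv g := \operatorname{germ}(a/Ab)$ on an $\mathcal M$-definable neighborhood of $b$, so $h(y_1) = h(y_2) = g$ when $h$ is extended to $\mathcal N$. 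Transfer via an $A$-automorphism $\sigma$ of $\mathcal N$ with $\sigma(a, b) = (x_0, y_1)$, chosen so that $\sigma^{-1}(y_2) \in \mu(b)$. By homogeneity of $\mathcal N$, such $\sigma$ exists iff there is $b' \in \mu(b)$ with $\tp(abb'/A) = \tp(x_0 y_1 y_2/A)$; this reduces via compactness to a density statement that follows from the infinitesimal closeness of $y_2$ to $b$ together with the rigidity of the germ under 1-basedness (via Lemma \ref{L: additive extension}). Applying $\sigma$ to $h(\sigma^{-1}(y_2)) = g$ then yields $\sigma(h)(y_2) = g_1$, forcing $g_2 = g_1$.

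For the propagation step: once $g_1 = g_2$, the fibers $F_{y_1}$ and $F_{y_2}$ agree on an $\mathcal N$-neighborhood of $x_0$. Repeating the germ-equality argument at any $x \in F_{y_1} \cap F_{y_2}$ (where $(x, y_1), (x, y_2) \in \mu(ab/A)$) shows that the set of shared points is open in $F_{y_1}$. Using that $\mu(ab/A)$ is open in any $\mathcal M$-definable d-approximation (Lemma \ref{L: mu open}) and a connectedness-style argument on the type-definable fibers, one concludes $F_{y_1} = F_{y_2}$ (together with the symmetric argument to close both inclusions).

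The main obstacle is the bridging step: establishing that $\sigma$ can be chosen with $\sigma^{-1}(y_2) \in \mu(b)$, equivalently that the type $\tp(x_0 y_1 y_2/A)$ is realized by some $(a, b, b')$ with $b' \in \mu(b)$. This requires a delicate interaction between 1-basedness (which constrains the germ structure so that $\tp(y_2/Ax_0 y_1)$ is ``locally absorbable'' into the infinitesimal neighborhood of $b$) and the independent neighborhood property (which provides the flexibility to realize the required type in an infinitesimal neighborhood). A secondary care point is the propagation step, where the topology of fibers as external subsets of $\mathcal N$ must be handled carefully.
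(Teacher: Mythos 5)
Your proof of (2) $\Rightarrow$ (1) is correct and is substantively the paper's argument: you identify the fiber $\mu(ab/A)_y$ (for $y\in\mu(b/Aa)$) with $\mu(ab/A)_b$ via Corollary \ref{C: fiber germ}, deduce germ equality, and close with Lemma \ref{L: 1-based iff constant germ}; the paper instead verifies the dimension inequality by hand, but the two routes are the same computation.

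Your proof of (1) $\Rightarrow$ (2) has two genuine gaps, and does not match the paper's method. First, the ``bridging step'' --- finding an $A$-automorphism $\sigma$ of $\mathcal N$ with $\sigma(a,b)=(x_0,y_1)$ and $\sigma^{-1}(y_2)\in\mu(b)$, equivalently some $b'\in\mu(b)$ with $\tp(abb'/A)=\tp(x_0y_1y_2/A)$ --- is asserted to follow from a ``density statement'' but is never proved. Observe that $\sigma$ cannot fix $M$ pointwise (it moves the tuple $(a,b)$ from $\mathcal M$ into $\mathcal N\setminus\mathcal M$), so $\mathcal M$-definable open sets are not $\sigma$-invariant and the constraint $\sigma^{-1}(y_2)\in\mu(b)$ is very far from automatic; it is, in fact, the crux of the difficulty. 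The paper avoids this entirely by staying inside $\mathcal M$: Lemma \ref{L: additive extension}(3) produces a real tuple $c$ \emph{from $\mathcal M$} realizing $\tp(b/Aat)$, additive over the new parameters, with $\operatorname{germ}(a/Ab)=\operatorname{germ}(a/Ac)$. All dimension-theoretic bookkeeping (Lemma \ref{L: 0-dim}, independent neighborhoods) is then carried out over small parameter sets in $\mathcal M^{eq}$, and the elements of $\mu(ab/A)$ are brought in only at the very last step of the argument.

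The second, more serious, gap is the ``propagation step.'' Even granting $g_1=g_2$, you get only that $F_{y_1}$ and $F_{y_2}$ agree in some neighborhood of $x_0$; concluding $F_{y_1}=F_{y_2}$ by a ``connectedness-style argument'' is unjustified, because these fibers are type-definable open subsets of d-approximations in a general t-minimal structure, and there is no reason for them to be connected (or even expressible as a small union of connected pieces). Openness of the agreement set does not force it to be everything. The paper's mechanism is completely different: after arranging, via independent neighborhoods, that $X_b$ and $X_c$ agree on a $u$-definable neighborhood $U$ of $a$, it shows that a code $e$ of the intersection $X_b\cap U=X_c\cap U$ is actually definable over $Aatuv$ --- i.e.~a function of the $a$-coordinate alone --- using Lemma \ref{L: 0-dim} applied at the level of the tuple $b$. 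This invariance forces $e'=e''$ for the conjugates over $b'$ and $b''$, hence $X_{b'}\cap U = X_{b''}\cap U$ \emph{as sets}, with no local-to-global step at all. That code-invariance argument is the heart of the proof and has no analogue in your sketch; without it, the argument cannot be closed.
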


\begin{proof}
    First assume $\tp(a/b)$ is topologically 1-based. To prove (2), we chose tuples $(a',b'),(a'',b'),(a',b'')\in\mu(ab/A)$. We want to show that also $(a'',b'')\in\mu(ab/A)$. To show this, let $X$ be an $\mathcal M$-definable d-approximation of $\tp(ab/A)$ at $(a,b)$; we show that $(a'',b'')\in X$. By Lemma \ref{L: d-app locality}, we may assume $X$ is $t$-definable where $\dim(ab/At)=\dim(ab/A)$.

    Let $g=\operatorname{germ}(a/Ab)$. Since $\dim(ab/At)=\dim(ab/A)$, Lemma \ref{L: fiber germ equation}(3) gives that also $g=\operatorname{germ}(a/Abt)$. Now by Lemma \ref{L: additive extension}(3), there is $c\models\tp(b/Aat)$ such that $\dim(c/Aabt)=\dim(b/Aat)$, $(c,ab)$ is additive over $At$, and $g=\operatorname{germ}(a/Act)$. Then $\tp(abgt/A)=\tp(acgt/A)$, since $g$ is definable over $Aabt$ and $Aact$ by the same formula. 

    Since $\dim(ab/At)=\dim(ab/A)$, $X$ is also a d-approximation of $\tp(ab/At)$ at $(a,b)$. Then by Lemma \ref{L: fiber germ equation}(2), the fiber $X_b$ realizes $g=\operatorname{germ}(a/Abt)$. Since $\tp(abgt/A)=\tp(acgt/A)$, $X_c$ also realizes $g$. So $X_b$ and $X_c$ agree on some neighborhood of $a$, say $U$. By Lemma \ref{L: inp for bigger tuple}, we may assume $U$ is $u$-definable where $\dim(abc/Atu)=\dim(abc/At)=\dim(abc/A)$. Note that by Lemma \ref{L: additive extension}(2) we know that $b$ and $c$ are symmetrically independent over $Aatu$; thus $\dim(b/Aactu)=\dim(b/Aatu)$.

    Let $e$ be a code of the set $X_b\cap U=X_c\cap U$. So $e\in\dcl(Aabtu)\cap\dcl(Aactu)$. Thus $$\dim(b/Aatu)\ge \dim(b/Aaetu)\geq\dim(b/Aactu)=\dim(b/Aatu).$$ So equality holds throughout, and by Lemma \ref{L: 0-dim}, there is an $Aatu$-definable function $f:Y\rightarrow Z$ such that $b\in Y$ and $f$ takes constant value $e$ in a neighborhood of $b$, say $V$. By Lemma \ref{L: inp for bigger tuple}, we may assume $V$ is $v$-definable where $\dim(abc/Atuv)=\dim(abc/Atu)=\dim(abc/At)$. Note, then, that $e\in\dcl(Aatuv)$, since it is the unique image of $f$ on $Y\cap V$.

    Now since $(c,ab)$ is additive over $At$ and $\dim(abc/Atuv)=\dim(abc/At)$, it follows by Lemma \ref{L: additive pairs} that $\dim(ab/Atuv)=\dim(ab/At)=\dim(ab/A)$. Thus $\mu(ab/Atuv)=\mu(ab/A)$. In particular, we have $$(a',b'),(a'',b'),(a',b'')\in\mu(ab/Atuv),$$ and thus each of these three pairs realizes $\tp(ab/Atuv)$. Let $e',e''$ be such that $$\tp(abetuv/A)=\tp(a'b'e'tuv/A)=\tp(a'b''e''tuv/A).$$ Since $e\in\dcl(Aatuv)$ (i.e. the $b$-coordinate is not needed to define $e$), it follows that $e'=e''$. Thus $e'$ codes both $X_{b'}\cap U$ and $X_{b''}\cap U$, and thus $X_{b'}\cap U=X_{b''}\cap U$. But since $$\tp(abtuv/A)=\tp(a''b'tuv/A),$$ and $a\in X_b\cap U$, it follows that $a''\in X_{b'}\cap U$ (recall here that $X$ and $U$ are $Atu$-definable). Thus we also have $a''\in X_{b''}\cap U$, which shows that $(a'',b'')\in X$, as desired. We have now proved that (1) implies (2).

    Now for the converse, assume that any two fibers of $\mu(ab/A)\rightarrow\mu(b/A)$ are equal or disjoint. We show that $\tp(a/Ab)$ is topologically 1-based. Since $\dim(\mu(b/Aa))=\dim(b/Aa)$, a compactness argument gives an element $c\in\mu(b/Aa)$ with $\dim(c/Aab)\geq\dim(b/Aa)$. By Lemma \ref{L: fiber germ equation}, $(a,c)\in\mu(ab/A)$. So by assumption, the fibers above $b$ and $c$ in $\mu(ab/A)$ are equal.

    Let $X$ be a $\emptyset$-definable d-approximation of $\tp(ab/A)$ at $(a,b)$. By Lemma \ref{L: fiber germ equation}, the fiber $X_b$ realizes $\operatorname{germ}(a/Ab)$. Since $(a,c)\in\mu(ab/A)$, we have $\tp(ab/A)=\tp(ac/A)$, and thus the fiber $X_c$ realizes $\operatorname{germ}(a/Ac)$. But by Lemma \ref{L: mu open}, $\mu(ab/A)$ is open in $X$; thus the common fiber above $b$ and $c$ in $\mu(ab/A)$ realizes both $\operatorname{germ}_a(X_b)$ and $\operatorname{germ}_a(X_c)$. In particular, $\operatorname{germ}(a/Ab)=\operatorname{germ}(a/Ac)=g$, say. Then $\tp(abg/A)=\tp(acg/A)$, since $g$ is definable over $Aab$ and $Aac$ by the same formula. Now since $g\in\dcl(Aab)$, we have $$\dim(b/Aag)=\dim(c/Aag)\geq\dim(c/Aab)\geq\dim(b/Aa).$$  This implies that $\tp(a/Ab)$ is topologically 1-based.
\end{proof}

We conclude, as promised earlier, that topological 1-basedness restricts exactly to the prior notion of weak 1-basedness for geometric theories:

\begin{corollary}\label{C: weak 1b} Assume $\mathcal M$ satisfies exchange. Then $\mathcal M$ is topologically 1-based if and only if it is weakly 1-based.
\end{corollary}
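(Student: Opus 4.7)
The forward implication, weakly 1-based implies topologically 1-based, is exactly Lemma \ref{L: w1b to 1b}. For the converse, assume $\mathcal M$ is topologically 1-based and satisfies exchange; given a real tuple $a$ and parameter set $B$, I would produce $a'\models\tp(a/B)$ witnessing weak 1-basedness, i.e.\ with $\dim(a/a')=\dim(a/Ba')=\dim(a/B)=:d$. By a standard compactness/monotonicity argument, I first reduce to the case where $B$ is a finite real tuple $b$, so $\dim(a/b)=d$ and $\operatorname{germ}(a/b)=\operatorname{germ}(a/B)$; the full $B$-case follows by a separate saturation argument, extending the constructed $a'$ to realize $\tp(a/B)$ while preserving the dimension data.

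The candidate $a'$ comes from Lemma \ref{L: additive extension}(1) and (3) applied with the roles of $a$ and $b$ interchanged (the hypothesis of (3), topological 1-basedness of $\tp(b/a)$, is immediate from our assumption). This yields $a'\models\tp(a/b)$ with $\dim(a'/ba)=d$, with $(a',ab)$ additive over $\emptyset$, and --- crucially --- with the germ equality $\operatorname{germ}(b/a)=\operatorname{germ}(b/a')=:h$. Exchange additivity immediately gives $\dim(a/ba')=d$. Expanding $\dim(baa')$ via additivity in the two different orders $(a,a',b)$ and $(a,b,a')$ and using $\dim(a)=\dim(a')$, the identity $\dim(a/a')=d$ reduces to the equality
$$\dim(b/aa')=\dim(b/a),$$
that is, to $b$ being independent from $a'$ over $a$.

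The main obstacle is establishing this last independence, and the plan is to exploit the germ equality in the infinitesimal picture. Fix an $a$-definable d-approximation $Y$ of $\tp(b/a)$ at $b$ and an $a'$-definable d-approximation $Y'$ of $\tp(b/a')$ at $b$, both realizing $h$ (these exist by definition of d-approximation together with the germ equality). Then $Y$ and $Y'$ agree on a neighborhood of $b$, so the $aa'$-definable set $Y\cap Y'$ contains a neighborhood of $b$ of local dimension $\dim(h)=\dim(b/a)$. Topological 1-basedness of $\tp(b/a)$, combined with Lemma \ref{L: 1-based iff dim 0}, gives $\dim(h/b)=0$, and Lemma \ref{L: 0-dim} then supplies a $b$-definable function $f$ taking constant value $h$ on an open neighborhood $U$ of $a$; $b$-definable homogeneity (since $a'\equiv_b a$) shows $a'\in U$ as well, and hence $f(a')=h$. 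The goal is to leverage this, together with Lemma \ref{L: d-app locality}, the independent neighborhood property, and the local-constancy characterization of Lemma \ref{L: 1-based iff constant germ}, to show that $Y\cap Y'$ is in fact a d-approximation of $\tp(b/aa')$ at $b$, yielding $\operatorname{germ}(b/aa')=h$ and hence $\dim(b/aa')=\dim(h)=\dim(b/a)$. The delicate point --- and where the full force of topological 1-basedness is needed --- is ruling out that a strictly smaller $aa'$-definable set around $b$ could produce a proper subgerm of $h$; once this is achieved, the dimensional bookkeeping yields $\dim(a/a')=d$, completing the proof.
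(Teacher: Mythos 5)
Your forward implication is correct (it is exactly Lemma \ref{L: w1b to 1b}), and the framing of the converse — reduce to finite $b$, produce $a'$ via Lemma \ref{L: additive extension} with roles swapped, and reduce the weak $1$-basedness condition $\dim(a/a')=\dim(a/a'b)=d$ to the single equality $\dim(b/aa')=\dim(b/a)$ — is all sound dimension bookkeeping under exchange. The gap is exactly at the step you yourself flag as ``delicate'': you never show that $\operatorname{germ}(b/aa')=h$. The equality $\operatorname{germ}(b/a)=\operatorname{germ}(b/a')=h$ gives $\operatorname{germ}_b(Y\cap Y')=h$, but by Lemma \ref{L: type germ facts}(3) this only yields $\operatorname{germ}(b/aa')\le h$; the reverse inequality is precisely the content of $\dim(b/aa')=\dim(b/a)$ and cannot be read off from the data you have. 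Concretely, with $e:=\dim(b/a)$, additivity gives $\dim(a'/a)+\dim(b/aa')=d+e$ with $\dim(a'/a)\ge\dim(a'/ab)=d$, so the missing step is exactly ruling out $\dim(a'/a)>d$; Lemma \ref{L: additive extension}(3) controls $\dim(a'/ab)$ and the germ, but says nothing about $\dim(a'/a)$. The $b$-definable function $f$ with $f(a)=f(a')=h$ (which you obtain correctly, though the claim $a'\in U$ is not quite right — the correct deduction is that $f(a')=\sigma(h)=\operatorname{germ}(b/a')=h$ for any $\sigma\in\operatorname{Aut}(\mathcal M/b)$ carrying $a$ to $a'$) only re-expresses the germ equality and does not produce the needed independence.

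The tools you propose to deploy to close the gap — Lemma \ref{L: d-app locality}, the independent neighborhood property, and Lemma \ref{L: 1-based iff constant germ} — are unlikely to suffice as stated, because they all operate inside the infinitesimal neighborhood $\mu(a/b)\subset\mathcal N$, whereas your $a'$ is an element of the small model $\mathcal M$ chosen generically via Lemma \ref{L: additive extension}, hence not infinitesimally close to $a$. This is where your route genuinely diverges from the paper's. The paper's proof of the converse works entirely in $\mathcal N$: it picks a generic $c\in\mu(a/b)$ and a generic $d\in\mu(b/a)$, and the crucial input is Theorem \ref{T: partition} (applied to the topologically $1$-based type $\tp(a/b)$), which forces the fibers of $\mu(ab/\emptyset)\to\mu(b/\emptyset)$ over $b$ and $d$ to coincide, yielding $(c,d)\in\mu(ab/\emptyset)$. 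It is exactly this ``alignment'' of the two infinitesimal perturbations, unavailable in your construction, that delivers the dimension equalities $\dim(a/c)=\dim(a/cd)=\dim(a/d)$; the pair $(c,d)$ is then pulled down to an $\mathcal M$-tuple $a'$ by saturation only at the very end. To fix your argument you would either need to re-derive the needed independence from Theorem \ref{T: partition} applied in a nonobvious way, or abandon the direct $\mathcal M$-level construction in favor of the infinitesimal one.
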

\begin{proof}
    We showed in Lemma \ref{L: w1b to 1b} that weak 1-basedness implies topological 1-basedness. We now show the reverse implication. So assume $\mathcal M$ is topologically 1-based.

    Recall that to show $\mathcal M$ is weakly 1-based, we are given a real tuple $a$ and a small set $B\subset M$. We need to find $a'\models\tp(a/B)$ so that $\dim(a/a')=\dim(a/Ba')=\dim(a/B)$. By compactness (since $\mathcal M$ is sufficiently saturated), it is enough to prove the result in the case where  $B:=b$ is a finite tuple.

    First, since $\dim(\mu(a/b))=\dim(a/b)$, there is $c\in\mu(a/b)$ with $\dim(c/ab)\geq\dim(a/b)$. In particular, $\tp(c/b)=\tp(a/b)$, and $\dim(c/ab)=\dim(a/b)$.

     Similarly, there is $d\in\mu(b/a)$ with $\dim(d/abc)=\dim(b/a)$. Then the fibers above $b$ and $d$ in $\mu(ab/\emptyset)\rightarrow\mu(b/\emptyset)$ both contain $a$, and thus they are equal by Theorem \ref{T: partition}. In particular, since $(c,b)\in\mu(ab/\emptyset)$, it follows that $(c,d)\in\mu(ab/\emptyset)$ as well. Thus $\tp(cd)=\tp(ab)$, and so $\dim(d/c)=\dim(b/a)=\dim(d/abc)$. It follows that $d$ and $a$ are independent over $c$, so that $\dim(a/c)=\dim(a/cd)$.

    Now, since $(a,d)\in\mu(ab/\emptyset)$, we also have $\tp(ad)=\tp(ab)$, and thus $$\dim(d/a)=\dim(b/a)=\dim(d/abc).$$ It follows that also $\dim(d/ab)=\dim(abc)$, so $d$ and $c$ are independent over $ab$. Thus also $$\dim(c/abd)=\dim(c/ab)=\dim(a/b)=\dim(c/d).$$ It follows that $c$ and $a$ are independent over $d$. Thus we also have $\dim(a/cd)=\dim(a/d)$.
    
    To recap, we have now shown that $\dim(a/c)=\dim(a/cd)=\dim(a/d)$. Now, since $(a,d)\in\mu(ab/\emptyset)$, we have $\tp(ab)=\tp(ad)$, and thus there is $a'$ so that $\tp(aba')=\tp(adc)$. Since $\mathcal M$ is sufficiently saturated, we may assume $a'$ is a tuple from $\mathcal M$. Then $\tp(a'b)=\tp(cd)=\tp(ab)$, so that $a'\models\tp(a/b)$ -- and moreover, since $\dim(a/c)=\dim(a/cd)=\dim(a/d)$, it follows that $\dim(a/a')=\dim(a/a'b)=\dim(a/b)$. This shows that $\mathcal M$ is weakly 1-based.
\end{proof}

\section{Groupoid Spines}

Later on we will show that under a suitable non-triviality assumption, any topologically 1-based t-minimal structure with the independent neighborhood property admits an infinite type-definable group. This amounts to a topological version of the group configuration theorem. Before giving the argument, we first separate its purely combinatorial content. Thus, the current section is independent from model theory and concerns only the recovery of a group (in fact, a groupoid) from a well-behaved family of bijections. The general idea is similar to \cite{ChePetSta}, but with details more suited to our needs. Then in the next section, we add type-definability requirements (but working in an arbitrary theory), before finally returning to our topologically 1-based setting in Sections 9-11. 

\subsection{Basic Notions}

The central object we will work with for now is:

\begin{definition}\label{D: groupoid spine}
    A \textit{groupoid spine} consists of:
    \begin{itemize}
        \item A linear order $(I,<)$,
        \item An $I$-indexed family of non-empty sets. $\mathcal X=\{X_i:i\in I\}$,
        \item A non-empty relation $\emptyset\neq R\subset I^2$ containing all pairs $(i,j)$ with $i<j$, and
        \item For all $(i,j)\in R$, a non-empty collection $\operatorname{Mor}(i,j)$ of bijections $X_i\rightarrow X_j$ (called \textit{morphisms}),
    \end{itemize}
    such that the morphisms are relatively closed under the groupoid operations. That is:
        \begin{enumerate}
        \item If $(i,i)\in R$ for some $i$ then $\operatorname{id}:X_i\rightarrow X_i\in\operatorname{Mor}(i,i)$.
        \item If $(i,j),(j,i)\in R$ and $f\in\operatorname{Mor}(i,j)$ then $f^{-1}\in\operatorname{Mor}(j,i)$.
        \item If $(i,j)(j,k),(i,k)\in R$, $f\in\operatorname{Mor}(i,j)$, and $g\in\operatorname{Mor}(j,k)$, then $g\circ f\in\operatorname{Mor}(i,k)$.
        \end{enumerate}
    We denote a groupoid spine by the data $(I,<,\mathcal X,R,\mathrm{Mor})$, where $\mathrm{Mor}$ is the family $\{\operatorname{Mor}(i,j):(i,j)\in R\}$.
\end{definition}

Note that Definition \ref{D: groupoid spine} implies that for any $i,j\in I$, there is a composition of morphisms and their inverses sending $X_i$ to $X_j$ (which may be empty if $i=j$). This ensures that the groupoid determined by the data (if it exists) is connected. 

\begin{definition}
    Let $(I,<,\mathcal X,R,\mathrm{Mor})$ be a groupoid spine. 
    \begin{enumerate}
        \item Suppose $R\subset S\subset I^2$. We say that $(I,<,\mathcal X,R,\mathrm{Mor})$ \textit{extends to $S$} if one can define $\operatorname{Mor}(i,j)$ for $(i,j)\in S-R$ in such a way that $(I,<,\mathcal X,S,\mathrm{Mor})$ is a groupoid spine.
        \item We say that $(I,<,\mathcal X,R,\mathrm{Mor})$ \textit{extends to a groupoid}, if it extends to $S=I^2$.
    \end{enumerate}
\end{definition}

In other words, $(I,<,\mathcal X,R,\mathrm{Mor})$ extends to $S$ if it can be grown to a groupoid spine on $S$ without adding any new morphisms to pairs in $R$.

The following is then clear.

\begin{lemma} Suppose $(I,\mathcal X,R,\mathrm{Mor})$ is a groupoid spine that extends to a groupoid. Then the extension to a groupoid is uniquely determined. More precisely, the resulting groupoid is precisely the groupoid on $I$ generated by all morphisms between pairs in $R$.
\end{lemma}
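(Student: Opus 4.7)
My plan is to argue that any groupoid extension of $(I,<,\mathcal{X},R,\mathrm{Mor})$ is completely pinned down by the data already present on $R$, so that two extensions must agree on every pair in $I^2$. Concretely, let $G$ be any groupoid extension; the goal is to give explicit formulas for $\mathrm{Mor}_G(i,j)$ in terms of the spine, and then observe that these formulas depend only on the spine.

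First I would handle the off-diagonal case. For $(i,j)\in I^2$ with $i\neq j$, since $R$ contains every strictly ordered pair, either $(i,j)\in R$ or $(j,i)\in R$. In the former case $\mathrm{Mor}_G(i,j)=\mathrm{Mor}(i,j)$ by the definition of ``extension''. In the latter case, closure under inverses forces
\[
\mathrm{Mor}_G(i,j)=\{h^{-1}:h\in\mathrm{Mor}(j,i)\},
\]
with $\supseteq$ because each $h^{-1}$ lies in $G$ by closure, and $\subseteq$ because any $f\in\mathrm{Mor}_G(i,j)$ has $f^{-1}\in\mathrm{Mor}_G(j,i)=\mathrm{Mor}(j,i)$, hence $f=(f^{-1})^{-1}$ lies in the right-hand side.

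Next I would handle the diagonal case $(i,i)$. If $(i,i)\in R$ there is nothing to do, and if $I=\{i\}$ then by non-emptiness $(i,i)$ must be in $R$. Otherwise pick $j\neq i$; by the off-diagonal argument one may assume $(i,j)\in R$ after possibly replacing by $(j,i)$. Fix any $f\in\mathrm{Mor}(i,j)$. For $g\in\mathrm{Mor}_G(i,i)$, composition yields $f\circ g\in\mathrm{Mor}_G(i,j)=\mathrm{Mor}(i,j)$, so $g=f^{-1}\circ (f\circ g)$ lies in $\{f^{-1}\circ h:h\in\mathrm{Mor}(i,j)\}$; conversely every $f^{-1}\circ h$ is a two-step composition inside $G$, hence in $\mathrm{Mor}_G(i,i)$. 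Thus
\[
\mathrm{Mor}_G(i,i)=\{f^{-1}\circ h:h\in\mathrm{Mor}(i,j)\},
\]
again determined solely by the spine. This proves uniqueness.

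For the ``more precisely'' clause, let $G_0$ denote the groupoid on $I$ generated by the morphisms appearing in pairs of $R$. Any extension $G$ must contain every such morphism and be closed under composition and inversion, so $G_0\subseteq G$ at every hom-set; and the explicit formulas just derived give the reverse containment $G\subseteq G_0$. The only real obstacle to watch for is the diagonal case --- one needs the connectivity baked into the definition (via the order $<$ and non-emptiness of $R$) in order to route $(i,i)$ through some auxiliary $j$ for which $(i,j)\in R$. Everything else is a direct application of the groupoid axioms and the hypothesis that, on pairs in $R$, extensions do not enlarge the given $\mathrm{Mor}(i,j)$.
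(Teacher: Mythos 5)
The paper does not supply a proof of this lemma (it is labelled ``then clear''), so there is nothing to compare against; your argument is correct and fills in the missing details. The structure is the natural one: pin down off-diagonal hom-sets via closure under inverses, using that every pair $(i,j)$ with $i\neq j$ has at least one orientation in $R$; pin down diagonal hom-sets by transporting through an auxiliary object; and observe that each resulting formula expresses $\mathrm{Mor}_G(i,j)$ as compositions and inverses of spine morphisms, giving both uniqueness and the identification with the generated groupoid.

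The one place where the write-up is imprecise is the phrase ``one may assume $(i,j)\in R$ after possibly replacing by $(j,i)$'' in the diagonal case. You cannot alter whether a fixed pair lies in $R$; what you actually want is either (a) if $(i,j)\notin R$ then $(j,i)\in R$, and the mirror computation with $f\in\mathrm{Mor}(j,i)$, writing $g = (g\circ f)\circ f^{-1}$, gives the formula $\mathrm{Mor}_G(i,i)=\{h\circ f^{-1}:h\in\mathrm{Mor}(j,i)\}$, or (b) note that $\mathrm{Mor}_G(i,j)$ has already been determined from the spine by the off-diagonal step regardless of which orientation lies in $R$, so the formula $\mathrm{Mor}_G(i,i)=\{f^{-1}\circ h:h\in\mathrm{Mor}_G(i,j)\}$ with $f$ fixed from that (already-determined) set still depends only on the spine. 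Either patch is immediate, so this is a presentational slip rather than a gap; the remainder of the argument, including the ``more precisely'' clause, is fine.
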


\subsection{Combinatorial Group Configuration Theorem}

We now give a purely combinatorial analog of the group configuration theorem. The statement will say that a symmetric groupoid spine on at least three objects always extends to a groupoid. First, we define:

\begin{definition}
    A groupoid spine $(I,<,\mathcal X,R,Mor)$ is \textit{symmetric} if $R$ is symmetric -- that is, $(i,j)\in R$ implies $(j,i)\in R$ for all $i,j\in I$.
\end{definition}

Equivalently, one easily checks that $(I,<,\mathcal X,R,Mor)$ is symmetric if and only if it contains all pairs $(i,j)$ with $i\neq j$. We now show:

\begin{theorem}\label{T: gp con} Let $(I,<\mathcal X,R,\mathrm{Mor})$ be a symmetric groupoid spine with $|I|\geq 3$.
Then $(I,<,\mathcal X,R,\mathrm{Mor})$ extends to a groupoid.
\end{theorem}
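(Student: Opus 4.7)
The plan is to construct the extension by defining $\operatorname{Mor}(i,i)$ for each $i \in I$ with $(i,i) \notin R$, while keeping all existing $\operatorname{Mor}(i,j)$ fixed. Since the spine is symmetric, $R$ already contains every off-diagonal pair, so only diagonal pairs can require a new definition. For each such $i$ I would fix any $k \in I$ with $k \neq i$ (which exists since $|I| \geq 3$) and set
\[
\operatorname{Mor}(i,i) := \{\, h \circ g^{-1} : g, h \in \operatorname{Mor}(k,i)\,\}.
\]
The first routine check is independence of the auxiliary index $k$: given another $k' \neq i$ with $k' \neq k$, pick any $\sigma \in \operatorname{Mor}(k',k)$ (non-empty by symmetry); then $g \circ \sigma, h \circ \sigma \in \operatorname{Mor}(k', i)$ by axiom (3), and $(h \circ \sigma) \circ (g \circ \sigma)^{-1} = h \circ g^{-1}$. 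An analogous manipulation shows that the formula reproduces the existing $\operatorname{Mor}(i,i)$ whenever $(i,i)$ was already in $R$, so no conflict arises.

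With the enlarged family in hand, I would verify the three spine axioms over all of $I^2$. The identity axiom is immediate, since $\operatorname{id}_{X_i} = g \circ g^{-1}$ for any $g \in \operatorname{Mor}(k,i)$, and inversion is trivial because $(h \circ g^{-1})^{-1} = g \circ h^{-1}$ has the same shape. For composition, given $f \in \operatorname{Mor}(i,j)$ and $g \in \operatorname{Mor}(j,k)$, I would proceed by cases on the coincidences among $i,j,k$ and on which of the three diagonals $(i,i), (j,j), (k,k)$ already lie in $R$. Most cases reduce directly to axiom (3) of the original spine. In the remaining cases I would substitute the defining formula $h \circ g'^{-1}$ for any factor drawn from a newly defined self-morphism set, then apply axiom (3) repeatedly; the essential bookkeeping is to choose the auxiliary index (the $l$ in the definition) distinct from every index already appearing in the ambient composition.

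The hard part will be the worst case, in which both $f$ and $g$ lie in a newly defined $\operatorname{Mor}(i,i)$. Writing $f = h_1 \circ g_1^{-1}$ and $g = h_2 \circ g_2^{-1}$, the product $g \circ f$ contains a middle factor $g_2^{-1} \circ h_1 : X_{l_1} \to X_{l_2}$ that must be realized as an element of some $\operatorname{Mor}(l_1,l_2)$ in the original spine. To arrange this cleanly I would use the independence-of-source established above to pick $l_1$ and $l_2$ distinct from each other and from $i$, so that $(l_1,l_2) \in R$ by symmetry and axiom (3) applies. This is precisely where $|I| \geq 3$ is needed: it provides two distinct auxiliary objects outside $\{i\}$, without which the required off-diagonal triples need not all lie in $R$. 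Once this case is handled, the remaining composition verifications are routine, and uniqueness of the resulting groupoid is already recorded in the excerpt.
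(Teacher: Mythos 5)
Your proposal is correct, but it takes a genuinely different route from the paper. You construct the extension explicitly: set $\operatorname{Mor}(i,i) := \operatorname{Mor}(k,i)\circ\operatorname{Mor}(k,i)^{-1}$ for some auxiliary $k\neq i$, verify independence of $k$ and consistency with any pre-existing $\operatorname{Mor}(i,i)$, and then check the three spine axioms by direct case analysis, using a third object $l\neq i,k$ (so $|I|\geq 3$ is invoked) to realize the "inner" factor $g_2^{-1}\circ h_1$ as a genuine off-diagonal morphism. The paper instead reasons in the free groupoid $\mathcal{G}$ generated by the spine and proves that no new morphisms appear between $R$-pairs: it fixes a minimal-length decomposition of a generated morphism into spine morphisms, argues from minimality plus symmetry that the index sequence must alternate between two objects, and then uses a third object to factor the middle map and shorten the word, forcing length $\leq 1$. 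Both arguments use $|I|\geq 3$ at essentially the same step (providing a detour object through which to reroute a "bad" inner composition). The tradeoff is the usual one: your direct construction is concrete and makes the new $\operatorname{Mor}(i,i)$ visible, at the cost of a multi-case verification of the composition axiom; the paper's minimal-word argument sidesteps the case-checking and, as a by-product, establishes that the extension is exactly the groupoid generated by the spine. One small remark: you should also record (briefly) that when $(i,i)\in R$ the formula $\operatorname{Mor}(k,i)\circ\operatorname{Mor}(k,i)^{-1}$ reproduces the existing set in \emph{both} directions --- the nontrivial inclusion being that any $f\in\operatorname{Mor}(i,i)$ equals $(f\circ g)\circ g^{-1}$ for any fixed $g\in\operatorname{Mor}(k,i)$, with $f\circ g\in\operatorname{Mor}(k,i)$ by axiom (3) --- which you gesture at but do not spell out. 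With that filled in, the argument is complete.
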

\begin{proof}
    Let $\mathcal G$ be the groupoid on $I$ generated by all maps in the sets $\operatorname{Mor}(i,j)$ for $(i,j)\in R$. It suffices to show that for $(i,j)\in R$, every $\mathcal G$-morphism $i\rightarrow j$ already belongs to $\operatorname{Mor}(i,j)$.

    So let $(i,j)\in R$, and let $f:X_i\rightarrow X_j$ be any $\mathcal G$-morphism. So $f$ is a composition of maps from Mor and their inverses. On the other hand, by symmetry, inverses are not needed. So in fact, $f$ is directly a composition of maps from Mor.
    
    To be precise, we now obtain $i=i_0,\dots,i_n=j\in I$ such that each $(i_l,i_{l+1})\in R$ and $f$ factors as a composition $$X_{i_0}\xrightarrow{f_1}\dots\xrightarrow{f_n}X_{i_n},$$ where each $f_{i_l}\in\operatorname{Mor}(i_{l-1},i_l)$. We may assume that $n$ is minimal among all such decompositions of $f$. Our goal is to show that $n=0$ or $n=1$ (where we interpret $n=0$ as $i=j$ and $f=\operatorname{id}:X_i\rightarrow X_i$). So, assume $n\geq 2$.

    Note that if $(i_l,i_{l+2})\in R$ for any $l$, then by closure under composition, the chain $$X_{i_l}\xrightarrow{f_l}X_{i_{l+1}}\xrightarrow{f_{l+1}}X_{i_{l+2}}$$ can be shortened to a single morphism $X_{i_l}\rightarrow X_{i_{l+2}}$, contradicting the minimality of $n$. So each $(i_l,i_{l+2})\notin R$. Since $R$ contains all distinct pairs, this implies $i_{l+2}=i_l$ whenever $0\leq l\leq n-2$. So the sequence $i_0,\dots,i_n$ alternates between the two values $i_0=i$ and $i_1$, and $(i,i)\notin R$. Since $(i,j)\in R$ by assumption, this implies $i\neq j$. This is only possible if $j=i_1$. That is, we deduce that $i$ and $j$ are distinct and the sequence $i_0,\dots,i_n$ alternates between $i$ and $j$. Since $i_n=j$, it follows that $n$ is odd, so in particular $n\geq 3$. So our decomposition starts with the chain $$X_i\xrightarrow{f_1}X_j\xrightarrow{f_2}X_i\xrightarrow{f_3}X_j.$$

    We now show that this chain of three maps can be shortened to a single morphism $X_i\rightarrow X_j$, which will contradict the minimality of $n$ and thus prove the theorem. To do this, recall the assumption that $|I|\geq 3$. Thus (as $i\neq j$) there is $k\in I$ so that $i,j,k$ are all distinct. Thus all non-diagonal pairs from $\{i,j,k\}$ belong to $R$.

    Now fix any morphism $g:j\rightarrow k$. By closure under inverses and composition, there is a morphism $h:k\rightarrow i$ so that $h\circ g=f_2$. So the first three maps in our decomposition can be expanded to the chain $$X_i\xrightarrow{f_1}X_j\xrightarrow gX_k\xrightarrow hX_i\xrightarrow{f_3}X_j.$$ By closure under composition applied to each of $X_i\xrightarrow{f_1}X_j\xrightarrow gX_k$ and $X_k\xrightarrow hX_i\xrightarrow{f_3}X_j$, this chain simplifies to a composition of two morphisms $X_i\rightarrow X_k\rightarrow X_j$ -- which then simplifies for the same reason to a single morphism $X_i\rightarrow X_j$, as desired. 
    \end{proof}

\subsection{Regularity}

Typically, one should not expect a groupoid spine to extend to a groupoid (at least, not without having many pairs $(i,j)\in R$ with $i>j$) -- and this is related to group configurations. For example, let $I=\{1,2,3\}$ with the usual order $<$, fix a non-empty set $X$, and fix any non-empty family $F$ of bijections $X\rightarrow X$. Now define:
\begin{itemize}
    \item $X_i=X$ for each $i$.
    \item $\operatorname{Mor}(1,2)=\operatorname{Mor}(2,3)=F$.
    \item $\operatorname{Mor}(1,3)=F\circ F$, the set of all compositions of two maps from $F$.
\end{itemize}
This data defines a groupoid spine (where $R=\{(1,2),(2,3),(1,3)\}$). One can check that if this groupoid spine extends to a groupoid, then $F$ is a (left and right) coset of a group of permutations of $X$. In particular, given $f,g\in F$, the composition $f\circ g$ is `independent' from both $f$ and $g$  (meaning that for any $f'$, there is $g'$ with $f\circ g=f'\circ g'$, and the analogous statement with $f$ and $g$ reversed). This independence of a composition with its factors is also the main idea of group configurations. So one can think of a groupoid spine with this property as an analog to a group configuration.

Typically, the independence property above is not expected. However, in stable 1-based theories, group configurations arise `automatically': for example, in a locally modular strongly minimal theory, any composition of one-dimensional families of plane curves gives rise to a group configuration. This is roughly because the composite family cannot be too large, in the sense that a composition $f\circ g$ is generically determined by one point. In the abstract language of groupoid spines, the analogous property is \textit{regularity}:

\begin{definition}
    Let $(I,<,\mathcal X,R,Mor)$ be a groupoid spine. We say that $(I,<,\mathcal X,R,Mor)$ is \textit{regular} if for all $(i,j)\in R$, and all $x\in X_i$ and $y\in X_j$, there is exactly one morphism $f\in\operatorname{Mor}(i,j)$ with $f(x)=y$.
\end{definition}

Note the analogy of the above definition with regular group actions. In particular, a group of permutations of a set induces a regular groupoid spine in the obvious way if and only if the corresponding group action is regular.

\subsection{The Main Result on Regular Groupoid Spines}

We now show an analog to the fact that group configurations are automatic in the 1-based case. Namely, we show that for groupoid spines, regularity guarantees extendability.

\begin{theorem}\label{T: regular gp con}
  Let $(I,<,\mathcal X,R,Mor)$ be a regular groupoid spine. Then:
  \begin{enumerate}
      \item There is a symmetric set $S\supset R$ such that $(I,\mathcal X,R,Mor)$ extends to $S$.
      \item In particular, if $|I|\geq 3$ then $(I,\mathcal X,R,Mor)$ extends to a groupoid.
  \end{enumerate}
\end{theorem}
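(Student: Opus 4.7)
My plan is to prove (1) directly and then derive (2) from (1) together with Theorem~\ref{T: gp con}. Indeed, if (1) provides a symmetric extension to some $S \supseteq R$ and $|I| \geq 3$, then Theorem~\ref{T: gp con} extends the resulting symmetric spine further to a groupoid on $I^2$; since neither extension modifies $\operatorname{Mor}$ on pairs already in $R$, the composition is the desired extension of the original spine to a groupoid.

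For part (1), I would take $S := R \cup R^{-1}$ and set $\operatorname{Mor}(j,i) := \{f^{-1} : f \in \operatorname{Mor}(i,j)\}$ whenever $(j,i) \in R^{-1} \setminus R$, leaving $\operatorname{Mor}$ unchanged on $R$. Axioms (1) and (2) of Definition~\ref{D: groupoid spine} are then almost immediate: axiom (1) is unaffected since $(i,i) \in S$ if and only if $(i,i) \in R$, while axiom (2) is built into the definition of the new $\operatorname{Mor}$ sets, with compatibility in the case where both $(i,j),(j,i) \in R$ supplied by the original axiom (2) applied in both directions.

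The substance of the argument is axiom (3): given $(a,b),(b,c),(a,c) \in S$ and $f \in \operatorname{Mor}(a,b)$, $g \in \operatorname{Mor}(b,c)$, I must show $g \circ f \in \operatorname{Mor}(a,c)$. I plan a case analysis on which of these three pairs lie in $R$ versus $R^{-1} \setminus R$, each case following the same pattern. Using regularity, I choose $h$ in whichever of $\operatorname{Mor}(a,c)$ or $\operatorname{Mor}(c,a)$ lies in $R$, selecting $h$ so as to agree with the candidate composition (or its inverse) at a single point. I then show $h$ equals that candidate everywhere by considering an auxiliary bijection such as $f \circ h$ or $h \circ g$, applying axiom (3) of the original spine to pin the auxiliary down as a known morphism in an intermediate $\operatorname{Mor}$ set, and then invoking regularity of that set.

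The main obstacle is ensuring the auxiliary composition actually lives inside a triple of pairs contained in $R$, so that the original axiom (3) is available. This is where I use the linear order on $I$: since $R$ contains every pair $(i,j)$ with $i<j$, the order-sorted triple among any three distinct elements of $\{a,b,c\}$ automatically lies in $R^3$. Depending on the relative order of $a,b,c$, either $f \circ h$ or $h \circ g$ — possibly after inversion, justified by axiom (2) on the order-forced reverse pairs — sits in such an order-sorted triangle of $R$. Degenerate cases where $\{a,b,c\}$ has fewer than three elements are handled by the same regularity argument, using that $\operatorname{Mor}(a,a)$, when defined, is forced by regularity to be a regular permutation group, which supplies exactly the structure needed to close the argument.
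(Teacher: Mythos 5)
Your proposal matches the paper's proof essentially step by step: take $S$ to be the symmetric closure of $R$, define the new morphism sets as inverses, observe that closure under identity and inverses is immediate, and then establish closure under composition via a case analysis on the relative order of $i,j,k$ (the order-monotone cases reduce directly to the original axiom (3), while the remaining cases are settled by using regularity to pick a candidate $h$ agreeing with the target composition at one point and then invoking the uniqueness half of regularity to conclude equality). The only stylistic difference is that the paper carries out the case analysis more explicitly and uses a WLOG argument in place of your enumeration of which pairs fall in $R$ versus $R^{-1}\setminus R$, but the substance is the same.
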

\begin{proof}
    That (1) implies (2) is immediate by Theorem \ref{T: gp con}. We only show (1).

    Let $S$ be the symmetric closure of $R$ -- so $S$ adds to $R$ all pairs $(i,j)\notin R$ with $i>j$, and in particular $S$ is symmetric. We will use multiple times the fact that $S$ only adds strictly decreasing pairs to $R$ -- so if $(i,j)\in S$ and $i\leq j$ then $(i,j)\in R$.
    
    For any pair $(i,j)\in S-R$, note that $(j,i)\in R$, so $\operatorname{Mor}(j,i)$ is well-defined. Now set $\operatorname{Mor}(i,j)$ to be the collection of all inverses of maps in $\operatorname{Mor}(j,i)$. All of the required properties in Definition \ref{D: groupoid spine} follow immediately, except closure under composition, which will occupy the rest of the proof.

    So, let $i,j,k\in I$ with $(i,j),(j,k),(i,k)\in S$, and consider a composition of morphisms $i\rightarrow j\rightarrow k$. We want to show that the composite map $X_i\rightarrow X_k$ is a morphism $i\rightarrow k$. To do this, our first main goal is to convert everything into a problem in the original groupoid spine (by replacing certain maps with their inverses to reduce to a statement about only maps between $R$-pairs). We then apply regularity in the original spine to prove the resulting statement.

    Let us proceed. First, note that if $i\leq j\leq k$, then $(i,j),(j,k),(i,k)\in R$, and the desired statement follows from closure under composition for $R$-pairs. The case $i\geq j\geq k$ then follows similarly by replacing each map with its inverse.

    Otherwise, $j$ is either the maximum or minimum of $\{i,j,k\}$. We will prove the case that $j=\max\{i,j,k\}$, as the other case is similar.  Assuming $j=\max\{i,j,k\}$, the problem is now symmetric in $i$ and $k$ -- so without loss of generality, we assume $i\leq k$, and thus $(i,k)\in R$ as well.
    
    By replacing the given morphism $j\rightarrow k$ with its inverse, we can now restate the problem in terms of $R$-pairs: we know that $(i,j),(k,j),(i,k)\in R$, and we have morphisms $f:i\rightarrow j$ and $g:k\rightarrow j$. We want to find a morphism $h:i\rightarrow k$ with $g\circ h=f$.

    We now find such a morphism $h$ using regularity. First, pick any element $y\in X_k$. Let $z=g(y)\in X_j$, and let $x\in X_i$ with $f(x)=z$. By regularity, there is $h\in\operatorname{Mor}(i,k)$ with $h(x)=y$. Thus $g\circ h$ is a morphism $i\rightarrow j$ sending $x$ to $z$. By regularity, there is only one such morphism. So since $f(x)=z$ as well, it follows that $g\circ h=f$, as desired.
\end{proof}

Before moving on, we also note that regularity is preserved under groupoid extensions:

\begin{lemma}\label{L: regularity of extension}
    Let $(I,\mathcal X,R,Mor)$ be a regular groupoid spine. If $(I,\mathcal X,R,Mor)$ extends to a groupoid, then the resulting extension is also regular.
\end{lemma}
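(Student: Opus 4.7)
The plan is to propagate regularity from one pair in $R$ to the entire groupoid using composition with fixed morphisms. Since the extended spine is a connected groupoid on $I$, every hom-set is non-empty and closed under composition and inverse, and in particular each $\operatorname{Mor}(i,i)$ is a group acting on $X_i$. Note that regularity of the extension on a pair $(i,j)$ is equivalent to freeness and transitivity of this action combined with a single bijection $X_i \to X_j$ in $\operatorname{Mor}(i,j)$.

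First I would dispose of the trivial case $|I|=1$, where $R = \{(i_0,i_0)\}$ and there is nothing to extend. So assume $|I|\ge 2$; then the definition of groupoid spine forces a pair $(i,j)\in R$ with $i<j$. The first real step is to show that $\operatorname{Mor}(j,j)$ acts regularly on $X_j$ in the extension. Fix any $f\in\operatorname{Mor}(i,j)$. Given $y,z\in X_j$, set $x=f^{-1}(y)\in X_i$; by regularity at $(i,j)$ in the original spine there is a unique $g\in\operatorname{Mor}(i,j)$ with $g(x)=z$. Then by closure in the extended groupoid, $h:=g\circ f^{-1}\in\operatorname{Mor}(j,j)$ and satisfies $h(y)=z$; conversely any $h'\in\operatorname{Mor}(j,j)$ with $h'(y)=z$ yields $h'\circ f\in\operatorname{Mor}(i,j)$ sending $x$ to $z$, so by uniqueness of $g$ we have $h'=h$.

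Next I would propagate regularity of $\operatorname{Mor}(j,j)$ on $X_j$ to $\operatorname{Mor}(l,l)$ on $X_l$ for every $l\in I$. In the extended groupoid choose any $g_0\in\operatorname{Mor}(j,l)$; conjugation $\phi\mapsto g_0\circ\phi\circ g_0^{-1}$ gives a bijection $\operatorname{Mor}(j,j)\to\operatorname{Mor}(l,l)$ that intertwines the two actions along the bijection $g_0\colon X_j\to X_l$, so regularity of the action transfers. Finally, for an arbitrary pair $(k,l)$, fix some $g_0\in\operatorname{Mor}(k,l)$ (non-empty by groupoid extension). For $x\in X_k$ and $y\in X_l$, writing a candidate morphism as $h=\phi\circ g_0$ with $\phi\in\operatorname{Mor}(l,l)$ converts the equation $h(x)=y$ to $\phi(g_0(x))=y$, which by the previous step has a unique solution $\phi$, hence a unique $h$.

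The main obstacle is really just the first step: one must use both closure under inverses \emph{and} closure under composition in the extended groupoid to recognise $g\circ f^{-1}$ as a member of $\operatorname{Mor}(j,j)$, and one must check uniqueness by reversing the same construction. Once regularity is established at a single object in the diagonal, the rest is a standard transport-of-structure argument in a connected groupoid and requires no further combinatorial input.
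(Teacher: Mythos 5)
Your proof is correct, but it takes a genuinely different route from the paper's. The paper proves regularity directly by a case split on $(i,j)$: for $i<j$ the claim is immediate since $(i,j)\in R$ and the extension does not add morphisms to $R$-pairs; for $i>j$ it inverts to reduce to the $i<j$ case; and for the diagonal $(i,i)$ with $(i,i)\notin R$, it picks $k\neq i$ and uses the bijection $f\mapsto g\circ f$ from $\operatorname{Mor}(i,i)$ to $\operatorname{Mor}(i,k)$ to reduce to an already-handled off-diagonal pair. You instead go in the opposite direction: you first establish regularity of the diagonal action $\operatorname{Mor}(j,j)\curvearrowright X_j$ at a single object $j$, via the $g\circ f^{-1}$ construction (correctly using that $\operatorname{Mor}(i,j)$ for an $R$-pair $(i,j)$ is inherited unchanged, so uniqueness of $g$ with $g(x)=z$ forces $h'\circ f=g$), then transport this by conjugation to all diagonal hom-sets, and finally reduce an arbitrary $(k,l)$ to the diagonal by factoring through a fixed $g_0\in\operatorname{Mor}(k,l)$. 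Both are sound. The paper's version is slightly shorter because the off-diagonal cases fall out immediately of the definition of ``extends to a groupoid'' and only the diagonal needs a real argument; your version is more systematic and makes the underlying torsor structure explicit (each $\operatorname{Mor}(l,l)$ is a group acting freely and transitively on $X_l$, and each hom-set is a torsor over it), at the cost of an extra transport step. One small remark: your appeal to ``regularity of the extension on a pair $(i,j)$ is equivalent to freeness and transitivity of this action combined with a single bijection in $\operatorname{Mor}(i,j)$'' should name which diagonal group is acting (the target group $\operatorname{Mor}(j,j)$, acting by post-composition, is the one your final step uses), but the subsequent argument disambiguates this.
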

\begin{proof}
    Let $(i,j)\in I^2$, and let $x\in X_i$ and $y\in X_j$. We show that there is exactly one morphism $i\rightarrow j$ (in the groupoid extension) sending $x$ to $y$.

    If $i<j$, then $(i,j)\in R$, and the desired statement follows from regularity for $R$-pairs. The case $i>j$ then follows similarly, by taking inverses and finding a unique morphism $j\rightarrow i$ sending $y$ to $x$. Combining these two cases, the desired statement holds whenever $i\neq j$.

    Now assume $i=j$, and from now on write $(i,j)$ as $(i,i)$. If $(i,i)\in R$ then we are done (by regularity for $R$-pairs), so we may further assume $(i,i)\notin R$. Recall that $R\neq\emptyset$ (by the definition of groupoid spines); so there must be at least one other element of $I$, say $k$. Fix any morphism $g:X_i\rightarrow X_k$, and let $z=g(y)\in X_k$. By the groupoid axioms, the map $f\mapsto g\circ f$ gives a bijective correspondence between $\operatorname{Mor}(i,i)$ and $\operatorname{Mor}(i,k)$. Moreover, for $f:i\rightarrow i$, the condition $f(x)=y$ is equivalent to $g\circ f(x)=z$. Since $i\neq k$ (and this case was covered above), there is a unique morphism $i\rightarrow k$ sending $x$ to $z$. Passing back through our bijective correspondence, this implies there is a unique morphism $i\rightarrow i$ sending $x$ to $y$, as desired.
\end{proof}

\section{Type-Definability Requirements}

We now revisit the constructions from the previous section in the context of a first-order theory, adding type-definability hypotheses in each step. The results of this section are valid in any first-order theory. In the following section, we then show that they apply in the setting of topological 1-basedness considered earlier. In this section, we work in a highly saturated first-order structure $\mathcal M$ (with no topology assumptions).

\subsection{Type-Definability and Faithful Type-Definability}

\begin{definition}\label{D: weak type def}
    Let $A$ be a parameter set, and let $(I,<,\mathcal X,R,Mor)$ be a groupoid spine. We say that $(I,<,\mathcal X,R,Mor)$ is \textit{$A$-type-definable} if $I$ is small, each $X_i$ is $A$-type-definable, and the morphisms can be represented by $A$-type-definable families. More precisely, this means that for each $(i,j)\in R$, one can choose $A$-type-definable sets $T_{ij}$ and $F_{ij}\subset X_i\times X_j\times T_{ij}$, so that:
    \begin{enumerate}
        \item Each fiber $(F_{ij})_t\subset X_i\times X_j$ for $t\in T_{ij}$ is the graph of a (necessarily unique) morphism $i\rightarrow j$, which we denote $f^{ij}_t:X_i\rightarrow X_j$. 
        \item For each morphism $f:i\rightarrow j$, there is $t\in T_{ij}$ with $f^{ij}_t=f$.
    \end{enumerate}
    In this case, we call $\{T_{ij}\}$ and $\{F_{ij}\}$ a \textit{type-definition of $(I,<,\mathcal X,R,Mor)$ over $A$}.
    \end{definition}

    \begin{remark}\label{R: relatively definable} Assume that $(I,<,\mathcal X,R,Mor)$ is an $A$-type-definable groupoid spine, with a type-definition $(\{T_{ij}\},\{F_{ij}\})$ over $A$. Then by compactness, each $F_{ij}$ is in fact relatively $A$-definable in $X_i\times X_j\times T_{ij}$ (this is a general property of type-definable families of functions).
    \end{remark}
    
    \begin{definition}
        Let $(I,<,\mathcal X,R,Mor)$ be a groupoid spine, and let $A$ be a parameter set. We say $(I,<,\mathcal X,R,Mor)$ is \textit{faithfully $A$-type-definable} if it is $A$-type-definable and in addition equality and composition of maps are $A$-type-definable. More precisely, this means one can find a type-definition $(\{T_{ij}\},\{F_{ij}\})$ over $A$ with the following extra properties:
        \begin{enumerate}
            \item For all $(i,j)\in R$, the set $$\{(t,u)\in T_{ij}:f_t^{ij}=f_u^{ij}\}$$ is $A$-type-definable.
            \item Whenever $(i,j)(j,k),(i,k)\in R$, the set $$\{(t,u,v)\in T_{ij}\times T_{jk}\times T_{ik}:f_u^{jk}\circ f_t^{ij}=f_v^{ij}\}$$ is $A$-type-definable.
        \end{enumerate}
    \end{definition}

    Later, we will see that faithfully type-definable groupoid spines can be definably quotiented by equality of maps (see Propositions \ref{P: elimination} and \ref{P: group from groupoind}). This is the reason for the word \textit{faithful}.  

In general, it is not clear that a type-definable groupoid spine should be faithfully type-definable: for example, the obvious definition of the formula $f^{ij}_t=f^{ij}_u$ (for given $(i,j)\in R$) would require quantifying over the type-definable set $X_i$ -- and this is only allowed if $X_i$ is in fact definable.

\subsection{Preservation Under Extensions}

The main advantage of ordinary type-definability (versus faithful type-definability) is that it allows for a straightforward extension result:

\begin{lemma}\label{L: extension definable} Let $A$ be a parameter set, and let $(I,<,\mathcal X,R,Mor)$ be an $A$-type-definable groupoid spine which extends to a groupoid. Then the resulting extension is also $A$-type-definable.
\end{lemma}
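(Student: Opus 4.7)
The plan is to reuse the given type-definition for $R$-pairs and construct new $A$-type-definable families for the pairs in $I^2\setminus R$. Fix a type-definition $(\{T_{ij}\},\{F_{ij}\})_{(i,j)\in R}$ of the spine over $A$. Since $R$ contains every pair $(i,j)$ with $i<j$, a missing pair $(i,j)\notin R$ satisfies either $i>j$ or $i=j$; I would handle these two cases in turn, producing new parameter sets $T_{ij}$ and graphs $F_{ij}$ that are $A$-type-definable.

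For $(i,j)\notin R$ with $i>j$, necessarily $(j,i)\in R$, so the original type-definition supplies $(T_{ji},F_{ji})$. In any groupoid the morphisms $i\to j$ are exactly the inverses of the morphisms $j\to i$, so I would set
$$T_{ij}:=T_{ji},\qquad F_{ij}:=\{(x,y,t)\in X_i\times X_j\times T_{ji}:(y,x,t)\in F_{ji}\},$$
which is manifestly $A$-type-definable (a coordinate swap of $F_{ji}$) and whose fibers $(F_{ij})_t$ are exactly the graphs of $(f^{ji}_t)^{-1}$, ranging over all of $\operatorname{Mor}(i,j)$ in the extension.

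For $(i,i)\notin R$, first observe that $|I|\ge 2$, since $|I|=1$ would force $R=\{(i,i)\}$ by the nonemptiness clause of Definition~\ref{D: groupoid spine}, contradicting $(i,i)\notin R$. Pick any $j\ne i$; by the previous paragraph (if $i>j$) or directly from the given type-definition (if $i<j$), we obtain an $A$-type-definable family $(T_{ij},F_{ij})$ parametrizing $\operatorname{Mor}(i,j)$ in the extended groupoid. Every $g\in\operatorname{Mor}(i,i)$ factors as $\psi^{-1}\circ\phi$ for some $\phi,\psi\in\operatorname{Mor}(i,j)$ — take any $\psi$ and set $\phi:=\psi\circ g$ — and conversely every such composition lies in $\operatorname{Mor}(i,i)$. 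Hence I would set $T_{ii}:=T_{ij}\times T_{ij}$ and
$$F_{ii}:=\{(x,z,t,u)\in X_i\times X_i\times T_{ij}\times T_{ij}:\exists y\in X_j\,((x,y,t)\in F_{ij}\wedge(z,y,u)\in F_{ij})\},$$
which is a projection of a type-definable set, hence $A$-type-definable, and whose fibers range exactly over the graphs of elements of $\operatorname{Mor}(i,i)$.

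The only non-routine point is the preservation of type-definability through the existential quantifier in the last display, which is handled cleanly by the general fact (recalled in the paper's preliminaries) that projections of type-definable sets are type-definable. Everything else amounts to formal bookkeeping against the groupoid axioms, and together these constructions yield a type-definition over $A$ of the extended groupoid.
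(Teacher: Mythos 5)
Your proof is correct and follows essentially the same approach as the paper's: reuse the given type-definition for $R$-pairs, handle $(i,j)$ with $i>j$ by swapping coordinates in $F_{ji}$, and handle $(i,i)$ by expressing $\operatorname{Mor}(i,i)$ as compositions through a third object, noting that the resulting $F_{ii}$ is a projection of a type-definable set. The only cosmetic difference is that you parametrize $\operatorname{Mor}(i,i)$ by pairs in $T_{ij}\times T_{ij}$ via $\psi^{-1}\circ\phi$, whereas the paper uses $T_{ik}\times T_{ki}$ via $g\circ f$; these are equivalent packagings of the same idea.
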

\begin{proof} Fix $(\{T_{ij}\},\{F_{ij}\})$, a type-definition of $(I,<,\mathcal X,R,Mor)$ over $A$. We show how to define $T_{ij}$ and $F_{ij}$ when $(i,j)\notin R$.

So let $(i,j)\notin R$. First suppose $i\neq j
$. Then necessarily $i>j$, so that $(j,i)\in R$ and $\operatorname{Mor}(i,j)$ is the collection of inverses of maps in $\operatorname{Mor}(i,j)$. Now define $T_{ij}=T_{ji}$, and set $(x,y,t)\in F_{ij}$ if and only if $(y,x,t)\in F_{ji}$; clearly this gives an $A$-type-definable family representing all maps in $\operatorname{Mor}(i,j).$

Now suppose $i=j$. By definition the relation $R\subset I^2$ is non-empty, which implies there must be some $k\in I$ with $k\neq i$. By the previous paragraph, we may assume $(i,k),(k,i)\in R$. Note, then, that $\operatorname{Mor}(i,i)$ is the collection of compositions $g\circ f$ for $f\in\operatorname{Mor}(i,k)$ and $g\in\operatorname{Mor}(k,i)$. So, we let $T_{ii}=T_{ik}\times T_{ki}$. And for $(x,y,(t,u))\in X_i\times X_i\times T_{ii}$, we set $(x,y,(t,u))\in F_{ii}$ if and only if there is $z$ with $(x,z,t)\in F_{ik}$ and $(z,y,u)\in F_{ki}$. Then $F_{ii}$ is a projection of an $A$-type-definable set, so is $A$-type-definable. 
\end{proof}

\subsection{An Elimination Result}

Our main goal is to interpret, under appropriate assumptions,  a group from a type-definable groupoid spine. The group we hope to construct is the group of morphisms from an object to itself in an extension to a groupoid. Notice, though, that our definitions of type-definability and faithful type-definability have a flaw in this regard: a type-definition is allowed to repeat one map several times (i.e. the natural map $T_{ij}\rightarrow\operatorname{Mor}(i,j)$ is surjective but might not be injective). Thus, one cannot hope to build a group structure on the parameter space $T_{ii}$ directly -- instead, one needs to build a quotient of it. Forming the necessary quotient object (in a type-definable manner) is the main use of faithful type-definability -- in this case, it essentially amounts to an elimination result for certain hyperimaginaries, which we treat separately below.

\begin{proposition}\label{P: elimination} Let $A$ be a parameter set. Let $X$ and $T$ be $A$-type-definable sets, and $Y\subset X\times T$ a relatively $A$-definable relation. For $t\in T$, denote $Y_t:=\{x:(x,t)\in Y\}$, and let $\sim$ be the equivalence relation $Y_t=Y_u$ on $T$. If $\sim$ is $A$-type-definable in $T^2$, then the quotient $T/\sim$ is $A$-type-definable in $\mathcal M^{eq}$: that is, there are an $A$-type-definable set $Z$ in $\mathcal M^{eq}$, and a relatively $A$-definable function $f:T\rightarrow Z$, such that for $t,u\in T$ we have $f(t)=f(u)$ if and only if $t\sim u$.
\end{proposition}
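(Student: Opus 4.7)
The plan is to reduce to the construction of a single $A$-definable formula $\phi(x,t)$ such that the $A$-definable equivalence relation
\[
E(t,u) \;:\equiv\; \forall x\,\bigl(\phi(x,t) \leftrightarrow \phi(x,u)\bigr)
\]
on $\mathcal{M}^n$ restricts to $\sim$ on $T^2$. Once such $\phi$ is in hand, the quotient sort $\mathcal{M}^n/E$ lives in $\mathcal{M}^{eq}$, the quotient map $q:\mathcal{M}^n \to \mathcal{M}^n/E$ is $A$-definable, and setting $Z := q(T)$ and $f := q|_T$ finishes the proof: $Z$ is $A$-type-definable as the image of a type-definable set under a definable map, and by construction $f(t)=f(u) \iff tEu \iff t \sim u$ for $t,u \in T$.

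To construct $\phi$, first fix the $A$-data presenting the hypotheses: an $A$-formula $\psi(x,t)$ with $Y = \{(x,t) \in X \times T : \psi(x,t)\}$ (using relative $A$-definability of $Y$), and a partial type $\pi(t,u)$ over $A$ such that for $t,u \in T$, $t \sim u$ if and only if $(t,u) \models \pi$. The key observation is that the partial $A$-type
\[
\Sigma(t,u,x) \;:=\; \{t \in T\} \cup \{u \in T\} \cup \{x \in X\} \cup \pi(t,u) \cup \{\psi(x,t) \not\leftrightarrow \psi(x,u)\}
\]
is inconsistent: whenever $t,u \in T$ both satisfy $\pi$, we have $Y_t = Y_u$, forcing $\psi(x,t) \leftrightarrow \psi(x,u)$ for every $x \in X$. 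By compactness, there exist $A$-formulas $\theta_T$ containing $T$, $\theta_X$ containing $X$, and a single $\eta \in \pi$ such that
\[
\theta_T(t) \wedge \theta_T(u) \wedge \theta_X(x) \wedge \eta(t,u) \;\vdash\; \psi(x,t) \leftrightarrow \psi(x,u).
\]
Then set $\phi(x,t) := \psi(x,t) \wedge \theta_X(x) \wedge \theta_T(t)$.

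To verify this $\phi$ has the required property, suppose first that $t,u \in T$ and $t \sim u$. Then $\theta_T(t)$, $\theta_T(u)$, and $\eta(t,u)$ all hold, and for any $x \in \mathcal{M}^m$ either $\theta_X(x)$ fails (making both $\phi(x,t)$ and $\phi(x,u)$ false) or $\theta_X(x)$ holds (so that $\phi(x,t)$ and $\phi(x,u)$ reduce to $\psi(x,t)$ and $\psi(x,u)$, which agree by the displayed implication). Conversely, if $t,u \in T$ but $t \not\sim u$, some $x \in X$ witnesses $\psi(x,t) \not\leftrightarrow \psi(x,u)$; since $X \subseteq \theta_X$ and $T \subseteq \theta_T$, the extra conjuncts are true at this $x$, and so $\phi$ also separates $(x,t)$ from $(x,u)$. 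This gives the desired biconditional.

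The main conceptual step, and the only real obstacle, is recognizing that one must pass from the given formula $\psi$ --- which captures $Y$ only on $X \times T$ --- to a truncation $\phi$ that is forced to be false outside an $A$-definable outer approximation of $X$ and $T$. This truncation is exactly what turns the relative quantifier $\forall x \in X$ into the honest $\forall x \in \mathcal{M}^m$ needed for $E$ to be genuinely $A$-definable (not merely $A$-type-definable), and it is the sole place the hypothesis that $\sim$ is type-definable enters, via the compactness step above.
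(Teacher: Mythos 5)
Your proof is correct and takes essentially the same approach as the paper: both arguments use compactness (together with the hypothesis that $\sim$ is $A$-type-definable) to replace the type-definable sets $X$ and $T$ by definable outer approximations $\theta_X$ and $\theta_T$ on which the equivalence relation ``fibers of $Y$ agree'' becomes an honest $A$-definable relation $E$, and then pass to the imaginary sort $\mathcal{M}^n/E$. The only cosmetic difference is that the paper phrases the approximation step as two separate compactness arguments over a directed family of definable supersets of $X$, while you fold everything into a single application of compactness to one inconsistent partial type.
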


\begin{proof}
Write $X=\bigcap_{i\in I}X_i$ as a small intersection of $A$-definable sets. We may assume  $\{X_i\}_{i\in I}$ is closed under finite intersections. Also write $Y=Y_0\cap(X\times T)$ for some $A$-definable set $Y_0$.

By compactness, for any $t,u\in T$, we have $t\sim u$ if and only if $(Y_0)_t\cap X_i=(Y_0)_u\cap X_i$ for some $i\in I$. By compactness again (and since the $X_i$ form a directed family),  there is a single $i\in I$ so that for all $t,u\in T$, $t\sim u$ if and only if $(Y_0)_t\cap X_i=(Y_0)_u\cap X_i$.

Now let $T_0\supset T$ be an $A$-definable set, and let $E$ be the $A$-definable equivalence relation given by $(Y_0)_t\cap X_i=(Y_0)_u\cap X_i$ on $T_0$. Finally, let $Z=T_0/E$, an $A$-definable set in $\mathcal M^{eq}$. Then we have a relatively $A$-definable function $f:T\rightarrow Z$ (factoring as $T\rightarrow T_0\rightarrow Z$). And for $t,u\in T$, we have $f(t)=f(u)$ if and only if $t$ and $u$ are $E$-equivalent, if and only if $(Y_0)_t\cap X_i=(Y_0)_u\cap X_i$, if and only if $t\sim u$. 
\end{proof}

\subsection{Interpreting a Group in the faithful Case}

We now use Proposition \ref{P: elimination} to interpret a group from a faithfully type-definable groupoid:

\begin{proposition}\label{P: group from groupoind} Let $A$ be a parameter set, and let $(I,<,\mathcal X,R,Mor)$ be a faithfully $A$-type-definable groupoid spine, with $R=I^2$. \footnote{In other words, a groupoid spine with $R=I^2$ is an actual groupoid -- or more precisely a connected groupoid of bijections equipped with a linear ordering of its objects.} Then for any $i$, the group $\operatorname{Mor}(i,i)$, equipped with its action on $X_i$, is $A$-type-definable in $\mathcal M^{eq}$. More precisely, in $\mathcal M^{eq}$ there are an $A$-type-definable group $G_i$ acting $A$-type-definably on $X_i$, and an isomorphism of group actions $(G_i,X_i)\leftrightarrow(\operatorname{Mor}(i,i),X_i)$ fixing $X_i$ pointwise. 
\end{proposition}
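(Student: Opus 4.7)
The plan is to apply the elimination result Proposition~\ref{P: elimination} to replace the parameter space $T_{ii}$---which may represent each morphism many times---by a type-definable quotient $G_i$ that parametrizes $\operatorname{Mor}(i,i)$ injectively, and then to push the composition law and the evaluation map down to $G_i$. Fix $i \in I$ throughout.

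By the equality clause of faithfulness, the equivalence relation
\[
t \sim u \iff f_t^{ii} = f_u^{ii}
\]
on $T_{ii}$ is $A$-type-definable; by Remark~\ref{R: relatively definable}, the graph $F_{ii} \subset X_i \times X_i \times T_{ii}$ is relatively $A$-definable. Proposition~\ref{P: elimination} applied with $X = X_i$, $T = T_{ii}$, $Y = F_{ii}$ then furnishes an $A$-type-definable set $G_i \subset \mathcal{M}^{eq}$ and a relatively $A$-definable surjection $q : T_{ii} \to G_i$ whose fibers are exactly the $\sim$-classes. The assignment $q(t) \mapsto f_t^{ii}$ therefore gives a well-defined bijection $G_i \leftrightarrow \operatorname{Mor}(i,i)$.

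I then transfer the groupoid structure. Let
\[
C = \{(t,u,v) \in T_{ii}^3 : f_u^{ii} \circ f_t^{ii} = f_v^{ii}\},
\]
which is $A$-type-definable by the composition clause of faithfulness. Its image under $(t,u,v) \mapsto (q(t),q(u),q(v))$ is $A$-type-definable in $G_i^3$, since images of type-definable sets under relatively definable functions are type-definable (they are projections). The condition defining $C$ depends only on the morphisms $f_t^{ii}, f_u^{ii}, f_v^{ii}$, so this image is $\sim$-invariant in each coordinate and is therefore the graph of a total function $m : G_i \times G_i \to G_i$; under the bijection with $\operatorname{Mor}(i,i)$ the operation $m$ corresponds to composition, so associativity, existence of an identity, and existence of inverses all follow from the groupoid axioms, making $(G_i,m)$ a group. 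The same device applied to $F_{ii}$ yields an $A$-type-definable subset of $X_i \times X_i \times G_i$ which, by $\sim$-invariance, is the graph of an action of $G_i$ on $X_i$ that agrees pointwise with evaluation of morphisms; this produces the required isomorphism of group actions fixing $X_i$ pointwise.

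The substantive content lies in Proposition~\ref{P: elimination}; the rest of the proof is bookkeeping. The only point requiring any care is, at each of the two transfers (composition and action), to invoke the appropriate clause of faithfulness to verify that the relation being pushed forward is $\sim$-invariant in every $T_{ii}$-coordinate and therefore descends to a well-defined relation on $G_i$.
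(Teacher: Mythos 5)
Your proof is correct and follows essentially the same route as the paper: both apply Proposition~\ref{P: elimination} to $F_{ii}$ to obtain the quotient $G_i$ with its relatively $A$-definable projection from $T_{ii}$, then observe that the action and composition relations on $G_i$ are images (equivalently, projections) of the corresponding $A$-type-definable relations on $T_{ii}$, with the composition clause of faithfulness used exactly where you use it. The paper writes out the existential formulas directly rather than phrasing things in terms of images of $C$ and $F_{ii}$, but this is the same argument.
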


\begin{proof}
    Let $\{T_{ij}\}$ and $\{F_{ij}\}$ be a faithful type-definition over $A$, and fix $i\in I$. By Remark \ref{R: relatively definable}, $F_{ii}$ is relatively $A$-definable in $X_i\times X_j\times T_{ij}$, so that Proposition \ref{P: elimination} can apply to $F_{ii}$. As in the setup of Proposition \ref{P: elimination}, let $\sim$ be the equivalence relation $f_t^{ii}=f_u^{ii}$ on $T_{ii}$. By faithful $A$-type-definability, $\sim$ is automatically $A$-type-definable, so that Proposition \ref{P: elimination} applies.
    
    Now by Proposition \ref{P: elimination}, after passing to $\mathcal M^{eq}$ we obtain an $A$-type-definable set $G$ and a relatively $A$-definable function $h:T_{ii}\rightarrow G$ such that for $t,u\in T_{ii}$ we have $h(t)=h(u)$ if and only if $t\sim u$. So $G$ is in natural bijective correspondence with $\operatorname{Mor}(i,i)$, and one can now endow $G$ $A$-type-definably with the action of $\operatorname{Mor}(i,i)$ on $X_i$ (so that for $g\in G$ and $x\in X_i$, $gx$ is now defined). To finish the proof, we just need to show that the group operation and action (of $G$ and $X_i$) are $A$-type-definable. 
    
    Now for $g\in G$ and $x,y\in X_i$, we have $gx=y$ if and only if there is $t\in T_{ii}$ with $h(t)=g$ and $f_t^{ii}(x)=y$. This shows that the action (i.e. the set defined by $gx=y$) is an image of an $A$-type-definable set, so is $A$-type-definable. Similarly, for $g_1,g_2,g_3\in G$, we have $g_1g_2=g_3$ if and only if there are $t_1,t_2,t_3\in T_{ii}$ with each $h(t_j)=g_j$ and $f_{t_1}^{ii}\circ f_{t_2}^{ii}=f_{t_3}^{ii}$. And again, this shows that the group operation is an image of an $A$-type-definable set, so is $A$-type-definable (note that we are using faithful $A$-type-definability here to get that $f_{t_1}^{ii}\circ f_{t_2}^{ii}=f_{t_3}^{ii}$ is $A$-type-definable).
\end{proof}

\subsection{Regularity revisited}

So far we have an extension result for type-definable groupoid spines, and a group interpretation result for faithfully type-definable groupoid spines. Our goal now is to combine these into a single theorem. The trick is Lemma \ref{L: regular weak is strong} below, which shows that in the regular case, there is no difference between ordinary and faithful type-definability:

\begin{lemma}\label{L: regular weak is strong}
    For every parameter set $A$, every $A$-type-definable regular groupoid spine is faithfully $A$-type-definable.
\end{lemma}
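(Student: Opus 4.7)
The plan is to use regularity to replace the quantifier ``for all $x \in X_i$'' (which in general requires $X_i$ to be definable, not merely type-definable) by ``for some $x \in X_i$'', and to then read off both the equality relation and the composition relation as projections of type-definable sets.

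Fix an $A$-type-definition $(\{T_{ij}\},\{F_{ij}\})$ of $(I,<,\mathcal X,R,\mathrm{Mor})$. The crucial observation is the following one-point rigidity: for any $(i,j)\in R$ and any two morphisms $f,g\in\operatorname{Mor}(i,j)$, if there exists an $x\in X_i$ with $f(x)=g(x)$, then $f=g$. Indeed, setting $y=f(x)=g(x)\in X_j$, regularity guarantees that there is at most one morphism in $\operatorname{Mor}(i,j)$ sending $x$ to $y$, so $f=g$. (The converse is trivial since $X_i\neq\emptyset$.)

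For the equality clause, I would then express
\[
\{(t,u)\in T_{ij}\times T_{ij}:f_t^{ij}=f_u^{ij}\}
\]
as the projection to $T_{ij}^2$ of the set
\[
\{(t,u,x,y)\in T_{ij}\times T_{ij}\times X_i\times X_j:(x,y,t)\in F_{ij}\wedge (x,y,u)\in F_{ij}\}.
\]
By Remark \ref{R: relatively definable}, $F_{ij}$ is relatively $A$-definable in $X_i\times X_j\times T_{ij}$, so the displayed set is $A$-type-definable; since projections of type-definable sets are type-definable over the same parameters, the equality relation is $A$-type-definable. The one-point rigidity ensures that this projection really captures equality everywhere on $X_i$, not merely agreement at a single point.

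For the composition clause, apply the same trick to the composite map. Whenever $(i,j),(j,k),(i,k)\in R$ and $(t,u,v)\in T_{ij}\times T_{jk}\times T_{ik}$, the spine axioms give that $f_u^{jk}\circ f_t^{ij}\in\operatorname{Mor}(i,k)$, so by the rigidity observation applied to this composite and $f_v^{ik}$, we have $f_u^{jk}\circ f_t^{ij}=f_v^{ik}$ if and only if they agree on some point. Hence
\[
\{(t,u,v):f_u^{jk}\circ f_t^{ij}=f_v^{ik}\}
\]
is the projection to $T_{ij}\times T_{jk}\times T_{ik}$ of
\[
\{(t,u,v,x,y,z):(x,y,t)\in F_{ij}\wedge(y,z,u)\in F_{jk}\wedge(x,z,v)\in F_{ik}\},
\]
which is an $A$-type-definable subset of $T_{ij}\times T_{jk}\times T_{ik}\times X_i\times X_j\times X_k$; its projection is thus $A$-type-definable as required.

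The only real subtlety is the one-point rigidity argument; once that is in hand, both clauses of faithful type-definability reduce to straightforward projection arguments from the given type-definition. No additional hypotheses on the $X_i$ (such as definability) are needed, precisely because regularity lets us avoid any universal quantification over $X_i$.
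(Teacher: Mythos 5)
Your proposal is correct and takes essentially the same route as the paper: both use regularity to replace universal quantification over $X_i$ with an existential (``agreement at a single point suffices''), then read off the equality and composition relations as projections of type-definable sets built from the $F_{ij}$. Your explicit ``one-point rigidity'' lemma is exactly the content the paper invokes inline with the phrase ``regularity gives that \dots if and only if there are $x, y$ \dots''.
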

\begin{proof} Let $(\{T_{ij}\},\{F_{ij}\})$ be a type-definition of the regular groupoid spine $(I,<,\mathcal X,R,Mor)$ over $A$. We show that equality and composition of maps are $A$-type-definable:
    \begin{itemize}
        \item Let $(i,j)\in R$. We show that equality of maps is $A$-type-definable in $T_{ij}^2$. Now for $(t,u)\in T_{ij}^2$, regularity gives that $f_t^{ij}=f_u^{ij}$ if and only if there are $x\in X_i$ and $y\in X_j$ such that $f_t^{ij}(x)=f_u^{ij}(x)=y$. This shows that the equality relation in $T_{ij}^2$ is a projection of an $A$-type-definable set, so is $A$-type-definable.
        \item Let $(i,j),(j,k),(i,k)\in R$. We show that composition is $A$-type-definable in $T_{ij}\times T_{jk}\times T_{ik}$. Now for $(t,u,v)\in T_{ij}\times T_{jk}\times T_{ik}$, regularity gives that $f_u^{jk}\circ f_t^{ij}=f_v^{ik}$ if and only if there are $x\in X_i$, $y\in X_j$, and $z\in X_k$ such that $f_t^{ij}(x)=y$, $f_u^{jk}(y)=z$, and $f_v^{ik}(x)=z$. This shows that the graph of composition is a projection of an $A$-type-definable set, so is $A$-type-definable. 
    \end{itemize}
\end{proof}

\subsection{Main Result}
Finally, we conclude with the main theorem of this section. The reader should view this as an analog of the group configuration for 1-based stable theories:

\begin{theorem}\label{T: combinatorial group conf}
    Let $A$ be a parameter set, and let $(I,<,\mathcal X,R,Mor)$ be an $A$-type-definable regular groupoid spine with $|I|\geq 3$. Then for any $i$, there is an $A$-type-definable group in $\mathcal M^{eq}$ acting regularly and type-definably on $X_i$. In particular, for any $e\in X_i$, there is an $Ae$-type-definable group structure on the set $X_i$, with $e$ as the identity element.
\end{theorem}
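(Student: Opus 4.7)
The plan is to assemble the theorem by chaining together the structural results already established in this section. First, apply Theorem \ref{T: regular gp con} to extend $(I,<,\mathcal X,R,\mathrm{Mor})$ to a groupoid on $I$, i.e.\ to a groupoid spine with $R=I^2$. By Lemma \ref{L: regularity of extension} this extension remains regular, and by Lemma \ref{L: extension definable} it remains $A$-type-definable. Replacing the original spine by its extension, we may henceforth assume $R=I^2$ while retaining $A$-type-definability and regularity.

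Now invoke Lemma \ref{L: regular weak is strong}, which upgrades $A$-type-definability of a regular spine to faithful $A$-type-definability. Since we are in the case $R=I^2$, Proposition \ref{P: group from groupoind} applies and produces, for each $i\in I$, an $A$-type-definable group $G_i$ in $\mathcal M^{eq}$ together with an $A$-type-definable action of $G_i$ on $X_i$, and an isomorphism of group actions $(G_i,X_i)\leftrightarrow(\operatorname{Mor}(i,i),X_i)$ fixing $X_i$ pointwise. Regularity of this action is immediate from regularity of the spine applied to $(i,i)\in R$: for any $x,y\in X_i$ there is a unique $f\in\operatorname{Mor}(i,i)$ with $f(x)=y$, and translating through the isomorphism yields a unique $g\in G_i$ with $g\cdot x=y$.

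For the final clause, fix $e\in X_i$ and consider the orbit map $\phi_e\colon G_i\to X_i$ given by $g\mapsto g\cdot e$. By regularity, $\phi_e$ is a bijection; and because the action of $G_i$ on $X_i$ is $A$-type-definable, $\phi_e$ is $Ae$-type-definable (its graph is cut out of $G_i\times X_i$ by the relation $g\cdot e=x$). Transporting the group structure on $G_i$ across $\phi_e$ yields a group structure on $X_i$ with $e$ as identity: concretely, $x\cdot_{X_i}y=z$ if and only if there exist $g,h,k\in G_i$ with $g\cdot e=x$, $h\cdot e=y$, $k\cdot e=z$, and $g\cdot_{G_i}h=k$. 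This expression exhibits the multiplication on $X_i$ as the projection of an $Ae$-type-definable set, and the same reasoning applies to inversion and the identity.

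Because all the heavy lifting has already been done in the preceding lemmas, there is no real obstacle here; the only point requiring care is the last paragraph, namely verifying that the transported operation on $X_i$ really is $Ae$-type-definable — but this reduces to observing that projections of type-definable sets are type-definable, which is a standing tool used throughout the paper.
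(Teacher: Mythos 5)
Your proof is correct and follows essentially the same route as the paper's: extend the spine to a groupoid via Theorem \ref{T: regular gp con}, propagate regularity and type-definability via Lemmas \ref{L: regularity of extension} and \ref{L: extension definable}, upgrade to faithful type-definability via Lemma \ref{L: regular weak is strong}, apply Proposition \ref{P: group from groupoind}, and transport the group structure to $X_i$ along the orbit map of a fixed $e$. The only (minor, welcome) elaboration is that you explicitly spell out why the resulting action is regular and why the transported operation on $X_i$ is $Ae$-type-definable, both of which the paper states without detail.
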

\begin{proof}
    By Theorem \ref{T: regular gp con}, $(I,<,\mathcal X,R,Mor)$ extends to a groupoid. By Lemma \ref{L: regularity of extension}, the resulting groupoid is regular; by Lemma \ref{L: extension definable}, the resulting groupoid is $A$-type-definable; thus by Lemma \ref{L: regular weak is strong}, the resulting groupoid is faithfully $A$-type-definable. Now apply Proposition \ref{P: group from groupoind} to obtain an $A$-type-definable group $G$ acting $A$-type-definably and regularly on $X_i$.

    To build a group structure on $X_i$ itself, fix any $e\in X_i$. There is now an $Ae$-type-definable bijection $G\rightarrow X_i$, given by $g\mapsto ge$ (by regularity). This allows us to transfer the group structure from $G$ to $X_i$ $Ae$-type-definably. Note that $e$ is identified with the identity, because the identity sends $e$ to $e$.
    \end{proof}

\section{Construction of a Regular Groupoid Spine}

\textbf{We now return to the topological setting considered earlier in the paper. Thus, we fix $\mathcal M$, a sufficiently saturated t-minimal structure with the independent neighborhood property; and we fix an elementary extension $\mathcal N$ of $\mathcal M$, sufficiently saturated with respect to $\mathcal M$.} As before, we consider infinitesimal neighborhoods of $\mathcal M$-tuples as $\mathcal M$-type-definable sets in $\mathcal N$.

Our goal now is to show that if $\mathcal M$ is non-trivial (see Definition \ref{D: nontrivial}) and topologically 1-based, then we can construct a type-definable regular groupoid spine in $\mathcal M$. Our main tool is the structure theorem for topologically 1-based types, Theorem \ref{T: partition}.

\subsection{Families of Bijections}

First, we show that from a certain configuration of interalgebraicities, we can construct a regular familiy of homeomrphisms between infinitesimal neighborhoods: 

\begin{lemma}\label{L: family of bijections} Let $(a,b,t)$ be real tuples. Assume:
\begin{enumerate}
\item Any two of $a,b,t$ are symmetrically independent.
\item $a$ and $b$ are interalgebraic over $t$.
\item $(t,ab)$ is additive. 
\item $\tp(ab/t)$ is topologially 1-based.
\end{enumerate}
Then $\mu(abt/\emptyset)$ is the graph of a regular family of homeomorphisms $\mu(a/\emptyset)\rightarrow\mu(b/\emptyset)$. More precisely, there is a family $F$ of homeomorphisms $f:\mu(a/\emptyset)\rightarrow\mu(b/\emptyset)$ such that:
\begin{enumerate}
    \item[(i)] For each $t'\in\mu(t)$, the fiber $\mu(abt/\emptyset)_{t'}\subset\mu(ab/\emptyset)$ is the graph of a homeomorphism $f_{t'}\in F$.
    \item[(ii)] For each $f\in F$, there is $t'\in\mu(t/\emptyset)$ with $f_{t'}=f$.
    \item[(iii)] For each $a'\in\mu(a/\emptyset)$ and $b'\in\mu(b/\emptyset)$, there is exactly one $f\in F$ with $f(a')=b'$.
\end{enumerate}
\end{lemma}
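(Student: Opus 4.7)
The plan is to establish the three properties of the family $F$ in order, with $F$ itself only being defined after property (i) has constructed its elements. The core inputs are the two parts of Lemma \ref{L: mu generic cont} plus Theorem \ref{T: partition}; the four hypotheses of the lemma enter one by one at the corresponding steps.

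First, I will prove (i) by showing that for every $t'\in\mu(t/\emptyset)$, the fiber $\mu(abt/\emptyset)_{t'}$ is the graph of a homeomorphism $\mu(a/\emptyset)\to\mu(b/\emptyset)$. The symmetric independence hypothesis (1), via Lemma \ref{L: mu strong ind}(2), gives the product decompositions $\mu(at/\emptyset)=\mu(a/\emptyset)\times\mu(t/\emptyset)$ and $\mu(bt/\emptyset)=\mu(b/\emptyset)\times\mu(t/\emptyset)$. Next, since $b\in\acl(at)$ and $a\in\acl(bt)$ by hypothesis (2), Lemma \ref{L: mu generic cont}(2) applied to the pairs $(b,(a,t))$ and $(a,(b,t))$ yields that the two projections
\[
\mu(abt/\emptyset)\longrightarrow\mu(at/\emptyset),\qquad \mu(abt/\emptyset)\longrightarrow\mu(bt/\emptyset)
\]
are homeomorphisms. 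Restricting each to the fiber over a fixed $t'\in\mu(t/\emptyset)$, I obtain homeomorphisms $\mu(abt/\emptyset)_{t'}\to\mu(a/\emptyset)$ and $\mu(abt/\emptyset)_{t'}\to\mu(b/\emptyset)$, and the composite of the inverse of the first with the second is the desired homeomorphism $f_{t'}:\mu(a/\emptyset)\to\mu(b/\emptyset)$ whose graph is $\mu(abt/\emptyset)_{t'}$. I then define $F:=\{f_{t'}:t'\in\mu(t/\emptyset)\}$, which makes (ii) tautological.

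For the existence half of (iii), I will use hypotheses (1) and (3). Given $a'\in\mu(a/\emptyset)$ and $b'\in\mu(b/\emptyset)$, Lemma \ref{L: mu strong ind}(2) and the $a,b$-case of (1) give $(a',b')\in\mu(a/\emptyset)\times\mu(b/\emptyset)=\mu(ab/\emptyset)$. Additivity of $(t,ab)$ over $\emptyset$ (hypothesis (3)) lets me invoke Lemma \ref{L: mu generic cont}(1) for the pair $((ab),t)$: the projection $\mu(abt/\emptyset)\to\mu(ab/\emptyset)$ is open and surjective, so there is $t'$ with $(a',b',t')\in\mu(abt/\emptyset)$. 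This $t'$ lies in $\mu(t/\emptyset)$ by Lemma \ref{L: mu strong ind}(1), and by construction $f_{t'}(a')=b'$.

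Uniqueness is where topological $1$-basedness (hypothesis (4)) finally enters, and I expect this to be the conceptual heart of the argument (though technically short). Suppose $f_{t_1}(a')=f_{t_2}(a')=b'$; then $(a',b')$ lies in both fibers $\mu(abt/\emptyset)_{t_1}$ and $\mu(abt/\emptyset)_{t_2}$ of the projection $\mu(abt/\emptyset)\to\mu(t/\emptyset)$. Because $\tp(ab/t)$ is topologically $1$-based over $\emptyset$, Theorem \ref{T: partition} says any two such fibers are equal or disjoint, so they must coincide, forcing $f_{t_1}=f_{t_2}$. The main obstacle I anticipate is purely bookkeeping: making sure the parameter sets in each invocation of Lemmas \ref{L: mu generic cont}, \ref{L: mu strong ind}, and Theorem \ref{T: partition} line up (all are applied over $\emptyset$, and the grouping of coordinates into ``pair'' in each lemma must be chosen correctly). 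Once those applications are set up, the proof is essentially a direct translation of the algebraic/geometric hypotheses (1)--(4) into the four lemma applications above.
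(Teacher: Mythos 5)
Your proof is correct and follows essentially the same route as the paper's: symmetric independence plus interalgebraicity gives the product decompositions and the two projection homeomorphisms, additivity of $(t,ab)$ gives surjectivity onto $\mu(ab/\emptyset)$ for existence in (iii), and Theorem~\ref{T: partition} gives uniqueness. The only thing left implicit in your write-up is the (trivial, automatic) check that $(b,(a,t))$ and $(a,(b,t))$ are additive pairs, which the paper notes parenthetically follows from the algebraicity hypothesis (2).
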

\begin{proof}
    By symmetric independence, Lemma \ref{L: mu strong ind} gives: $\mu(at/\emptyset)=\mu(a/\emptyset)\times\mu(t/\emptyset)$ and $\mu(bt/\emptyset)=\mu(b/\emptyset)\times\mu(t/\emptyset)$. By Lemma \ref{L: mu generic cont}, since $a\in\acl(bt)$, projecting gives a homeomorphism $$\mu(abt/\emptyset)\rightarrow\mu(bt/\emptyset)=\mu(b/\emptyset)\times\mu(t/\emptyset)$$ (technically this requires that $(a,bt)$ is additive, but additivity follows automatically from $a\in\acl(bt)$). Similarly, the projection $\mu(abt/\emptyset)\rightarrow\mu(a/\emptyset)\times\mu(t/\emptyset)$ is a homeomorphism. It follows that $\mu(abt/\emptyset)$ is the graph of a family of homeomorphisms $\mu(a/\emptyset)\rightarrow\mu(b/\emptyset)$. Let $F$ be the set of distinct homeomorphisms $\mu(a/\emptyset)\rightarrow\mu(b/\emptyset)$ occurring in this way.

    It remains to show that given $a'\in\mu(a/\emptyset)$ and $b'\in\mu(b/\emptyset)$, there is exactly one $f\in F$ with $f(a')=b'$. First, the fact that there is at least one such $f$ follows by the additivity of $(t,ab)$: indeed, since $a$ and $b$ are symmetrically independent, we have $\mu(ab/\emptyset)=\mu(a/\emptyset)\times\mu(b/\emptyset)$; then by additivity and Lemma \ref{L: mu generic cont}, the projection $$\mu(abt/\emptyset)\rightarrow\mu(ab/\emptyset)=\mu(a/\emptyset)\times\mu(b/\emptyset)$$ is surjective.
    
    It remains to show that there is at most one $f\in F$ sending $a'$ to $b'$. But this is exactly the content of Theorem \ref{T: partition}: if $f,g\in F$ both send $a'$ to $b'$, then there are $t',t''\in\mu(t/\emptyset)$ so that $\mu(abt/\emptyset)_{t'}$ is the graph of $f$ and $\mu(abt/\emptyset)_{t''}$ is the graph of $g$. Then both $\mu(abt/\emptyset)_{t'}$ and $\mu(abt/\emptyset)_{t''}$ contain $(a',b')$ -- and since $\tp(ab/t)$ is topologically 1-based, Theorem \ref{T: partition} gives that these two fibers are equal. That is, $f$ and $g$ have the same graph, and so they are the same function.
    \end{proof}

    \subsection{Pre-Group Configurations}

    We now aim toward a topological analog of a group configuration theorem. Toward that end, we define:
    
    \begin{definition}
        A \textit{one-dimensional topological pre-group configuration} in $\mathcal M$ is a 5-tuple $(a,b,c,t,u)$ such that all of the following hold:
        \begin{enumerate}
            \item Each of $a,b,c,t,u$ has dimension 1.
            \item $\dim(abctu)=3$.
            \item $a$ and $b$ are interalgebraic over $t$, and $b$ and $c$ are interalgebraic over $u$.
        \end{enumerate}
    \end{definition}

 \[   \groupconf{t}{u}{\star}{a}{b}{c} \]

    Thus, a one-dimensional topological pre-group configuration consists of all but one point (denoted $\star$ in the above diagram) in a conventional one-dimensional group configuration. One can easily check that all dimension computations in a one-dimensional pre-group configuration match those in a usual group configuration. Precisely, we leave the following to the reader:

    \begin{lemma}
        Let $(a,b,c,t,u)$ be a one-dimensional topological pre-group configuration in $\mathcal M$. Separately, let $a',t',u'$ be $\mathbb Q$-linearly independent real numbers, let $b'=a'+t'$, and let $c'=b'+u'$. Then the map $f:\{a,b,c,t,u\}\mapsto\{a',b',c',t',u'\}$ -- sending each element to its primed counterpart -- preserves all dimensions. More precisely, if $A,B\subset\{a,b,c,d,e\}$, then $\dim(A/B)$ (in the sense of $\mathcal M$) is the same as the pregeometry dimension of $f(A)$ over $f(B)$ in the sense of the strongly minimal group $(\mathbb R,+)$. 
    \end{lemma}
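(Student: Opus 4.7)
The plan is to establish the matroid of $\mathcal{M}$-dimensions on $\{a,b,c,t,u\}$ and show it matches the $\mathbb{Q}$-linear rank on $\{a',b',c',t',u'\}$; the relative-dimension identity then follows by sub-additivity and a chain-rule argument.

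First, I would pin down $\dim(atu)=3$: using $b\in\acl(at)$ and $c\in\acl(bu)$, we have $\{a,b,c,t,u\}\subseteq\acl(atu)$, so $\dim(atu)\geq\dim(abctu)=3$, and sub-additivity supplies the reverse inequality. Running the analogous ``closure-chase'' triple by triple, every triple in $\{a,b,c,t,u\}$ other than the bad triples $\{a,b,t\}$ and $\{b,c,u\}$ contains all five elements in its algebraic closure and hence has dimension $3$. Combining $\dim(atu)=3$ with sub-additivity forces $\dim(at)=2$, so $\dim(abt)=\dim(at)=2$ (since $b\in\acl(at)$), and similarly for the other pairs inside bad triples; the remaining four pairs each lie inside a good (dimension-$3$) triple, and sub-additivity together with singleton dimension $1$ yields $\geq 2$, which is then the exact value. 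All quadruples and the whole $5$-tuple inherit dimension $3$, so the $\mathcal{M}$-rank function on subsets of $\{a,b,c,t,u\}$ agrees with the $\mathbb{Q}$-linear matroid on $f$-images.

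These dimension values also pin down the algebraic-closure relations among $\{a,b,c,t,u\}$: the relations imposed by the axioms together with their transitive consequences (e.g.\ $c\in\acl(atu)$ from $c\in\acl(bu)$ and $b\in\acl(at)$) exhaust $\acl_{\mathcal{M}}$ restricted to these five points, because any extra acl-relation would contradict one of the dimension values just computed.

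Finally, for arbitrary $A,B\subseteq\{a,b,c,t,u\}$, let $r(A/B)$ denote the $\mathbb{Q}$-linear rank of $f(A)$ over $f(B)$. Sub-additivity (Fact~\ref{F: tmin}(\ref{subad})) gives $\dim_{\mathcal{M}}(A/B)\geq\dim(AB)-\dim(B)=r(A/B)$. For the upper bound, enumerate $A\setminus B=\{x_{k+1},\dots,x_n\}$ greedily, placing each $x_i$ that already lies in $\acl_{\mathcal{M}}(B\cup\{x_{k+1},\dots,x_{i-1}\})$ first; sub-additivity then yields $\dim_{\mathcal{M}}(A/B)\leq\sum_i\dim(x_i/Bx_{k+1}\dots x_{i-1})$, each summand is $0$ or $1$, and by the closure identification the number of nonzero summands is exactly $r(A/B)$. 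The main obstacle is that $\mathcal{M}$ need not satisfy exchange, so the pregeometry identity $\dim(A/B)=\dim(AB)-\dim(B)$ is not automatic; the pre-group-configuration axioms compensate by injecting enough interalgebraicity among the five points that the finite matroid behaves like a pregeometry locally, which is what licenses both the closure-chase and the matched upper and lower bounds.
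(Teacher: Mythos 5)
Your "closure-chase" step does not follow from the stated axioms, and in fact the lemma as literally stated is false. The pre-group-configuration axioms give $a\in\acl(bt)$ and $b\in\acl(at)$ (interalgebraicity \emph{over} $t$), but they do \emph{not} give $t\in\acl(ab)$; likewise nothing gives $u\in\acl(bc)$. Consequently, for a triple such as $\{a,b,c\}$ you cannot place $t$ inside $\acl(abc)$, and for $\{a,c,t\}$ you cannot place $u$. The claim that ``every triple other than $\{a,b,t\}$ and $\{b,c,u\}$ contains all five elements in its algebraic closure'' is therefore unjustified; by the method you describe only $\{a,t,u\}$, $\{b,t,u\}$ and $\{c,t,u\}$ are provably $3$-dimensional. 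The subsequent steps inherit the same gap: nothing in your argument forces $\dim(ab)=2$ (the analogue of ``$\dim(at)\geq2$ from $\dim(atu)=3$'' has no counterpart for $\{a,b\}$, since no triple containing $\{a,b\}$ is provably $3$-dimensional), and the pair $\{a,c\}$ does not lie in a provably good triple either. The closing remark that ``the pre-group-configuration axioms compensate by injecting enough interalgebraicity'' is precisely the unsupported assertion.

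This is not a fixable bookkeeping issue with the stated hypotheses: in $\mathcal M=(\mathbb R,+,<)$ take $a$ generic, $b=2a$, $u$ generic over $a$, $c=b+u$, and $t$ generic over $\{a,c,u\}$. All three defining conditions of a one-dimensional topological pre-group configuration hold, yet $\dim(ab)=1$ while the $\mathbb Q$-linear rank of $\{a',b'\}$ is $2$, so the conclusion fails. What is actually needed is the extra hypothesis $t\in\acl(ab)$ and $u\in\acl(bc)$ (equivalently, $\dim(ab)=\dim(bc)=2$), which does hold for the configurations the paper builds in Corollary~\ref{C: 1-based gp}: there the seed triple $(a,b,t)$ has all three coordinates mutually interalgebraic with $t\in\acl(ab)$, and $(c,u)$ inherits the symmetric property by conjugacy. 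Under that strengthening your closure-chase does go through for all eight non-degenerate triples, the pair computations then follow, and the final paragraph's sub-additivity/greedy-chain argument for $\dim(A/B)=r(f(A)/f(B))$ is sound. So the proof attempt can be repaired, but only by adding a hypothesis that is missing both from the lemma's statement and from the proof.
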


    In particular, all dimension computations in a one-dimensional topological pre-group configuration have the additivity property:

    \begin{corollary}\label{C: gconf is additive}
        Let $(a,b,c,t,u)$ be a one-dimensional pre-group configuration. If $A,B\subset\{a,b,c,t,u\}$, then $(A,B)$ is additive.
    \end{corollary}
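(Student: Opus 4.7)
The plan is to reduce the corollary directly to the strongly minimal case via the transfer lemma stated immediately above. Since sub-additivity (Fact~\ref{F: tmin}, clause (\ref{subad})) already gives $\dim(AB) \leq \dim(B) + \dim(A/B)$ for free, the only genuine content to be proved is the reverse inequality $\dim(AB) \geq \dim(B) + \dim(A/B)$.

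First I would apply the transfer lemma three times to rewrite each of the quantities $\dim(AB)$, $\dim(B)$, and $\dim(A/B)$ as the corresponding pregeometry dimensions $\dim_{\mathbb R}(f(A)\cup f(B))$, $\dim_{\mathbb R}(f(B))$, and $\dim_{\mathbb R}(f(A)/f(B))$ computed inside the strongly minimal group $(\mathbb R,+)$. Then, because $(\mathbb R,+)$ is strongly minimal, its associated pregeometry satisfies exchange, and in any pregeometry with exchange the additivity identity $\dim(XY) = \dim(Y) + \dim(X/Y)$ holds automatically. Applying this identity in $(\mathbb R,+)$ to $f(A)$ and $f(B)$ and pulling back along the transfer lemma delivers the required equality $\dim(AB) = \dim(B) + \dim(A/B)$.

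There is no real obstacle to confront here, since the transfer lemma absorbs all the geometric content and the remaining step is just the standard additivity of dimension in any pregeometry satisfying exchange. The only thing worth verifying while writing up is that the transfer lemma applies to each of the three relevant (possibly conditional) dimensions, including the degenerate cases $B = \emptyset$ and $A \subseteq B$; this is clear from its statement, which covers arbitrary subsets of $\{a,b,c,t,u\}$.
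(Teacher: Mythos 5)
Your proof is correct and follows exactly the approach the paper takes: transfer all three dimensions to $(\mathbb R,+)$ via the preceding lemma, use additivity there (which holds since the pregeometry of a strongly minimal set satisfies exchange), and transfer back. The paper's proof is a one-line version of precisely this argument.
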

    \begin{proof} The analogous property holds in $(\mathbb R,+)$, by exchange.
    \end{proof}

    \subsection{Group Configurations}

    We now define what it means for a one-dimensional topological pre-group configuration $(a,b,c,t,u)$ to be an actual topological group configuration. In a stable theory, the requirement is the existence of a non-algebraic tuple $v\in\acl(ac)\cap\acl(tu)$ (i.e. the sixth point of a conventional group configuration). In the topological setting, this is (a priori) too much to ask for. On the other hand, the same condition can be expressed using 1-basedness: if such a $v$ as above exists in a stable context, then $v$ is interalgebraic with $\operatorname{Cb}(\operatorname{stp}(ac/tu))$, which implies that $\operatorname{stp}(ac/tu)$ is 1-based (and conversely, if $\operatorname{stp}(ac/tu)$ is 1-based, one can take its canonical base for $v$). Thus, in the topological setting, we simply adapt the version of the definition using 1-basedness:

    \begin{definition}
        Let $(a,b,c,t,u)$ be a one-dimensional topological pre-group configuration. We say that $(a,b,c,t,u)$ is a \textit{one-dimensional topological group configuration} if $\tp(ac/tu)$ is topologically 1-based.
    \end{definition}

 We now show:

    \begin{theorem}\label{T: gconf}
        Let $(a,b,c,t,u)$ be a one-dimensional topological group configuration in $\mathcal M$. Then there is an $\mathcal M$-type-definable group structure on the set $\mu(a/\emptyset)$ with identity element $a$ (and similarly for $b$ and $c$).
    \end{theorem}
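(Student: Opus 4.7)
The plan is to manufacture an $\mathcal M$-type-definable regular groupoid spine on three objects and then invoke Theorem \ref{T: combinatorial group conf}. Linearly order $I=\{a,b,c\}$ with $a<b<c$, set $X_a=\mu(a/\emptyset)$, $X_b=\mu(b/\emptyset)$, $X_c=\mu(c/\emptyset)$, and take $R=\{(a,b),(b,c),(a,c)\}$ (which contains all pairs $i<j$, as required). For the three morphism families, I will apply Lemma \ref{L: family of bijections} to each of the triples $(a,b,t)$, $(b,c,u)$, and $(a,c,tu)$, obtaining regular families of homeomorphisms $\mu(a/\emptyset)\to\mu(b/\emptyset)$, $\mu(b/\emptyset)\to\mu(c/\emptyset)$, and $\mu(a/\emptyset)\to\mu(c/\emptyset)$ parameterized by $\mu(t/\emptyset)$, $\mu(u/\emptyset)$, and $\mu(tu/\emptyset)$ respectively. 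These parameter sets are $\mathcal M$-type-definable in $\mathcal N$, as are the families (built from the type-definable sets $\mu(abt/\emptyset)$, $\mu(bcu/\emptyset)$, $\mu(actu/\emptyset)$), so the whole spine is $\mathcal M$-type-definable.

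To verify the hypotheses of Lemma \ref{L: family of bijections} in each case: symmetric independence for any two of the three tuples follows immediately from the dimension bookkeeping of a one-dimensional pre-group configuration (for example, $\dim(atu)=3=\dim(a)+\dim(tu)$ gives the required symmetric independence of $a$ and $tu$); interalgebraicity is either given outright or, for $(a,c,tu)$, goes through the intermediate point $b$; and additivity of $(t,ab)$, $(u,bc)$, and $(tu,ac)$ is Corollary \ref{C: gconf is additive}. The topological 1-basedness of $\tp(ac/tu)$ is the defining condition of a topological group configuration, while the 1-basedness of $\tp(ab/t)$ and $\tp(bc/u)$ is automatic because the corresponding fiber dimensions vanish: since $b\in\acl(at)$ we have $\dim(abt)=\dim(at)=2=\dim(ab)$, so $\dim(t/ab)=0$, forcing both sides in Definition \ref{D: 1-based type} to equal $0$ (and similarly for $\tp(bc/u)$).

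The only non-trivial groupoid spine axiom to check is closure of composition: $\operatorname{Mor}(b,c)\circ\operatorname{Mor}(a,b)\subseteq\operatorname{Mor}(a,c)$. Here I expect the main technical step: given $f_{t'}$ and $g_{u'}$ in the first two families, I need to exhibit $g_{u'}\circ f_{t'}$ inside the third. By symmetric independence of $t$ and $u$, Lemma \ref{L: mu strong ind} gives $\mu(tu/\emptyset)=\mu(t/\emptyset)\times\mu(u/\emptyset)$, so $(t',u')\in\mu(tu/\emptyset)$. Additivity of $(b,actu)$ combined with Lemma \ref{L: mu generic cont} makes the projection $\mu(abctu/\emptyset)\to\mu(actu/\emptyset)$ surjective, so the $(t',u')$-fiber of $\mu(actu/\emptyset)$ is the projection of the $(t',u')$-fiber of $\mu(abctu/\emptyset)$; by construction of the two original families this fiber is exactly the graph of $g_{u'}\circ f_{t'}$, matching the corresponding element of the $(a,c,tu)$-family. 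Regularity of each piece is handed to us by Lemma \ref{L: family of bijections}, so the spine is a regular $\mathcal M$-type-definable groupoid spine with $|I|=3$. Theorem \ref{T: combinatorial group conf} then produces an $\mathcal M$-type-definable group acting regularly on $X_a=\mu(a/\emptyset)$, and the choice of identity $e=a$ transfers this into an $\mathcal M$-type-definable group structure on $\mu(a/\emptyset)$ with $a$ as identity; the corresponding statements for $\mu(b/\emptyset)$ and $\mu(c/\emptyset)$ follow by the same argument with a different distinguished object.
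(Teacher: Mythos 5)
Your proof is correct and follows essentially the same route as the paper: same groupoid spine on three objects, same use of Lemma \ref{L: family of bijections} for the three morphism families (with the same observation that 1-basedness of $\tp(ab/t)$ and $\tp(bc/u)$ is automatic since $t\in\acl(ab)$, $u\in\acl(bc)$), and the same appeal to Theorem \ref{T: combinatorial group conf}. Your composition-closure argument projects to the $(a,c,t,u)$-coordinates rather than, as in the paper, to $(a,t,u)$ and then arguing point-by-point, but this is a cosmetic reformulation; the one thing worth spelling out is that the projected fiber is the \emph{full} graph of $g_{u'}\circ f_{t'}$ (not merely a subgraph), which follows because it is itself the graph of a bijection $\mu(a/\emptyset)\to\mu(c/\emptyset)$ by Lemma \ref{L: family of bijections}(i).
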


    \begin{proof}
        We will build an $\mathcal M$-type-definable regular groupoid spine $(I,<,\mathcal X,R,\operatorname{Mor})$ in the structure $\mathcal N$, where:
        \begin{itemize}
            \item $I=\{1,2,3\}$ with the usual order.
            \item $X_1=\mu(a/\emptyset)$, $X_2=\mu(b/\emptyset)$, and $X_3=\mu(c/\emptyset)$.
            \item $R=\{(i,j):i<j\}$.
        \end{itemize}
        The Theorem will then follow by Theorem \ref{T: combinatorial group conf}. Precisely, Theorem \ref{T: combinatorial group conf} will provide an $\mathcal Ma$-type-definable group structure on the set $X_1=\mu(a/\emptyset)$, with identity element $a$ (and similarly for $b$ and $c$). But since $a$, $b$, and $c$ are tuples from $\mathcal M$, these groups are actually $\mathcal M$-type-definable. 

        Our task, then, is to define the sets $\operatorname{Mor}(i,j)$ for $i<j$. We will do this by applying Lemma \ref{L: family of bijections} to each of $(a,b,t)$, $(b,c,u)$, and $(a,c,tu)$. We leave it to the reader to check the various independence and algebraicity requirements needed to apply Lemma \ref{L: family of bijections} in each of these cases; note that there is no need to check the additivity requirements, as Corollary \ref{C: gconf is additive}  gives them for free.

        So, in order to apply Lemma \ref{L: family of bijections}, the main thing to check is that each of $\tp(ab/t)$, $\tp(bc/u)$, and $\tp(ac/tu)$ is topologically 1-based. But indeed:
        
        \begin{itemize}
            \item By assumption $t\in\acl(ab)$, so $\dim(t/ab)=0$, and thus $\dim(t/\operatorname{germ}(ab/t))$ must also be 0. Thus $\tp(ab/t)$ is topologically 1-based.
            \item By the same reason as above, $\tp(bc/u)$ is also topologically 1-based.
            \item Finally, $\tp(ac/tu)$ is topologically 1-based because of the assumption that $(a,b,c,t,u)$ is a one-dimensional topological group configuration.
        \end{itemize}
        
        So, we may legally apply Lemma \ref{L: family of bijections} in each of the above instances. Denote the three resulting families of bijections by $\operatorname{Mor}(i,j)$ (in the obvious way, so that $\operatorname{Mor}(i,j)$ is the family of maps from $X_i$ to $X_j$).
        
        If we can show that the data defined so far gives a groupoid spine, then it will automatically be $\mathcal M$-type-definable and regular. That is:
        \begin{itemize}
            \item Each $X_i$ is an infinitesimal neighborhood, so is $\mathcal M$-type-definable.
            \item Similarly, each $\operatorname{Mor}(i,j)$ is represented as in Definition \ref{D: weak type def} by an $\mathcal M$-type-definable family -- precisely  by $\mu(abt/\emptyset)$, $\mu(bcu/\emptyset)$, and $\mu(actu/\emptyset)$, respectively.
            \item By Lemma \ref{L: family of bijections}, each $\operatorname{Mor}(i,j)$ has the required property for the definition of regularity.
        \end{itemize}
        Thus, our only remaining task is showing that $(I,<,\mathcal X,R,\operatorname{Mor})$ indeed gives a groupoid spine. And for this, the only thing to check is closure under composition. So let $f\in\operatorname{Mor}(1,2)$, and let $g\in\operatorname{Mor}(2,3)$. We want to show that $g\circ f\in\operatorname{Mor}(1,3)$.

        Since $f\in\operatorname{Mor}(1,2)$, there is $t'\in\mu(t/\emptyset)$ so that the fiber $\mu(abt/\emptyset)_{t'}$ is the graph of $f$. Similarly, since $g\in\operatorname{Mor}(2,3)$, there is $u'\in\mu(u/\emptyset)$ so that the fiber $\mu(bcu)_{u'}$ is the graph of $g$. We will show that the fiber $\mu(actu/\emptyset)_{t'u'}$ is the graph of $g\circ f$. Let $h:\mu(a/\emptyset)\rightarrow\mu(c/\emptyset)$ be the bijection whose graph is the fiber above $(t',u')$. So we want to show that $g\circ f=h$.

        To show this, let $a'\in\mu(a/\emptyset)$. Note that $abctu\in\acl(atu)$ -- thus $\dim(atu)=3$, and thus $\mu(atu/\emptyset)=\mu(a/\emptyset)\times\mu(t/\emptyset)\times\mu(u/\emptyset)$. Moreover, by Lemma \ref{L: mu generic cont}, the projection gives a homeomorphism $$\mu(abctu/\emptyset)\rightarrow\mu(atu/\emptyset)=\mu(a/\emptyset)\times\mu(t/\emptyset)\times\mu(u/\emptyset).$$ Since $a'\in\mu(a/\emptyset)$, $t'\in\mu(t/\emptyset)$, and $u'\in\mu(u/\emptyset)$, we conclude that $(a',t',u')\in\mu(atu/\emptyset)$, and thus there are $b',c'$ so that $(a',b',c',t',u')\in\mu(abctu/\emptyset)$. Thus:
        \begin{itemize}
            \item $(a',b',t')\in\mu(abt/\emptyset)$, and so $b'=f(a')$;
            \item Similarly, $(b',c',u')\in\mu(bcu/\emptyset)$, and so $c'=g(b')$;
            \item And finally, $(a',c',t',u')\in\mu(actu/\emptyset)$, and so $c'=h(a')$.
        \end{itemize}
        
        It now follows that $g\circ f(a')=h(a')$. Since $a'\in\mu(a/\emptyset)$ was arbitrary, it follows that $g\circ f=h$, and this proves the theorem.
        \end{proof}

\subsection{Building a Topological Group Configuration}

We now show that, combined with a non-triviality assumption, topological 1-basedness implies the type-definability of an infinite group.

\begin{definition}\label{D: nontrivial}
Say that $\mathcal M$ is \textit{non-trivial} if for some parameter set $A$, there are $a,b,c\in M$ with $\dim(abc/A)=2$ and each of $a,b,c$ is algebraic over $A$ with the other two.
\end{definition}

    \begin{corollary}\label{C: 1-based gp}
        Assume $\mathcal M$ is non-trivial and topologically 1-based. Then there are $a\in M$ with $\dim(a)=1$, and an infinite $\mathcal M$-type-definable group structure on the set $\mu(a)$, with identity element $a$.  
    \end{corollary}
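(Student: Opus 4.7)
The plan is to build a one-dimensional topological group configuration $(a, b, c, t, u)$ from the non-triviality hypothesis, and then invoke Theorem \ref{T: gconf}.

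First, I would absorb the parameter set $A$ appearing in Definition \ref{D: nontrivial} by naming its elements as constants. By Lemma \ref{L: constants} topological 1-basedness is preserved, while non-triviality obviously is. This yields three singletons $a, b, t \in M$, pairwise interalgebraic over $\emptyset$, with $\dim(abt) = 2$; were any of them, say $a$, in $\acl(\emptyset)$, the interalgebraicity would force $\dim(abt) = \dim(b) \leq 1$, so each of $a, b, t$ has dimension 1.

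Next, I would produce a second triple sharing the point $b$. Applying Lemma \ref{L: additive extension}(1) with the role of its ``$a$'' played by $b$ and its ``$b$'' played by the concatenation $(a, t)$ (over $A = \emptyset$), I obtain $(c, u) \models \tp(at/b)$ with $\dim(cu/abt) = \dim(at/b)$ and the pair $((c,u), (a,b,t))$ additive over $\emptyset$. The equality $\tp(bcu) = \tp(bat)$ transfers pairwise interalgebraicity, so $b$ and $c$ are interalgebraic over $u$, and additivity delivers $\dim(abctu) = \dim(abt) + \dim(cu/abt) = 2 + 1 = 3$. It is then routine to check that $(a, b, c, t, u)$ is a one-dimensional topological pre-group configuration. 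Because $\mathcal M$ is topologically 1-based, $\tp(ac/tu)$ is automatically topologically 1-based, so the tuple is in fact a one-dimensional topological group configuration.

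Theorem \ref{T: gconf} now provides an $\mathcal M$-type-definable group structure on $\mu(a/\emptyset)$ with identity $a$. Since $a$ is a singleton of dimension 1, $M$ itself is a minimal basic witness of $a$ over $\emptyset$, so by Lemma \ref{L: germs exist} every $\mathcal M$-definable d-approximation of $\tp(a/\emptyset)$ at $a$ contains an $\mathcal M$-definable open neighborhood of $a$; hence $\mu(a) = \mu(a/\emptyset)$. This set has dimension 1 and is therefore infinite. The main obstacle will be the dimension bookkeeping in the second step: without exchange, merely taking $(c, u)$ as an independent realization of $\tp(at/b)$ does not force $\dim(abctu) = 3$, so the use of Lemma \ref{L: additive extension} to secure genuine additivity is what makes the argument go through.
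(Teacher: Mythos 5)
Your proposal is correct and follows the paper's own proof essentially verbatim: absorb $A$ via Lemma \ref{L: constants}, extend $(a,b,t)$ to $(a,b,c,t,u)$ with $(c,u)\models\tp(at/b)$ via Lemma \ref{L: additive extension} so that $\dim(abctu)=3$, note that topological 1-basedness makes this a group configuration, and apply Theorem \ref{T: gconf}. The only difference is that you spell out a few routine checks (the dimension-1 bookkeeping and the identification $\mu(a/\emptyset)=\mu(a)$) that the paper leaves implicit.
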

    \begin{proof}
        Let $A$ be a parameter set and $(a,b,t)\in M^3$ with $\dim(abt/A)=2$ and each of $a,b,t$ algebraic over the other two. By Lemma \ref{L: constants}, we may assume $A=\emptyset$. By Lemma \ref{L: additive extension}, there is a pair $(c,u)\models\tp(at/b)$ so that $$\dim(abctu)=\dim(abt)+\dim(at/b)=3.$$ It follows easily that $(a,b,c,t,u)$ is a one-dimensional pre-group configuration in $\mathcal M$. Since $\mathcal M$ is topologically 1-based, $(a,b,c,t,u)$ is then automatically a one-dimensional topological group configuration in $\mathcal M$. Now apply Theorem \ref{T: gconf} to get an $\mathcal M$-type-definable group structure on $\mu(a/\emptyset)$ with identity element $a$, and note that since $\dim(a)=1$, $\mu(a/\emptyset)=\mu(a)$.
    \end{proof}

    We can also give some more local analysis:

\begin{definition}
    Let $p=\tp(a/A)$ where $\dim(a/A)=1$.
    \begin{enumerate}
        \item Say that $p$ is \textit{trivial} if there are $(a,b,c)\models p$ with $\dim(abc/A)=3$ and each of $a,b,c$ algebraic over $A$ with the other two.
        \item Say that $p$ is \textit{topologically locally modular} if for any $a,b\models p$ and any real tuple $c$, if $\dim(a/Ac)=\dim(b/Ac)=1$ and $a$ and $b$ are interalgebraic over $Ac$, then $\tp(ab/Ac)$ is topologically 1-based over $A$.
    \end{enumerate}
\end{definition}

So $p$ is topologically locally modular if, at the level of germs and topologically 1-basedness, there are no large families of plane curves inside $p^2$. Then the exact same argument gives:

\begin{corollary}\label{C: loc mod gp}
    Let $p=\tp(a/A)$ where $\dim(a/A)=1$. Assume that $p$ is non-trivial and topologically locally modular. Then there is an $\mathcal M$-type-definable group structure on the set $\mu(a/A)$, with identity element $a$. 
\end{corollary}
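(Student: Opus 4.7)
The plan is to mirror the argument of Corollary \ref{C: 1-based gp}, but localized to realizations of $p$ and relativized to the base $A$: produce a one-dimensional topological group configuration $(a,b,c,t,u)$ in which each of $a,b,c$ realizes $p$, and then invoke Theorem \ref{T: gconf}. First I would name the parameters in $A$ as constants, passing to $\mathcal{M}_A$. By the same argument as in Lemma \ref{L: constants}, $\mathcal{M}_A$ is still t-minimal with the independent neighborhood property, and dimension in $\mathcal{M}_A$ matches $\dim(\cdot/A)$ in $\mathcal{M}$. Both non-triviality of $p$ and topological local modularity of $p$ transfer directly to $\mathcal{M}_A$ (the latter because the defining condition is itself relativized to $A$), so we may work over $\emptyset$.

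Next, using non-triviality of $p$, pick $a,b,t \models p$ that are pairwise interalgebraic (in the sense of the other two jointly) with $\dim(abt)=2$. Apply Lemma \ref{L: additive extension}(1) with the lemma's parameters specialized to $b$ and $at$, producing $(c,u) \models \tp(at/b)$ with $\dim(cu/abt) = \dim(at/b)$ and $(cu,abt)$ additive. Since $\tp(cu/b) = \tp(at/b)$, we have $c \models \tp(a/b) \subseteq p$ and the same interalgebraicity patterns hold on the new side: $b \in \acl(cu)$ and $c \in \acl(bu)$. A short sub-additivity computation gives $\dim(abctu) \geq \dim(abt) + \dim(at/b) \geq 3$, while the chain $a \in \acl(bt) \subseteq \acl(cut)$, $b \in \acl(at)$, $c \in \acl(bu)$ forces $\acl(abctu) = \acl(ctu)$, hence $\dim(abctu) \leq 3$. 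So $\dim(abctu)=3$ and $(a,b,c,t,u)$ is a one-dimensional topological pre-group configuration.

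To upgrade this to a topological group configuration, we need $\tp(ac/tu)$ topologically 1-based. Corollary \ref{C: gconf is additive} together with the standard pregeometry calculation inside a pre-group configuration shows $\dim(a/tu) = \dim(c/tu) = 1$ and $a,c$ are interalgebraic over $tu$ (concretely, $a \in \acl(bt) \subseteq \acl(cut)$ and symmetrically $c \in \acl(bu) \subseteq \acl(atu)$). Since $a,c \models p$, topological local modularity of $p$ applies verbatim with the extra parameter tuple $tu$, yielding that $\tp(ac/tu)$ is topologically 1-based. Theorem \ref{T: gconf} then produces an $\mathcal{M}_A$-type-definable group structure on $\mu(a/\emptyset) = \mu(a/A)$ with identity $a$, which is $\mathcal{M}$-type-definable in the original language (the parameters used lie in $\mathcal{M}^{eq}$).

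The main obstacle I expect is the purely bookkeeping step of verifying that all the dimension equalities hold without any appeal to exchange: sub-additivity alone provides only one-sided inequalities, so each equality must be closed up using an explicit interalgebraicity coming from the pre-group configuration (as in the $\dim(abctu) \leq \dim(ctu)$ step). Everything else is a direct transfer of the proof of Corollary \ref{C: 1-based gp}, with ``$\mathcal{M}$ is topologically 1-based'' replaced, at the single place where it is used (namely to conclude 1-basedness of $\tp(ac/tu)$), by ``$p$ is topologically locally modular applied to the realizations $a,c \models p$''.
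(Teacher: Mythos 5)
Your proposal is correct and follows the same route as the paper, whose proof of Corollary \ref{C: loc mod gp} is literally ``the exact same argument gives,'' referring back to Corollary \ref{C: 1-based gp}. You have simply spelled out the relativization to $A$ and the one place where ``$\mathcal M$ is topologically 1-based'' is replaced by an application of topological local modularity of $p$ to $\tp(ac/Atu)$, which is exactly what the paper intends.
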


\section{Topologizing the Group}

\textbf{Throughout this section, assume $e\in M$ with $\dim(e)=1$. Moreover, assume $(G_0,\cdot)$ is an $\mathcal M$-type-definable group with universe $\mu(e)$ and identity element $e$.} 

Our goal will be to replace the group structure on $\mu(e)$ with a suitable type-definable group over a small set in $\mathcal M$. A key feature will be that the group we build is a topological group with the topology inherited from the given t-minimal topology $\tau$.

\subsection{Step 1: A Type-Definable Group in $\mathcal M$}

First we build a type-definable supergroup of $G_0$, defined over a small set in $\mathcal M$:

\begin{lemma}\label{L: open supergroup}
    Let $A$ be any parameter set such that $\dim(e/A)=1$. Then there are a countable set $B$, and a $B$-type-definable group $(G,\cdot)$ with $G_0\leq G$ so that the following hold:
    \begin{enumerate}
        \item $G$ is an open subset of $M$.
        \item Every element of $G$ realizes $\tp(e/A)$.
    \end{enumerate}
\end{lemma}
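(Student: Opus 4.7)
The plan is to re-present the $\mathcal M$-type-definable group $(G_0,\cdot)$ itself as a $B$-type-definable group over a countable $B\subseteq\mathcal M^{eq}$, by extracting the multiplication over a finite parameter tuple via compactness and using the hypothesis $\dim(e/A)=1$ to express $\mu(e)$ over countable parameters.

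First, by compactness in $\mathcal N$, I would extract a formula $\phi(x,y,z;\bar m)$ over a finite tuple $\bar m\in\mathcal M^{eq}$ such that $\phi$ defines the graph of $\cdot$ on $\mu(e)^2$ and, on some $\bar m$-definable open set $U\times U\supseteq\mu(e)^2$, defines a single-valued partial function $\ast$. The key input is that the partial type defining $\cdot$ over $\mathcal M$ entails uniqueness of the output on $\mu(e)^2$, and this uniqueness consequence compactifies to an $\bar m$-definable open superset of $\mu(e)^2$; Lemma \ref{L: mu characterization} (placing $\mu(e)$ inside every $\mathcal M$-definable open neighborhood of $e$) supplies the needed consistency. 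A further compactness argument, shrinking $U$ and enlarging $\bar m$ finitely, arranges that $(U,\ast)$ is a local group chunk with $e$ as a two-sided identity and inverses in $U$.

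Next, I would pick a countable $A_0\subseteq A$ witnessing $\dim(e/A_0)=1$ (which exists by compactness, since any drop in dimension is witnessed by a single formula over finitely many parameters from $A$), and set $B=A_0\cup\bar m$. By Lemma \ref{L: weakly ind} we have $\mu(e/A_0)=\mu(e/A)=\mu(e)$. Define $G:=\mu(e/B)$, equipped with the multiplication induced by $\phi|_{G\times G}$. As a set $G=\mu(e)=G_0$, so $G_0\le G$ holds and the group operation on $G$ agrees with $\cdot$ on $G_0$.

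Condition (1) holds because $G$ is a small intersection of $B$-definable open neighborhoods of $e$, hence open by Fact \ref{F: int of opens}. Condition (2) is immediate from Lemma \ref{L: mu characterization}(3): every element of $G=\mu(e/A)$ realizes $\tp(e/A)$. The main obstacle is the compactness step in the first paragraph, namely verifying that a single $\bar m$-definable formula $\phi$ suffices to capture $\cdot$ uniformly on $\mu(e)^2$ and that $G=\mu(e/B)$ is genuinely $B$-type-definable rather than merely $\mathcal M$-type-definable. This should follow because, once $\mu(e/B)=\mu(e)$ by Lemma \ref{L: weakly ind}, the $B$-definable open neighborhoods of $e$ form a cofinal subfamily of the $\mathcal M$-definable ones at the level of $d$-approximations, allowing the type-definition to be rewritten over $B$.
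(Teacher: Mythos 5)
Your proposal contains a genuine gap. You set $G := \mu(e/B) = \mu(e) = G_0$ and assert that this set is $B$-type-definable for a countable set $B$. That is false in general, and the argument you offer for it (that the $B$-definable $d$-approximations of $\tp(e/B)$ at $e$ form a cofinal subfamily of the $\mathcal M$-definable ones) does not follow from anything in the paper. When $\dim(e/B)=1$, a $d$-approximation of $\tp(e/B)$ at $e$ is simply a definable set containing an open neighborhood of $e$; so your cofinality claim amounts to asserting that $e$ has a neighborhood basis consisting of $B$-definable sets for a \emph{fixed} countable $B$. The independent neighborhood property gives nothing of the sort: it produces, for each individual neighborhood, \emph{some} parameter $t$ (depending on the neighborhood) that keeps the dimension up, but it provides no single small parameter set over which a cofinal family of neighborhoods is definable. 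In a structure where the topology at $e$ is not first-countable relative to any countable parameter set, $\mu(e)$ (the intersection of \emph{all} $\mathcal M$-definable open neighborhoods of $e$) is strictly smaller than the intersection of all $B$-definable open neighborhoods for any countable $B$, so $\mu(e)$ is $M$-type-definable but not $B$-type-definable.

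The paper avoids this by \emph{not} trying to make $G_0 = \mu(e)$ small-type-definable. Instead it builds a strict supergroup: one chooses an $\mathcal M$-definable open $U_0 \ni e$ all of whose elements realize $\tp(e/A)$, then, using the definability of the operation and inverse and compactness against $\mu(e)$, an $\mathcal M$-definable open $U_1 \subseteq U_0$ on which the universal group axioms hold, and then inductively a countable decreasing chain $U_1 \supseteq U_2 \supseteq \cdots$ of $\mathcal M$-definable open neighborhoods of $e$ with $m(U_{i+1}^2), i(U_{i+1}) \subseteq U_i$. The group is $G := \bigcap_i U_i$: it is open by Fact~\ref{F: int of opens}, closed under the operations by construction, it contains $\mu(e) = G_0$ (since each $U_i$ is an $\mathcal M$-definable open neighborhood of $e$), and it is type-definable over the countable set of parameters used to define the $U_i$. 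The key insight your proposal misses is that one must be willing to enlarge $G_0$, not reparametrize it.
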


\begin{proof}
    Since $\dim(e/A)=1$, every formula in $\tp(e/A)$ holds on an open neighborhood of $e$. So by Fact \ref{F: int of opens}, there is an open neighborhood $U_0$ of $e$ all of whose elements realize $\tp(e/A)$. Shrinking $U_0$ if necessary, we may assume it is $\mathcal M$-definable.

    Since $G$ is type-definable over $\mathcal M$, its operation and inverse are given by (restrictions of) $\mathcal M$-definable functions, say $(x,y)\models m(x,y)$ and $x\mapsto i(x)$.

    Now recall that $G_0=\mu(e)$ is the intersection of all $\mathcal M$-definable open neighborhoods of $e$. By compactness, there is such a neighborhood $U_1\subset U_0$ satisfying the universal group axioms. That is:
    \begin{itemize}
        \item If $a,b,c\in U_1$ then $m(m(a,b),c)$ and $m(a,m(b,c))$ are both defined and equal.
        \item If $a\in U_1$ then $m(a,e)=m(e,a)=a$.
        \item If $a\in U_1$ then $i(a)$ is defined and $m(a,i(a))=m(i(a),a)=e$.
    \end{itemize}
    We now build a decreasing sequence of definable open neighborhoods $U_i\subset U_1$ (for $i\geq 2$) whose intersection is closed under $m$ and $i$, and so is a group. To do this, it suffices to check:
    \begin{claim}
        Let $W$ be any $\mathcal M$-definable open neighborhood of $e$. Then there is an $\mathcal M$-definable open neighborhood $V\subset U_1$ of $e$ such that $m(V^2),i(V)\subset W$.
    \end{claim}
    \begin{proof}
        Since $W$ is $\mathcal M$-definable, we have $\mu(e)\subset W$, and thus all products and inverses of elements of $\mu(e)$ belong to $W$. By compactness, the same holds after replacing $\mu(e)$ with some $\mathcal M$-definable neighborhood of $e$.
    \end{proof}
    Now if we have constructed all $U_i$, then clearly $G=\bigcap U_i$ is a group with the operations $m$ and $i$. Thus $G_0$ is a subgroup. Since $G$ is an intersection of countably many definable open sets, it is open (by Fact \ref{F: int of opens}). Finally, $G$ is defined over a countable set in $\mathcal M$ (namely, the parameters used to define each $U_i$).
    \end{proof}

    \subsection{Step 2: A Locally Topological Group}

    Recall that a version of generic continuity holds in our setting (see Lemma \ref{L: mu generic cont}). It follows that the group $G$ built above is a `generically topological group': the operation and inverse are almost everywhere continuous. In \cite{MarikovaGps}, Marikova initiated a general method for turning a `generically topological' group into a genuinely topological group. We recall one version of Marikova's method, following \cite{CasHasTconv}:

    \begin{definition}\label{D: G_d}
        Let $G$ be a group, and $d\in G$. By $G_d$, we mean the group on underlying set $G$ with operation $(x,y)\mapsto xd^{-1}y$ and inverse $x\mapsto dx^{-1}d$. 
    \end{definition}

    The group $G_d$ is obtained from $G$ by a relabelling, translating every element by $d$. Thus $d$ is the identity of $G_d$.

    \begin{definition}
        Let $G$ be a group whose underlying set is also a topological space.
        \begin{enumerate}
            \item We say $G$ is \textit{generically topological} if there are dense open sets $U\subset G^2$ and $V\subset G$ such that $(x,y)\mapsto xy$ and $x\mapsto x^{1}$ are continuous on $U$ and $V$, respectively.
            \item We say $G$ is \textit{locally topological} if there is an open neighborhood $U\subset G$ of the identity such that $(x,y)\mapsto xy$ and $x\mapsto x^{-1}$ are continuous on $U^2$ and $U$, respectively.
        \end{enumerate}
    \end{definition}
The following is from \cite[\S 7]{CasHasTconv}:
    \begin{fact}\label{F: marikova}
        Let $G$ be a generically topological group. Then there is a dense open set $W$ such that $G_d$ is locally topological for all $d\in W$. 
    \end{fact}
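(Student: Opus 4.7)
The plan is to exploit how the operations of $G_d$ factor through those of $G$. Multiplication $(x,y)\mapsto xd^{-1}y$ in $G_d$ decomposes as right translation by $d^{-1}$ followed by multiplication in $G$, and inversion $x\mapsto dx^{-1}d$ decomposes as inversion in $G$ sandwiched between left and right translations by $d$. Each factor is continuous on an open set controlled by the dense open $U$ (where $G$-multiplication is continuous) and $V$ (where $G$-inversion is continuous). So $G_d$ will be locally topological at its new identity $d$ provided that enough of these intermediate maps happen to be continuous at the correct base points.

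Concretely, I would take $W$ to be the set of $d\in V$ such that the points $(d,d^{-1})$, $(d^{-1},d)$, $(e,d)$, and $(d,e)$ all lie in $U$. For such a $d$, openness of $U$ and $V$ lets me pick a single open neighborhood $N\ni d$ (by intersecting finitely many neighborhoods witnessing each required containment) on which the relevant factorizations land inside $U$ or $V$ at every stage. A short diagram chase along the two factorizations above then yields continuity of the $G_d$-multiplication on $N\times N$ and of the $G_d$-inversion on $N$.

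The bulk of the work is showing that $W$ itself is dense and open. Openness is routine: each defining condition such as ``$(d,d^{-1})\in U$'' pulls back an open set under a map that is continuous on $V$ (using that inversion is continuous there). Density is the main obstacle: one must argue that for a dense set of $d$ the point $(d,d^{-1})$, and similarly the other auxiliary points, fall inside $U$, and that finitely many such density conditions survive intersection. In a purely topological setting this would need a Baire-category assumption, but in the intended t-minimal context one rephrases ``$U$ dense'' as ``$G^2\setminus U$ has dimension strictly less than $2\dim G$,'' so that generic slices have small complement and finite intersections of dense opens remain dense. This dimension-theoretic input is where I expect the real content of the argument to sit.
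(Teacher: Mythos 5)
The paper does not prove Fact~\ref{F: marikova}; it is quoted from \cite[\S 7]{CasHasTconv}, so there is no in-paper argument to compare your proposal against. Taking it on its own terms: the factorization strategy is the natural starting point and your openness argument for $W$ is fine, but the density of $W$ is a genuine gap, and the dimension-theoretic fallback you invoke does not close it. The culprit is the pair of conditions $(e,d)\in U$ and $(d,e)\in U$, which refer to the fixed slices $\{e\}\times G$ and $G\times\{e\}$. A dense open $U\subseteq G^2$ gives no control whatsoever over a single fixed slice: the complement of $U$ can contain all of $\{e\}\times G$ while remaining nowhere dense, and since $\{e\}\times G$ has dimension $\dim G<2\dim G$, the inequality $\dim(G^2\setminus U)<2\dim G$ is perfectly compatible with $\{e\}\times G\subseteq G^2\setminus U$. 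The ``generic slices of a low-dimensional set are low-dimensional'' principle is a statement about a generic parameter $a$, not about the specific non-generic $a=e$ that your definition of $W$ requires. The conditions $(d,d^{-1})\in U$ and $(d^{-1},d)\in U$ have the same defect: they ask $U$ to meet the fixed definable curve $d\mapsto(d,d^{-1})$, again a $\dim G$-dimensional subset of $G^2$ that a dense open set need not meet on a dense set of parameters. So the $W$ you propose may simply be empty.

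The structural reason the approach stalls is that any bracketing of $(x,y)\mapsto xd^{-1}y$ into binary products, evaluated at the base point $(x,y)=(d,d)$, produces intermediate pairs lying on thin curves in $G^2$ (through $e$, or through $d^2$, $d^{-1}$, etc., depending on the bracketing), and density of $U$ says nothing about these curves. A correct proof therefore needs an input beyond density---for example the definability machinery of the cited reference, such as treating left and right translations by generic elements as definable bijections that are generically local homeomorphisms, or otherwise avoiding tracking continuity along fixed slices. As written, your proposal does not establish the Fact.
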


We also need the following reinterpretation of generic continuity:

\begin{lemma}\label{L: open dense continuity}
    Let $U\subset M^n$ be definable and open, and let $f:U\rightarrow M$ be definable. Then $f$ is continuous on a dense open set. 
    \end{lemma}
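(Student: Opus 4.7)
The plan is to deduce this directly from Corollary \ref{C: generic continuity}, using the fact that any open subset of $M^n$ is a very weak $n$-cell.

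First, fix a parameter set $A$ over which $f$ and $U$ are definable. Define
\[
W \;=\; \bigcup\{V \subseteq U : V \text{ is open and } f|_V \text{ is continuous}\}.
\]
Then $W$ is open by construction (it is a union of opens), and $f|_W$ is continuous because continuity is a local property: for any $a \in W$, a witnessing open $V \ni a$ with $f|_V$ continuous shows $f$ is continuous at $a$ as a function on $W$. So the only issue is to prove that $W$ is dense in $U$.

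For density, I would take an arbitrary non-empty open $V_0 \subseteq U$ and show $V_0 \cap W \neq \emptyset$. Choose any $B \supseteq A$ over which $V_0$ is definable. Since $V_0$ is open and non-empty in $M^n$, we have $\dim(V_0) = n$, so there is some $a \in V_0$ with $\dim(a/B) = n$, and hence also $\dim(a/A) = n$. Since $U$ is open in $M^n$, the identity map $U \to M^n$ witnesses that $U$ is a very weak $n$-cell (as noted in the paragraph preceding Corollary \ref{C: generic continuity}). Applying that corollary to the $A$-definable function $f : U \to M$ at the point $a$, we obtain an open neighborhood $N$ of $a$ in $U$ on which $f$ is continuous. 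Replacing $N$ by $N \cap V_0$ (still open, still a neighborhood of $a$, and restriction preserves continuity), we get an open set $N \cap V_0 \subseteq V_0$ on which $f$ is continuous, whence $N \cap V_0 \subseteq W \cap V_0$. This proves $W$ is dense.

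There is no real obstacle; the only subtle point to check is that the conclusion of Corollary \ref{C: generic continuity} really is ``$f$ is continuous on an open neighborhood of $a$'' rather than the weaker ``$f$ is continuous at $a$''. Inspection of its proof confirms this: $f$ factors on $X''$ as a composition of two continuous maps $X'' \to \Gamma'' \to Y$, so $f|_{X''}$ is continuous as a function on the whole open set $X''$. Thus the argument above goes through, and $W$ is the desired dense open set on which $f$ is continuous.
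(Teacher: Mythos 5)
Your proof is correct and takes essentially the same route as the paper's: both reduce density to Corollary \ref{C: generic continuity} by picking a point of maximal dimension over the parameters in an arbitrary (basic, hence definable) open subset of $U$. The only cosmetic difference is that the paper argues by contradiction (if the set of continuity points were not dense there would be a definable open set avoiding it), while you argue directly; also note that your ``arbitrary non-empty open $V_0$'' should be taken to be a basic open set so that it is in fact definable, but this is harmless since it suffices to meet every basic open.
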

    \begin{proof}
        Let $U'$ be the set of $u\in U$ so that $f$ is continuous on a neighborhood of $u$. We show that $U'$ is dense. If not, there is a definable open $V\subset U$ disjoint from $U'$. Let $A$ be a set so that $f$ and $V$ are $A$-definable. Since $V$ is open, there is $v\in V$ with $\dim(v/A)=n$. Then by generic continuity, Corollary \ref{C: generic continuity}, $f$ is continuous on a neighborhood of $v$, a contradiction.
    \end{proof}

    We conclude:

    \begin{lemma}\label{L: loc top gp}
        Let $G$ be a type-definable group over a small set $A$, whose universe is an open subset of $M$. Then for any $d\in G-\acl(A)$, the group $G_d$ is locally topological with the topology inherited from $M$.
    \end{lemma}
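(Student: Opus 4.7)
The plan is to verify that $G$ is a generically topological group, apply Marikova's reduction (Fact~\ref{F: marikova}) to obtain a dense open set $W \subset G$ of good basepoints, and then transfer the conclusion from basepoints in $W$ to all $d \in G - \acl(A)$ by strong homogeneity.

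First, let $B \supset A$ be a countable parameter set over which the type-definable group $G$, together with the (relatively) definable operation $m$ and inverse $i$, is all available; recall that the operation and inverse of a type-definable group extend by compactness to $\mathcal M$-definable functions on $A$-definable open supersets of $G \times G$ and $G$ respectively. By Lemma~\ref{L: open dense continuity}, $m$ is continuous on a dense open $B$-definable set $C_m \subseteq G \times G$, and $i$ is continuous on a dense open $B$-definable set $C_i \subseteq G$. This exhibits $G$ as a generically topological group with the subspace topology inherited from $M$.

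Next, apply Fact~\ref{F: marikova} to obtain a dense open $W \subseteq G$ such that $G_d$ is locally topological for every $d \in W$. Tracing through the construction in \cite[\S 7]{CasHasTconv}, the set $W$ is built from $C_m$ and $C_i$ via Boolean combinations and translation-type operations, and may therefore be taken to be $B$-definable. Since $W$ is dense open in $G$ and $G$ is open in $M$, the complement $G - W$ is a $B$-definable subset of $M$ with empty interior; by t-minimality it is finite, hence $G - W \subseteq \acl(B)$.

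Finally, fix any $d \in G - \acl(A)$, so $\dim(d/A) = 1$. Since $B$ is countable, $\acl(B) \cap M$ is countable, while the non-algebraic type $\tp(d/A)$ has (by saturation) uncountably many realizations in $M$; choose $d' \models \tp(d/A)$ with $d' \notin \acl(B)$, so $d' \in W$ and $G_{d'}$ is locally topological. By strong homogeneity there is an automorphism $\sigma$ of $\mathcal M$ fixing $A$ pointwise with $\sigma(d) = d'$. Since $\tau$ is $\emptyset$-definable, $\sigma$ is a homeomorphism of $M$; and since $G$ and $\cdot$ are $A$-type-definable, $\sigma$ restricts to a bijection $G \to G$ that is simultaneously a group automorphism of $(G, \cdot)$ and a homeomorphism. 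Because $\sigma(x d^{-1} y) = \sigma(x) d'^{-1} \sigma(y)$ and similarly for the $G_d$-inverse, pulling back an open neighborhood of $d'$ witnessing that $G_{d'}$ is locally topological yields one witnessing that $G_d$ is locally topological. This completes the proof.

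The main obstacle I expect lies in step two, namely the $B$-definability of the set $W$. Fact~\ref{F: marikova} as quoted only supplies an abstract dense open $W$; without definability data we cannot ensure that every dimension-$1$ type over $A$ has a realization outside $G - W$. The argument therefore hinges on verifying from the construction in \cite[\S 7]{CasHasTconv} that the set of good basepoints is definable in terms of the continuity sets $C_m$ and $C_i$, after which the t-minimal dimension theory does the rest.
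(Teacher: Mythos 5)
Your proposal follows essentially the same outline as the paper (exhibit $G$ as generically topological, invoke Fact~\ref{F: marikova}, then transfer from the good basepoints to all of $G-\acl(A)$ using t-minimality), but it stops short of closing the gap you correctly identify at the end: the $B$-definability of the set $W$ produced by Fact~\ref{F: marikova}. As you note, the fact as quoted gives only an abstract dense open set, and without definability you cannot conclude $G - W$ is finite, so the homogeneity argument in the last paragraph has nothing to grab onto. Inspecting Marikova's construction to extract definability is possible in principle but is more work than needed.

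The paper's proof sidesteps the issue with a small but essential trick you are missing. Rather than try to show $W$ is definable, one defines directly
\[
Y \;=\; \bigl\{\, d \in X : (x,y)\mapsto x d^{-1} y \text{ and } x \mapsto d x^{-1} d \text{ are defined and continuous on neighborhoods of } (d,d) \text{ and } d \,\bigr\},
\]
where $X \supset G$ is an $A$-definable set on which the group operations extend. The condition defining $Y$ is first-order over $A$ (local continuity is expressible using the $\emptyset$-definable basis of $\tau$), so $Y$ is $A$-definable; and $Y$ is precisely the set of $d$ for which $G_d$ is locally topological. One then observes $W \subseteq Y$, so $Y$ is dense in $G$; hence $X - Y$ is an $A$-definable set with empty interior in $G$. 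If $d \in G - \acl(A)$ and $d \notin Y$, then by t-minimality the $A$-definable set $X - Y$ containing $d$ would contain a neighborhood of $d$ (since $d \notin \acl(A)$), contradicting density of $W \subseteq Y$. So $d \in Y$, which is exactly the conclusion. Note also that once you have the definable set $Y$, the strong-homogeneity transfer in your final step becomes superfluous. I encourage you to rewrite your argument replacing the attempted definability of $W$ with the definability of $Y$.
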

    \begin{proof}
        The operation and inverse of $G$ are given by $A$-definable functions, say $m(x,y)$ and $i(x)$. By compactness, there is an $A$-definable $X\supset G$ so that $m$ and $i$ are defined on elements of $X$. By Lemma \ref{L: open dense continuity}, $m$ and $i$ are continuous on dense open subsets of $X^2$ and $X$, respectively. But $G$ is open in $X$, so it follows that $m$ and $i$ are also continuous on dense open subsets of $G^2$ and $G$, respectively. Thus $G$ is generically topological. By Fact \ref{F: marikova}, there is a dense open set $W\subset G$ so that $G_d$ is locally topological for all $d\in W$. Now let $Y$ be the set of all $d\in X$ so that the maps $(x,y)\mapsto xd^{-1}y$ and $x\mapsto dx^{-1}d$ defined and continuous on neighborhoods of $(d,d)$ and $d$, respectively. Then $Y$ contains $W$ and is $A$-definable.

        Now let $d\in G-\acl(A)$. If $G_d$ is not locally topological, then $d\notin Y$, and thus by t-minimality, $X-Y$ contains a neighborhood of $d$. Since $G$ is open, $X-Y$ contains a neighborhood of $d$ in $G$. This contradicts that $W\subset Y$ and $W$ is dense.
    \end{proof}

    \subsection{Step 3: A Topological Group}

    Finally, we show that from a locally topological group, one can always extract a genuinely topological group:

    \begin{lemma}\label{L: top gp}
        Let $G$ be a locally topological type-definable group whose underlying set is an open subset of $G$. Then there is a type-definable topological open subgroup of $G$ defined over a countable set.
    \end{lemma}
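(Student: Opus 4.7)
The plan is to build $H$ as a countable intersection of definable basic open neighborhoods of the identity $e$, forming a nested chain closed under multiplication and inversion ``one step at a time.'' Let $U$ be the open neighborhood of $e$ on which multiplication $m$ and inversion $i$ are continuous, as provided by local topology.

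First, I would fix $V_0$ to be a definable basic open neighborhood of $e$ contained in $U$, which exists because $U$ is open and we have assumed a $\emptyset$-definable basis for the topology on $M$. Then, inductively, given a definable basic open neighborhood $V_n$ of $e$ with $V_n \subseteq U$, I would produce a definable basic open neighborhood $V_{n+1} \subseteq V_n$ of $e$ satisfying $V_{n+1} \cdot V_{n+1} \subseteq V_n$ and $V_{n+1}^{-1} \subseteq V_n$. The existence of such a $V_{n+1}$ follows from the continuity hypothesis: the preimage $m^{-1}(V_n) \cap U^2$ is open in $U^2$ and contains $(e,e)$, so it contains a basic open rectangle $W_1 \times W_1$; similarly, continuity of $i$ at $e$ yields a basic open $W_2 \ni e$ with $W_2^{-1} \subseteq V_n$; then I take $V_{n+1}$ to be any definable basic open neighborhood of $e$ contained in $W_1 \cap W_2 \cap V_n$, which exists again by the $\emptyset$-definable basis.

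Next, I would set $H := \bigcap_n V_n$ and verify the four required properties. Closure under multiplication follows from the chain condition: if $a, b \in H$, then $a, b \in V_{n+1}$ for every $n$, so $ab \in V_n$ for every $n$, hence $ab \in H$; closure under inversion is analogous, and trivially $e \in H$. Since each $V_n \subseteq U \subseteq G$, we have $H \subseteq G$, so $H$ is a subgroup. $H$ is open in $M$ (and hence in $G$) by Fact \ref{F: int of opens}, as a countable intersection of open sets. It is type-definable over the countable parameter set used to define the $V_n$'s collectively. Finally, $H$ is a topological group with the subspace topology: since $H \subseteq U$, multiplication $H \times H \to H$ and inversion $H \to H$ are restrictions of maps continuous on $U^2$ and $U$, and their images land in $H$ by closure.

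The only step requiring real care is the inductive construction of $V_{n+1}$, where we must simultaneously ensure that $V_{n+1}$ is definable (basic open), contained in $V_n$, and shrunk enough that both $m$ and $i$ send it into $V_n$. This is precisely the point where the $\emptyset$-definable basis of the topology must be combined with the continuity assumption on $U$; everything else in the argument — closure under the group operations, openness via Fact \ref{F: int of opens}, type-definability, and continuity of the inherited group operations on $H$ — is then formal bookkeeping.
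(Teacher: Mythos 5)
Your proof is correct and takes essentially the same approach as the paper's: construct a decreasing chain of definable open neighborhoods of $e$, each shrunk (using continuity on the local-topology neighborhood $U$ together with the $\emptyset$-definable basis) so that its product and inverse land in the previous one, and then take the countable intersection. The paper's write-up is terser, but the inductive step and the final verification you spell out are the same argument.
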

\begin{proof}
    By Fact \ref{F: int of opens}, there is a definable neighborhood $U_0$ of the identity contained in $G$. Since $G$ is locally topological, there is a definable open neighborhood $U_1\subset U_0$ of the identity such that the operation and inverse are continuous on elements of $U_1$. By continuity, there is a definable open neighborhood $U_2\subset U_1$ over the identity such that $U_2\cdot U_2$ and $U_2^{-1}$ are contained in $U_1$. Continue in this manner, constructing definable open neighborhoods $U_2\supset U_3\supset U_4\dots$ of the identity such that $U_{i+1}\cdot U_{i+t}\subset U_i$ and $U_{i+1}^{-1}\subset U_i$. Then $\bigcap U_i$ satisfies the requirements of the lemma.
\end{proof}

We conclude the main result of this subsection. Recall that we have a fixed $\mathcal M$-type-definable group structure $G_0$ on $\mu(e)$, with $e$ as the identity. Now we show:

\begin{proposition}\label{P: top gp}
    Let $A$ be any parameter set so that $\dim(e/A)=1$. Then there are a countable set $B$, and a $B$-type-definable group $G$, such that:
    \begin{enumerate}
        \item $e$ is the identity of $G$.
        \item $G$ is open in $M$.
        \item $G$ is a topological group with the topology inherited from $M$.
        \item Every element of $G$ realizes $\tp(e/A)$.
    \end{enumerate}
\end{proposition}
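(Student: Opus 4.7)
The plan is to chain the three preceding lemmas of this section, with one modification to the first. The main obstacle to anticipate is that Lemma \ref{L: loc top gp} produces a locally topological group $G_d$ whose identity is a \emph{generic} element $d$, whereas the proposition requires the identity to be the specified point $e$. My approach addresses this by arranging, during the first step, that $e$ itself is generic over the parameter set defining the ambient group, so that we may take $d = e$ throughout.

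More precisely, I would first strengthen the conclusion of Lemma \ref{L: open supergroup} to produce not just some countable $B_0$, but a countable $B_0$ satisfying $\dim(e/B_0) = 1$. This is a light modification of the existing proof, using the independent neighborhood property. Specifically, throughout the inductive construction of the chain $U_0 \supset U_1 \supset \cdots$ of $\mathcal M$-definable neighborhoods of $e$, each $U_i$ can be further shrunk via the INP to a neighborhood defined over a parameter $t_i$ satisfying $\dim(e/A t_0 \cdots t_i) = \dim(e/A) = 1$; all the required closure and containment properties of $U_i$ are preserved under such shrinking. The resulting $B_0 = A \cup \{t_i : i \geq 0\}$ is countable, and since every finite subset of $B_0$ preserves the dimension of $e$, so does $B_0$ itself (by the infimum characterization of dimension over small parameter sets). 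This gives a $B_0$-type-definable open supergroup $G_1 \supseteq G_0$ whose elements realize $\tp(e/A)$, with the additional property $\dim(e/B_0) = 1$.

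With this refinement in hand, $e \in G_1 - \acl(B_0)$, so Lemma \ref{L: loc top gp} applies with $d = e$. The crucial observation is that $G_{1,e} = G_1$: the operation $x \cdot_e y = x e^{-1} y = xy$ from Definition \ref{D: G_d} collapses to the original group operation of $G_1$ when $d$ equals the identity, and likewise for the inverse. Hence $G_1$ itself is locally topological with the topology inherited from $M$. Lemma \ref{L: top gp} now applies directly to $G_1$, producing a type-definable topological open subgroup $G \leq G_1$ defined over some countable set $B \supseteq B_0$. All four required conclusions are then immediate: the identity of $G$ agrees with that of $G_1$ and so equals $e$; $G$ is open in $M$ since it is open in $G_1$, which is open in $M$; it is a topological group with the inherited topology by the conclusion of Lemma \ref{L: top gp}; and every element of $G$ realizes $\tp(e/A)$ because $G \subseteq G_1$ and this was arranged in the (strengthened) first step.
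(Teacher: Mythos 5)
Your proposal is correct, and it takes a genuinely different route from the paper's. The paper's own proof applies Lemma \ref{L: open supergroup} as stated, picks a generic $d \in G - \acl(B)$, uses Lemma \ref{L: loc top gp} (and implicitly Lemma \ref{L: top gp}) to obtain a topological open subgroup of $G_d$ with identity $d$, and then — since $d$ and $e$ both realize $\tp(e/A)$ — pushes the whole configuration forward by an automorphism of $\mathcal M$ fixing $A$ and sending $d$ to $e$. You instead fold an application of the independent neighborhood property into the inductive construction of Lemma \ref{L: open supergroup}, shrinking each $U_i$ to be defined over a parameter $t_i$ with $\dim(e/At_0\cdots t_i)=1$ before passing to the next stage, so that $e$ itself remains generic over the resulting countable base $B_0$. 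Since $G_{1,e}=G_1$ (conjugation by the identity is trivial), Lemma \ref{L: loc top gp} then applies directly at $d=e$, and Lemma \ref{L: top gp} finishes the job with no relabeling or automorphism needed. Your argument is careful about the only subtle point, namely the order of operations in the induction (apply the Claim to the already-shrunk $V_i$, then shrink the result), and your observation that the closure properties survive shrinking because they are universal conditions on subsets is exactly right. The tradeoff: the paper keeps Lemma \ref{L: open supergroup} unchanged and pays with an automorphism and the Marikova-style relabeling $G_d$; you pay by reworking the inductive construction inside that lemma but avoid the automorphism entirely, which gives a somewhat cleaner and more self-contained derivation of the proposition and also makes transparent that the final group really is a subgroup of the one built at the first step.
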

\begin{proof}
  By Lemma \ref{L: open supergroup}, we can find $B$ and $G$ satisfying all of the given requirements except being topological. Let $d\in G-\acl(B)$. Then the group $G_d$ is $Bd$-type-definable and topological. So the pair $(G_d,Bd)$ satisfies all requirements of the proposition except that its identity element is $d$, not $e$. But $d$ and $e$ have the same type over $A$, so there is an automorphism $\sigma$ of $\mathcal M$ fixing $A$ and satisfying $\sigma(d)=e$. Now the pair $(\sigma(G),\sigma(Bd))$ satisfies the requirements of the proposition.   
\end{proof}

\subsection{Concluding}

We now give our main results at the level we have reached thus far. So, we drop the assumptions from the beginning of this section; but we retain our fixed pair $\mathcal M$ and $\mathcal N$ of t-minimal structures with independent neighborhoods.

\begin{theorem}\label{T: top gp 1-based}
    Assume $\mathcal M$ is non-trivial and 1-based. Then there are a countable parameter set $B$, and a $B$-type-definable group $(G,\cdot)$ such that:
    \begin{enumerate}
        \item $G$ is a non-empty open subset of $M$.
        \item $(G,\cdot)$ is a topological group with the subspace topology inherited from $M$.
    \end{enumerate}
\end{theorem}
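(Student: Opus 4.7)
The plan is to combine the two main results already established in this section, namely Corollary~\ref{C: 1-based gp} and Proposition~\ref{P: top gp}, in a completely straightforward manner. These two results together already contain everything needed; the theorem is essentially a bookkeeping statement about chaining them.

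First, I would invoke Corollary~\ref{C: 1-based gp}, using the non-triviality and topological 1-basedness hypotheses on $\mathcal M$, to obtain an element $e \in M$ with $\dim(e) = 1$ together with an $\mathcal M$-type-definable group structure $(G_0, \cdot)$ on the set $\mu(e)$ having identity element $e$. This puts us into the precise setup assumed at the beginning of this section (the standing assumption for Proposition~\ref{P: top gp}).

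Next, I would apply Proposition~\ref{P: top gp} with parameter set $A = \emptyset$. This is legal since $\dim(e/\emptyset) = \dim(e) = 1$ by the previous step. The proposition then directly supplies a countable parameter set $B$ and a $B$-type-definable group $(G, \cdot)$ with $e$ as its identity, such that $G$ is open in $M$ and is a topological group with the subspace topology inherited from $M$. Since $e \in G$, $G$ is in particular non-empty. These are exactly clauses (1) and (2) of the theorem.

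There is no genuine obstacle here: all of the real work has been done in the two cited results. The only thing to observe is that the group $G$ produced by Proposition~\ref{P: top gp} need not contain $G_0$ as a subgroup (we pass through the twisted group $G_d$ and an automorphism in the proof of that proposition), but the theorem does not require any such containment --- it asks only for the existence of a $B$-type-definable topological group that is open in $M$. Hence the proof reduces to a one-line citation of the previous results.
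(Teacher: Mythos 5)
Your proposal is correct and follows exactly the same route as the paper: cite Corollary~\ref{C: 1-based gp} to produce $e\in M$ with $\dim(e)=1$ and a type-definable group structure on $\mu(e)$, then apply Proposition~\ref{P: top gp} (with $A=\emptyset$). Your extra observation that $G$ need not contain $G_0$ is accurate but, as you note, harmless, since the theorem asks only for existence.
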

\begin{proof}
    By Corollary \ref{C: 1-based gp}, there are $a\in M$ with $\dim(a)=1$, and an $\mathcal M$-type-definable group structure on $\mu(a)$. Now apply Proposition \ref{P: top gp}.
\end{proof}

\begin{theorem}\label{T: top gp loc mod}
    Let $a\in M$ and $A$ a parameter set so that $\dim(a/A)=1$, and $p=\tp(a/A)$ is non-trivial and topologically locally modular. Then there are a countable set $B$, and a $B$-type-definable group $(G,\cdot)$ such that:
    \begin{enumerate}
        \item The identity element of $G$ is $a$.
        \item $G$ is a non-empty open subset of $M$, and is contained in the realizations of $p$.
        \item $(G,\cdot)$ is a topological group with the subspace topology inherited from $M$.
    \end{enumerate}
\end{theorem}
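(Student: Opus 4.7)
The plan is to follow the proof of Theorem \ref{T: top gp 1-based}, but starting from the local group-construction result Corollary \ref{C: loc mod gp} in place of its global analog Corollary \ref{C: 1-based gp}, and correspondingly adapting Proposition \ref{P: top gp} to the local setting. The first step is immediate: since $\dim(a/A)=1$ and $a\in M$ we have $\dim(a)=1$, hence $\mu(a/A)\subseteq\mu(a)$, and Corollary \ref{C: loc mod gp} produces an $\mathcal M$-type-definable group structure $(G_0,\cdot)$ on $\mu(a/A)$ with identity element $a$.

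The main work is a local version of Proposition \ref{P: top gp}: to construct a countable parameter set $B$ and a $B$-type-definable group $(G,\cdot)$ which is open in $M$, contains $G_0$, and is contained in $p(\mathcal M)$. For this I would imitate the proof of Lemma \ref{L: open supergroup}. First, by the independent case of Lemma \ref{L: germs exist} combined with Fact \ref{F: int of opens}, the set of realizations of $\tp(a/A)$ in $\mathcal M$ contains an $\mathcal M$-definable open neighborhood $U_0$ of $a$. Writing the group operation and inverse of $G_0$ as $\mathcal M$-definable partial functions $m$ and $i$, one has $m(\mu(a/A)^2)\subseteq\mu(a/A)\subseteq\mu(a)$ and $i(\mu(a/A))\subseteq\mu(a)$. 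Hence for any $\mathcal M$-definable open neighborhood $W$ of $a$, compactness yields an $\mathcal M$-definable $d$-approximation $V'$ of $\tp(a/A)$ at $a$ with $m(V'^2),i(V')\subseteq W$. Since $\dim(a/A)=1$, the germ of $V'$ at $a$ agrees with the germ at $a$ of the set of realizations of $\tp(a/A)$, and the latter contains an open neighborhood of $a$, so $V'$ itself contains an $\mathcal M$-definable open neighborhood $V$ of $a$ on which $m$ and $i$ still map into $W$. Iterating as in the proof of Lemma \ref{L: open supergroup} produces a descending sequence of $\mathcal M$-definable open neighborhoods of $a$ whose intersection $G$ is a countably $B$-type-definable group, open in $M$, with every element realizing $p$.

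The final topologization proceeds verbatim as in the proof of Proposition \ref{P: top gp}. Pick $d\in G-\acl(B)$; Lemma \ref{L: loc top gp} says the relabeled group $G_d$ is locally topological with the subspace topology, and Lemma \ref{L: top gp} yields a type-definable open subgroup $H$ of $G_d$ which is a genuine topological group with the subspace topology, defined over a countable set. Since $G\subseteq p(\mathcal M)$ we have $d\models p$, so fix an automorphism $\sigma$ of $\mathcal M$ fixing $A$ with $\sigma(d)=a$. As the topology on $\mathcal M$ is $\emptyset$-definable, $\sigma$ is a homeomorphism, so $\sigma(H)$ is a topological group with identity $a$, open in $M$, contained in $\sigma(p(\mathcal M))=p(\mathcal M)$, and defined over a countable set; this is the required $G$.

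The only real obstacle is the adaptation of Lemma \ref{L: open supergroup}: one must check that the compactness step still produces open neighborhoods of $a$ even though $G_0$ lives on the possibly strictly smaller set $\mu(a/A)\subseteq\mu(a)$. This is precisely where the hypothesis $\dim(a/A)=1$ is used, via the observation that $d$-approximations of $\tp(a/A)$ at $a$ must contain open neighborhoods of $a$ in this dimension.
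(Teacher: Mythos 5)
Your proof is correct, but it does a fair amount of unnecessary work. The key observation you missed is that $\mu(a/A)=\mu(a)$ whenever $a\in M$ and $\dim(a/A)=1$: in this case a d-approximation of $\tp(a/A)$ at $a$ is simply any definable subset of $M$ containing a neighborhood of $a$ (since $\operatorname{germ}(a/A)$ is the full germ at $a$ and $\dim$ is automatically $1$), so the intersection defining $\mu(a/A)$ coincides with $\mu(a)$. Once this identity is noted, Corollary \ref{C: loc mod gp} already furnishes a group on $\mu(a)$ with identity $a$, and the standing assumption of Section 10 is satisfied verbatim; one can simply invoke Proposition \ref{P: top gp} with the given parameter set $A$. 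That is exactly the route the paper takes. Your re-derivation of a ``local'' version of Lemma \ref{L: open supergroup} -- passing through d-approximations of $\tp(a/A)$ and then extracting open neighborhoods -- is sound, and correctly addresses the worry you raise at the end about whether compactness still produces opens; but the identity $\mu(a/A)=\mu(a)$ makes that entire adaptation collapse to the original proof of Lemma \ref{L: open supergroup}. The final topologization step (picking generic $d$, applying Lemmas \ref{L: loc top gp} and \ref{L: top gp}, then moving by an automorphism over $A$) is correct and matches the paper.
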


\begin{proof}
    By Corollary \ref{C: loc mod gp}, there is an $\mathcal M$-definable group structure on the set $\mu(a/A)$, with $a$ as the identity. Now apply Proposition \ref{P: top gp}.
\end{proof}

\subsection{Viscerality of T-minimal Groups}

Before moving on, we point out an isolated fact that falls out of our work in this section and may be of interest. Roughly, we show that \textit{every} t-minimal expansion of a group with independent neighborhoods is \textit{already} both topological and visceral, at least up to editing the topology at finitely many points. 

Recall that $\mathcal M$ is a fixed sufficiently saturated t-minimal structure with the independent neighborhood property.

\begin{theorem}\label{T: the group is visceral}
    Assume that $\mathcal M=(M,\cdot,e,...)$ is an expansion of a group. Then there is a definable topology $\tau'$ on $\mathcal M$ such that:
    \begin{enumerate}
        \item With respect to $\tau'$, $\mathcal M$ is a visceral topological group.
        \item $\tau'$ agrees with $\tau$ on a cofinite subset of $M$.
    \end{enumerate}
\end{theorem}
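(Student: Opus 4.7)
The plan is to build $\tau'$ by first identifying a locally topological group structure on $\mathcal M$ near a generic point via Lemma \ref{L: loc top gp}, then spreading this structure globally by group translation. The resulting $\tau'$ will be parameter-definable, will make $\mathcal M$ a topological group with a uniformly definable basis (hence visceral), and will agree with $\tau$ cofinitely via generic continuity of translations.

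First, I would apply Lemma \ref{L: loc top gp} to the $\emptyset$-definable group $G=(M,\cdot)$ (whose universe is trivially $\tau$-open in itself) to obtain a generic element $d\in M-\acl(\emptyset)$ such that the twisted group $G_d$ of Definition \ref{D: G_d} is locally topological with respect to $\tau$. Fix a $d$-definable $\tau$-open neighborhood $U$ of $d$ on which the $G_d$-operations $(x,y)\mapsto xd^{-1}y$ and $x\mapsto dx^{-1}d$ are $\tau$-continuous. Applying Lemma \ref{L: top gp} to $G_d$, I pass to a type-definable topological open subgroup $H\leq G_d$ with $H\subset U$, defined over a countable set $B\ni d$, whose $\tau$-restricted topology makes $H$ a topological group. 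Define $\tau'$ on $M$ by declaring $V$ open iff for every $g\in V$ there is a $\tau$-basic open $W$ with $d\in W\subset H$ such that $g\cdot_d W = gd^{-1}W\subset V$. The family $\mathcal B=\{g\cdot_d W\}_{g,W}$ is then a uniformly $B$-definable basis for $\tau'$.

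Next I would verify that $\tau'$ is a topology making $G_d$ a topological group: the topological group structure on $H$ extends to all of $G_d$ by the standard construction (a topological group from a topological open subgroup) that declares each $G_d$-coset of $H$ to be $\tau'$-open with topology inherited from $H$ by translation. Since $\mathcal M$ and $G_d$ are $d$-definably isomorphic as groups via $\phi(x)=xd^{-1}$, the topology $\tau'$ makes $(\mathcal M,\cdot)$ a topological group as well (after pulling back along $\phi$ if needed). Viscerality is immediate from the uniformly $B$-definable basis $\mathcal B$.

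The main step, and the principal obstacle, is to show that $\tau$ and $\tau'$ agree on a cofinite subset of $M$. For any $g\in M$ with $\dim(g/B)=1$, the pair $(g,d)$ is interdefinable with $(d,gd^{-1})$, so $\dim(gd^{-1})=1$ and $\dim(d/gd^{-1})=\dim(g/dg^{-1})=1$; by Corollary \ref{C: generic continuity}, the translations $L_{gd^{-1}}$ and $L_{dg^{-1}}$ are $\tau$-continuous at $d$ and $g$ respectively, making $L_{gd^{-1}}$ a local $\tau$-homeomorphism between neighborhoods of $d$ and $g$. This forces the $\tau$- and $\tau'$-neighborhood filters at $g$ to coincide. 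The property ``$\tau$ and $\tau'$ have the same neighborhood filter at $x$'' is $B$-definable and holds whenever $\dim(x/B)=1$; since any $B$-definable subset of $M$ of dimension $0$ is finite (an infinite definable subset of $M$ has dimension $1$ by t-minimality of $\tau$), the failure set is finite. T-minimality of $\tau'$ then follows: any infinite definable subset $X\subset M$ has non-empty $\tau$-interior (by t-minimality of $\tau$), coinciding cofinitely with its $\tau'$-interior; conversely, any non-empty $\tau'$-open contains an infinite set of the form $g\cdot_d W$, since $W$ is $\tau$-open and $d$ is not $\tau$-isolated. The principal difficulty lies in the delicate interplay between $H$, $G_d$, and $\mathcal M$ needed to ensure $\tau'$ is properly definable and handles the possibly non-abelian group structure correctly.
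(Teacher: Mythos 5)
Your construction of $\tau'$ and the overall scaffolding (pass to $G_d$ via Lemma~\ref{L: loc top gp}, extract a type-definable open topological subgroup $H$ via Lemma~\ref{L: top gp}, translate a basis) matches the paper. But there is a genuine gap at the step where you assert that ``the standard construction (a topological group from a topological open subgroup) that declares each $G_d$-coset of $H$ to be $\tau'$-open with topology inherited from $H$ by translation'' produces a group topology. This is not a standard construction and does not work in general. Declaring left translates of a neighborhood basis of the identity in $H$ to be a basis for $\tau'$ immediately gives continuity of left multiplication, but continuity of \emph{right} multiplication (and of inversion) requires that for every $g$, conjugation $x\mapsto gxg^{-1}$ carries some neighborhood of the identity homeomorphically onto another. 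When $H$ is not normal and the group is non-abelian this must be proved, and it is precisely the hard part. The paper's proof spends the bulk of its effort here: it invokes \cite[Lemma 2.11]{HaHaPeGps} to reduce the problem to showing conjugation is a local homeomorphism, and then establishes this through a sequence of claims (every element is a product of two generics; left/right translation by generics gives homeomorphisms between infinitesimal neighborhoods via Lemma~\ref{L: mu generic cont}; composing these). You have silently skipped all of this.

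Your route to cofinite agreement of $\tau$ and $\tau'$ is also different from the paper's and, as written, has loose ends. The paper does not argue agreement directly: it first shows $(\mathcal M,\tau')$ is a visceral topological group, then invokes Theorem~\ref{wj-indep-nbhd} from Johnson's appendix (visceral implies the independent neighborhood property) and concludes cofinite agreement immediately from the uniqueness lemma~\ref{L: unique top}. Your direct argument via generic continuity of $L_{gd^{-1}}$ requires $d$ to be generic over the parameters $B,gd^{-1}$ defining that translation map, i.e.\ $\dim(d/B g)=1$, which does not follow from $\dim(g/B)=1$ alone absent exchange; you would need additional independence bookkeeping. Even granting this, your argument for t-minimality of $\tau'$ (``any non-empty $\tau'$-open contains an infinite set of the form $g\cdot_d W$'') is essentially the direction the paper handles by translation invariance, but you have no translation invariance established until the conjugation gap is filled. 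So both the topological-group step and the cofinite-agreement step need to be reorganized along the lines the paper takes.
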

\begin{proof}
    It is enough to find $\tau'$ satisfying (1). Then by Johnson's appendix (Theorem \ref{wj-indep-nbhd}), $\mathcal M$ has the independent neighborhood property with respect to $\tau'$; therefore, by Lemma \ref{L: unique top}, $\tau'$ agrees with $\tau$ on a cofinite set.

    The rest of the proof finds $\tau'$ satisfying (1). First, let $d\in M$ be generic, and consider the group $M_d$ as in Definition \ref{D: G_d}. By Lemma \ref{L: loc top gp}, $M_d$ is locally topological with respect to $\tau$. Moreover, if we construct $\tau'$ satisfying (1) for $M_d$, the same $\tau'$ will work for $M$. Thus, we assume moving forward that $M$ is locally topological with respect to $\tau$.
    
    By Lemma \ref{L: top gp}, there is thus a type-definable open topological subgroup $H\leq M$, defined over a countable set. Absorbing parameters, we assume $H$ is $\emptyset$-definable. We then obtain a definable uniform structure on $M$ by taking translations of basic neighborhoods of $e$. Call the resulting topology $\tau'$. So $\tau'$ is translation invariant (both left and right), and agrees with $\tau$ on $H$.
    
    We will show that $(M,\tau')$ is a topological group, and $\mathcal M$ is visceral with respect to $\tau$'. We start with the first of these goals, i.e. showing $(M,\tau')$ is topological. By \cite[Lemma 2.11]{HaHaPeGps}, it is enough to show that for each $g\in M$, conjugation by $g$ is a \textit{local homeomorphism}: this means there are $\tau'$-open neighborhoods $e\in U,V\subset M$ so that $u\mapsto gug^{-1}$ induces a homeomorphism $U\rightarrow V$. Since $\tau$ and $\tau'$ agree on $H$, it is equivalent to find such $U,V$ which are $\tau$-open.
    
    Before proceeding, we collect some easy claims. Throughout, by a \textit{generic} element of $M$ over $A$, we mean an element $g$ with $\dim(g/A)=1$. 
    
    \begin{claim}\label{C: product of two generics} Every element of $M$ is a product of two generic elements.
    \end{claim}
    \begin{proof}
        Given $g\in M$, let $h$ be generic over $g$, and write $g=h(h^{-1}g)$.
        \end{proof}
    \begin{claim}\label{C: translation of mus}
        Let $g,h\in G$ with $\dim(g,h)=2$. Then the maps $x\mapsto gx$ and $x\mapsto xg$ define homeomorphisms $\mu(h)\rightarrow\mu(gh)$ and $\mu(h)\rightarrow\mu(hg)$. \end{claim}
        \begin{proof} This follows by Lemma \ref{L: mu generic cont}, noting that $\dim(h/g)=\dim(h)=1$, so $\mu(h/g)=\mu(h/\emptyset)=\mu(h)$ (and similarly for $gh$).
        \end{proof}
    \begin{claim}\label{C: local translation of mus} Let $h\in H$ be generic. Then the maps $x\mapsto hx$ and $x\mapsto xh$ give homeomorphisms $\mu(e)\rightarrow\mu(h)$.
    \end{claim}
            \begin{proof}
                This follows since $H$ is topological, so these maps are homeomorphisms, and thus they preserve all $\mathcal M$-defianble open sets (see also Lemma \ref{L: mu translation}).
            \end{proof}
    \begin{claim}\label{C: local generic mu trans}
        Let $g\in M$ be generic. Then $x\mapsto gx$ and $x\mapsto xg$ each give homeomorphisms $\mu(e)\rightarrow\mu(g)$.
    \end{claim}
    \begin{proof}
        Let $h\in H$ with $\dim(g,h)=2$, and write $g=ah$ where $a=gh^{-1}$. So also $\dim(ah)=2$. By Claims \ref{C: local translation of mus} and \ref{C: translation of mus}, $x\mapsto gx=ahx$ is now a composition of translation homeomorphisms $\mu(e)\rightarrow\mu(h)\rightarrow\mu(g)$ (translating by $h$ then by $a$). The argument for $x\mapsto xg$ is similar, using $b$ so that $g=hb$.
    \end{proof}
            
    Finally, we conclude:
    \begin{claim}
        Let $g\in M$. Then conjugation by $g$ induces a homeomorphism between neighborhoods of $e$. Thus, $(M,\tau')$ is a topological group. 
    \end{claim}
    \begin{proof}
        By Claim \ref{C: product of two generics}, we may assume $g$ is generic. Now using Claim \ref{C: local generic mu trans}, write conjugation by $g$ as a composition of translation homeomorphisms $\mu(e)\rightarrow\mu(g)\rightarrow\mu(e)$, sending $x\mapsto gx\mapsto gxg^{-1}$. So conjugation by $g$ gives a homeomorphism $\mu(e)\rightarrow\mu(e)$, and thus (by compactness) also a homeomorphism between neighborhoods of $e$.
    \end{proof}

    We now turn to our second main goal mentioned above:
    \begin{claim}
        $\mathcal M$ is t-minimal, and thus visceral, with respect to $\tau'$.
    \end{claim}
    \begin{proof}
        Viscerality follows from t-minimality by definition, since $(M,\tau')$ is uniform. We show t-minimality. Recall this means showing that for definable $X\subset M$, $X$ has $\tau$'-interior if and only if it is infinite.
        
        First suppose $X$ has $\tau'$-interior. Then by translation invariance, some translation $Y$ of $X$ contains a $\tau'$-neighborhood of $e$. Since $\tau$ and $\tau'$ agree on $H$ (i.e. near $e$), $Y$ also contains a $\tau$-neighborhood of $e$. By t-minimality of $(\mathcal M,\tau)$, $Y$ is infinite -- thus so is $X$.
        
        Now suppose $X$ is infinite. Say $X$ is $A$-definable, and pick $a\in X$ with $\dim(a/A)=1$. By t-minimality of $(\mathcal M,\tau)$, $X$ contains a neighborhood of $a$, and thus contains $\mu(a)$. By Claim \ref{C: local generic mu trans}, $Y=a^{-1}X$ contains $\mu(e)$, so in particular contains a $\tau$-neighborhood of $e$. Since $\tau$ and $\tau'$ agree near $e$, $Y$ contains a $\tau'$-neighborhood of $e$. Then by translation invariance, $X$ contains a $\tau'$-neighborhood of $a$.
    \end{proof}

    We have now shown that $(\mathcal M,\tau')$ is a visceral topological group, which completes the proof of (1), and thus of the theorem.
    \end{proof}

\begin{remark} One could also prove Theorem \ref{T: the group is visceral} by quoting upcoming results of Johnson. According to private communication, he shows that any visceral expansion of a group admits a definable visceral group topology. This allows us to skip the proof that $(M,\tau')$ is a topological group above, concentrating only on the proof that $(\mathcal M,\tau')$ is t-minimal. Indeed, once we know $(\mathcal M,\tau')$ is t-minimal (and thus visceral), we obtain from Johnson a second definable visceral topology $\tau''$ with respect to which $\mathcal M$ is a topological group. (In the end, one can show $\tau''=\tau'$ using independent neighborhoods and translation invariance, so Johnson's argument really gives an alternate proof that $\tau'$ is a group topology).
\end{remark}

\section{The Structure of the Group}

\textbf{In this section, we assume that $G$ is a type-definable group on a non-empty open subset of $M$, and is a topological group with the subspace topology inherited from $M$. Absorbing parameters, we assume $G$ is $\emptyset$-type-definable. We denote the identity of $G$ by $e$.}

Our goal is to prove results analogous to the Hrushovski-Pillay classification of 1-based stable groups (\cite{HrPi87}). In a very coarse language, we will show:

\begin{enumerate}
    \item $G$ is locally linear.
    \item $G$ is locally abelian.
    \item There are no large type-definable families of germs of subgroups of $G$.
\end{enumerate}

\subsection{The Infinitesimal Subgroup}

Let $\mu=\mu(e)$ be the intersection of all $\mathcal M$-definable open neighborhoods of $e$. So $\mu$ is open and type-definable over $\mathcal M$. One easily checks:

\begin{lemma}\label{L: mu subgroup}
    $\mu$ is a subgroup of $G$.
\end{lemma}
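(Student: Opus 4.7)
The plan is to verify the three subgroup axioms for $\mu$ using only continuity of the group operation and inversion at $e$ together with elementarity.

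First, $e\in\mu$ is immediate since every neighborhood of $e$ contains $e$. For closure under the group operation, fix $a,b\in\mu$ and let $U$ be an arbitrary $\mathcal M$-definable open neighborhood of $e$ in $M$ (equivalently in $G$, since $G$ is open). Because $G$ is a topological group with the subspace topology, multiplication is continuous at $(e,e)$; this is a property of $\mathcal M$ that can be expressed using the $\emptyset$-definable basis for the topology together with the $\emptyset$-type-definition of the group operation. More precisely, the assertion ``there exists an $\mathcal M$-definable basic open neighborhood $V$ of $e$ with $V\cdot V\subseteq U$'' is a first-order consequence of the continuity of multiplication at $(e,e)$ in $\mathcal M$, so such a $V$ exists in $\mathcal M$. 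By elementarity this $V$ persists as an $\mathcal M$-definable open neighborhood of $e$ when interpreted in $\mathcal N$, and still satisfies $V\cdot V\subseteq U$ there. Since $a,b\in\mu\subseteq V$, we conclude $ab\in V\cdot V\subseteq U$; as $U$ was arbitrary, $ab\in\mu$.

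The argument for closure under inversion is entirely analogous: using continuity of $x\mapsto x^{-1}$ at $e$ in $\mathcal M$, for every $\mathcal M$-definable open $U\ni e$ there is an $\mathcal M$-definable open $V\ni e$ with $V^{-1}\subseteq U$, and the same elementarity transfer to $\mathcal N$ gives $a^{-1}\in U$ for every $a\in\mu$, hence $a^{-1}\in\mu$.

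There is essentially no obstacle beyond bookkeeping. The only point requiring care is the two-model setup: one needs to recognize that the relevant continuity statements at $(e,e)$ (resp.\ $e$) are first-order over $\emptyset$ (since $G$ is $\emptyset$-type-definable, the operation and inverse are $\emptyset$-definable via compactness in an appropriate $\mathcal M$-definable ambient set, and the basis for $\tau$ is $\emptyset$-definable), so that the neighborhoods $V$ produced inside $\mathcal M$ can be invoked as $\mathcal M$-definable open neighborhoods of $e$ in $\mathcal N$ to interact with $\mu$.
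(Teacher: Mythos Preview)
Your proof is correct and follows essentially the same approach as the paper's: both use continuity of the group operations at the identity to find, for each $\mathcal M$-definable neighborhood $U$ of $e$, a smaller $\mathcal M$-definable neighborhood $V$ whose image under the relevant operation lands in $U$, and then use $\mu\subseteq V$. The only cosmetic difference is that the paper handles product and inverse simultaneously via the map $(x,y)\mapsto xy^{-1}$, whereas you treat them separately.
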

\begin{proof}
    Let $a,b\in\mu$. We show $ab^{-1}\in\mu$. Indeed, let $V$ be any $\mathcal M$-definable open neighborhood of $e$. Since $G$ is topological, the set $\{(x,y):xy^{-1}\in V\}$ is an $\mathcal M$-definable open neighborhood of $e^2$, so contains a set $U^2$ where $U$ is an $\mathcal M$-definable open neighborhood of $e$. By definition of $\mu$ we get $a,b\in U$, and so $ab^{-1}\in V$. Since $V$ was arbitrary, $ab^{-1}\in\mu$. 
\end{proof}

\begin{lemma}\label{L: mu translation}
    Let $a\in G$. Then $\mu(a)=a\cdot\mu=\mu\cdot a$.
\end{lemma}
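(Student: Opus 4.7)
The plan is to prove both containments $a\cdot\mu\subseteq \mu(a)$ and $\mu(a)\subseteq a\cdot\mu$ directly from the definitions, using that $G$ is a topological group and that $a$ is available as a parameter when forming $\mathcal M$-definable neighborhoods. (The statement for $\mu\cdot a$ is proved identically by swapping left and right translation.) Before doing anything, I would note that since $G$ is open in $M$, open neighborhoods of $a$ inside $G$ are the same as open neighborhoods of $a$ in $M$ that happen to lie in $G$, so working inside $G$ is harmless for computing $\mu(a)$. Also, because $\mu(a)$ is contained in any $\mathcal M$-definable neighborhood of $a$ and $G$ itself is such a neighborhood (as an open subset of $M$ defined over $\emptyset$), we have $\mu(a)\subseteq G$, which lets us freely apply $a^{-1}$ to its elements.

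For $a\cdot\mu\subseteq\mu(a)$, I would fix $b\in\mu$ and an arbitrary $\mathcal M$-definable open neighborhood $V$ of $a$ in $M$, and then show $ab\in V$. Since $G$ is a topological group, left multiplication by $a^{-1}$ is a homeomorphism of $G$, so $U:=a^{-1}V\cap G$ is open in $G$, hence open in $M$ (as $G$ is open in $M$), and it contains $e$. Moreover $U$ is definable over $a$ together with the parameters of $V$ and of $G$, so $U$ is an $\mathcal M$-definable open neighborhood of $e$. By definition of $\mu$ this forces $b\in U$, whence $ab\in V$. Varying $V$ gives $ab\in\mu(a)$.

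For $\mu(a)\subseteq a\cdot\mu$, I would take $c\in\mu(a)$ and show that $b:=a^{-1}c\in\mu$; this makes sense because $\mu(a)\subseteq G$ as noted above. Given any $\mathcal M$-definable open neighborhood $U$ of $e$, the set $aU\cap G$ is (by the same topological-group argument applied to left multiplication by $a$) an $\mathcal M$-definable open neighborhood of $a$, so by definition of $\mu(a)$ we have $c\in aU$, i.e.\ $b=a^{-1}c\in U$. Varying $U$ yields $b\in\mu$, i.e.\ $c\in a\cdot\mu$.

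No step here looks like a genuine obstacle; the only things to be a little careful about are (i) that the neighborhoods produced by translation use $a$ as a parameter, which is allowed since $a\in M$ and parameter sets may be any small subset of $\mathcal M^{eq}$, and (ii) that one stays inside $G$ when translating, which is automatic from $G$ being open in $M$ and a subgroup. The right-sided equality $\mu(a)=\mu\cdot a$ follows by the same argument with right multiplication, which is also a homeomorphism of $G$.
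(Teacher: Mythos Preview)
Your argument is correct and is essentially the same approach as the paper's, just unpacked in more detail: the paper's one-line proof observes that $x\mapsto ax$ and $x\mapsto xa$ are $\mathcal M$-definable homeomorphisms and hence carry the family of $\mathcal M$-definable open neighborhoods of $e$ bijectively onto that of $a$, which is exactly what your two containments verify explicitly. One minor remark: $G$ is only $\emptyset$-\emph{type}-definable, not definable, so your justification that $\mu(a)\subseteq G$ should read ``$G$ is an intersection of $\mathcal M$-definable neighborhoods of $a$'' rather than ``$G$ is itself such a neighborhood''; this does not affect the argument.
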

\begin{proof}
    The maps $x\mapsto ax$ and $x\mapsto xa$ are $\mathcal M$-definable homeomorphisms, so preserve $\mathcal M$-definable open sets.
\end{proof}

\begin{lemma}\label{L: big subgroups}
    Let $A$ be a parameter set, and $H\leq G^n$ an $A$-type-definable subgroup. Then the following are equivalent:
    \begin{enumerate}
        \item $H$ has non-empty interior.
        \item $H$ is open.
        \item $\mu^n\subset H$.
        \item $\dim(H)=n$.
    \end{enumerate}
\end{lemma}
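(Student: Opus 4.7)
\textbf{My plan} is to establish the cycle $(2) \Rightarrow (1) \Rightarrow (3) \Rightarrow (2)$ together with the equivalence $(3) \Leftrightarrow (4)$. Throughout, I write $H = \bigcap_i H_i$ as a small intersection of $A$-definable sets, and use the key fact (Lemma~\ref{L: mu translation}) that $\mu(a) = a \cdot \mu^n$ in $G^n$ for every $a \in G^n$.

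The implication $(2) \Rightarrow (1)$ is immediate. For $(1) \Rightarrow (3)$: if $a \in \operatorname{int}(H)$, then since $H$ is a subgroup and left-translation by $a^{-1}$ is a self-homeomorphism of $G^n$, we get $e^n = a^{-1}a \in \operatorname{int}(a^{-1}H) = \operatorname{int}(H)$. Thus each $H_i$ contains an open neighborhood of $e^n$; by elementary equivalence, applied to the first-order statement that some element of the $\emptyset$-definable basis sits between $e^n$ and $H_i$, this neighborhood can be chosen $\mathcal M$-definable, giving $\mu^n \subset H_i$ for every $i$ and hence $\mu^n \subset H$. For $(3) \Rightarrow (2)$: if $\mu^n \subset H_i$, then compactness (together with closure of basic opens under coordinate-wise finite intersections) produces a single $\mathcal M$-definable open neighborhood $V_i$ of $e^n$ with $V_i \subset H_i$; then $\bigcap_i V_i$ contains $e^n$, lies in $H$, and is open in $G^n$ by Fact~\ref{F: int of opens}. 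Translating by arbitrary elements of $H$ extends openness to all of $H$.

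For $(3) \Leftrightarrow (4)$: since $\mu$ is open in $G$, which is open in $M$, we have $\dim(\mu) = 1$ and hence $\dim(\mu^n) = n$ by Fact~\ref{F: tmin}(3), giving $(3) \Rightarrow (4)$ at once. Conversely, if $\dim(H) = n$, choose $a \in H$ with $\dim(a/A) = n$; by saturation of $\mathcal M$ we may take $a \in M^n$. Then $a$ is $A$-independent in $M^n$, so the easy case of Lemma~\ref{L: germs exist} shows that the $A$-definable set $H_i$ contains an open neighborhood of $a$, which via the $\emptyset$-definable basis can be chosen $\mathcal M$-definable. Thus $\mu(a) \subset H_i$ for each $i$, so $\mu(a) \subset H$, and translating by $a^{-1} \in H$ yields $\mu^n = a^{-1}\mu(a) \subset H$.

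The only delicate point throughout is ensuring at each step that the open neighborhoods produced can be taken $\mathcal M$-definable, so that the defining intersection for $\mu$ actually applies; this is routine given the $\emptyset$-definable topological basis and the assumption that $A \subset \mathcal M^{eq}$.
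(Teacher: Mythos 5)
Your proof is correct and uses essentially the same ingredients as the paper's: translation in the topological group $G^n$ to carry interior points to the identity (and vice versa), compactness to extract finite subcoverings from the defining conditions of $H$ and $\mu^n$, Fact~\ref{F: int of opens} for closure of opens under small intersections, and the observation that an $n$-dimensional point of an $A$-definable set has a neighborhood inside it. The only difference is the order of the implications -- you establish $(2)\Rightarrow(1)\Rightarrow(3)\Rightarrow(2)$ with $(3)\Leftrightarrow(4)$ attached separately, while the paper runs the single cycle $(1)\Rightarrow(2)\Rightarrow(3)\Rightarrow(4)\Rightarrow(1)$ -- but the content of each step is the same.
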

\begin{proof}
$(1)\implies(2)$: Let $a$ be an interior point of $H$. For any $b\in H$, the map $x\mapsto ba^{-1}x$ sends $a$ to $b$ and is a homeomorphism of both $H$ and $G$. So $b$ is also an interior point of $H$.

$(2)\implies(3)$: If $H$ is $\mathcal M$-definable and open, it contains the intersection of all $\mathcal M$-definable open neighborhoods of $e$, which is precisely $\mu^n$.

$(3)\implies(4)$: If $H$ contains $\mu^n$, then $$n=\dim(\mu^n)\leq\dim(H)\leq\dim(G^n)=n,$$ so $\dim(H)=n$.

$(4)\implies(1)$: If $\dim(H)=n$, then there is $h\in H$ with $\dim(h/A)=n$. Then every $A$-definable superset of $h$ contains a neighborhood of $h$, and by Fact \ref{F: int of opens}, so does $H$. 
\end{proof}

\subsection{Characterization of Topological 1-basedness}

In this subsection we prove the main geometric result concerning topological 1-basedness in $G$. This should be seen as analogous to the Hrushovski-Pillay characterization of 1-based stable groups.

We will need the following, which is an elementary group-theoretic exercise:

\begin{fact}\label{F: easy group facts}
    Let $H$ be a group, and $X\subset H$ a non-empty subset. Then the following are equivalent:
    \begin{enumerate}
        \item Any two left translates of $X$ are equal or disjoint.
        \item Any two right translates of $X$ are equal or disjoint.
        \item $X$ is a left coset of a subgroup of $G^n$.
        \item $X$ is a right coset of a subgroup of $G^n$.
        \item $X$ is closed under the map $(x,y,z)\mapsto xy^{-1}z$.
    \end{enumerate}
\end{fact}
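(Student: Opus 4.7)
The plan is to prove the five equivalences by establishing the cycle $(3) \Leftrightarrow (5) \Leftrightarrow (4)$ together with $(1) \Leftrightarrow (3)$ and $(2) \Leftrightarrow (4)$. All of these are elementary group-theoretic manipulations, with no use of the topology or model theory; the only mild subtlety is being careful about which side of the group one multiplies on, since $H$ need not be abelian.

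First I would handle $(3) \Leftrightarrow (5)$. The forward direction is a direct calculation: if $X = aK$ for a subgroup $K \le H$, then $(ak_1)(ak_2)^{-1}(ak_3) = a(k_1 k_2^{-1} k_3) \in aK$. For the converse, assuming (5), pick any $a \in X$ and set $K := a^{-1}X$. Then $e \in K$, and for $k_1, k_2 \in K$ the element $(ak_1)(ak_2)^{-1} a = a(k_1 k_2^{-1})$ lies in $X$ by (5), so $k_1 k_2^{-1} \in K$; hence $K$ is a subgroup and $X = aK$. The equivalence $(4) \Leftrightarrow (5)$ is entirely symmetric, using $K := Xa^{-1}$ with $a \in X$.

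Next I would show $(1) \Leftrightarrow (3)$. If $X = aK$ is a left coset, then every left translate $bX = (ba)K$ is again a left coset of $K$, and cosets of a subgroup are equal or disjoint. Conversely, assume (1) and fix $a \in X$. For any $x \in X$, the translate $xa^{-1}X$ contains $xa^{-1}\cdot a = x$, which also lies in $X$; by (1), we deduce $xa^{-1}X = X$, i.e.\ $x^{-1}X = a^{-1}X$. Thus $K := a^{-1}X$ is independent of the choice of $a \in X$. Now $e \in K$, and for $k_1, k_2 \in K$ we have $a k_1 \in X$, so $(ak_1)^{-1}X = K$, giving $k_1^{-1} K = K$ and hence $k_1^{-1} k_2 \in K$; so $K$ is a subgroup and $X = aK$. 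Again $(2) \Leftrightarrow (4)$ is completely symmetric, so there is no additional work.

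There is no real obstacle here; the only thing to keep straight is the bookkeeping between left and right translates when verifying that the candidate stabilizing set $K$ is a subgroup in each case. Once $(3) \Leftrightarrow (5) \Leftrightarrow (4)$ and the two coset characterizations $(1) \Leftrightarrow (3)$ and $(2) \Leftrightarrow (4)$ are in place, the full equivalence of all five statements follows immediately.
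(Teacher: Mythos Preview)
Your argument is correct. The paper does not actually give a proof of this fact; it is stated as ``an elementary group-theoretic exercise'' and left to the reader, so you have correctly supplied the details that the paper omits.
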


Recall that in Section \ref{ss: local linearity} we introduced \textit{local linearity} for t-minimal topological groups. We now give this notion a more general definition in the type-definable setting.

\begin{lemma}\label{L: loc lin char}
    Let $a\in G^n$ and $A$ a parameter set. The following are equivalent:
    \begin{enumerate}
        \item There are a parameter set $A$, and an $A$-type-definable subgroup $H\leq G^n$, so that $\operatorname{germ}(a/A)=\operatorname{germ}_a(a\cdot H)$.
        \item There are a parameter set $A$, and an $A$-type-definable subgroup $H\leq G^n$, so that $\operatorname{germ}(a/A)=\operatorname{germ}_a(H\cdot a)$.
        \item $\mu(a/A)$ is a left coset of a subgroup of $\mu^n$.
        \item $\mu(a/A)$ is a right coset of a subgroup of $\mu^n$.
    \end{enumerate}
    \begin{proof}
        (1) and (2) are equivalent by Fact \ref{F: easy group facts}. Similarly, (3) and (4) are equivalent by Fact \ref{F: easy group facts}.

        $(1)\rightarrow(3)$: Let $A$ be a parameter set, and $H\leq G^n$ an $A$-type-definable subgroup, so that $\operatorname{germ}(a/A)=\operatorname{germ}_a(a\cdot H)$. Then $\mu(a/A)=\mu(a)\cap(a\cdot H)$. By Lemma \ref{L: mu translation}, this is the same as $a\cdot(\mu\cap H)$. By Lemma \ref{L: mu subgroup}, $\mu\cap H$ is a subgroup of $\mu^n$. This implies (3).
        
        $(3)\rightarrow(1)$. Assume $\mu(a/A)$ is a left coset of a subgroup. By Fact \ref{F: easy group facts}, $\mu(a/A)$ is closed under $(x,y,z)\mapsto xy^{-1}z$. Now using compactness, inductively build $\mathcal M$-definable d-approximations $X_1\supset X_2\dots$ of $\tp(a/A)$ at $a$ so that $X_{i+1}X_{i+1}^{-1}X_{i+1}\subset X_i$ for all $i$. Then $X=\bigcap X_i$ is closed under $(x,y,z)\mapsto xy^{-1}z$, so is a left coset of the subgroup $a^{-1}X$. Since each $X_i$ realizes $\operatorname{germ}(a/A)$, Fact \ref{F: int of opens} gives that $X$ also realizes $\operatorname{germ}(a/A)$. Finally, $a^{-1}X$ is defined over countably many parameters from $\mathcal M$, so satisfies the requirements of (1).
    \end{proof}
    \end{lemma}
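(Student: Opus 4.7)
The plan is to reduce the four-way equivalence to two substantive implications by observing that (1)$\leftrightarrow$(2) and (3)$\leftrightarrow$(4) follow cheaply from Fact \ref{F: easy group facts}: any left coset $a\cdot H$ coincides with the right coset $(aHa^{-1})\cdot a$, and conjugation preserves type-definability after adjoining $a$ to the parameter set. So the real content lies in the equivalence (1)$\leftrightarrow$(3).

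For (1)$\Rightarrow$(3), assume $\operatorname{germ}(a/A)=\operatorname{germ}_a(a\cdot H)$ for some type-definable subgroup $H\leq G^n$. By Lemma \ref{L: mu open}(2) we also have $\operatorname{germ}_a(\mu(a/A))=\operatorname{germ}(a/A)$, so $\mu(a/A)$ and $a\cdot H$ agree on some open neighborhood of $a$; using the $\emptyset$-definable basis for $\tau$, we may take this neighborhood $V$ to be $\mathcal M$-definable. Then $\mu(a)\subseteq V$ by definition of $\mu(a)$, so $\mu(a/A)\subseteq\mu(a)\subseteq V$, and therefore $\mu(a/A)=\mu(a)\cap(a\cdot H)$. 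Applying Lemma \ref{L: mu translation} componentwise to rewrite $\mu(a)=a\cdot\mu^n$, and noting that $\mu^n\cap H$ is a subgroup of $\mu^n$ by Lemma \ref{L: mu subgroup}, we conclude $\mu(a/A)=a\cdot(\mu^n\cap H)$, witnessing (3).

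For (3)$\Rightarrow$(1), suppose $\mu(a/A)$ is a left coset of a subgroup $K\leq\mu^n$. By Fact \ref{F: easy group facts}, $\mu(a/A)$ is closed under the ternary operation $(x,y,z)\mapsto xy^{-1}z$. The idea is to propagate this closure up to a type-definable subgroup of $G^n$ via compactness. Concretely, I would inductively build $\mathcal M$-definable d-approximations $X_1\supseteq X_2\supseteq\cdots$ of $\tp(a/A)$ at $a$ such that $X_{i+1}X_{i+1}^{-1}X_{i+1}\subseteq X_i$; compactness justifies each step because $\mu(a/A)\cdot\mu(a/A)^{-1}\cdot\mu(a/A)\subseteq\mu(a/A)\subseteq X_i$. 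Then $X:=\bigcap X_i$ is itself closed under $(x,y,z)\mapsto xy^{-1}z$, so $X=a\cdot H$ for the subgroup $H:=a^{-1}X$, which is type-definable over a countable set together with $a$. To verify $\operatorname{germ}_a(X)=\operatorname{germ}(a/A)$, use that $\mu(a/A)\subseteq X\subseteq X_i$ and that $\mu(a/A)$ is open in each $X_i$ by Lemma \ref{L: mu open}(1), which forces $X$ and $\mu(a/A)$ to agree on some neighborhood of $a$.

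I expect the main obstacle to be the compactness step in (3)$\Rightarrow$(1): at each stage one must simultaneously ensure that $X_{i+1}$ is a d-approximation and satisfies the ternary product containment $X_{i+1}X_{i+1}^{-1}X_{i+1}\subseteq X_i$, while also staying inside $G^n$ (achievable since $G$ is open and type-definable, so $G^n$ can be replaced near $a$ by an $\mathcal M$-definable open neighborhood of $a$). Everything else is essentially bookkeeping.
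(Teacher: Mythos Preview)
Your proposal is correct and follows essentially the same approach as the paper: the same reduction to (1)$\leftrightarrow$(3) via Fact~\ref{F: easy group facts}, the same computation $\mu(a/A)=\mu(a)\cap(a\cdot H)=a\cdot(\mu^n\cap H)$ for (1)$\Rightarrow$(3), and the same nested-d-approximation compactness argument for (3)$\Rightarrow$(1). The only cosmetic difference is that the paper verifies $\operatorname{germ}_a(X)=\operatorname{germ}(a/A)$ directly from Fact~\ref{F: int of opens} rather than via Lemma~\ref{L: mu open}(1), but these amount to the same thing.
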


    \begin{definition}
        Let $a\in G^n$ and $A$ a parameter set. We say that $\tp(a/A)$ is \textit{locally linear} if the equivalent conditions of Lemma \ref{L: loc lin char} hold. 
    \end{definition}

    \begin{definition}
        We say that $G$ is \textit{locally linear} if $\tp(a/A)$ is locally linear for all $a\in G^n$ and all parameter sets $A$.
    \end{definition}

    We also give a general definition of topological 1-basedness for $G$:

    \begin{definition}\label{D: 1-based group}
    We say that $G$ is \textit{topologically 1-based} if for every $a\in G^m$, every $b\in G^n$, and every parameter set $A$, $\tp(a/Ab)$ is topologically 1-based over $A$.
\end{definition}

\begin{remark}\label{R: loc mod gp} By Remark \ref{R: reduction to n=2}, if exchange holds then Definition \ref{D: 1-based group} is equivalent to the analogous assertion restricted to the case when $n=2$.
\end{remark}

In this language, we now prove our main analog of the Hrushovski-Pillay theorem on 1-based stable groups:

\begin{theorem}\label{T: loc lin}
     The following are equivalent:
    \begin{enumerate}
        \item $G$ is topologically 1-based.
        \item $G$ is locally linear.
    \end{enumerate}
    In particular, both conditions hold if $\mathcal M$ is topologically 1-based.
\end{theorem}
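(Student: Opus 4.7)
The plan is to prove the two implications separately; the ``in particular'' clause is immediate since, if $\mathcal M$ is topologically 1-based, then by Definition \ref{D: 1-base} every $\tp(a/Ab)$ for real tuples $a, b$ is topologically 1-based over $A$, and this specializes to tuples from $G \subseteq M$, giving Definition \ref{D: 1-based group} for $G$.

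For $(2) \Rightarrow (1)$, I would assume $G$ is locally linear and verify topological 1-basedness using Theorem \ref{T: partition}. Given real tuples $a \in G^m$, $b \in G^n$ and a parameter set $A$, it suffices to show the fibers of $\mu((a, b)/A) \to \mu(b/A)$ are pairwise equal or disjoint. Local linearity applied to $(a, b) \in G^{m+n}$ over $A$ gives $\mu((a, b)/A) = (a, b) \cdot K$ for some type-definable subgroup $K \leq \mu^{m+n}$. A direct group computation --- essentially the content of Fact \ref{F: easy group facts} --- shows that the $y$-fibers of this coset are translates of $K \cap (\mu^m \times \{e\})$, hence pairwise equal or disjoint.

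For $(1) \Rightarrow (2)$, fix $a \in G^n$ and a parameter set $A$. By Fact \ref{F: easy group facts}, exhibiting $\mu(a/A)$ as a left coset of a subgroup of $\mu^n$ is equivalent to showing closure of $\mu(a/A)$ under $(x, y, z) \mapsto xy^{-1}z$. The natural candidate subgroup is the left stabilizer
$$H := \{h \in \mu^n : h \cdot \mu(a/A) = \mu(a/A)\},$$
a type-definable subgroup of $\mu^n$ satisfying $H \cdot a \subseteq \mu(a/A)$ automatically; so the task reduces to showing $xa^{-1} \in H$ for every $x \in \mu(a/A)$. My plan is to combine Theorem \ref{T: partition} with the $\emptyset$-definable homeomorphisms $(x, y) \mapsto (xy^{-1}, y)$ and $(x, y) \mapsto (y^{-1}x, y)$ of $G^n \times G^n$. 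Applied to independent realizations $(a, a')$ of $\tp(a/A)$ in $\mathcal M$, these transformations convert the fiber-partition conclusion of Theorem \ref{T: partition} into pairwise equal-or-disjoint statements about the right and left translates $\mu(a/A)\alpha^{-1}$ and $\alpha^{-1}\mu(a/A)$ for $\alpha \in \mu(a'/A)$. Exploiting the $A$-automorphism-homogeneity between $a$ and $a'$, varying $a'$ over a suitable family of independent realizations, and iterating the resulting stabilizer constraints should yield enough translation-invariance on $\mu(a/A)$ to deduce the desired closure.

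The main obstacle will be this second direction: Theorem \ref{T: partition} provides a fiber-partition condition formulated in terms of $\mathcal M$-tuples, while local linearity is a coset condition on the type-definable infinitesimal neighborhood $\mu(a/A) \subset \mathcal N$. The technical core will be to bridge these two settings via $\emptyset$-definable group translations, managing carefully how the group operation interacts with the infinitesimal topology.
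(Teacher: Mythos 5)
Your plan for the ``in particular'' clause and for $(2)\Rightarrow(1)$ is correct and essentially matches the paper: local linearity makes $\mu(a,b/A)$ a coset of a subgroup $K\leq\mu^{m+n}$, the fibers over $\mu(b/A)$ are left cosets of $K\cap(\mu^m\times\{e\})$, hence equal or disjoint, and Theorem \ref{T: partition} gives topological 1-basedness.

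Your plan for $(1)\Rightarrow(2)$, however, has a genuine gap. You propose to apply Theorem \ref{T: partition} after transforming a pair $(a,a')$, where $a'$ is an \emph{independent realization of $\tp(a/A)$}. After the transformation $(x,y)\mapsto(xy^{-1},y)$, the fibers over $\mu(a'/A)$ are the right translates $\mu(a/A)\cdot y^{-1}$ for $y\in\mu(a'/A)$. The trouble is that $\dim(a'/A)=\dim(a/A)$, which in general is less than $n$; so $\mu(a'/A)$ is a proper subset of $\mu(a')=\mu^n\cdot a'$, and the set of translators $\{y^{-1}:y\in\mu(a'/A)\}$ does not range over all of $\mu^n$ (up to a fixed right factor). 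In fact, that translator set is a conjugate copy of $X:=a^{-1}\mu(a/A)$ itself, so trying to derive from it that $X$ is a coset is circular. The vague appeal to ``iterating the resulting stabilizer constraints'' does not resolve this: the constraints you obtain only assert equal-or-disjointness of translates by elements of $X^{-1}X$, which is precisely what one would already know if $X$ were a coset.

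The paper's proof sidesteps this exactly by choosing the auxiliary tuple $b\in G^n$ to be \emph{fully generic over $Aa$} --- that is, $\dim(a,b/A)=\dim(a/A)+n$, so $\dim(b/A)=n$ (not merely $\dim(a/A)$). This ensures $\mu(b/A)=\mu(b)=\mu^n\cdot b$, so the translations obtained from the fibers of $\mu(a\cdot b,\,b/A)\to\mu(b/A)$ (after applying $(x,y)\mapsto(xy,y)$ and Theorem \ref{T: partition} to $\tp(a\cdot b/Ab)$) are the \emph{arbitrary} right translates $X\cdot u$ for $u\in\mu^n$. Fact \ref{F: easy group facts}, applied inside $\mu^n$, then concludes $X$ is a left coset of a subgroup of $\mu^n$. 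Your approach can be repaired by replacing $a'$ with such a fully generic $b$; as written, the argument cannot close.
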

\begin{proof}
 $(1)\implies(2)$: Let $a\in G^n$ and let $A$ be a parameter set. Let $X=a^{-1}\cdot\mu(a/A)\subset\mu^n$, so that $\mu(a/A)=a\cdot X$. Then let $b\in G^n$ so that $\dim(a,b/A)=\dim(a/A)+n$. So $\dim(b/A)=n$, which implies that $\mu(b/
    A)=\mu(b)=\mu^n\cdot b$. Also, $a$ and $b$ are symmetrically independent over $A$, and thus $$\mu(a,b/A)=\mu(a/A)\times\mu(b/A)=(a\cdot X)\times(\mu^n\cdot b).$$ So for all $u\in\mu^n$, the fiber above $u\cdot b$ in $\mu(a,b/A)$ is $\mu(a/A)=a\cdot X$. 
    
    Now $(a,b)$ and $(a\cdot b,b)$ are interdefinable over $A$ by the map $(x,y)\mapsto(x\cdot y,y)$. It follows that the same map gives a homeomorphism $\mu(a,b/A)\rightarrow\mu(a\cdot b,b/A)$. Thus, for $u\in\mu^n$, the fiber above $u\cdot b$ in $\mu(a\cdot b,b/A)$ is the right translate by $u\cdot b$ of the corresponding fiber in $\mu(a,b/A)$, i.e. $a\cdot X\cdot u\cdot b$. Now, assuming (1), the type $\tp(a\cdot b/Ab)$ is topologically 1-based over $A$. Thus, by Theorem \ref{T: partition}, any two of the sets $a\cdot X\cdot u\cdot b$ for $u\in\mu^n$ are equal or disjoint. The same holds after cancelling $a$ on the left and $b$ on the right. Thus, any two of the sets $X\cdot u$, for $u\in\mu^n$, are equal or disjoint. By Fact \ref{F: easy group facts}, it follows that $X$ is a left coset of a subgroup of $\mu^n$, and thus so is $a\cdot X=\mu(a/A)$.

$(2)\implies(1)$: Let $a\in G^m$, $b\in G^n$, and $A$ a parameter set. Assuming (2), the set $\mu(a,b/A)$ is a left coset of $\mu^{m+n}$. It follows that there is a subgroup $H\leq G^m$ so that each fiber $\mu(a,b/A)_y$ for $y\in\mu(b/A)$ is either empty or a left coset of $H$. Since any two left cosets of $H$ are equal or disjoint, it follows by Theorem \ref{T: partition} that $\tp(a/Ab)$ is 1-based over $A$.
\end{proof}

\subsection{Few Subgroups}

Hrushosvki and Pillay also prove that 1-based stable groups have few definable subgroups, in the sense that every connected definable subgroup is defined over $\operatorname{acl}(\emptyset)$. To adapt to our setting, we make two changes:

\begin{enumerate}
    \item Instead of connected subgroups, we consider germs of subgroups at the identity.
    \item Instead of asking for the germ to be defined over $\acl(\emptyset)$, we require (as in the definition of topological 1-basedness) that the germ is constant on an open set.
\end{enumerate}

Before proving a precise statement, we need a technical lemma. In the general t-minimal setting (even with independent neighborhoods), one can have a definable set -- say over $A$ -- which is not a d-approximation for any of its elements over $A$. In contrast, Lemma \ref{L: projections of cosets} essentially says that groups and their cosets are always d-approximations at generic points:

\begin{lemma} \label{L: projections of cosets}
Let $X$ be an $A$-type-definable left coset of a subgroup of $G^n$ of dimension $d$. Let $a\in X$ with $\dim(a/A)=d$. Let $b$ be a basis of $a$ over $A$, and $\pi$ the corresponding projection sending $a$ to $b$. Then:
\begin{enumerate}
    \item $\pi$ is finite-to-one on $X$.
    \item $X$ realizes $\operatorname{germ}(a/A)$.
\end{enumerate}
\end{lemma}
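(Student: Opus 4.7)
My plan is to prove (1) first and then deduce (2) from (1) by a standard compactness argument combined with Lemma~\ref{L: basic witness}.

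For (1), write $X=aH$. The fibers of $\pi|_X$ are all left cosets of the $A$-type-definable subgroup $K:=H\cap\ker\pi$, so they have common cardinality $|K|$, and it suffices to show $K$ is finite. Since $b=\pi(a)$ is a basis of $a$ over $A$ we have $a\in\acl(Ab)$, so by Lemma~\ref{L: d-app homeo} there is an $\mathcal M$-definable d-approximation $X_0$ of $\tp(a/A)$ at $a$ such that $\pi|_{X_0}$ is a homeomorphism onto its image. In particular $X_0\cap\pi^{-1}(b)=\{a\}$, and since $\mu(a/A)\subseteq X_0$ we obtain $\mu(a/A)\cap\pi^{-1}(b)=\{a\}$. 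On the other hand, by Lemma~\ref{L: mu translation} and the coset structure, $X\cap\mu(a)=a(H\cap\mu^n)$ and hence $X\cap\mu(a)\cap\pi^{-1}(b)=a(K\cap\mu^n)$. Thus it suffices to show $X\cap\mu(a)\subseteq\mu(a/A)$, i.e.\ that every point of $X$ infinitesimally close to $a$ realizes $\tp(a/A)$.

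This is the heart of the argument, and the idea is to use independent neighborhoods: for any $F\in\tp(a/A)$, apply Lemma~\ref{L: GenOS} to obtain an $At$-definable d-approximation $Y\subseteq F$ of $\tp(a/A)$ at $a$ with $\dim(a/At)=d$; because the witnessing parameter $t$ can be chosen independently of infinitesimal perturbations of $a$, one then transfers membership in $F$ from $a$ to any $ak$ with $k\in K\cap\mu^n$ by combining the coset structure with Lemma~\ref{L: d-app locality}.

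Once $K\cap\mu^n=\{e\}$ is established, a clean generic-element argument shows $K$ is finite. Indeed if $\dim K\geq 1$, there would exist $k\in K$ with $\dim(k/A)\geq 1$; then the type-definable set $\mu(k/A)$ would lie in $K\cap k\mu^n=k(K\cap\mu^n)=\{k\}$, contradicting $\dim\mu(k/A)=\dim(k/A)\geq 1$. Hence $\dim K=0$ and $K$ is finite, proving (1).

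For (2), given (1), all fibers of $\pi|_X$ have finite cardinality $|K|$, so by compactness there is an $A$-definable superset $Y\supseteq X$ with $\pi|_Y$ still finite-to-one; then $Y$ is an $A$-definable basic witness of $a$ over $A$, and by Lemma~\ref{L: basic witness} we have $\operatorname{germ}_a(Y)=\operatorname{germ}(a/A)$. Combined with $X\subseteq Y$ (giving $\operatorname{germ}_a(X)\leq\operatorname{germ}_a(Y)$) and the standard inclusion $\operatorname{germ}(a/A)\leq\operatorname{germ}_a(X)$ for any $A$-type-definable $X$ containing $a$ (via Fact~\ref{F: int of opens}), we conclude $\operatorname{germ}_a(X)=\operatorname{germ}(a/A)$, which is (2).

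The main obstacle is the claim $X\cap\mu(a)\subseteq\mu(a/A)$ in the proof of (1): while the reduction of finiteness of $K$ to $K\cap\mu^n=\{e\}$ and the generic-element argument are clean, transferring an arbitrary $A$-definable formula in $\tp(a/A)$ from $a$ to an infinitesimal translate $ak$ cannot be done by directly applying Lemma~\ref{L: d-app locality} (since $ak$ need not lie in the chosen d-approximation $X_0$) and requires the combination of independent neighborhoods with the coset structure outlined above.
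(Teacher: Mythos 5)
Your proof of (2) from (1) is correct and is essentially the same compactness-plus-squeeze argument the paper uses. Your proof of (1), however, takes a different route and has a genuine gap.

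The pivotal claim in your argument for (1) is $X\cap\mu(a)\subseteq\mu(a/A)$, i.e.\ that every point of $X$ infinitesimally close to $a$ realizes $\tp(a/A)$. You flag this as the ``heart of the argument'' but only gesture at it. The problem is that this claim is not merely a technical sub-step: unwinding definitions (and using compactness over the small model $\mathcal M$ inside $\mathcal N$), the inclusion $X\cap\mu(a)\subseteq\mu(a/A)$ is equivalent to $\operatorname{germ}_a(X)\leq\operatorname{germ}(a/A)$, which together with the trivial inclusion $\operatorname{germ}(a/A)\leq\operatorname{germ}_a(X)$ is exactly statement (2). So your route for (1) passes through (2), while your proof of (2) assumes (1); as written, the argument is circular unless you supply an independent proof of the inclusion. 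The sketch you give does not do this: for $k\in K\cap\mu^n$ with $k\in\mathcal N\setminus\mathcal M$, there is no automorphism of $\mathcal N$ fixing $A$ sending $a$ to $ak$, and Lemma~\ref{L: d-app locality} by itself transfers $A$-definable properties only within a fixed d-approximation, which you cannot yet assume contains $ak$. Merely noting that the witnessing parameter $t$ can be chosen with $\dim(a/At)=d$ does not force $ak$ into the $At$-definable d-approximation $Y$, which is exactly what is at stake.

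The paper proves (1) by a much shorter, purely dimension-theoretic argument that never touches infinitesimals: if the kernel subgroup $H$ were infinite, pick $h\in H$ symmetrically independent from $a$ over $A$ with $\dim(h/A)=\dim(H)\geq 1$; then $a\cdot h\in X$ lies in the fiber over $b$, so $\pi(a\cdot h)=b$ and hence $b$, then $a$, then $h=a^{-1}(a\cdot h)$ are all algebraic over $A,a\cdot h$, i.e.\ $a\cdot h$ is interalgebraic with $(a,h)$ over $A$. This gives $\dim(a\cdot h/A)=\dim(a,h/A)=\dim(a/A)+\dim(H)>\dim(X)$, contradicting $a\cdot h\in X$. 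I would recommend this direct argument; your outer reduction (reducing finiteness of $K$ to a fiber computation and a generic-element argument) is fine, but the missing infinitesimal step is precisely where the substance lies, and the paper's interalgebraicity trick sidesteps it entirely.
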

\begin{proof}
    Note that (1) implies (2) by compactness, Lemma \ref{L: basic witness}, and Fact \ref{F: int of opens} (as cofinally many $A$-definable supersets of $X$ are d-approximations of $\tp(a/A)$).
    
    We show (1). Since $X$ is a left coset, there is an $A$-type-definable subgroup $H\leq G^n$ such that all non-empty fibers of $\pi$ in $X$ are left cosets of $H$ (here we treat fibers a subsets of $G^n$, not $G^{n-d}$, as this will make the argument easier). So it suffices to show that $H$ is finite.
    
    Now assume $H$ is infinite. Then there is $h\in H$ with $\dim(a,h/A)=\dim(a/A)+\dim(H)>\dim(X)$.
    
    Since the fiber $X_b$ is a left coset of $H$ and contains $a$, it follows that $a\cdot h\in X$, and so $\dim(a\cdot h/A)\leq\dim(X)<\dim(a,h/A)$. On the other hand, we claim that $a\cdot h$ is interalgebraic with $(a,h)$ over $A$, which will give a contradiction. Clearly $a\cdot h\in\acl(A,a,h)$. For the converse, since $\pi(a\cdot h)=b$, we have $b\in\acl(A,a\cdot h)$. But $a\in\acl(Ab)$ by definition; thus $a\in\acl(A,a\cdot h)$, and thus $h\in\acl(A,a\cdot h)$ as well, since $h=a^{-1}\cdot(a\cdot h)$.
\end{proof}

We now give our `few subgroups' result:

\begin{theorem}\label{T: few subgroups}
    Assume $G$ is topologically 1-based. Let $Y\subset G^m$ be $A$-type-definable, let $\{H_y:y\in Y\}$ be an $A$-type-definable family of d-dimensional subgroups of $G^n$, and let $b\in Y$. Then the germ of $H_y$ at the identity is constant for all $y\in\mu(b/A)$.
\end{theorem}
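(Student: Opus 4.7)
The plan is to apply Theorem \ref{T: partition} to $\tp(a/Ab)$ for a generic $a \in H_b$ over $Ab$, and to use the subgroup structure of the family $\{H_y\}$ to propagate the resulting fiber information from a generic point to the identity. First, I would reduce to the case where $Y$ equals the $A$-type-definable locus of $\tp(b/A)$ (so $b$ is generic in $Y$, with $k := \dim(b/A) = \dim(Y)$). Then pick $a \in H_b$ generic over $Ab$, so $\dim(a/Ab) = d$, additionally arranging that $(a, b)$ is additive over $A$; this should be possible since the total type-definable graph $\Gamma = \{(x, y) : y \in Y,\, x \in H_y\}$ has dimension $k + d$ by uniform fiber considerations. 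By Lemma \ref{L: projections of cosets}, $H_b$ realizes $\operatorname{germ}(a/Ab) = \operatorname{germ}_a(H_b)$ at $a$; since $a \in H_b$, this germ is the left $a$-translate of the target $\gamma(b) := \operatorname{germ}_e(H_b)$.

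Applying Theorem \ref{T: partition} to $\tp(a/Ab)$---topologically 1-based over $A$ by hypothesis---the fibers of $\mu(ab/A) \to \mu(b/A)$ are pairwise equal or disjoint, and Lemma \ref{L: mu generic cont}(1) together with additivity give surjectivity. For each $y \in \mu(b/A)$, pick $a' \in \mu(a)$ in the fiber over $y$: then $a' \in H_y$ is generic over $Ay$, and by Lemma \ref{L: projections of cosets}, $\operatorname{germ}_{a'}(H_y) = \operatorname{germ}(a'/Ay)$. Left-translation by $(a')^{-1}$ (using $a' \in H_y$) yields $\gamma(y) = (a')^{-1} \cdot \operatorname{germ}(a'/Ay)$, and analogously $\gamma(b) = a^{-1} \cdot \operatorname{germ}(a/Ab)$. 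In the case $a' = a$ (equivalently $y \in \mu(b/Aa)$), Lemma \ref{L: 1-based iff constant germ} applied to $\tp(a/Ab)$ gives $\operatorname{germ}(a/Ay) = \operatorname{germ}(a/Ab)$, whence $\gamma(y) = \gamma(b)$ follows at once.

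The main obstacle is the remaining case $y \in \mu(b/A) \setminus \mu(b/Aa)$, where the transporting automorphism $\sigma$ sending $(a,b) \mapsto (a',y)$ moves $a$ to a nearby $a' \neq a$ in $\mu(a)$, and a direct computation only yields $\gamma(y) = \sigma(\gamma(b))$. I plan to close this gap by refining the choice of $a$ to be independent from $b$ over $A$, forcing $\mu(b/Aa) = \mu(b/A)$ and thereby eliminating the problematic case. Such a refinement should be available thanks to Theorem \ref{T: loc lin}: in the 1-based setting, local linearity restricts the projection of $\Gamma$ to the first coordinate enough that one can select a generic $a \in H_b$ lying in a $d$-dimensional $A$-definable subset. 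Alternatively, if such an independent $a$ is not directly available in a given configuration, one iterates the argument for several independent generics $a_1, a_2, \ldots \in H_b$, shows that $\gamma(b)$ lies in $\bigcap_i \operatorname{acl}^{eq}(Aa_i)$, and concludes---via a stationarity-style argument in the spirit of Hrushovski--Pillay's treatment of 1-based stable groups---that $\gamma(b) \in \operatorname{acl}^{eq}(A)$, whence a discreteness-plus-continuity argument forces $\gamma(y) = \gamma(b)$ for all $y \in \mu(b/A)$.
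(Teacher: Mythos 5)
Your overall strategy is sensible and shares several ingredients with the paper (Lemma \ref{L: projections of cosets} to identify $\operatorname{germ}_a(H_b)$ with $\operatorname{germ}(a/Ab)$, surjectivity from additivity and Lemma \ref{L: mu generic cont}, translation to the identity), but the step you rightly flag as ``the main obstacle'' is a genuine gap, and neither of your proposed fixes closes it.

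The problem is that when $y\in\mu(b/A)\setminus\mu(b/Aa)$ you are forced to pick $a'\neq a$, and all you can extract is $\gamma(y)=\sigma(\gamma(b))$ for the automorphism $\sigma$ carrying $(a,b)$ to $(a',y)$, which is just a restatement of $A$-definability of the family. Your first fix would require choosing $a\in H_b$ with $\dim(a/Ab)=d$ \emph{and} $\dim(b/Aa)=\dim(b/A)$ simultaneously. With exchange this follows from additivity ($\dim(ab/A)=\dim(b/A)+d$ would split both ways), but without exchange you can establish $\dim(ab/A)=\dim(b/A)+d$ (this is exactly the paper's internal Claim, proved by projecting $H_b$ finite-to-one onto $G^d$, noting $\pi(H_b)\supseteq\mu^d$ by Lemma \ref{L: big subgroups}, and lifting a $u\in\mu^d$ symmetrically independent from $b$), yet this says nothing about $\dim(b/Aa)$ because $\dim(a/A)$ is uncontrolled -- the lifted $a$ is only interalgebraic with $u$ over $Ab$, not over $A$. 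Your appeal to Theorem \ref{T: loc lin} to ``restrict the projection of $\Gamma$'' does not obviously yield a generic $a\in H_b$ lying in a $d$-dimensional $A$-definable set, and your second fix (the $\bigcap_i\acl^{eq}(Aa_i)$ argument) is speculative: germs are coded in $\mathcal M^{eq}$ but need not be controlled by $\acl^{eq}$ without exchange, and no stationarity machinery is available here.

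The paper sidesteps the whole comparison of $\gamma(y)$ with $\gamma(b)$ by using Theorem \ref{T: loc lin} in a more structural way: topological 1-basedness of $G$ makes $\mu(ab/A)$ itself a coset of a subgroup of $G^{n+m}$, so \emph{all} nonempty fibers $\mu(ab/A)_y$ over $y\in\mu(b/A)$ are cosets of a single fixed group $L\leq G^n$. Then for each $y$, picking $(x,y)\in\mu(ab/A)$ via surjectivity, the fiber simultaneously realizes $\operatorname{germ}(x/Ay)$ (Corollary \ref{C: fiber germ}) and equals $x\cdot L$; since $H_y$ also realizes $\operatorname{germ}(x/Ay)$ by Lemma \ref{L: projections of cosets}, translating by $x^{-1}$ gives $\operatorname{germ}_e(H_y)=\operatorname{germ}_e(L)$, a fixed object independent of $y$. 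This comparison to the fixed $L$, rather than to $\gamma(b)$, is what removes the need for the extra independence of $a$ from $b$. You already had Theorem \ref{T: loc lin} on the table, so the missing idea was to apply it to $\mu(ab/A)$ directly rather than trying to steer the choice of $a$.
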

\begin{proof}
    First, we need:
    \begin{claim} There is $a\in H_b$ so that $\dim(a,b/A)=\dim(b/A)+d$. 
    \end{claim}
    \begin{proof}
        Let $u\in\mu^d$ with $\dim(u,b/A)=d+\dim(b/A)$. By Lemma \ref{L: projections of cosets}, there is a finite-to-one projection $\pi:H_b\rightarrow G^d$. Then $\pi(H_b)$ has dimension $d$, so by Lemma \ref{L: big subgroups}, $\pi(H_b)$ contains $\mu^d$. In particular, $u\in\pi(H_b)$. Let $a'\in H_b$ with $\pi(a')=u$. Then $a'$ and $u$ are interalgebraic over $Ab$. So $(a',b)$ and $(u,b)$ are interalgebraic over $A$, and thus $\dim(a',b/A)=d+\dim(d/A)$. Finally, since $\mathcal M$ is sufficiently saturated, there is a tuple $a$ from $\mathcal M$ with $\tp(a,b/A)=\tp(a',b/A)$, and thus $\dim(a,b/A)=d+\dim(b/A)$ as well.
    \end{proof}
    Now fix $a$ as in the claim. Since $G$ is topologically 1-based, Theorem \ref{T: loc lin} gives that $\mu(a,b/A)$ is a left coset of a subgroup. In particular, all of the non-empty fibers $\mu(a,b/A)_y$ for $y\in\mu(b/A)$ are left cosets of the same group, say $L\leq G^n$.
    
    Let $y\in\mu(b/A)$. We show that the germ of $H_y$ at the identity is the same as the germ of $L$ at the identity. As $y$ is arbitrary, this will prove the theorem.
    
    Now by the choice of $a$, we have that $(a,b)$ is additive over $A$. So by Lemma \ref{L: mu generic cont}, there is $x$ so that $(x,y)\in\mu(a,b/A)$. By Corollary \ref{C: fiber germ}, $\operatorname{germ}(x/Ay)$ is realized by the fiber $\mu(a,b/A)_y$, and therefore by a coset of $L$. On the other hand, by Lemma \ref{L: projections of cosets}, $\operatorname{germ}(a/Ab)$ is realized by $H_b$; and since $(x,y)\models\tp(a,b/A)$, $\operatorname{germ}(x/Ay)$ is also realized by $H_y$. So $H_y$ agrees with the coset $x\cdot L$ in a neighborhood of $x$; and thus the germ of $H_y$ at the identity is realized by $x^{-1}(x\cdot L)=L$, as desired.
\end{proof}

\subsection{Local Abelianity}

Recall that every 1-based stable group is abelian-by-finite (see \cite[Theorem 3.2]{HrPi87}). This is shown by applying the `few subgroups' result in that context to the family of conjugation maps.

In the topological setting, the analogous statement is that $G$ contains an open abelian subgroup -- or equivalently, $\mu$ is abelian. We now prove this, using essentially the same strategy from the stable case.

\begin{theorem}\label{T: local abelian} Assume $G$ is topologically 1-based.
\begin{enumerate}
    \item There are a countable parameter set $A$, and an $A$-type-definable open abelian subgroup $H\leq G$.
    \item In particular, $\mu$ is abelian.
\end{enumerate}
\end{theorem}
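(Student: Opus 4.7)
The plan is to adapt the Hrushovski--Pillay argument that 1-based stable groups are abelian-by-finite, by applying the ``few subgroups'' result Theorem \ref{T: few subgroups} to the family of graphs of inner automorphisms. Specifically, I consider the $\emptyset$-type-definable family $\{H_g : g \in G\}$, where
\[ H_g := \{(x, gxg^{-1}) : x \in G\} \leq G^2 \]
is the graph of conjugation by $g$, a $1$-dimensional subgroup of $G^2$. I fix any $b \in G$ with $\dim(b) = 1$; since $G$ is $1$-dimensional, $\mu(b/\emptyset) = \mu(b) = b \cdot \mu$ by Lemma \ref{L: mu translation}. Applying Theorem \ref{T: few subgroups} with $A = \emptyset$, $m = 1$, $n = 2$, $d = 1$, I conclude that the germ of $H_y$ at $(e,e)$ is constant for $y \in b\mu$; in particular, for each $u \in \mu$, the germs of $H_{bu}$ and $H_b$ at $(e,e)$ coincide.

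Next I deduce that $\mu$ is abelian. Fix $u \in \mu$ and choose a neighborhood $W_u$ of $(e,e)$ in $G^2$ on which $H_{bu}$ and $H_b$ agree. By continuity of conjugation, there is a neighborhood $V_u$ of $e$ in $G$ such that for every $x \in V_u$ both $(x, buxu^{-1}b^{-1})$ and $(x, bxb^{-1})$ lie in $W_u$. Since $H_b$ is the graph of a function, these two points -- lying in $H_{bu} \cap W_u = H_b \cap W_u$ -- must share the same second coordinate, yielding $buxu^{-1}b^{-1} = bxb^{-1}$ and hence $uxu^{-1} = x$. Thus $V_u \subseteq C_G(u)$, so $C_G(u)$ is a $\{u\}$-type-definable subgroup of $G$ of nonempty interior. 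By Lemma \ref{L: big subgroups}, $C_G(u) \supseteq \mu$. Since $u \in \mu$ was arbitrary, $\mu$ is abelian.

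For part (1), combine this with compactness and a standard iteration. Set $E := \{(x,y) \in G^2 : xy = yx\}$, a $\emptyset$-definable subset of $G^2$. The previous paragraph gives $\mu \times \mu \subseteq E$; since $\mu \times \mu$ is the intersection of all products $U \times U$ with $U$ an $\mathcal M$-definable open neighborhood of $e$, compactness in $\mathcal N$ yields such a $U_0$ with $U_0 \times U_0 \subseteq E$, i.e., $U_0$ is pairwise-commuting. Using continuity of the group operations, I inductively construct $\mathcal M$-definable open neighborhoods $U_0 \supseteq U_1 \supseteq U_2 \supseteq \cdots$ of $e$ satisfying $U_{i+1} \cdot U_{i+1}^{-1} \subseteq U_i$. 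Then $H := \bigcap_i U_i$ is open in $G$ by Fact \ref{F: int of opens}, closed under $(x,y) \mapsto xy^{-1}$ (hence a subgroup), abelian (as $H \subseteq U_0$), and type-definable over the countable parameter set used to define the $U_i$, proving (1); part (2) then follows because any open subgroup contains $\mu$ by Lemma \ref{L: big subgroups}. The essential subtlety is the passage from the $u$-dependent neighborhood $V_u$ to the uniform containment $\mu \subseteq C_G(u)$ via Lemma \ref{L: big subgroups}; namely, the input that in a t-minimal topological group any subgroup with nonempty interior is automatically open and contains the infinitesimal subgroup.
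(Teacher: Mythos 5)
Your derivation that, for each $u\in\mu$, there is a neighborhood $V_u$ of $e$ with $V_u\subseteq C_G(u)$ is correct, and it matches the paper's intermediate observation that $\mu(b/\emptyset)\mu(b/\emptyset)^{-1}$ lies in the local center of $G$. However, the next step -- invoking Lemma \ref{L: big subgroups} to conclude $\mu\subseteq C_G(u)$ -- is a genuine gap. Lemma \ref{L: big subgroups} requires $H$ to be type-definable over a \emph{parameter set}, which by the paper's convention means a small subset of $\mathcal M^{eq}$. But $u\in\mu\subseteq N$, so $C_G(u)$ is only $\mathcal N$-definable, and the proof of $(2)\Rightarrow(3)$ in Lemma \ref{L: big subgroups} breaks down: the key step is that each $\mathcal M$-definable set $D_i$ in the type-definition of $H$ has interior at $e$, so $\operatorname{int}(D_i)$ is an $\mathcal M$-definable open neighborhood of $e$ and hence contains $\mu$. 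If $D_i$ is merely $\mathcal N$-definable, $\operatorname{int}(D_i)$ is $\mathcal N$-definable and can be strictly smaller than $\mu$ (indeed, for any $u\in\mu\setminus\{e\}$, Hausdorffness gives an $\mathcal N$-definable open neighborhood of $e$ omitting $u$). So at most you would get $\mu_{\mathcal N}(e)\subseteq C_G(u)$, not $\mu\subseteq C_G(u)$, and these differ.

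What you have actually shown is that $\mu\subseteq H$, where $H$ is the $\emptyset$-type-definable \emph{local center} $\{a\in G : x\mapsto axa^{-1}=\operatorname{id}\ \text{near}\ e\}$. The paper closes the gap by exploiting precisely the $\emptyset$-type-definability of $H$: since $\dim(H)=1$, saturation of $\mathcal M$ produces $h\in H\cap M$ with $\dim(h)=1$; for such an $h$, by elementarity the witness to ``$h$ commutes with a neighborhood of $e$'' can be taken $\mathcal M$-definable, so $h$ commutes with $\mu$. One then takes $u\in\mu$ with $\dim(h,u)=2$, transfers $u$ down to $v\in M$ by saturation, and uses that the definable commutativity condition holds at the $2$-dimensional point $(h,v)$ to get commuting $\mathcal M$-definable open sets $U',V'\ni e$. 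Only now is Lemma \ref{L: big subgroups} legitimately applied (to the centralizer of an $\mathcal M$-definable set). To repair your proof, you would need this detour through the $\emptyset$-type-definable local center and a generic $\mathcal M$-element of it; the direct argument with $C_G(u)$ for $u\in N$ does not go through.

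Your reduction of part (1) to part (2) via compactness is fine once part (2) is established.
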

\begin{proof}
    The family of (graphs of) conjugation maps $x\mapsto axa^{-1}$ is a $\emptyset$-type-definable family of subgroups of $G^2$. Let $b\in G$ with $\dim(b)=1$. Then by Theorem \ref{T: loc lin}, the germ of $x\mapsto yxy^{-1}$ at the identity is constant for $y\in\mu(b/\emptyset)$.
    
    Let $H$ be the \textit{local center} of $G$ -- the set of elements $a\in G$ so that $x\mapsto axa^{-1}$ is the identity on a neighborhood of $e$. Then $H$ is a $\emptyset$-type-definable subgroup of $G$. Since any two elements of $\mu(b/\emptyset)$ induce the same conjugation on a neighborhood of $e$, it follows that $\mu(b/\emptyset)\mu(b/\emptyset)^{-1}\subset H$. In particular, $H$ is infinite, and so $\dim(H)=1$. Let $h\in H$ with $\dim(h)=1$, and then let $u\in\mu$ with $\dim(h,u)=2$ (so $h\in M$ and $u\in N$). So $h$ commutes with an $\mathcal M$-definable neighborhood of the identity, and so $h$ commutes with $u$. Now since $\mathcal M$ is sufficiently saturated, there is $v\in G$ from $\mathcal M$ so that $\dim(h,v)=2$ and $h$ and $v$ commute. 
    
    Since $\dim(h,v)=2$, the $\emptyset$-definable condition `$x$ and $y$ commute' holds on all $(x,y)$ in some $\mathcal M$-definable neighborhood of $(h,v)$. In particular, there are $\mathcal M$-definable open neighborhoods, $U$ of $h$ and $V$ of $v$, so that all elements of $U$ commute with all elements of $V$. But the centralizer of $V$ is a subgroup, so since it has non-empty interior, it contains an $\mathcal M$-definable neighborhood of $e$, say $U'$ (see Lemma \ref{L: big subgroups}). Similarly, the centralizer of $U'$ now contains an $\mathcal M$-definable neighborhood $V'$ of $e$. So all elements of $U'$ commute with all elements of $V'$. It follows that $\mu$ is abelian, because $\mu\subset U'\cap V'$. Thus we have shown (2).

    To show (1), inductively construct a sequence $U_1\supset U_2\supset\dots$ of $\mathcal M$-definable open neighborhoods of $e$ so that $U_1\subset U'\cap V'$ and each $U_{i+1}\cdot U_{i+1},U_{i+1}^{-1}\subset U_i$. Then $\bigcap U_i$ is an open type-definable abelian subgroup defined over a countable set. 
\end{proof}

\subsection{Summarizing}

Finally, we end the paper by giving our main theorem in full. Thus, we now drop all underlying assumptions that were present in this section until now.

\begin{theorem}\label{T: main 1-based}
    Let $\mathcal M$ be a sufficiently saturated t-minimal structure with the independent neighborhood property. Assume $\mathcal M$ is non-trivial and topologically 1-based. Then there are a countable parameter set $A$, and an $A$-type-definable abelian group $G$, such that the following hold:
    \begin{enumerate}
        \item $G$ is open in $\mathcal M$.
        \item $G$ is a topological group with the topology inherited from $M$.
        \item $G$ is locally linear.
    \end{enumerate}
\end{theorem}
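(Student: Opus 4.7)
The plan is to assemble Theorem \ref{T: main 1-based} as a synthesis of three results already established in Sections 10 and 11, with all the real work having been done in the preceding chain of arguments (the structure theorem for topologically 1-based types, the group configuration machinery, the topologization procedure, and the Hrushovski--Pillay-style local analysis).

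First I would invoke Theorem \ref{T: top gp 1-based}: under the standing hypotheses (sufficient saturation, t-minimality, the independent neighborhood property, non-triviality, and topological 1-basedness), this yields a countable parameter set $B$ and a $B$-type-definable group $(G_0,\cdot)$ whose universe is a non-empty open subset of $M$, and which is a topological group with the subspace topology inherited from $M$. At this stage, clauses (1) and (2) already hold for $G_0$.

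Next I would observe that because $\mathcal{M}$ is topologically 1-based, the group $G_0$ is topologically 1-based in the sense of Definition \ref{D: 1-based group}: for every choice of real tuples $a\in G_0^m$, $b\in G_0^n$, and parameter set $A$, the type $\tp(a/Ab)$ is topologically 1-based over $A$, simply because $a,b$ are real tuples in $\mathcal{M}$. By Theorem \ref{T: loc lin}, $G_0$ is therefore locally linear. Then applying Theorem \ref{T: local abelian}(1) to $G_0$ produces a countable parameter set $A\supseteq B$ and an $A$-type-definable open abelian subgroup $G\leq G_0$. Since $G$ is open in $G_0$ and $G_0$ is open in $M$, $G$ is open in $M$, giving (1); the group operations of $G$ are restrictions of those of $G_0$, so $G$ remains a topological group in the subspace topology inherited from $M$, giving (2).

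It remains to verify (3), local linearity of $G$. This follows from local linearity of $G_0$: since $G$ is an open subgroup of $G_0$ containing the identity, the infinitesimal neighborhood $\mu = \mu(e)$ is the same whether computed in $G$, in $G_0$, or in $M$. Hence, for any $a\in G^n$ and parameter set $A'$, the coset structure of $\mu(a/A')$ inside $\mu^n$ provided by local linearity of $G_0$ (via the characterization in Lemma \ref{L: loc lin char}) transfers verbatim to $G$. This yields (3).

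The step that requires the most care is not any single computation but the bookkeeping: ensuring that the parameter sets remain countable through each stage (which they do, since we only combine and extend by countable unions), and that the identity element and the infinitesimal $\mu$ are coherently identified when passing from $G_0$ down to the abelian subgroup $G$. Since no genuine obstacle arises, the proof proceeds as a clean assembly of Theorems \ref{T: top gp 1-based}, \ref{T: loc lin}, and \ref{T: local abelian}.
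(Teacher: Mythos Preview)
Your proposal is correct and follows essentially the same approach as the paper: invoke Theorem~\ref{T: top gp 1-based} to obtain the open topological group, use Theorem~\ref{T: loc lin} to deduce local linearity from the ambient topological 1-basedness, and apply Theorem~\ref{T: local abelian} to pass to an open abelian subgroup. Your write-up is more explicit than the paper's three-line proof in spelling out why local linearity survives the passage to the abelian subgroup and in tracking the countability of the parameter set, but the logical structure is identical.
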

\begin{proof}
    By Theorem \ref{T: top gp 1-based}, we can satisfy all requirements other than abelianity and local linearity. But local linearity follows by Theorem \ref{T: loc lin}; and by Theorem \ref{T: local abelian}, after further shrinking we can arrange that $G$ is abelian.
\end{proof}

The following may also be worth noting. We leave the details for the interested reader to verify: 
\begin{remark}
    Let $\mathcal G$ be a $t$-minimal structure with independent neighbourhoods. Assume that $\mathcal G$ expands a group, $(G, \cdot, e)$. Let $\tau$ denote the given definable topology on $G$. By  Lemma \ref{L: loc top gp} $G$ is definably isomorphic to a group $G_d$ locally topological with respect to the topology of the $t$-minimal structure $\mathcal G$. By (the proof of)Lemma \ref{L: mu subgroup} the intersection $\mu_d$ of all $G$-definable neighbourhoods of $d$ (the identity element of $G_d$) is a type-definable topological subgroup of $G_d$. A short argument shows that $\mu_d$ is invariant under conjugation by any element of $G_d$ (in the same model over which $\mu_d$ was defined). By compactness, this implies that conjugation by $g\in G_d$ is continuous at $d$ (for $g$ in any model) -- see e.g., the proof of \cite[Proposition 5.17(3)]{HaHaPeGps}. By \cite[Lemma 2.11]{HaHaPeGps}, there is an (automatically uniform) definable group topology $\tau_d$ on $G_d$ extending the topology on $\mu_d$. Since $\mathcal G$ is $t$-minimal, we get that $G_d$ with $\tau_d$ is visceral. Indeed, any infinite definable set $S$ contains a $\tau$-open set $U$, and for any generic $d'\in U$ we get that $\mu_{d'}\subseteq U$ is open and definably  $\tau_d$-homeomorphic to $\mu_d$, so $\tau_d$ open. So $S$ has non-empty $\tau_d$-interior. By the main result of Appendix B this implies that $(G_d, \tau_d)$ has the independent neighborhood property, so by Lemma \ref{L: unique top} it coincides with $\tau$ away from finitely many points. Since $G_d$ is definably isomorphic to $G$, this isomorphism induces a definable group topology $\rho$ on $G$, and by the same argument  $\rho$ and $\tau$ coincide outside finitely many points. In particular, a topological $t$-minimal group admits exactly one $t$-minimal group topology. 
    \end{remark}
 
\appendix

\section{Proving the Independent Neighborhood Property}

Here, we include a brief appendix to demonstrate that various well-known structures have the independent neighborhood property. It is possible that the facts we present here were already known. In this case, our contribution is to give a unified treatment.

\begin{assumption}
    Throughout the appendix, $\mathcal M$ is a sufficiently saturated t-minimal structure. We do not assume $\mathcal M$ has the independent neighborhood property.
\end{assumption}

\subsection{Broadness at a Point}

In his paper, Johnson defines the class of \textit{broad} subsets of $M^n$. Here we give a similar notion but applying locally near one point. This is not quite a direct translation, because Johnson's notion of broadness is invariant under coordinate permutations and ours is not. Nevertheless, our notion will better suit the argument at hand.

\begin{definition}
    For each $a\in M$, we inductively define the class of \textit{$a$-broad} subsets of $M^n$ as follows:
    \begin{itemize}
        \item If $n=1$, then $B\subset M$ is $a$-broad if $a$ is an accumulation point of $B$, i.e. every neighborhood of $a$ contains a point of $B-\{a\}$.
        \item If $n\geq 1$, then $B\subset M^{n+1}=M^n\times M$ is $a$-broad if there is an $a$-broad set $C\subset M$ such that for each $c\in C$, the fiber $X_c\subset M^n$ is $a$-broad.
    \end{itemize}
\end{definition}

\begin{remark} By induction, note that the assertion `$X$ is $a$-broad' is $\emptyset$-definable in families (one only needs the $\emptyset$-definability of the topology).
\end{remark}

The main elementary fact we need about $a$-broad sets is the following:

\begin{lemma}\label{L: broad union}
    Let $a\in M$, and let $\{B_i:i\in I\}$ be a small collection of subsets of $M^n$. If $B=\bigcup_{i\in I}B_i$ is $a$-broad, then some $B_i$ is $a$-broad. 
\end{lemma}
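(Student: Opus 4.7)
The plan is to proceed by induction on $n$.

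For the base case $n=1$, suppose $B = \bigcup_{i\in I} B_i$ is $a$-broad and yet no $B_i$ is $a$-broad. Then for every $i \in I$ there is an open neighborhood $U_i$ of $a$ with $U_i \cap (B_i - \{a\}) = \emptyset$. Since $I$ is small, Fact~\ref{F: int of opens} (applied in the ambient space $M$) guarantees that $U = \bigcap_{i\in I} U_i$ is still an open neighborhood of $a$. But any point of $U \cap (B - \{a\})$ would lie in some $B_i$, contradicting $U \subset U_i$ and the choice of $U_i$. Hence $U$ witnesses that $a$ is not an accumulation point of $B$, contradicting $a$-broadness of $B$.

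For the inductive step, assume the lemma for $n$ and consider $B = \bigcup_{i \in I} B_i \subset M^n \times M$ with $B$ being $a$-broad. By definition there exists an $a$-broad set $C \subset M$ such that for every $c \in C$, the fiber $B_c \subset M^n$ is $a$-broad. Fix such a $C$. For each $c \in C$, we have $B_c = \bigcup_{i \in I} (B_i)_c$, so by the inductive hypothesis applied to this small union, there exists some $i(c) \in I$ with $(B_{i(c)})_c$ being $a$-broad. Choosing such an $i(c)$ for each $c$ partitions $C$ as $C = \bigsqcup_{i \in I} C_i$, where $C_i = \{c \in C : i(c) = i\}$. Since $C$ is $a$-broad and the partition is small, applying the base case ($n=1$) of the lemma to this union yields some $i^* \in I$ such that $C_{i^*}$ is $a$-broad. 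Then $C_{i^*}$ witnesses that $B_{i^*}$ is $a$-broad: for every $c \in C_{i^*}$ the fiber $(B_{i^*})_c$ is $a$-broad by construction, and $C_{i^*}$ itself is $a$-broad.

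The main (and essentially only) obstacle is making sure that in the base case the neighborhoods $U_i$ can be intersected while keeping openness — this is exactly what Fact~\ref{F: int of opens} provides, using that $I$ is small. Aside from this, the argument is a clean induction, with the inductive step reducing via the Axiom of Choice to two applications of the previous case: one for each fiber, and one for the resulting partition of the witness set $C$.
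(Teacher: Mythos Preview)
Your proof is correct and follows essentially the same approach as the paper: induction on $n$, with the base case using Fact~\ref{F: int of opens} to intersect the small family of neighborhoods, and the inductive step reducing via the fibers to the base case. The only cosmetic difference is that the paper defines $C_i = \{c \in C : (B_i)_c \text{ is } a\text{-broad}\}$ directly (a covering rather than a partition), avoiding the explicit choice function, but the logic is identical.
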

\begin{proof}
    We induct on $n$. If $n=1$, this is just Fact \ref{F: int of opens}: if no $B_i$ is $a$-broad, we can find open $U_i$ containing $a$ and disjoint from $B_i$. Then $\bigcap U_i$ is open and disjoint from $B$, so $B$ is not $a$-broad, a contradiction.
    
    Now suppose $n=m+1$ where $m\geq 1$. Since $B$ is $a$-broad, there is an $a$-broad set $C\subset M$ so that for all $c\in C$ the fiber $B_c\subset M^m$ is $a$-broad. For each $i\in I$, let $C_i$ be the set of $c\in C$ so that the fiber $(B_i)_c$ is $a$-broad. For each $c\in C$, the fibers $(B_i)_c$ union to the $a$-broad set $B_c$, so by induction one of the $(B_i)_c$ is $a$-broad. Thus $\bigcup C_i=C$, and so by the base case some $C_{i_0}$ is $a$-broad. Then by definition, $B_{i_0}$ is $a$-broad.
\end{proof}

\subsection{Algebraicity over Broad Sets, Part 1}

There are many examples of t-minimal theories admitting locally constant definable functions $f:M\rightarrow M$ with infinite image (indeed, every failure of exchange in a t-minimal theory looks something like this). However, no such function can be a choice function: that is, one cannot hope that the constant value taken on an open set $U\subset M$ always belongs to $U$ (otherwise, the image of $f$ would be infinite and discrete, which is impossible). Lemma \ref{L: broad algebraic} below captures this idea on a general level:

\begin{lemma}\label{L: broad algebraic}
    Let $a\in M$, let $A$ be a parameter set, and let $B\subset M^n$ be $a$-broad. If $a\in\acl(Ab)$ for all $b\in B$, then $a\in\acl(A)$.
\end{lemma}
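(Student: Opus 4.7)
The plan is to induct on $n$, with the inductive step handled by unpacking the definition and the base case $n=1$ carrying the real content. For the inductive step $n+1$, I would unpack $a$-broadness of $B\subseteq M^n\times M$ to produce an $a$-broad set $C \subseteq M$ such that every fiber $B_c \subseteq M^n$ (for $c \in C$) is itself $a$-broad. For each $c \in C$ and $b \in B_c$, $(b,c) \in B$ gives $a \in \acl(Abc)$, so the inductive hypothesis applied to $B_c$ over $A \cup \{c\}$ yields $a \in \acl(Ac)$. Thus $a \in \acl(Ac)$ for every $c$ in the $a$-broad set $C$, which is precisely the hypothesis of the base case with $C$ in place of $B$.

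For the base case $n=1$, I would first reduce via compactness and Lemma~\ref{L: broad union}: for each $b \in B$ pick an $A$-formula $\phi_b(x,y)$ and integer $k_b$ with $\phi_b(a,b)$ and $|\phi_b(M,b)| \leq k_b$; writing $B = \bigcup_{\phi,k} B_{\phi,k}$ with $B_{\phi,k} = \{b \in B : \phi(a,b) \wedge |\phi(M,b)| \leq k\}$, the broad-union lemma yields a single $A$-formula $\phi$ and uniform bound $k$ with $B_{\phi,k}$ still $a$-broad. Set $Y = \{y : |\phi(M,y)| \leq k\}$ and $W = \{x \in M : \phi(x,\cdot) \cap Y \text{ has non-empty interior in } M\}$, both $A$-definable (the latter because non-empty interior is detected by the $\emptyset$-definable topological basis, a first-order property). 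Since $\phi(a,\cdot) \cap Y$ is $Aa$-definable, contains $B_{\phi,k}$, and therefore has $a$ as an accumulation point, t-minimality forces it to have non-empty interior, so $a \in W$; it thus suffices to prove that $W$ is finite.

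Suppose for contradiction that $W$ is infinite; by t-minimality pick a non-empty $A$-definable open $W' \subseteq W$, and consider $R = \{(x,y) \in W' \times Y : \phi(x,y)\} \subseteq M^2$. The projection $R \to Y$ has fibers of size at most $k$, so Fact~\ref{F: tmin}(4) gives $\dim R \leq \dim Y \leq 1$. But for every $x \in W'$ the fiber $R_x = \phi(x,\cdot) \cap Y$ has non-empty interior in $M$ by definition of $W$, while $W'$ is itself a non-empty open subset of $M$; arranging the coordinate indexed by $W'$ as the outer one, this realises the iterated-non-empty-interior property defining broadness of $R$ in Johnson's sense, so $R$ is broad in $M^2$ and therefore has non-empty interior and dimension $2$. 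This contradicts $\dim R \leq 1$; hence $W$ is finite, and $a \in W$ gives $a \in \acl(A)$.

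The main obstacle I expect is the second dimension estimate $\dim R = 2$ in the base case. Without exchange or independent neighbourhoods one cannot simply combine $\dim(x_0/A) = 1$ and $\dim(y_0/Ax_0) = 1$ for a generic $(x_0,y_0) \in R$ to conclude $\dim(x_0,y_0/A) = 2$, since additivity of dimension may fail in general t-minimal structures; the argument must route through Johnson's broadness theory, which packages precisely the local-to-global statement that a definable set with sufficiently fat fibres over an open base has $n$-dimensional interior in $M^n$.
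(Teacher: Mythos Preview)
Your inductive step and the compactness reduction to a single formula $\phi$ with uniform bound $k$ are fine and match the paper. The gap is the claim that $R$ is broad in Johnson's sense and hence has $\dim R=2$. Johnson's broadness means $R$ contains a product $S_1\times S_2$ of infinite sets, and this is outright false here: since $\pi_2$ is $k$-to-one on $R$, each $\pi_2$-fiber over a point of $S_2$ would have to contain $S_1$. What you are really asserting is that an open base $W'$ with each fiber $R_x$ having non-empty interior forces $\dim R=2$; this is a form of dimension additivity, which need not hold in a bare t-minimal theory. Indeed every $(x,y)\in R$ already satisfies $x\in\acl(Ay)$, so $\dim(xy/A)=\dim(y/A)\le 1$ no matter how large $R_x$ is. Your fibre-to-global principle is in fact stronger than the \emph{few broad sets property} that the paper explicitly leaves open in general (see the discussion preceding Definition~\ref{D: few broad sets}).

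The paper's argument avoids dimension theory entirely by using a self-referential condition: its auxiliary set $Y$ consists of those $y$ whose fiber $\{z:(y,z)\in X\}$ is \emph{$y$-broad}, i.e.\ accumulates at $y$ itself, not merely has interior somewhere. Assuming an open $U\subset Y$, one inductively picks $a_i\in U$ and shrinking non-empty open $U_i$: since $a_i\in Y$, the fiber over $a_i$ accumulates at $a_i$, so it meets the current neighborhood in an infinite definable set, whose interior supplies $U_i$. This yields $(a_i,a_j)\in X$ for all $i<j$, so the $\pi_2$-fiber over $a_j$ has at least $j-1$ elements, directly contradicting the bound $k$. The accumulation-at-the-basepoint condition is precisely the idea your set $W$ discards.
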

\begin{proof}
    We induct on $n$. The inductive step is straightforward: assume $n=m+1$ for some $m\geq 1$, and further assume the statement is true for all values less than $n$. Let $C\subset M$ be $a$-broad so that $B_c$ is $a$-broad for each $c\in C$. For each $c\in C$, applying the inductive hypotheses to $B_c$ over the parameter set $Ac$ gives  $a\in\acl(Ac)$. Now applying the base case to $C$ gives that $a\in\acl(A)$.

    The bulk of the work is the case $n=1$. So assume $B\subset M$ is $a$-broad and $a\in\acl(Ab)$ for all $b\in B$. By Lemma \ref{L: broad union}, this algebraicity is witnessed uniformly on a broad set. That is, after potentially replacing  $B$ with a smaller (still $a$-broad) subset, we can find an $A$-definable $X\subset M^2$ with the following properties:
    \begin{enumerate}
        \item $(a,b)\in X$ for all $b\in B$.
        \item The second projection $\pi_2:M^2\rightarrow M$ is at most $k$-to-1 on $X$ for some integer $k$.
    \end{enumerate}

    Let $\pi_1:M^2\rightarrow M$ be the first projection. Let $Y$ be the set of $y\in M$ so that the fiber $\pi_1^{-1}(y)\cap X$ is $y$-broad. Note that $Y$ is $A$-definable and contains $a$ (as $\pi^{-1}(a)\supseteq B$). If $a$ is a boundary point of $Y$, then by t-minimality, $a\in\acl(A)$, and we are done. We show that $a$ cannot be an interior point of $Y$. Assume otherwise, and let $U$ be an open neighborhood of $a$ with $U\subset Y$.

    We now inductively construct elements $a_1,a_2,\dots\in U$ and non-empty open sets $U=U_0\supset U_1\supset U_2\dots$ with the following properties:

    \begin{enumerate}
    \item $a_i\in U_j$ whenever $i>j$.
    \item Each $\{a_i\}\times U_i\subset X$.
    \end{enumerate}

    Suppose $i\geq 1$ and we have built all previous $a_j$ and $U_j$. First, let $V_i$ be the intersection of all previous $U_j$ (so if $i=0$ then $V_i=U_0=U$). Since the previous $U_j$ are non-empty and form a descending chain, $V_i$ is non-empty. Now let $a_i$ be any element of $V_i$, noting that this will ensure (1) is satisfied. 

    Next, by definition $a_i\in U_0\subset Y$, so the fiber $F_i=\pi_1^{-1}(a_i)\cap X$ is $a_i$-broad. In particular, $F_i\cap V_i$ is infinite, so has non-empty interior by t-minimality. Now let $U_i$ be any non-empty open subset of $F_i\cap V_i$, and note that this ensures we satisfy (2). 

    Finally, assume we have built $a_i$ and $U_i$ as above for all $i$. Then by (1) and (2), we have:

    \begin{enumerate}
        \item[(3)] $(a_i,a_j)\in X$ whenever $i<j$. 
    \end{enumerate}

    Indeed, if $i<j$ then (1) gives $a_j\in U_i$, and so (2) gives $(a_i,a_j)\in X$. In particular, (3) implies that the fibers $\pi_2^{-1}(a_j)\cap X$ become arbitrarily large as $j\rightarrow\infty$, which contradicts that $\pi_2$ is $k$-to-1 on $X$.
\end{proof}

\subsection{Algebraicity over Broad Sets, Part 2}

Next we consider a counterpart to Lemma \ref{L: broad algebraic} where we consider two points. That is, suppose $B$ is $a$-broad, and $c\in\acl(Ab)$ for all $b\in B$. Does it follow that $c\in\acl(Aa)$? In fact, we do not know how to prove this, but we do not know of a t-miimal theory where it fails. Instead, we make it a definition:

\begin{definition}\label{D: few broad sets}
    $\mathcal M$ has the \textit{few broad sets property} if the following holds: let $X\subset M^2$ be definable, $\pi_1, \pi_2: M^2\to M$ the projections onto the first and second factors, respectively. Assume that $\pi_2$  is finite-to-one on $X$. Then for each $a\in M$, there are only finitely many $b\in M$ so that the fiber $\{y:(b,y)\in X\}$ is $a$-broad.
\end{definition}

We give some examples:

\begin{example} Suppose $\mathcal M$ satisfies exchange. Then $\mathcal M$ has the few broad sets property. Indeed, let $X$ and $a$ be as in Definition \ref{D: few broad sets}. If infinitely many fibers in $X$ under $\pi_1$ are $a$-broad, then in particular, infinitely many fibers in $X$ under $\pi_1$ are infinite. By exchange, this implies $\dim(X)=2$, which contradicts that $\pi_2$ is finite-to-one on $X$.  
\end{example}

\begin{example}
    Say that $\mathcal M$ has the \textit{few germs property} if for all $a\in M$, there are only a small (i.e. bounded) number of definable germs at $a$. For example:
    \begin{itemize}
        \item Every weakly o-minimal theory has the few germs property. Indeed, in this case the germ of a definable set $X\subset M$ at $a\in X$ has only four possibilities: (i) the single point $a$; (ii) an interval $[a,b)$ with $b>a$; (iii) an interval $(b,a]$ with $b<a$; or (iv) an interval $(b,c)$ with $b<a<c$.
        \item Every C-minimal theory has the few germs property. Indeed, suppose $\mathcal M$ is C-minimal. Then given a definable $X\subset M$ and $a\in X$, either $a$ is isolated in $X$ or $X$ contains a ball around $a$. Thus, there are only two definable germs at $a$.
    \end{itemize}
    Now we show that if $\mathcal M$ has the few germs property, then it has the few broad sets property. Indeed, let $X\subset M^2$ and $a$ be as in Definition \ref{D: few broad sets}. For each $y\in M$, let $B_y=\{a\}\cup\{z:(y,z)\in X\}$, i.e. the fiber above $y$ in $X$ under the first projection (with $a$ added to ensure that $\operatorname{germ}_a(B_y)$ is well-defined; note that adding or removing $a$ does not affect $a$-broadness).
    
    Let $Y$ the (definable) set of $y\in M$ so that $B_y$ is $a$-broad. Toward a contradiction, assume $Y$ is infinite. Then $Y$ is not small, so by the few germs property, there is an infinite $Z\subset Y$ so that $\operatorname{germ}_a(B_y)$ is constant for $y\in Z$. Thus any finitely many $B_y$ for $y\in Z$ have infinite intersection (here we use that all such $B_y$ are $a$-broad). In particular, for any positive integer $k$ we can find sets $S_k,T_k\subset M$ with $|S_k|=|T_k|=k$ and $S_k\times T_k\subset X$. By compactness, we can find infinite sets $S,T\subset M$ with $S\times T\subset X$. This contradicts that $\pi_2$ is finite-to-one on $X$.
\end{example}

\begin{example} For those well-versed in invariant types, one can generalize the previous example as follows: Say that a global 1-type is $a$-broad if every formula in it defines an $a$-broad set. Then say $\mathcal M$ is \textit{locally invariant} if for each $a\in M$, every $a$-broad global 1-type is $a$-invariant (in fact, it is enough to find a small set $B$ so that every $a$-broad 1-type is $Ba$-invariant). Weakly o-minimal and C-minimal theories are easily seen to be locally invariant. Now suppose $\mathcal M$ is locally invariant; we show $\mathcal M$ has the few broad sets property. Let $X$ and $a$ be as in Definition \ref{D: few broad sets}, where $X$ is $A$-definable. In the case that every $a$-broad type is only $Ba$-invariant for a fixed $B$, we include $B$ in $A$; in any case, every $a$-broad type is now $Aa$-invariant.

Now assuming that infinitely many $\pi_1$-fibers in $X$ are $a$-broad, one finds $b\in M-\acl(Aa)$ so that $\pi_1^{-1}(b)\cap X$ is $a$-broad. A compactness argument (using Lemma \ref{L: broad union}) produces a global $a$-broad type $p$ inside the fiber $\pi^{-1}(b)\cap X$. By $Aa$-invariance, and the fact that $b\notin\acl(Aa)$, any realization $x\models p$ satisfies $x\times T\subset X$, where $T$ is the infinite set of realizations of $\tp(b/Aa)$ in $\mathcal M$. But since $p$ is $a$-broad, it is not algebraic. Let $S$ be an infinite set of realizations of $p$ in an elementary extension. Then $S\times T\subset X$, and we again contradict that $\pi_2$ is finite-to-one on $X$. 
\end{example}

Now using a similar argument to Lemma \ref{L: broad algebraic}, we show:

\begin{lemma}\label{L: broad algebraic 2}
    Assume $\mathcal M$ has the few broad sets property. Let $a,c\in M$, $A$ a parameter set, and $B\subset M^n$ an $a$-broad set. If $c\in\acl(Ab)$ for all $b\in B$, then $c\in\acl(Aa)$.
\end{lemma}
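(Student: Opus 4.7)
The plan is to mirror the architecture of Lemma \ref{L: broad algebraic}: induct on $n$, with the inductive step essentially identical to the one there, and with the base case $n=1$ being where the few broad sets property enters (replacing the direct argument used in Lemma \ref{L: broad algebraic}).

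For the inductive step, assume the statement holds for values less than $n = m+1$, with $m \geq 1$. By definition of $a$-broadness, there is an $a$-broad set $D \subset M$ such that the fiber $B_d \subset M^m$ is $a$-broad for every $d \in D$. Fix $d \in D$. For each $b' \in B_d$, we have $(d, b') \in B$, so $c \in \acl(A, d, b')$ by hypothesis; the inductive hypothesis applied with parameter set $Ad$ then yields $c \in \acl(Aad)$. Since this holds for every $d \in D$, a single application of the base case $n=1$, this time with parameter set $Aa$ and broad set $D$, gives $c \in \acl(Aa)$.

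For the base case $n=1$, I would use the same compactness reduction as in Lemma \ref{L: broad algebraic}. For each $b \in B$, the algebraicity $c \in \acl(Ab)$ is witnessed by some $A$-definable set together with an integer bounding the relevant fiber, and the collection of such (formula, integer) pairs is small. By Lemma \ref{L: broad union}, a single pair $(X,k)$ then serves for an $a$-broad subset $B' \subseteq B$. Replacing $X$ by its $A$-definable subset where the $\pi_2$-fibers have size at most $k$, we may assume $\pi_2 : X \to M$ is everywhere (at most) $k$-to-$1$, while still retaining $(c, b) \in X$ for every $b \in B'$.

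Now $\{y : (c, y) \in X\}$ contains $B'$, hence is $a$-broad, so the few broad sets property applied to $X$ and $a$ shows that the set
\[
Z \;=\; \{b \in M : \{y : (b, y) \in X\} \text{ is } a\text{-broad}\}
\]
is finite. Because $a$-broadness is $\emptyset$-definable in families (noted after the definition), $Z$ is $Aa$-definable; being finite, $Z \subseteq \acl(Aa)$, and $c \in Z$ gives $c \in \acl(Aa)$, as required. I do not expect a genuine obstacle: given the infrastructure already assembled, the only delicate point is the bookkeeping in the compactness step that extracts a uniform $(X,k)$ from the individual algebraic witnesses, and this is handled by Lemma \ref{L: broad union} in precisely the same way as in the $n=1$ case of Lemma \ref{L: broad algebraic}.
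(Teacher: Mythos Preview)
Your proof is correct and matches the paper's approach exactly: an induction on $n$ with the inductive step identical to that of Lemma~\ref{L: broad algebraic}, and a base case that reduces to a single $A$-definable $X$ with finite $\pi_2$-fibers and then invokes the few broad sets property to conclude that the finite, $Aa$-definable set of points with $a$-broad $\pi_1$-fiber contains $c$. The only cosmetic slip is the coordinate order $(d,b')$ versus $(b',d)$ in the inductive step, which is immaterial since $\acl(A,d,b') = \acl(A,b',d)$.
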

\begin{proof}
    Similar to Lemma \ref{L: broad algebraic}. We induct on $n$, the inductive step being essentially identical to that of Lemma \ref{L: broad algebraic}. For the base case, assume $n=1$. As in Lemma \ref{L: broad algebraic}, we can reduce to a single $A$-definable set $X\subset M^2$ witnessing all algebraicities. So if $\pi_1,\pi_2$ are the two projections, then $\pi_2$ is finite-to-one on $X$ and $\pi_1^{-1}(c)\cap X$ is $a$-broad. By the few broad sets property, $c$ is one of only finitely many points with $a$-broad fiber in $X$. Thus $c\in\acl(Aa)$. 
\end{proof}

\subsection{Independent Parameters from a Broad Set}

Assume $\mathcal M$ has the few broad sets property. We now prove that given a tuple and a broad set at any of its coordinates, we can choose parameters from the broad sets without decreasing the dimension of the whole tuple. In many cases, this is sufficient to prove the independent neighborhood property.

We begin by proving the result at a single coordinate. We then conclude the full version inductively.

\begin{lemma}\label{L: ind broad}
    Assume $\mathcal M$ has the few broad sets property. Let $a=(a_1,\dots,a_n)\in M^n$, let $A$ be a parameter set, and let $B\subset M^m$ be $a_i$-broad for some $i$. Then there is $b\in B$ so that $\dim(a/Ab)=\dim(a/A)$.
\end{lemma}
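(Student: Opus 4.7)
The plan is to argue by contradiction. Let $d=\dim(a/A)$. The case $d=0$ is immediate: then $a\in\acl(A)$, so $\dim(a/Ab)=0$ for any $b$, and any $b\in B$ (nonempty, being $a_i$-broad) works. So assume $d\geq 1$ and suppose for contradiction that every $b\in B$ satisfies $\dim(a/Ab)<d$. Then for each such $b$ there is a proper subtuple $c\subsetneq a$ with $|c|\leq d-1$ and $a\in\acl(Abc)$; padding with extra coordinates of $a$ (possible since $d\leq n$), we may take $|c|=d-1$. Since $a$ has only finitely many length-$(d-1)$ subtuples, we obtain a finite decomposition $B=\bigcup_c B_c$, where $B_c=\{b\in B:a\in\acl(Abc)\}$ and $c$ ranges over length-$(d-1)$ subtuples of $a$. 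By Lemma \ref{L: broad union}, some $B_c$ is $a_i$-broad; fix such a $c$.

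The heart of the argument is to apply the two broad-algebraicity lemmas over the parameter set $A\cup c$ to conclude that every coordinate of $a$ lies in $\acl(Ac)$, contradicting $\dim(a/A)=d>|c|$. First, since $a_i$ is a coordinate of $a$, we have $a_i\in\acl((Ac)b)$ for every $b\in B_c$; Lemma \ref{L: broad algebraic} applied over $Ac$ to the $a_i$-broad set $B_c$ yields $a_i\in\acl(Ac)$. Second, for any other coordinate $a_j$ of $a$, the same relation $a_j\in\acl((Ac)b)$ holds for all $b\in B_c$, and now Lemma \ref{L: broad algebraic 2} (which requires the few broad sets hypothesis) gives $a_j\in\acl(Aca_i)$; combined with the previous step, $a_j\in\acl(Ac)$. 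Hence $a\in\acl(Ac)$, the desired contradiction.

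The main subtlety to watch is that the broad set $B_c$ is $a_i$-broad, not $a_j$-broad for the other coordinates $a_j$ of $a$, so Lemma \ref{L: broad algebraic} cannot be directly applied to handle those $a_j$. It is precisely Lemma \ref{L: broad algebraic 2}, whose hypothesis is the few broad sets property, that bridges this gap by transferring algebraicity from $b$ across to the single distinguished coordinate $a_i$; this is the only place the few broad sets assumption enters the argument, and it is essential.
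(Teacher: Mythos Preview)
Your proof is correct and follows the same overall strategy as the paper's: argue by contradiction, use Lemma~\ref{L: broad union} to pass to a uniform witness, and then apply Lemmas~\ref{L: broad algebraic} and~\ref{L: broad algebraic 2} to reach a contradiction with $\dim(a/A)=d$.

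The organizational details differ slightly, and your version is arguably cleaner. The paper first fixes a basis $e$ of $a$ over $A$, carefully chosen so that $a_i\in e$ whenever $a_i$ lies in \emph{some} basis; it then decomposes $B$ according to which coordinate $x\in e$ becomes dependent over $Ab\cup(e\setminus\{x\})$, and splits into two cases according to whether $x=a_i$. The case $x\neq a_i$ needs the clever basis choice to force $a_i\in e$ and close the argument. You instead decompose $B$ over all length-$(d-1)$ subtuples $c$ of $a$, and once a single $c$ is fixed you treat every coordinate $a_j$ uniformly: Lemma~\ref{L: broad algebraic} handles $a_i$ directly, and Lemma~\ref{L: broad algebraic 2} funnels each remaining $a_j$ through $a_i$. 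This avoids both the special basis choice and the case split, at the cost of a somewhat larger (but still finite) decomposition. Both routes use the few broad sets hypothesis in exactly the same place, namely in the appeal to Lemma~\ref{L: broad algebraic 2}.
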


\begin{proof}
    Let $e$ be a basis for $a$ over $A$. Choose $e$ so that if $a_i$ belongs to any basis, then it belongs to $e$.

    Assume $\dim(a/Ab)<\dim(a/A)$ for each $b\in B$. Then for each $b\in B$, there is a coordinate $x\in e$ so that $x\in\acl(Ab\cup(e-\{x\}))$. By Lemma \ref{L: broad union}, after shrinking $B$, we can assume there is a single coordinate $x\in e$ so that $x\in\acl(Ab\cup(e-\{x\}))$ for all $b\in B$. We have two cases:

    \begin{itemize}
        \item Case 1: $x=a_i$. Then applying Lemma \ref{L: broad algebraic} over the parameter set $A\cup(e-\{x\})$, we conclude that $a_i\in A\cup(e-\{a_i\})$. This contradicts that $e$ is a basis for $a$ over $A$.

        \item Case 2: $x\neq a_i$. Then applying Lemma \ref{L: broad algebraic 2} over the parameter set $A\cup(e-\{x\})$, we conclude that $x\in\acl(Aa_i\cup(e-\{x\})$. In particular, let $e'$ be $e$ with $x$ replaced by $a_i$; then $e\in\acl(Ae')$, thus $a\in\acl(Ae')$. Since $e$ and $e'$ have the same length, $e'$ is also a basis for $a$ over $A$. Then by the choice of $e$, $a_i\in e$. Thus $Aa_i\cup(e-\{x\})$ is just $A\cup(e-\{x\})$. So $x\in\acl(A\cup(e-\{x\}))$, and this again contradicts that $e$ is a basis for $a$ over $A$.
    \end{itemize}
\end{proof}

Finally, we conclude:

\begin{theorem}\label{T: ind broad}
    Assume $\mathcal M$ has the few broad types property. Fix $a=(a_1,\dots,a_n)\in M^n$ and a parametet set $A$. For each $i$, let $B_i$ be an $a_i$-broad subset of $M^{m_i}$. Then there is $b=(b_1,\dots,b_n)\in B_1\times\dots\times B_n$ so that $\dim(a/Ab)=\dim(a/A)$.
\end{theorem}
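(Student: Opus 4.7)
The approach is straightforward induction, with Lemma \ref{L: ind broad} serving simultaneously as the base case and as the engine driving the inductive step. The key observation making this work is that the property ``$B_i$ is $a_i$-broad'' is purely a topological/definability condition on $B_i$ relative to the point $a_i$; it makes no reference to any ambient parameter set. Consequently, enlarging parameter sets never destroys $a_i$-broadness of $B_i$, so we are free to apply Lemma \ref{L: ind broad} to each $B_i$ in turn with a progressively larger parameter set.

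Concretely, I would induct on $k \in \{0,1,\dots,n\}$, producing at stage $k$ elements $b_1 \in B_1,\dots,b_k \in B_k$ such that $\dim(a/Ab_1\cdots b_k) = \dim(a/A)$. The case $k=0$ is trivial. For the inductive step, suppose $b_1,\dots,b_k$ have been chosen, and set $A' := A \cup \{b_1,\dots,b_k\}$; by induction $\dim(a/A') = \dim(a/A)$. Since $B_{k+1} \subset M^{m_{k+1}}$ is $a_{k+1}$-broad (a property that does not involve $A'$), Lemma \ref{L: ind broad} applied to the tuple $a$, the parameter set $A'$, the index $i = k+1$, and the broad set $B_{k+1}$ produces some $b_{k+1} \in B_{k+1}$ with $\dim(a/A'b_{k+1}) = \dim(a/A')$. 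Chaining gives $\dim(a/Ab_1\cdots b_{k+1}) = \dim(a/A)$, closing the induction. Taking $k = n$ produces the desired tuple $b = (b_1,\dots,b_n) \in B_1 \times \cdots \times B_n$.

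There is no real obstacle beyond Lemma \ref{L: ind broad} itself: all of the substantive model-theoretic content — the interplay between Lemmas \ref{L: broad algebraic} and \ref{L: broad algebraic 2}, the case analysis on whether the ``bad'' coordinate equals $a_i$, and the role of the few broad sets property in ruling out a drop in dimension when adding a single broad parameter — has already been absorbed into that lemma. The present theorem is simply the observation that a one-coordinate statement can be iterated, provided the iteration does not interfere with the hypotheses, which here it does not because broadness is parameter-independent.
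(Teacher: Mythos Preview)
Your proposal is correct and matches the paper's approach exactly: the paper's proof is the single line ``Choose the $b_i$ inductively, at each stage working over $A$ together with all previous $b_j$'s,'' and you have simply spelled out the details of this induction, including the key observation that $a_i$-broadness is parameter-independent.
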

\begin{proof}
    Choose the $b_i$ inductively, at each stage working over $A$ together with all previous $b_j$'s.
\end{proof}

\subsection{Concluding}

Finally, we connect Theorem \ref{T: ind broad} to the topology:

\begin{definition}\label{D: loc broad}
    Say that $\mathcal M$ is \textit{locally broad} if the following holds: let $a\in M$ and $U$ an open neighborhood of $a$. Then there are $m\geq 1$ and an $a$-broad set $B\subset M^m$ so that for every $b\in B$, there is a $b$-definable open set $V_b$ with $a\in V_b\subset U$.
\end{definition}

Roughly speaking, \ref{D: loc broad} says that one can define small neighborhoods of $a\in M$ using parameters near $a$ -- and in addition, $a$-broad many perturbations of those parameters should not move the neighborhood too much (or move $a$ out of it). Weakly o-minimal and C-minimal theories easily have this property, as do most natural examples of t-minimal theories. For example, if $\mathcal M$ is weakly o-minimal and $(b,c)$ is a neighborhood of $a$, then one can define subneighborhoods using parameters $x$ and $y$ whenever $x\in(b,a)$ and $y\in(a,c)$ (and the set of such pairs $(x,y)$ is $a$-broad). The C-minimal case follows by a similar argument.

On the other hand, a real closed field with the Sorgenfrey topology is \textit{not} locally broad: for example, given $a$ and a neighborhood $[a,b)$, the set of $(x,y)$ with $a\in[x,y)\subset[a,b)$ is $\{a\}\times(a,b]$, which is not $a$-broad.

Now we show:

\begin{theorem}\label{T: inp}
    Suppose $\mathcal M$ is locally broad and has the few broad sets property. Then $\mathcal M$ has the independent neighborhood property.
\end{theorem}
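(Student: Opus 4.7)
The plan is to combine the two hypotheses in a direct way: local broadness will produce small $\tau$-neighborhoods of each coordinate of $a$ with parameters drawn from broad sets, and then Theorem \ref{T: ind broad} will let us simultaneously select such parameters for every coordinate without dropping the dimension of $a$ over $A$. The main technical maneuver is making sure we work one coordinate at a time, using the product structure of the topology on $M^n$.

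Fix $a = (a_1, \ldots, a_n) \in M^n$, a parameter set $A$, and an open neighborhood $U$ of $a$ in $M^n$. First, since the product topology on $M^n$ has a basis consisting of products of basic opens, I would shrink $U$ to a product $U_1 \times \cdots \times U_n$, where each $U_i$ is a $\emptyset$-definable (indeed, basic) open neighborhood of $a_i$ in $M$; it suffices to find the required independent sub-neighborhood of $a$ inside this smaller open set. Next, for each $i$, apply local broadness (Definition \ref{D: loc broad}) to the point $a_i$ and the neighborhood $U_i$, obtaining an integer $m_i$ and an $a_i$-broad set $B_i \subseteq M^{m_i}$ such that for every $b \in B_i$ there exists a $b$-definable open set $V_{i,b}$ with $a_i \in V_{i,b} \subseteq U_i$.

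Now I apply Theorem \ref{T: ind broad} to the tuple $a = (a_1, \ldots, a_n)$, the parameter set $A$, and the sets $B_1, \ldots, B_n$ (each $B_i$ being $a_i$-broad as required). This yields a tuple $b = (b_1, \ldots, b_n) \in B_1 \times \cdots \times B_n$ with $\dim(a/Ab) = \dim(a/A)$. Set
\[
V := V_{1,b_1} \times \cdots \times V_{n,b_n}.
\]
Then $V$ is open in $M^n$, contains $a$, and is contained in $U_1 \times \cdots \times U_n \subseteq U$. Moreover, $V$ is definable over the parameter tuple $b$, and $\dim(a/Ab) = \dim(a/A)$, which is exactly the conclusion required by Definition \ref{D: inp}.

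The hard part in the theorem is really hidden in the two inputs we are invoking: the few broad sets property is what makes Lemma \ref{L: broad algebraic 2} (and hence Theorem \ref{T: ind broad}) available, while local broadness is what converts the purely combinatorial statement of Theorem \ref{T: ind broad} into a topological one by supplying small neighborhoods whose defining parameters live in broad sets. With both in hand, the proof reduces to the bookkeeping above; the only real subtlety is the initial reduction to a product neighborhood, which is automatic since the product topology on $M^n$ admits a $\emptyset$-definable basis of products of basic opens.
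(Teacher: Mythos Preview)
Your proposal is correct and follows exactly the approach the paper intends: the paper's own proof is the single line ``Combine the definition of local broadness with Theorem~\ref{T: ind broad},'' and you have faithfully unpacked that combination. One harmless slip: the basic open sets $U_i$ need not be $\emptyset$-definable (they come from a $\emptyset$-definable \emph{family}, so each is definable over its own parameter), but this plays no role since local broadness applies to arbitrary open neighborhoods.
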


\begin{proof}
    Combine the definition of local broadness with Theorem \ref{T: ind broad}.
\end{proof}

\begin{corollary}
    If $\mathcal M$ is weakly o-minimal, or if $\mathcal M$ is C-minimal, then $\mathcal M$ has the independent neighborhood property.
\end{corollary}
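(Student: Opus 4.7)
The plan is to verify that both weakly o-minimal and C-minimal structures satisfy the two hypotheses of Theorem \ref{T: inp}, namely local broadness (Definition \ref{D: loc broad}) and the few broad sets property (Definition \ref{D: few broad sets}), and then simply invoke that theorem.

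For the few broad sets property, I would appeal directly to the examples worked out in the body of the appendix. There it is already observed that weakly o-minimal and C-minimal theories both enjoy the few germs property: in the weakly o-minimal case any definable germ at a point is one of four interval-shaped germs, and in the C-minimal case it is either a point or contains a ball. A separate example then shows that the few germs property implies the few broad sets property. So nothing new is required for this half.

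The remaining work is to check local broadness, which is discussed informally at the end of subsection on local broadness but not verified in detail. In the weakly o-minimal case, given $a\in M$ and an open neighborhood $U$ of $a$, I would choose a basic open interval $(b,c)\subseteq U$ containing $a$, and take
\[ B \;=\; \{(x,y)\in M^2 : b<x<a<y<c\}, \qquad V_{(x,y)} = (x,y). \]
Then each $V_{(x,y)}$ is an open neighborhood of $a$ contained in $U$ and definable from $(x,y)$; and $B$ is easily $a$-broad in $M^2$, since every neighborhood of $a$ in each coordinate meets the corresponding projection of $B$ in infinitely many points (one uses the base case of broadness at $n=1$ together with the inductive definition). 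In the C-minimal case the argument is even cleaner: by C-minimality one may assume $U$ contains a ball $B_r(a)$ about $a$, and then the set
\[ B \;=\; \{x\in M : x\in B_r(a)\} \]
is infinite and contains $a$ in its interior, so $B$ is $a$-broad in $M$; for any $x\in B$, the ball $V_x:=B_{d(a,x)}(x)$ if $x\ne a$, or $V_x:=B_r(a)$ if $x=a$, is $x$-definable, contains $a$, and is contained in $U$ (using the ultrametric property to keep $V_x\subseteq B_r(a)\subseteq U$).

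With both hypotheses of Theorem \ref{T: inp} verified, the corollary follows immediately. The only step that requires any care is the verification of local broadness in the C-minimal setting, where one must use the ultrametric nature of the topology to guarantee that the small $x$-definable balls $V_x$ actually sit inside the original neighborhood $U$; everything else is bookkeeping against the examples and definitions already collected in the appendix.
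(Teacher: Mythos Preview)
Your overall strategy matches the paper's: verify the few broad sets property (via the few germs property, already worked out in the examples) and local broadness, then apply Theorem~\ref{T: inp}. The weakly o-minimal verification of local broadness is correct.

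The C-minimal argument for local broadness, however, has a genuine gap. You set $V_x := B_{d(a,x)}(x)$ and claim it is $x$-definable, but this set visibly depends on $a$ as well as $x$ (and, strictly speaking, a C-minimal structure need not carry a definable metric $d$; only the C-relation is available). Definition~\ref{D: loc broad} requires $V_b$ to be definable from $b$ alone: if you smuggle $a$ in as an extra parameter, then in the application via Theorem~\ref{T: ind broad} the resulting neighborhood would be defined over a set containing $a$, forcing $\dim(a/At)=0$ and defeating the purpose.

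The fix is to follow the weakly o-minimal pattern more closely and use \emph{pairs}. Shrink $U$ to a ball containing $a$. For distinct $x,y\in M$, let $B(x,y)$ be the smallest ball containing both (this is $(x,y)$-definable purely from the C-relation), and set
\[
B \;=\; \{(x,y)\in M^2 : x\neq y,\ a\in B(x,y)\subseteq U\}, \qquad V_{(x,y)} \;=\; B(x,y).
\]
Each $V_{(x,y)}$ is $(x,y)$-definable, contains $a$, and lies in $U$. To see $B$ is $a$-broad: fix $y\in U\setminus\{a\}$ and choose a ball $W$ with $a\in W\subseteq U$ and $y\notin W$ (possible by Hausdorffness and the nesting of balls). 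For any $x\in W$, the ball $B(x,y)$ meets $W$ (at $x$) and is not contained in $W$ (since $y\in B(x,y)$), so $W\subseteq B(x,y)\subseteq U$ by the ultrametric nesting property, giving $a\in B(x,y)$. Thus $W\subseteq B_y$, so each fiber $B_y$ is $a$-broad; since $U\setminus\{a\}$ is $a$-broad, $B$ is $a$-broad in $M^2$. With this correction, your proof goes through and coincides with the paper's intended argument.
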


\begin{corollary}
    If $\mathcal M$ is locally broad and satisfies exchange, then $\mathcal M$ has the independent neighborhood property.
\end{corollary}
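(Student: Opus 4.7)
The plan is to reduce immediately to Theorem \ref{T: inp}: it suffices to verify that exchange implies the few broad sets property, after which the corollary is automatic. This reduction is already essentially contained in the first Example following Definition \ref{D: few broad sets}, so the proposal is just to carry out that observation carefully.

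For the key step, suppose $\mathcal M$ satisfies exchange and let $X \subset M^2$ be definable with the second projection $\pi_2\colon X \to M$ finite-to-one. Finite-to-one projections do not increase dimension (Fact \ref{F: tmin}(4)), so $\dim(X) \leq 1$. Now fix $a \in M$, and let $Y \subset M$ be the set of $b$ for which the fiber $X_b = \{y : (b,y) \in X\}$ is $a$-broad. Any $a$-broad subset of $M$ is, in particular, infinite (since $a$ is an accumulation point). Thus every $b \in Y$ yields an infinite fiber $X_b$, which by t-minimality has non-empty interior, and in particular $\dim(X_b) = 1$. If $Y$ were infinite, then by t-minimality $Y$ would have non-empty interior, and exchange (applied via additivity of dimension for the first projection on $X \cap (Y \times M)$) would force $\dim(X) \geq 2$, contradicting $\dim(X)\leq 1$. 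Hence $Y$ is finite, which is exactly the few broad sets property.

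With the few broad sets property established, the corollary follows directly: by hypothesis $\mathcal M$ is locally broad, so both hypotheses of Theorem \ref{T: inp} are met, and we conclude that $\mathcal M$ has the independent neighborhood property.

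There is no real obstacle here; the only subtle point is to make sure the application of exchange to bound $\dim(X)$ from below is correctly justified. Concretely, one picks $b \in Y$ with $\dim(b) = 1$ (possible since $Y$ has non-empty interior) and then $c \in X_b$ with $\dim(c/b) = 1$ (possible since $X_b$ has non-empty interior), so that $\dim(bc) = 2$ by additivity, giving $\dim(X) \geq 2$. This is where exchange is genuinely used, in the form of Fact \ref{F: tmin} together with the fact that in a theory with exchange, sub-additivity upgrades to full additivity of dimension.
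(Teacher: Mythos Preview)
Your proposal is correct and follows exactly the approach the paper intends: the corollary is an immediate consequence of Theorem~\ref{T: inp} once one knows that exchange implies the few broad sets property, and the latter is precisely the content of the first Example after Definition~\ref{D: few broad sets}, which you have spelled out in detail. The only cosmetic point is that your dimension computations $\dim(b)=1$ and $\dim(c/b)=1$ should be taken over a parameter set defining $X$ (and $a$), but this does not affect the argument.
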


\section{Independent neighborhoods in visceral theories, by Will Johnson}\label{A: visceral}
We show that the independent neighborhood property (Definition \ref{D: inp}) holds in any visceral theory. Recall that a $t$-minimal theory is \textit{visceral} if it admits a definable uniformity (see, e.g., \cite[Definition 1.17]{t-minimal-johnson}). \emph{Dp-minimal} visceral theories were considered in \cite{SimWal}, and the independent neighborhood property was apparently proved for them in \cite[Corollary 3.12]{HaHaPeGps}. 

Work in a monster model $\mathcal{M}$ of a visceral theory $T$ in a language $\mathcal{L}$.  Let $\mathcal{B}$
be a definable basis for the uniformity, chosen so that basic
entourages are symmetric and open.  If $E$ is a basic entourage, and
$a \in \mathcal{M}$, then $E[a]$ denotes the ``open ball'' $\{x \in \mathcal{M} :
(a,x) \in E\}$.  The family $\{E[a] : E \in \mathcal{B}\}$ is a
neighborhood basis of $a$.  More generally if $\bar{a} = (a_1,\ldots,a_n) \in \mathcal{M}^n$, then $E[\bar{a}]$ denotes the product $\prod_{i=1}^n E[a_i]$.  The family $\{E[\bar{a}] : E \in \mathcal{B}\}$ is again a neighborhood basis of $\bar{a}$.
\begin{remark}
  The arguments below generalize to the case of a definable set $D$
  with a visceral uniformity, not necessarily definable in the induced
  structure on $D$.
\end{remark}
Recall that a partial $n$-type $p$ is \emph{broad} if, in some elementary extension, we can find infinite sets $S_1,S_2,\ldots,S_n$ such that every tuple in the product $S_1 \times S_2 \times \cdots \times S_n$ realizes $p$.  For more about broad types and sets, see \cite[\S 2.1]{t-minimal-johnson}.

\begin{lemma} \label{wj-erdos-rado}
  For any small cardinal $\kappa$, there is a small cardinal $\lambda$
  such that if $S_1,\ldots,S_n \subseteq \mathcal{M}$ are sets of size at least $\lambda$ and $A \subseteq \mathcal{M}$ is a  set of size at most $\kappa$, then
  there is $\bar{b} \in \prod_{i=1}^n S_i$ such that $\tp(\bar{b}/A)$ is
  broad.
\end{lemma}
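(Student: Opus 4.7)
The strategy is a straightforward Erdős--Rado / pigeonhole argument, where the coloring of $n$-tuples is by their complete type over $A$. The cardinal $\lambda$ will be chosen purely in terms of $\kappa$ and $|\mathcal{L}|$, large enough to guarantee a monochromatic ``product grid'' for any coloring with few colors.

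First I would set $\mu = 2^{\kappa + |\mathcal{L}|}$, which is a small cardinal uniformly bounding the number of complete $n$-types over any parameter set of size at most $\kappa$. I would then fix $\lambda$ (also small, depending only on $\mu$ and $n$) to satisfy the following ``product Ramsey'' property: for any sets $S_1,\ldots,S_n$ with $|S_i|\ge\lambda$ and any coloring $c:S_1\times\cdots\times S_n\to\mu$, there exist infinite subsets $T_i\subseteq S_i$ such that $c$ is constant on $T_1\times\cdots\times T_n$. Granting this, the lemma is immediate: apply it to the coloring $(s_1,\ldots,s_n)\mapsto\tp(s_1,\ldots,s_n/A)$, which has at most $\mu$ colors. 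We obtain infinite $T_i\subseteq S_i$ on whose product the type over $A$ is constant, equal to some $p$; any $\bar b\in T_1\times\cdots\times T_n$ then has $\tp(\bar b/A)=p$, and $p$ is broad, witnessed by the $T_i$ themselves.

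The main obstacle is verifying the product Ramsey property, i.e., choosing $\lambda$. I would do this by induction on $n$. For $n=1$, any $\lambda > \mu$ of uncountable cofinality works: some color class is infinite by pigeonhole. For the inductive step, suppose $\lambda_{n-1}$ suffices for dimension $n-1$. Take $\lambda_n$ large enough that $\lambda_n \to (\aleph_0)^1_{\mu^{\lambda_{n-1}}}$, e.g.\ $\lambda_n = (\mu^{\lambda_{n-1}})^+$ (of uncountable cofinality). Given a coloring $c:S_1\times\cdots\times S_n\to\mu$ with each $|S_i|\ge\lambda_n$, for each $s_n\in S_n$ let $c_{s_n}:S_1\times\cdots\times S_{n-1}\to\mu$ be the restricted coloring. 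The map $s_n\mapsto c_{s_n}$ sends $S_n$ to a set of size at most $\mu^{\lambda_{n-1}^{n-1}}$; by pigeonhole choose an infinite $T_n\subseteq S_n$ on which $c_{s_n}$ only depends on countably many ``coordinates,'' and then apply the inductive hypothesis to find the remaining $T_i\subseteq S_i$. (Alternatively, one can merge the $S_i$ into a single linearly ordered set and invoke the standard Erd\H{o}s--Rado theorem $\beth_{n-1}(\mu)^+\to(\aleph_1)^n_\mu$ applied to $n$-subsets; then use a final pigeonhole to extract one element of the monochromatic set per factor $S_i$.)

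I expect the only real subtlety to be bookkeeping the cardinal arithmetic in the inductive step, but this is entirely set-theoretic and independent of the visceral structure on $\mathcal{M}$; nothing beyond the coloring-by-type observation uses the model theory of $T$.
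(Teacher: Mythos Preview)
Your overall strategy --- color tuples by their type over $A$ and invoke an Erd\H{o}s--Rado-style partition result --- is correct and matches the paper's. The execution differs, however, and the paper's is cleaner.

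You aim to prove a full \emph{product Ramsey} (polarized partition) statement by induction on $n$, which works but requires the bookkeeping you flag: one must first shrink $S_1,\ldots,S_{n-1}$ to size exactly $\lambda_{n-1}$ before the pigeonhole on $s_n\mapsto c_{s_n}$ goes through, and your phrase ``$c_{s_n}$ only depends on countably many `coordinates''' should simply read ``$c_{s_n}$ is constant on $T_n$.'' Your parenthetical alternative (merge the $S_i$ into one ordered set) is not quite right as stated, since a homogeneous set need not meet every $S_i$.

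The paper avoids the induction entirely. It enumerates each $S_i$ as $\{c_{i,\alpha}:\alpha<\lambda\}$ and colors \emph{$n$-element subsets of the common index set $\lambda$} by sending $\{\alpha_1<\cdots<\alpha_n\}$ to $\tp(c_{1,\alpha_1},\ldots,c_{n,\alpha_n}/A)$. A single application of Erd\H{o}s--Rado gives an infinite homogeneous $J\subseteq\lambda$ of color $p$. Broadness of $p$ then follows from the key observation that for any $k$ one can split off $nk$ consecutive elements of $J$ into $n$ blocks of size $k$; any choice of one index per block is still increasing in $J$, so the corresponding $n$-tuple has type $p$, yielding a $k\times\cdots\times k$ grid inside $p(\mathcal{M})$. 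This sidesteps both the inductive cardinal arithmetic and the need for a separate product-Ramsey lemma.
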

\begin{proof}
  Fix $\kappa$ as in the statement. By the Erd\H{o}s-Rado theorem, choose $\lambda$ such that if
  we color the $n$-element subsets of $\lambda$ with
  $2^{|\mathcal{L}|+\kappa}$-many colors, then there is a homogeneous set of
  size $\aleph_0$.

  Now let $S_1,\ldots,S_n, A \subseteq \mathcal{M}$ be given.  Shrinking the
  $S_i$, we may assume $|S_i| = \lambda$ for each $i$.  Let
  $\{c_{i,\alpha} : \alpha < \lambda\}$ be an enumeration of $S_i$ for
  each $i$.  Consider the map from $n$-element subsets of $\lambda$
  to the type space over $A$ given by
  \begin{equation*}
    \{\alpha_1,\ldots,\alpha_n\} \mapsto \tp(c_{1,\alpha_1},\ldots,c_{n,\alpha_n}/A)
  \end{equation*}
  for $\alpha_1 < \alpha_2 < \cdots < \alpha_n$.  Let $J \subseteq
  \lambda$ be an infinite subset which is homogeneous of color $p$, so
  that
  \begin{equation*}
    \tp(c_{1,\alpha_1},\ldots,c_{n,\alpha_n}/A) = p
  \end{equation*}
  for all $\alpha_1 < \cdots < \alpha_n$ in $J$.  Note that
  $(c_{1,\alpha_1},\ldots,c_{n,\alpha_n}) \in \prod_{i=1}^n S_i$, so
  it remains to show that $p$ is broad, i.e., the set $p(\mathcal{M})$ of
  realizations is broad.  Given any $k$, we can find an increasing
  sequence
  \begin{equation*}
    \beta_{1,1} < \beta_{1,2} < \cdots < \beta_{1,k} < \beta_{2,1} < \cdots < \beta_{2,k} < \cdots < \beta_{n,1} < \cdots < \beta_{n,k}
  \end{equation*}
  in $J$, because $J$ is infinite.  Then every element of
  \begin{gather*}
    \prod_{i=1}^n \{c_{i,\beta_{i,1}},c_{i,\beta_{i,2}},\ldots,c_{i,\beta_{i,k}}\} \\
    = \{c_{1,\beta_{1,1}},c_{1,\beta_{1,2}},\ldots,c_{1,\beta_{1,k}}\} \times
    \cdots
    \times \{c_{n,\beta_{n,1}},c_{n,\beta_{n,2}},\ldots,c_{n,\beta_{n,k}}\}
  \end{gather*}
  has type $p$.  As $k$ was arbitrary, $p(\mathcal{M})$ is broad.
\end{proof}

\begin{lemma}
  Let $(P,\le)$ be an $A$-definable preorder.  Suppose $(P,\le)$ is
  downward directed, in the sense that for any $x,y \in P$, there is
  $z \in P$ with $z \le x$ and $z \le y$.  If $\tp(\bar{b}/A)$ is broad
  and $c \in P$, there is $c' \le c$ such that $\tp(\bar{b}/Ac')$ is
  broad.
\end{lemma}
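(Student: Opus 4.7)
The plan is to recast the conclusion as a saturation statement. Let $\Sigma(y)$ be the partial type over $A\bar{b}c$ asserting $y\in P$, $y\le c$, and that $\tp(\bar b/Ay)$ is broad. The last clause is type-definable: for each $\mathcal L(A)$-formula $\varphi(\bar x,y)$ and each $k<\omega$, it contains the first-order formula $\varphi(\bar b,y)\to B_{k,\varphi}(y)$, where $B_{k,\varphi}(y)$ is the definable statement that $\varphi(\mathcal M,y)$ contains a product of $n$ pairwise distinct $k$-element sets (the characterisation of broadness used in Lemma \ref{wj-erdos-rado}). By saturation of $\mathcal M$, the lemma reduces to showing that every finite fragment of $\Sigma$ is realised in $\mathcal M$.

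The central input is Lemma \ref{wj-erdos-rado}. Fix $\kappa\ge |A|+|T|+\aleph_0$ and let $\lambda$ be the associated Erd\H{o}s--Rado cardinal. Broadness of $\tp(\bar b/A)$ supplies sets $S_1,\dots,S_n\subseteq\mathcal M$ of cardinality at least $\lambda$ with $\prod_i S_i\subseteq\tp(\bar b/A)(\mathcal M)$. Applying Lemma \ref{wj-erdos-rado} with parameter set $A\cup\{c\}$ yields $\bar s^*\in\prod_i S_i$ such that $\tp(\bar s^*/Ac)$ is broad; hence for each of the finitely many $\varphi_i,k_i$ in the fragment under consideration, $\varphi_i(\bar s^*,c)$ implies $\varphi_i(\mathcal M,c)$ contains the required $k_i$-power product. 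Pick $\sigma\in\mathrm{Aut}(\mathcal M/A)$ with $\sigma(\bar s^*)=\bar b$ and set $c^*:=\sigma(c)\in P$. Transporting the previous implication through $\sigma$ gives, for each $i$, that $\varphi_i(\bar b,c^*)$ forces $B_{k_i,\varphi_i}(c^*)$. In other words, $c^*$ already satisfies every implication in the fragment, and $c^*\equiv_A c$.

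The main obstacle is that $c^*$ is only $A$-conjugate to $c$, not necessarily below $c$ in $P$. This is where downward directedness enters: it produces $c'\in P$ with $c'\le c$ and $c'\le c^*$. The delicate step is that broadness does not automatically descend from $c^*$ to $c'$, since $c'\le c^*$ does not make $c^*$ lie in $\dcl(Ac')$. My plan for resolving this is to iterate the Erd\H{o}s--Rado construction: at each stage, absorb the newly produced $c^*$, $c'$, and their $A$-conjugates into the parameter set of Lemma \ref{wj-erdos-rado}, and use downward directedness to chain lower bounds below $c$. Taking the resulting partial type over the enlarged parameter set, compactness together with saturation of $\mathcal M$ produces a single $c'\le c$ realising the full finite fragment of $\Sigma$. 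The principal technical point, and the place I expect the proof to be most delicate, is verifying that $\lambda$ can be taken uniformly large enough to absorb all parameters generated during the recursion while still supplying a broad Erd\H{o}s--Rado witness at every stage; this is what allows the finite fragment to be realised by a single element $c'\le c$ rather than requiring a genuine transfinite iteration.
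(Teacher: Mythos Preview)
Your proposal correctly identifies the relevant ingredients (Lemma~\ref{wj-erdos-rado}, an automorphism to transport back to $\bar b$, downward directedness), but the order in which you apply them creates a gap that your proposed iteration does not close. After producing $c^*\equiv_A c$ with $\tp(\bar b/Ac^*)$ broad, taking $c'\le c,c^*$ gives you nothing about $\tp(\bar b/Ac')$: $c^*\notin\dcl(Ac')$ in general, so broadness over $Ac^*$ does not descend. Your fix is to iterate, absorbing $c^*,c'$ into the parameter set and reapplying Erd\H{o}s--Rado. But each new application produces a new $\bar s$, and the automorphism sending $\bar s\mapsto\bar b$ fixes only $A$, so it moves the freshly absorbed parameters. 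You end up with a new conjugate of $c'$ (not of $c$), again not below $c$, and the recursion never produces a single $y\le c$ satisfying even one nontrivial broadness implication. There is no compactness argument to extract such a $y$ from the sequence, because at no finite stage have you realised the fragment ``$y\le c$ together with a broadness clause''.

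The paper's proof reverses the order of the two key steps, and this is the missing idea. Rather than applying Erd\H{o}s--Rado first and then trying to push the resulting conjugate below $c$, one first uses directedness \emph{uniformly} across the whole product. Concretely: for every $\bar e\in S=\prod S_i$ choose (by choice) some $f(\bar e)\in P$ with $(\bar e,f(\bar e))\equiv_A(\bar b,c)$; by saturation and directedness there is a single $c_0\in P$ with $c_0\le f(\bar e)$ for all $\bar e\in S$. Only now apply Lemma~\ref{wj-erdos-rado} over $Ac_0$ to select $\bar e\in S$ with $\tp(\bar e/Ac_0)$ broad. Transporting the triple $(\bar e,f(\bar e),c_0)$ to $(\bar b,c,c')$ by an $A$-automorphism gives $c'\le c$ and $\tp(\bar b/Ac')$ broad in one stroke. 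The point is that $c_0$ was chosen below \emph{all} the relevant conjugates of $c$ before Erd\H{o}s--Rado singled one out, so the inequality survives the transport.
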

\begin{proof}
  Let $\kappa = |A| + \aleph_0$.  Let $\lambda$ be as in
  Lemma~\ref{wj-erdos-rado}.  Let $p = \tp(\bar{b}/A)$.  Since $p$ is broad,
  the set $p(\mathcal{M})$ contains a product $S := \prod_{i=1}^n S_i$, with
  $|S_i| = \lambda$ for each $i$.  Note that $\bar{e} \equiv_A \bar{b}$ for
  every $\bar{e} \in S$.  By the axiom of choice, there is a function $f :
  S \to P$ such that
  \begin{equation*}
    (\bar{e},f(\bar{e})) \equiv_A (\bar{b},c) \text{ for all } \bar{e} \in S.
  \end{equation*}
  By saturation and directedness, there is some $c_0 \in P$ such that
  $c_0 \le f(\bar{e})$ for every $\bar{e} \in S$.  By
  Lemma~\ref{wj-erdos-rado}, there is some $\bar{e} \in S$ such that
  $\tp(\bar{e}/Ac_0)$ is broad.  Take $c' \in P$ such that
  \begin{equation*}
    (\bar{e},f(\bar{e}),c_0) \equiv_A (\bar{b},c,c').
  \end{equation*}
  The fact that $c_0 \le f(\bar{e})$ implies $c' \le c$.  The fact
  that $\tp(\bar{e}/Ac_0)$ is broad implies that $\tp(\bar{b}/Ac')$ is
  broad.
\end{proof}

\begin{corollary}\label{wj-dim-case}
  Let $(P,\le)$ be an $A$-definable downward directed preorder.  For
  any $\bar{b} \in \mathcal{M}^n$ and any $c \in P$, there is $c' \le c$ such that
  $\dim(\bar{b}/A) = \dim(\bar{b}/Ac')$.
\end{corollary}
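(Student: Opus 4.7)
\noindent\textbf{Proof plan for Corollary \ref{wj-dim-case}.} The plan is to reduce the dimension statement to the broadness statement proved in the preceding lemma. The key observation is that in a visceral (hence t-minimal) theory, dimension and broadness are tightly linked: for a tuple $\bar{a} \in \mathcal{M}^k$, the equality $\dim(\bar{a}/A) = k$ is equivalent to $\tp(\bar{a}/A)$ being broad. (One direction: if $\tp(\bar{a}/A)$ contains a product of infinite sets $S_1 \times \cdots \times S_k$, then no coordinate is algebraic over $A$ together with the others, so $\bar{a}$ is its own basis. The other direction follows from Johnson's dimension theory, where a full-dimensional set is broad.)

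Now given $\bar{b}\in \mathcal{M}^n$, $A$, and $c\in P$, I would first let $k = \dim(\bar{b}/A)$ and pick a basis $\bar{b}'\subseteq \bar{b}$ of length $k$. By the definition of a basis, $\bar{b}\in\acl(A\bar{b}')$ and $\dim(\bar{b}'/A) = k$ equals the length of $\bar{b}'$. By the equivalence above, $\tp(\bar{b}'/A)$ is broad.

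The preceding lemma applies directly: there is $c'\le c$ in $P$ so that $\tp(\bar{b}'/Ac')$ is still broad. Translating back to dimension, this gives $\dim(\bar{b}'/Ac') = k$. Since $\bar{b}\in\acl(A\bar{b}')\subseteq\acl(Ac'\bar{b}')$, a basis of $\bar{b}'$ over $Ac'$ is also a basis (hence in particular a spanning set) for $\bar{b}$ over $Ac'$, so $\dim(\bar{b}/Ac')\geq\dim(\bar{b}'/Ac') = k$. The reverse inequality $\dim(\bar{b}/Ac')\leq \dim(\bar{b}/A) = k$ is the monotonicity clause of Fact \ref{F: tmin}(5). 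Hence $\dim(\bar{b}/Ac') = \dim(\bar{b}/A)$, as required.

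The only non-routine ingredient is the broadness--dimension equivalence used in the first paragraph, which should already be available from Johnson's dimension theory in \cite{t-minimal-johnson}; once that is in hand, the corollary is a direct corollary of the previous lemma applied to the basis subtuple. I do not foresee any real obstacle beyond citing the correct form of that equivalence.
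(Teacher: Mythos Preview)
Your proposal is correct and follows essentially the same route as the paper's proof: both pick a basis subtuple $\bar{b}'$ (the paper calls it $\bar{e}$), note that its type over $A$ is broad, apply the preceding lemma to get $c'\le c$ with $\tp(\bar{b}'/Ac')$ broad, and then observe that $\bar{b}'$ remains a basis for $\bar{b}$ over $Ac'$. One small cleanup: once you know $\bar{b}'$ is $\acl$-independent over $Ac'$ and $\bar{b}\in\acl(Ac'\bar{b}')$, you get $\dim(\bar{b}/Ac')=|\bar{b}'|=k$ directly from interalgebraicity (Fact~\ref{F: tmin}(7)); the detour through ``spanning set $\Rightarrow\ \geq k$'' followed by monotonicity is unnecessary and the inequality direction as written is a bit muddled.
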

\begin{proof}
  Let $\bar{e}$ be a subtuple of $\bar{b}$ which is $\acl$-independent
  over $A$, and interalgebraic with $\bar{b}$ over $A$, so
  \begin{equation*}
    \dim(\bar{b}/A) = |\bar{e}|.
  \end{equation*}
  Moreover, $\tp(\bar{e}/A)$ is broad.  Take $c' \le c$ such that
  $\tp(\bar{e}/Ac')$ is broad.  Then $\bar{e}$ is $\acl$-independent
  over $Ac'$, and interalgebraic with $\bar{b}$ over $Ac'$, and so
  \begin{equation*}
    \dim(\bar{b}/Ac') = |\bar{e}| = \dim(\bar{b}/A).  \qedhere
  \end{equation*}
\end{proof}

\begin{lemma} \label{wj-close-conjugates}
  Let $p = \tp(\bar{a}/C)$, and let $p(\mathcal{M})$ be the set of realizations.
  Let $k = \dim(\bar{a}/C) = \dim(p) = \dim(p(\mathcal{M}))$.  If $U \ni \bar{a}$ is
  any neighborhood, then $\dim(p(\mathcal{M}) \cap U) = k$.
\end{lemma}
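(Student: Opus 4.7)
The inequality $\dim(p(\mathcal{M}) \cap U) \leq \dim(p(\mathcal{M})) = k$ is immediate, so the task is the reverse inequality. Since $\dim$ of a type-definable set only decreases when we shrink, I may replace $U$ by any basic neighborhood $E_0[\bar{a}] \subseteq U$ for some $E_0 \in \mathcal{B}$. The preorder $(\mathcal{B},\subseteq)$ is $\emptyset$-definable and downward directed (as $\mathcal{B}$ is a uniformity basis), so applying Corollary~\ref{wj-dim-case} with $\bar{b} = \bar{a}$, $A = C$, and $c = E_0$ yields $E \subseteq E_0$ in $\mathcal{B}$ with $\dim(\bar{a}/CE) = \dim(\bar{a}/C) = k$. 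It then suffices to show $\dim(p(\mathcal{M}) \cap E[\bar{a}]) \geq k$; since this set is type-definable over $CE\bar{a}$, it is enough to exhibit some $\bar{c}$ in it with $\dim(\bar{c}/CE\bar{a}) = k$.

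To produce $\bar{c}$, fix an $\acl$-independent subtuple $\bar{e} = (a_{i_1},\ldots,a_{i_k})$ of $\bar{a}$ over $CE$, and build $\bar{c}_0 = (c_{i_1},\ldots,c_{i_k})$ coordinate by coordinate. At stage $j$, I want $c_{i_j} \in E[a_{i_j}]$ that realizes $\tp(a_{i_j}/CE\,c_{i_1}\cdots c_{i_{j-1}})$ but avoids the (finite) algebraic closure $\acl(CE\,\bar{a}\,c_{i_1}\cdots c_{i_{j-1}})$. The type $\tp(a_{i_j}/CE\,c_{i_1}\cdots c_{i_{j-1}})$ is non-algebraic (by $\acl$-independence of $\bar{e}$), and the $n=1$ case of the present lemma says its realization set meets every neighborhood of $a_{i_j}$ in an infinite set; this $n=1$ case is straightforward from $t$-minimality (a generic element of a $C$-definable unary set cannot lie in its frontier, which is finite, so is interior). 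Hence the target set remains infinite after removing finitely many points.

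Now extend $\bar{c}_0$ to $\bar{c}$ with $\bar{c} \equiv_{CE} \bar{a}$; the remaining coordinates are in $\acl(CE\,\bar{c}_0)$ and arise from $\bar{c}_0$ by the same algebraic correspondence that produces the non-$\bar{e}$ coordinates of $\bar{a}$ from $\bar{e}$. By construction $\bar{c} \models p$ and $\dim(\bar{c}/CE\bar{a}) = k$. To ensure $\bar{c} \in E[\bar{a}]$, the only issue is the remaining coordinates: here I tighten $E$ at the start (the conditions $\dim(\bar{a}/CE) = k$ and ``the given algebraic correspondences are continuous enough on $E$-balls to send $E[\bar{e}]$ into the appropriate $E[a_j]$'' are both preserved under shrinking in $\mathcal{B}$, so a single application of Corollary~\ref{wj-dim-case} suffices to secure both).

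The main obstacle is the $n=1$ base case feeding the inductive choice of $c_{i_j}$, together with the continuity needed to transport closeness in $\bar{e}$ to closeness in the algebraic coordinates. The first is pure $t$-minimality; the second is generic continuity of algebraic maps, which is available in the visceral setting.
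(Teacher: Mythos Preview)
Your coordinate-by-coordinate construction of $\bar{c}_0$ has a gap: it does not ensure $\bar{c}_0 \equiv_{CE} \bar{e}$, which you need before you can ``extend $\bar{c}_0$ to $\bar{c}$ with $\bar{c} \equiv_{CE} \bar{a}$.'' Requiring $c_{i_j} \models \tp(a_{i_j}/CE\,c_{i_1}\cdots c_{i_{j-1}})$ yields only $\tp(c_{i_1}\cdots c_{i_j}/CE) = \tp(c_{i_1}\cdots c_{i_{j-1}}\,a_{i_j}/CE)$, and nothing forces the right-hand side to equal $\tp(a_{i_1}\cdots a_{i_j}/CE)$. Already at $k=2$: you arrange $c_{i_1} \equiv_{CE} a_{i_1}$ and then $(c_{i_1},c_{i_2}) \equiv_{CE} (c_{i_1},a_{i_2})$, but you would also need $c_{i_1} \equiv_{CE\,a_{i_2}} a_{i_1}$, which was never secured. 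The repair is to strengthen the inductive step: maintain the invariant $(c_{i_1},\ldots,c_{i_{j-1}},a_{i_j},\ldots,a_{i_k}) \equiv_{CE} \bar{e}$, and at stage $j$ choose $c_{i_j}$ realizing $\tp(a_{i_j}/CE\,c_{i_1}\cdots c_{i_{j-1}}\,a_{i_{j+1}}\cdots a_{i_k})$. The invariant guarantees this type is non-algebraic, so your $n=1$ argument still applies, and the invariant propagates.

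Even repaired, this is a good deal heavier than the paper's route, which avoids building any $\bar{c}$ at all. The paper argues by contradiction via local dimension: if $\dim(p(\mathcal{M}) \cap U) < k$, pass to a $C$-definable $D \supseteq p(\mathcal{M})$ with $\dim_{\bar{a}}(D) < k$; set $X = \{\bar{b} \in D : \dim_{\bar{b}}(D) < k\}$, which is $C$-definable (definability of dimension), contains $\bar{a}$, and has local dimension $<k$ everywhere, so $\dim(X) < k$ by the visceral identity $\dim(X) = \max_{\bar{b}} \dim_{\bar{b}}(X)$ --- contradicting $\dim(\bar{a}/C)=k$. This sidesteps Corollary~\ref{wj-dim-case}, the coordinate bookkeeping, and the generic-continuity step you would still need to land the non-basis coordinates of $\bar{c}$ inside $E[\bar{a}]$.
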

\begin{proof}
  Otherwise, there is a $C$-definable set $D \supseteq p(\mathcal{M})$ such
  that $\dim(D \cap U) < k$ by \cite[Corollary~2.42]{t-minimal-johnson}.  Then
  $\dim_{\bar{a}}(D) < k$, where $\dim_{\bar{a}}(D)$ is the local dimension of $D$
  at $\bar{a}$.  Let $X = \{\bar{b} \in D : \dim_{\bar{b}}(D) < k\} \subseteq D$.
  The set $X$ contains $\bar{a}$ and is $C$-definable (by definability of dimension \cite[Theorem~2.51]{t-minimal-johnson}).  If $\bar{b} \in X$, then $\dim_{\bar{b}}(X) \le
  \dim_{\bar{b}}(D) < k$.  So the local dimension of $X$ is at most $k-1$ at
  any point.  By \cite[Proposition~3.10]{t-minimal-johnson},
  \begin{equation*}
    \dim(X) = \max_{\bar{b} \in X} \dim_{\bar{b}}(X) \le k-1.
  \end{equation*}
  Then $\bar{a}$ belongs to the $(k-1)$-dimensional $C$-definable set $X$,
  contradicting the fact that $\dim(\bar{a}/C) = k$.
\end{proof}

\begin{lemma} \label{wj-strange}
  Let $C$ be a set.  Let $E$ be a $C$-definable basic entourage.  For
  any $\bar{b}$, there is $\bar{c} \in E[\bar{b}] \subseteq \mathcal{M}^n$ such that
  $\dim(\bar{b},\bar{c}/C) = \dim(\bar{b}/C) + \dim(\bar{c}/C)$.
\end{lemma}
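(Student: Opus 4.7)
The plan is to produce $\bar{c}$ satisfying the strengthened property $\bar{c} \equiv_C \bar{b}$ and $\dim(\bar{b}\bar{c}/C) = 2k$, where $k := \dim(\bar{b}/C)$; since $\bar{c} \equiv_C \bar{b}$ forces $\dim(\bar{c}/C) = k$, this is exactly $\dim(\bar{b}/C) + \dim(\bar{c}/C)$, which is what the lemma requires. I would realize such a $\bar{c}$ inside the monster $\mathcal{M}$ as a realization of the partial type $\Sigma(\bar{y})$ over $C\bar{b}$ given by
\begin{equation*}
\tp(\bar{b}/C)(\bar{y}) \;\cup\; \{\bar{y} \in E[\bar{b}]\} \;\cup\; \{(\bar{b},\bar{y}) \notin D : D \text{ is $C$-definable with } \dim D < 2k\}.
\end{equation*}
Sub-additivity (Fact~\ref{F: tmin}(6)) already forces $\dim(\bar{b}\bar{c}/C) \le 2k$ on any realization, so the entire task reduces to showing that $\Sigma$ is consistent, at which point saturation of $\mathcal{M}$ supplies the required $\bar{c}$.

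For consistency I would apply compactness: after bundling the contributions of $\tp(\bar{b}/C)$ and of the anti-dimension clauses into a single $\phi \in \tp(\bar{b}/C)$ and a single $C$-definable $D$ with $\dim D < 2k$, the task is to find $\bar{y} \in \mathcal{M}^n$ with $\bar{y} \models \phi$, $\bar{y} \in E[\bar{b}]$, and $(\bar{b},\bar{y}) \notin D$. Equivalently, the $C\bar{b}$-definable set $\phi(\mathcal{M}) \cap E[\bar{b}]$ must not be contained in the fiber $D_{\bar{b}} := \{\bar{y} : (\bar{b},\bar{y}) \in D\}$. The former has dimension at least $k$: Lemma~\ref{wj-close-conjugates}, applied to $p := \tp(\bar{b}/C)$ and the neighborhood $E[\bar{b}]$ of $\bar{b}$, gives $\dim(p(\mathcal{M}) \cap E[\bar{b}]) = k$, and $p(\mathcal{M}) \subseteq \phi(\mathcal{M})$. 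So it suffices to show $\dim(D_{\bar{b}}) < k$.

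The hard part is this last step, which I would handle via a fiber-dimension inequality from Johnson's $t$-minimal dimension theory: the $C$-definable set $F := \{\bar{y} \in \mathcal{M}^n : \dim(D_{\bar{y}}) \ge k\}$ satisfies $\dim(D) \ge \dim(F) + k$ (a consequence of weak cell decomposition in \cite{t-minimal-johnson}). Combined with $\dim D < 2k$, this yields $\dim F < k$, so $F$ is a $C$-definable set of dimension strictly less than $\dim(\bar{b}/C) = k$; hence $\bar{b} \notin F$, i.e., $\dim(D_{\bar{b}}) < k$, as required. The main obstacle is pinning down this fiber-dimension inequality cleanly in the purely $t$-minimal setting, without tacitly invoking the independent neighborhood property that we are in fact trying to establish.
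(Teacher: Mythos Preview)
Your proof has a genuine gap at exactly the point you flagged as the ``main obstacle'': the fiber-dimension inequality $\dim(D) \ge \dim(F) + k$ is \emph{equivalent} to additivity of dimension, which can fail in visceral theories without exchange. Indeed, given any failure of additivity --- say $\dim(ab/C) < \dim(a/C) + \dim(b/Ca)$ --- take a $C$-definable $D \ni (a,b)$ with $\dim D = \dim(ab/C)$ and set $k = \dim(b/Ca)$; then $a \in F$ (since $b \in D_a$ witnesses $\dim D_a \ge k$), so $\dim F \ge \dim(a/C)$, and your inequality would force $\dim D \ge \dim(a/C) + k > \dim(ab/C) = \dim D$. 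Weak cell decomposition does not rescue this: the projections defining the cells need not respect the fiber coordinate, and in any case the inequality simply fails whenever exchange does. So your compactness argument cannot close as written.

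The paper sidesteps additivity entirely by a different mechanism. It passes to a smaller entourage $E_1$ with $E_1 \circ E_1 \subseteq E$, forms the \emph{product} $X \times E_1[\bar b]$ where $X = p(\mathcal M) \cap E_1[\bar b]$, and picks $(\bar b', \bar c')$ generic in this product over the enlarged base $C_1\bar b$. The point is that $\dim(X \times E_1[\bar b]) = k + n$ by multiplicativity of dimension on products (Fact~\ref{F: tmin}(3)), which holds unconditionally; then only \emph{sub}-additivity and the crude bound $\dim(\bar c'/C) \le n$ are needed to force $\dim(\bar b', \bar c'/C) = \dim(\bar b'/C) + \dim(\bar c'/C)$. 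The entourage triangle inequality converts ``$\bar c'$ close to $\bar b$'' into ``$\bar c'$ close to $\bar b'$'', and an automorphism over $C$ then moves $\bar b'$ back to $\bar b$. Note the paper's $\bar c$ has $\dim(\bar c/C) = n$, not $k$; it is \emph{not} a realization of $\tp(\bar b/C)$. Your stronger target $\bar c \equiv_C \bar b$ is in fact attainable --- replace $E_1[\bar b]$ by $X$ in the second factor and run the same argument on $X \times X$ --- but it requires the paper's product-and-automorphism trick, not a fiber bound.
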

\begin{proof}
  Take $E_1$ a basic entourage such that $E_1 \circ E_1 \subseteq E$.
  Let $C_1 \supseteq C$ be a small set defining $E_1$.  Let $k =
  \dim(\bar{b}/C)$.  Let $n = |\bar{b}| = \dim E[\bar{b}] = \dim E_1[\bar{b}]$.
  Let $X$ be the type-definable set of tuples $\bar{b}'$ such that
  $\bar{b}' \equiv_C \bar{b}$ and $\bar{b}' \in E_1[\bar{b}]$.  By
  Lemma~\ref{wj-close-conjugates}, $\dim(X) = \dim(\bar{b}/C) = k$.  Then
  $\dim(X \times E_1[\bar{b}]) = k + n$.  Since the set $X \times
  E_1[\bar{b}]$ is type-definable over $C_1\bar{b}$, there are
  $(\bar{b}',\bar{c}') \in X \times E_1[\bar{b}]$ with
  $\dim(\bar{b}',\bar{c}'/C_1\bar{b}) = k + n$.  Then $\tp(\bar{b}'/C) =
  \tp(\bar{b}/C)$, so $\dim(\bar{b}'/C) = k$.  And $\dim(\bar{c}'/C) \le
  n$ because $\bar{c}'$ is an $n$-tuple.  Then
  \begin{gather*}
    k + n = \dim(\bar{b}',\bar{c}'/C_1\bar{b}) \le \dim(\bar{b}',\bar{c}'/C) \\
    \le \dim(\bar{b}'/C) + \dim(\bar{c}'/C) \le k + n.
  \end{gather*}
  Equality must hold, so $\dim(\bar{b}',\bar{c}'/C) = \dim(\bar{b}'/C) +
  \dim(\bar{c}'/C)$.  By choice of $(\bar{b}',\bar{c}') \in X \times
  E_1[\bar{b}]$, we have $\bar{b}', \bar{c}' \in E_1[\bar{b}]$.  By choice of $E_1$,
  it follows that $\bar{c}' \in E[\bar{b}']$.  To summarize,
  we have $
  \bar{b}', \bar{c}'$ such that
  \begin{gather*}
    \bar{b}' \equiv_C \bar{b} \\ \bar{c}' \in E[\bar{b}']
    \\ \dim(\bar{b}',\bar{c}'/C) = \dim(\bar{b}'/C) + \dim(\bar{c}'/C)
  \end{gather*}
  Taking $\bar{c}$ such that $(\bar{b}',\bar{c}') \equiv_C (\bar{b},\bar{c})$,
  and moving by an automorphism, we get the desired configuration.
\end{proof}
One might conjecture the following cleaner form of
Lemma~\ref{wj-strange}:
\begin{nontheorem}\label{wj-nontheorem}
  Given a type $\tp(\bar{a}/C)$ and a non-empty open set $U \subseteq
  \mathcal{M}^n$, there is $\bar{b} \in U$ with $\dim(\bar{a},\bar{b}/C) = \dim(\bar{a}/C) +
  \dim(\bar{b}/C)$.
\end{nontheorem}
If the exchange property holds, this is true: take $\bar{b} \in U$ with
$\dim(\bar{b}/C, \ulcorner U \urcorner ,\bar{a}) = n$.  Then $\dim(\bar{b}/C) =
\dim(\bar{b}/C\bar{a}) = n$, so $\dim(\bar{a},\bar{b}/C) = \dim(\bar{a}/C) + \dim(\bar{b}/C)$
by additivity.

On the other hand, if the exchange property fails, so does
Non-Theorem~\ref{wj-nontheorem}.  Specifically, take $a,b_0 \in \mathcal{M} \setminus
\acl(C)$ with $a \in \acl(Cb_0)$ and $b_0 \notin \acl(Ca)$.  Take
$\phi(x,y) \in \tp(a,b_0/C)$ algebraizing $a$ over $b_0$.  The
boundary $\bd(\phi(a,\mathcal{M}))$ is finite and $Ca$-definable, so it
doesn't contain $b_0$, and instead $b_0 \in \inter(\phi(a,\mathcal{M})) =: U$.
If $b \in U$, then $a$ is in the finite set $\phi(\mathcal{M},b)$, so $a \in
\acl(Cb)$ and
\begin{equation*}
  \dim(a,b/C) = \dim(b/C) < \dim(a/C) + \dim(b/C).
\end{equation*}
So we cannot find any $b \in U$ which is ``dimension independent''
from $a$ over $C$.

\begin{theorem} \label{wj-indep-nbhd}
$\mathcal{M}$ has the independent neighborhood property:
  let $\bar{b}$ be a finite tuple in $\mathcal{M}$.  Let $C$ be a small set of
  parameters.  Let $U$ be a neighborhood of $\bar{b}$.  Then there is $C'
  \supseteq C$ such that $\dim(\bar{b}/C') = \dim(\bar{b}/C)$, and a
  $C'$-definable neighborhood $U' \ni \bar{b}$, with $U' \subseteq U$.
\end{theorem}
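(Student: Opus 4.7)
The plan is to take a basic-entourage neighborhood $E[\bar{b}] \subseteq U$, then produce a nearby tuple $\bar{c}$ that is dimension-independent from $\bar{b}$ over $C$, and finally let $U'$ be a smaller basic-entourage neighborhood of $\bar{c}$ (which will automatically contain $\bar{b}$ because $\bar{c}$ is close to $\bar{b}$ and entourages are symmetric). The whole argument is really just Lemma~\ref{wj-strange} repackaged.

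Concretely, first I would reduce to the case $U = E[\bar{b}]$ for some basic entourage $E \in \mathcal{B}$, using that such sets form a neighborhood basis of $\bar{b}$. By enlarging $C$ with the canonical parameter of $E$ (which is harmless since this is a parameter over $\emptyset$, not something that depends on $\bar{b}$, and one can always assume $E$ is $\emptyset$-definable after adjusting $C$), we may assume $E$ is $C$-definable. Next, pick a symmetric basic entourage $E_1 \in \mathcal{B}$, also $C$-definable, with $E_1 \circ E_1 \subseteq E$.

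Now apply Lemma~\ref{wj-strange} to the tuple $\bar{b}$, the set $C$, and the entourage $E_1$: this gives some $\bar{c} \in E_1[\bar{b}]$ with
\[
\dim(\bar{b},\bar{c}/C) \;=\; \dim(\bar{b}/C) + \dim(\bar{c}/C).
\]
By sub-additivity (Fact~\ref{F: tmin}(\ref{subad})), $\dim(\bar{b}/C\bar{c}) = \dim(\bar{b}/C)$, so $C' := C\bar{c}$ preserves the dimension of $\bar{b}$ as required. Set $U' := E_1[\bar{c}]$, which is a $C\bar{c}$-definable open set containing $\bar{b}$ (symmetry of $E_1$ converts $(\bar{b},\bar{c})\in E_1$ into $\bar{b} \in E_1[\bar{c}]$). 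Finally, $U' \subseteq U$ because any $\bar{x} \in E_1[\bar{c}]$ satisfies $(\bar{b},\bar{c}),(\bar{c},\bar{x}) \in E_1$, hence $(\bar{b},\bar{x}) \in E_1 \circ E_1 \subseteq E$, i.e.\ $\bar{x} \in E[\bar{b}] = U$.

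There is no real obstacle here: Lemma~\ref{wj-strange} is the substantive step, and the only thing to be careful about is arranging the entourages so that $E_1[\bar{c}] \subseteq E[\bar{b}]$ and $\bar{b} \in E_1[\bar{c}]$, which is standard uniformity bookkeeping handled by the $E_1 \circ E_1 \subseteq E$ trick together with symmetry of $E_1$.
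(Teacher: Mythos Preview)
There is a genuine gap in your reduction step. You write that enlarging $C$ by the canonical parameter of the entourage $E$ is ``harmless since this is a parameter over $\emptyset$, not something that depends on $\bar{b}$.'' This is precisely the issue: the parameter $p$ defining $E$ is a tuple in the model, and there is no reason a priori that $\dim(\bar{b}/Cp) = \dim(\bar{b}/C)$. Adding \emph{any} parameter can drop the dimension of $\bar{b}$; the fact that $p$ was not chosen ``using'' $\bar{b}$ is irrelevant. The same problem recurs when you pick $E_1$ ``also $C$-definable'' with $E_1 \circ E_1 \subseteq E$: for a given $C$ there need not exist any $C$-definable basic entourage at all, let alone one refining a given $E$.

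The paper addresses exactly this point with Corollary~\ref{wj-dim-case} (resting on the Erd\H{o}s--Rado argument of Lemma~\ref{wj-erdos-rado}): given $p_0$ with $E_{p_0}^2[\bar{b}] \subseteq U$, one can find $p \le p_0$ in the directed preorder of basic entourages such that $\dim(\bar{b}/Cp) = \dim(\bar{b}/C)$. Only \emph{after} this step is Lemma~\ref{wj-strange} applied over the enlarged base $Cp$. Your proof would be correct if you inserted this step; as written, the sentence ``we may assume $E$ is $C$-definable'' is doing unjustified work, and the remainder of the argument (which is indeed the same uniformity bookkeeping as in the paper) only goes through once that step is supplied.
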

\begin{proof}
  Let $\{E_p : p \in P\}$ be the definable basis for the uniformity.
  Regard $P$ as a downward-directed preorder by defining
  \begin{equation*}
    p \le q \iff E_p \subseteq E_q.
  \end{equation*}
  Let $p_0$ be so small that $E_{p_0}^2[\bar{b}] \subseteq U$.  By
  Corollary~\ref{wj-dim-case}, there is $p \le p_0$ such that
  $\dim(\bar{b}/Cp) = \dim(\bar{b}/C)$.  By Lemma~\ref{wj-strange},
  there is $\bar{c} \in E_p[\bar{b}]$ such that
  \begin{equation*}
    \dim(\bar{b},\bar{c}/Cp) = \dim(\bar{b}/Cp) + \dim(\bar{c}/Cp).
  \end{equation*}
  By subadditivity of dimension \cite[Proposition~2.31(5)]{t-minimal-johnson},
  \begin{equation*}
    \dim(\bar{b},\bar{c}/Cp) \le \dim(\bar{b}/Cp \bar{c}) + \dim(\bar{c}/Cp) \le \dim(\bar{b}/Cp) + \dim(\bar{c}/Cp),
  \end{equation*}
  and so equality holds and
  \begin{equation*}
    \dim(\bar{b}/C p \bar{c}) = \dim(\bar{b}/Cp) = \dim(\bar{b}/C).
  \end{equation*}
  Because $E_p^2[\bar{b}] \subseteq E_{p_0}^2[\bar{b}] \subseteq U$ and $\bar{c}
  \in E_p[\bar{b}]$, we have $\bar{b} \in E_p[\bar{c}] \subseteq U$.  Take $U' =
  E_p[\bar{c}]$ and $C' = Cp\bar{c}$.  Then $U'$ is $C'$-definable and
  $\dim(\bar{b}/C') = \dim(\bar{b}/C)$.
\end{proof}

\bibliographystyle{plain}
\bibliography{harvard}

\end{document}